\let\origmarginpar=\marginpar
\def\marginpar#1{\origmarginpar{\footnotesize#1}}
\def\T{\mbox{\sc t}}
\def\N{\mathbf{N}}
\def\R{\mathbf{R}}
\def\C{\mathbf{C}}
\def\Sph{{\bf S}}
\def\M{\mathcal{M}} 
\def\UM{{\bf M}} 
\def\MT{{\M_T}} 
\def\UMT{{\UM_T}}
\def\RnT{{\R^n_T}}
\def\vb{\bar{v}}
\def\vbb{\bar{\bar{v}}}
\def\Ceta{C_\eta}
\def\Cetaell{C_{\eta,\ell}}
\def\Cetacr{C_{\eta_{cr}}}
\def\mapT{\mathcal{T}}
\def\uw{w\kern-0.4em\makebox[0pt][c]{\underline{\phantom v}}\kern0.4em}
\let\Laplace=\Delta
\let\boundary=\partial
\let\eps=\varepsilon
\def\arsinh{\mathop{\rm arsinh}}
\let\halpha=\alpha 
\let\hbeta=\beta 
\def\oC{o\makebox[0pt][c]{-}C} 
\def\BC{C_b} 
\def\tr{\mathop{\rm trace}}
\def\re{\mathop{\rm Re}}
\def\im{\mathop{\rm Im}}
\newtheorem{remark}{Remark}[section]
\newtheorem{lemma}[remark]{Lemma}
\newtheorem{cor}[remark]{Corollary}
\newtheorem{definition}[remark]{Definition}
\newtheorem{theorem}[remark]{Theorem}
\renewcommand{\d}{\partial}
\def\x{\mathchoice%
  {\mbox{\boldmath$x$}}{\mbox{\boldmath$x$}}
  {\mbox{\tiny\boldmath$x$}}
  {\mbox{\tiny\boldmath$x$}}
}
\def\y{\mathchoice%
  {\mbox{\boldmath$y$}}{\mbox{\boldmath$y$}}
  {\mbox{\tiny\boldmath$y$}}
  {\mbox{\tiny\boldmath$y$}}
}
\def\cb{\mbox{\boldmath$c$}}
\let\orignabla=\nabla
\def\nabla{\mbox{\boldmath$\orignabla$}}
\def\z{\mbox{\boldmath$z$}}   
\def\e{\mbox{\boldmath$e$}}   
\def\Rn{{\R^n}}
\def\Lop{{\bf L}}
\def\Lopmod{\raisebox{0.3ex}{-}\kern-0.42em\mbox{\bf L}}
\def\Lup{\mathbb{L}}
\def\Hop{{\bf H}}
\def\Pop{{\bf P}}
\def\Qop{{\bf Q}}
\def\Sop{{\bf S}}
\def\Div{\nabla\cdot}
\let\ita=k 
\let\Dst=\displaystyle
\let\Tst=\textstyle
\def\uB{u_{\scriptscriptstyle B}}
\def\rhoB{\rho_{\scriptscriptstyle B}}
\def\rhoBplus{\rho_{\scriptscriptstyle B_+}}
\def\uBplus{u_{\scriptscriptstyle B_+}}
\def\rhoBminus{\rho_{\scriptscriptstyle B_-}}
\newcommand{\spread}{\beta}
\newcommand{\biota}{{\underline \iota}}
\newcommand{\blambda}{{\underline \lambda}}
\newcommand{\origin}{{\mathbf 0}}
\def\vc{\tilde{v}} 
\def\Vc{\tilde{V}}
\def\vcheck{\check{v}}
\def\vcheckc{\tilde{\check{v}}} 
\def\vareta{\hat\eta} %
\def\glresp{\mathbin{\mbox{$\begin{array}{@{}c@{}}>\\[-1ex]<\end{array}$}}}
\gdef\rqch@r{\hbox{\char'140}}
\begin{document}

\title{Higher-order time asymptotics of fast diffusion in Euclidean space:
 a dynamical systems approach}

\author{Jochen Denzler}
\author{Herbert Koch}
\author{Robert J. McCann}

\email{denzler@math.utk.edu}
\email{koch@math.uni-bonn.de}
\email{mccann@math.toronto.edu}

\address{Jochen Denzler, Department of Mathematics, University of Tennessee,
  Knoxville, TN 37996, USA}
\address{Herbert Koch, Mathematisches Institut, Universit\"at Bonn, Endenicher
  Allee 60, 53115 Bonn, Germany}
\address{Robert McCann, Department of Mathematics, University of Toronto, 
  Toronto, Ontario M5S\,2E4, Canada}

\thanks{The authors are pleased to acknowledge the hospitality of the
  Mathematical Sciences Research Institute at Berkeley,  and of the
  Banff International Research Station,  where parts of this work were
  performed.  They are grateful to Juan-Luis V\'azquez for
  conversations which helped to stimulate this project, and for his
  hospitality in Madrid,  and to Thomas Hagen for providing an early opportunity 
  to present these results in the May 2008 AIMS Conference
  on Dynamical Systems and Differential Equations.
  This research was supported in part by
  Natural Sciences and Engineering Research Council of Canada Grants
  217006-03 and -08 and United States National Science Foundation Grant
  DMS-0354729 to Robert McCann; by a grant from 
  the Simons Foundation (\#208550) to Jochen Denzler; and by the DFG through 
  SFB 611 (Herbert Koch). 
}
\thanks{\copyright 2012 by the authors.
}
\subjclass[2010]{35B40; 33C50, 35K61, 37L10, 58J50, 76S05}

\begin{abstract}
This paper quantifies the speed of convergence and higher-order asymptotics
of fast diffusion dynamics on $\Rn$ to the Barenblatt (self similar) solution.
Degeneracies in the parabolicity of this equation are cured by
re-expressing the dynamics on a manifold with a
cylindrical end, called the cigar.
The nonlinear evolution becomes differentiable
in H\"older spaces on the cigar.  The
linearization of the dynamics is 
given by the Laplace-Beltrami operator
plus a transport term (which can be suppressed by introducing
appropriate weights into the function space norm), plus a
finite-depth potential well with a universal profile.
In the limiting case of the (linear) heat equation, the depth diverges,
the number of eigenstates increases without bound,  and the continuous
spectrum recedes to infinity.
We provide a detailed study of
the linear and nonlinear problems in H\"older spaces on the cigar, including
a sharp boundedness estimate for the semigroup, and use this as a tool to
obtain sharp convergence results toward the Barenblatt solution, and higher
order asymptotics. In finer
convergence results (after modding out symmetries of the problem), a subtle
interplay between convergence rates and tail behavior is revealed.
The difficulties involved in choosing the right functional 
spaces in which to carry out the analysis can be interpreted as genuine
features of the equation rather than mere annoying technicalities.
\end{abstract}

\maketitle

\section{Introduction}
\label{SecIntro}

Long-time asymptotics of nonlinear diffusion processes have been a subject of much
recent interest.  The porous medium equation 
\begin{equation}
\label{pm}
 \rho_{\tau} =\frac1m \Laplace \rho^m
\end{equation}
is a prototypical example; it
governs the evolution of a nonnegative density $\rho(\tau,\y)$
on $[0,\infty`[ \times \Rn$,
as described in V\'azquez book \cite{MR2286292} and its references.
The basin of attraction \cite{MR586735} \cite{MR1977429}
of its self-similar solution  \cite{ZeldovichKompaneets50} \cite{Barenblatt52} \cite{Pattle59},
and the rate of convergence of other solutions to it
\cite{CaLeMaTo03}
\cite{MR1986060} 
\cite{MR1982656} 
\cite{LedermanMarkowich03}
\cite{MR2126633} 
\cite{MR2246356} 
\cite{MR2211152} 
\cite{BBDGV07}
\cite{BBDGV09}
\cite{BGV10}
\cite{BDGV10}
\cite{DT11}
have attracted a steady stream of attention since
such rates were first obtained by
Carrillo-Toscani \cite{MR1777035}, Otto \cite{MR1842429}
and del Pino-Dolbeault \cite{MR1940370}.
Sharp results for convergence in entropy sense have since been extended to the
full range of $m \in \R$ by the quintet consisting of
Blanchet, Bonforte, Dolbeault, Grillo and Vazquez \cite{BBDGV09},
the quartet \cite{BDGV10} and the trio \cite{BGV10}. 
However, few results are known concerning higher asymptotics,
beyond the two improvements
accessible by choosing an appropriate translation in space
(Carrillo, di Francesco, Kim, McCann, Slepcev and Toscani and the quartet \cite{BDGV10}
 in various combinations
 and settings
 \cite{MR694373}  
 \cite{MR1986060} 
 \cite{MR2246356} 
 \cite{MR2211152} 
 \cite{MR2255281}) 
and in time by the Dolbeault-Toscani duo
\cite{DT11}
(c.f. \cite{AngenentAronson96} \cite{MR1491842}). 
In the one-dimensional porous medium regime $m>1$, one has a full asymptotic expansion
of Angenent \cite{MR956056} based on the spectral calculation of
Zel'dovitch and Barenblatt \cite{ZeldovichBarenblatt58}.
Koch's habilitation thesis provides a potential
framework for generalizing this to higher dimensions \cite{KochHabil}.
In the fast diffusion regime $m<1$, a spectral
calculation by Denzler and McCann \cite{MR1982656} \cite{MR2126633}
has been used to derive
the first two corrections
\cite{MR2211152} 
\cite{BDGV10}
\cite{DT11}
to the leading order asymptotics
\cite{MR1940370} \cite{MR1842429},
but the higher-order modes never
been successfully
related to the nonlinear dynamics.
It is framework for achieving such a relation which we develop for the first
time below --- inspired by ideas from dynamical systems, and sticking to the
mass-preserving range $m \in `]\frac{n-2}{n},1`[$.

To achieve this,  several obstacles must be confronted.  The spectrum of
Denzler and McCann \cite{MR2126633} contains only finitely many eigenvalues
below the continuum threshold, so only finitely many modes are in principle
accessible.
Moreover, there is an incompatibility between the spaces in which the
eigenfunctions live,  and the spaces in which the dynamics \eqref{pm}
turn out to depend differentiably on their initial conditions.
It is this differentiable dependency that we need to establish
to justify the linearization which leads to the spectral problem.

Guessing a space in which it will hold is far from trivial, however.
One of the technical devices we use to achieve
this appears independently in Bonforte, Grillo and V\'azquez' \cite{BGV10}
entropy-based approach to the special case
$m=\frac{n-4}{n-2}$: namely,  after linearizing
the rescaled evolution in relative variables,  we restore uniform parabolicity
to this apparently degenerate equation by carrying out the analysis on $\R^n$
viewed as a Riemannian manifold $\M$ with an asymptotically cylindrical
metric, known as the cigar.

Reconciling the above-mentioned
incompatibility of spaces for the linearized and nonlinear dynamics
complicates our analysis and yields a richness
to the statements of our theorems which goes beyond the intrinsic complexity of
a spectrum whose features include a web of eigenvalue crossings as $m$ is varied.
To obtain higher-order asymptotics, we need to
work in weighted spaces which discount information appropriately 
at spatial infinity.
The further we wish to penetrate into the spectrum, the more severe the required
discounting. The corresponding linearized operators are no longer self-adjoint;
they act on weighted Banach rather than Hilbert spaces.

Since the qualitative behaviour of \eqref{pm} varies considerable with
$m$ in different regimes,  let us set
\begin{equation}\label{pdef}
m_p := 1  - \frac{2}{n+p},
\end{equation}
where the parameter $p = \frac{2}{1-m} -n$ is the {\em moment-index}
introduced in \cite{MR1982656}. Thus $m_{-2}= \frac{n-4}{n-2}$, $m_0 = \frac{n-2}{n}$,
$m_2=\frac{n}{n+2}$, and $m_n=\frac{n-1}{n}$, for example.

Herrero and Pierre \cite{MR797051} obtained remarkable local estimates for
solutions to the fast diffusion equations
if $0<m< 1$ and $m_0 < m<1$,
which they and Dahlberg and Kenig \cite{MR1009117} used to prove that every
solution has an initial trace, and it can be extended for all $t$, 
and vice versa,
there exists a global solution for every initial data that is a local measure.

For $m>m_0$,  the evolution \eqref{pm} of compactly supported initial data
is mass-preserving;  this is the regime we shall explore.
For $m<m_0$, the mass dwindles to zero in finite time;
the basin of attraction 
and leading-order asymptotics describing this disappearance
have been provided by Daskalopoulos-Sesum \cite{DaskalopoulosSesum06}
and the quintet \cite{BBDGV09}, respectively.

Although the dynamics \eqref{pm} has no fixed point,
a well-known rescaling using the self similar coordinates
\begin{equation}\label{scalingtrf}
\begin{array}{l}
  \x=(1+ 2p\tau)^{-\spread} \y, \qquad
  t= \frac{1}{2p} \ln (1+2p\tau)\,,
\\[1ex]
  \spread = (2-(1-m)n)^{-1} = \frac12 (1 + \frac n p)\,,
\\[1ex]
 u(t,\x) =  e^{(n+p) nt}  \rho((e^{2pt}-1)/(2p),
  e^{(n+p)t} \x)
\end{array}
\end{equation}
yields an alternate description
\begin{equation} \label{transformed}
 \frac{\d u}{\d t}  = \frac1m \Laplace u^m +
 \frac{2}{1-m} \nabla \cdot (\x u)
\end{equation}
of the same evolution, but with a fixed point in the new variables.
  This scaling
  coincides with the one used in our proceedings report \cite{MR1982656}
  and by the quartet~\cite{BDGV10}.
Conversely, we can express  the density of the solution to the fast diffusion
equation by
\begin{equation}\label{back-trf}
\rho(\tau,\y) =    (1+ 2p\tau)^{-\spread n}
u\left( \frac{1}{2p} \ln (1+ 2p\tau),
(1+ 2p\tau)^{-\spread} \y\right) 
\;.
\end{equation}

The character of this rewriting depends on the sign of $\spread$.  If
$m>m_0$,
then $\tau \to \infty$ is equivalent to
$t \to \infty$ and the $\tau \to \infty$ asymptotics  for \eqref{pm}
translates to $t \to \infty$ asymptotics  for \eqref{transformed}.
This remains true after a similar transformation for $m=m_0$. 
For $m< m_0$ solutions may extinguish in finite time and the
asymptotics of \eqref{transformed} for $t \to \infty$  give
information about the asymptotics of \eqref{pm} at the extinguishing
time, as in work of the quintet \cite{BBDGV09} and trio \cite{BGV10}.

The stationary solution
\begin{equation}\label{barenblatt}
 \uB(\x) = ( B + |\x|^2)^{-\frac1{1-m} }
\end{equation}
to \eqref{transformed}  is related to the Barenblatt solution
\cite{ZeldovichKompaneets50} \cite{Barenblatt52} \cite{Pattle59}
\begin{equation}\label{d-rhoB}
\textstyle
\rhoB(\tau, \y)  =
( 2p\tau+1)^{-n\spread}
\uB((2p\tau+1)^{-\spread}\y ),
\end{equation}
where $B$ is determined by a quantity $\int \rho_0 d\x$ called the {\em mass},
at least if this mass is bounded.  It has been known to attract
all solutions which share its mass since the work of Friedman and
Kamin \cite{MR586735}; see also V\'azquez \cite{MR1977429}.
The Barenblatt solution has finite moments of exactly
those orders smaller than~$p$.

It has been shown by V\'azquez \cite[Thm.~21.1]{MR1977429} that weak assumptions like
$$
\int \rho(0,\y) d\y = \int \rhoB(0,\y) d\y
$$
and
$$
\sup_{\y}  |\rho(0,\y)/ \rhoB(0,\y) | < \infty
$$
imply
\begin{equation}\label{reluni}
  \limsup_{\tau \to \infty } \sup_{\y}
  \Big|\frac{\rho(\tau,\y)}{\rhoB(\tau,\y)}-1\Big| =0
\;.
\end{equation}
We investigate the sharp decay rate of
the quantity $|\,\cdot\,|$
in \eqref{reluni},
called the {\em relative} $L^\infty$ distance.
The known sharp rates of decay for integral expressions
--- such as $\|\rho - \rhoB\|_{L^1(\Rn)}$ or the relative entropy
\cite{MR1777035}
\cite{MR1940370} \cite{MR1842429}
\cite{BDGV10} 
--- imply a (non-sharp)  rate of decay in relative $L^\infty$ through 
the work of the quintet \cite{BBDGV09}. 

Because of the  gradient structure with respect to the
Wasserstein distance discovered by Otto \cite{MR1842429}, convergence questions
can be attacked using displacement convexity; see also \cite{MR2211152}.
It is within this framework that the
linearized problem for $m <1$ had been studied in great 
detail \cite{MR2126633}.
Unfortunately, there is no clean way of passing from the linearized operator
to the full nonlinear equation.   However,
by cleverly employing the entropy method,
McCann-Slepcev \cite{MR2211152},
the quartet \cite{BDGV10},
and the duo \cite{DT11}
were able improve on the sharp integral rates of convergence of
Carrillo-Toscani ($m>1$) \cite{MR1777035},
Otto ($m\ge m_n$) \cite{MR1842429}
and del-Pino-Dolbeault ($m\ge m_n$) \cite{MR1940370},
to extract first \cite{MR2211152} \cite{BDGV10} and
second \cite{DT11} corrections.
The first order correction depends on centering the data,  and the second-order
on choosing a suitable translation in~$\tau$.
Our first result 
gives the sharp relative $L^\infty$ rate of convergence for centered initial data.
It improves on the rate found independently
by the quintet  \cite{BBDGV09} without centering,
and implies the sharp entropy rate of convergence
also found independently by the quartet for centered data \cite{BDGV10}.

\begin{theorem}[Exact leading-order asymptotics in the relative $L^\infty$
    norm]
\label{t-1}
\hskip0pt plus 10em 
Fix $0<m \in `]m_0,1`[$ with $m_0=\frac{n-2}{n}$. 
Suppose $\rho(\tau,\y)$ satisfies \eqref{pm} and the condition \eqref{reluni}
holds for some $B>0$.
If $m=m_2=\frac{n}{n+2}$, we assume in addition
\begin{equation} \label{te-1}
 \int_{\Rn}
\left| 1- \frac{ \rho(0,\y)}{ \rhoB(0,\y)} \right|^2 (1+|\y|^2)^{-n/2} d\y
< \infty 
\;.
\end{equation}
Then there exists $\z \in \Rn$ such that
\begin{equation}\label{te-2}
 \limsup_{\tau\to\infty}
\sup_{\y \in \Rn} \tau \Big| \frac{\rho(\tau,\y-\z)}{\rhoB(\tau,\y)} - 1 \Big|
 < \infty
\;.
\end{equation}
Without the additional condition \eqref{te-1} in the case $m=m_2$,
we still get \eqref{te-2}, except that
the leading factor $\tau$ is replaced by  $\tau^{1-\eps}$, for
any~$\eps>0$.
\end{theorem}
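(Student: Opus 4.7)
The plan is to pass to self-similar coordinates via \eqref{scalingtrf}: the rescaled density $u$ satisfies \eqref{transformed} whose stationary solution is the Barenblatt profile $\uB$, and the relative $L^\infty$ distance $|\rho/\rhoB-1|$ coincides with $|u/\uB-1|$ at the rescaled space-time point. Writing $u(t,\x)=\uB(\x)(1+v(t,\x))$, the hypothesis \eqref{reluni} becomes $\sup_\x |v(t,\x)|\to 0$, and what must be shown is that $\sup_\x |v(t,\x)|=O(e^{-2pt})$ once the initial data have been spatially translated by a suitable $\z\in\Rn$, since $(1+2p\tau)=e^{2pt}$ converts an exponential rate in $t$ into the polynomial rate $\tau^{-1}$ claimed in \eqref{te-2}.

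The equation for $v$ takes the form $v_t=\Lop v+Q(v)$, where $\Lop$ is the linearization of \eqref{transformed} at $\uB$ and $Q$ is a remainder that vanishes at least quadratically in $v$. Viewed on the cigar manifold $\M$ announced in the introduction, $\Lop$ is --- up to a drift that can be weighted away and a zeroth-order term --- the Laplace--Beltrami operator plus the universal finite-depth potential well described in the abstract, and its spectrum was explicitly computed in \cite{MR1982656,MR2126633}. Below an essential-spectrum threshold $\Lambda$ sit finitely many eigenvalues: $\lambda_0=0$ (mass; already killed because the hypothesis implies $\int\rho=\int\rhoB$), the $n$-fold degenerate $\lambda_1$ (translations), and further $\lambda_2<\lambda_3<\cdots$. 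For $m\in (m_0,1)$, direct inspection of the Denzler--McCann spectrum shows that the eigenvalue immediately above the translation mode yields precisely the rate $e^{-2pt}$, so the strategy is to choose $\z$ so that the translated initial data have vanishing projection on the $\lambda_1$-eigenspace (in the regime $p>1$ this coincides with placing the first moment of $\rho(0,\cdot-\z)$ at the origin, and it admits an analogous moment-based definition for smaller $p$), and then to apply a semigroup estimate for $\Lop$ on the complementary spectral subspace together with a Duhamel bootstrap that absorbs the quadratic $Q$.

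The principal obstacle --- and the reason for the paper's long technical development --- is the mismatch between the $L^\infty$-type norm in which \eqref{reluni} is supplied and in which the conclusion \eqref{te-2} is stated on the one hand, and the Hilbert space of \cite{MR2126633} in which the spectrum of $\Lop$ was originally realized on the other. Accordingly, I would work in a weighted H\"older space on $\M$ tailored so that (i) the eigenfunctions of $\Lop$ below $\Lambda$ lie in it; (ii) $\Lop$ generates a strongly continuous, ideally analytic, semigroup whose operator norm, after projecting away the modes with $\lambda\le\lambda_1$, decays at the sharp rate $e^{-2pt}$; and (iii) $Q$ is locally Lipschitz with quadratic vanishing at zero. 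Parabolic smoothing on the cigar, which is uniformly elliptic thanks to the choice of metric, should bootstrap \eqref{reluni} into a small ball of this H\"older space after a waiting time; then the selection of $\z$, a spectral projection, and the Duhamel formula close the loop and yield \eqref{te-2}.

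The borderline $m=m_2=n/(n+2)$ is special because at that parameter the eigenvalue driving the rate merges with the edge $\Lambda$ of the continuous spectrum of $\Lop$, so the spectral gap collapses and the semigroup bound loses its clean exponential character. The weighted $L^2$ hypothesis \eqref{te-1} is precisely a condition forcing the spectral measure of the initial perturbation to vanish fast enough near $\Lambda$ to restore the sharp $e^{-2pt}$ rate. Without \eqref{te-1}, interpolating between the $L^\infty$ hypothesis and a bare $L^2$ bound loses an arbitrarily small $\eps$ in the exponent, which upon reverting to the variable $\tau$ yields the fallback $\tau^{1-\eps}$.
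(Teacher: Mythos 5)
Your high-level plan coincides with the paper's: pass to the relative variable $v=u/\uB$, exploit the cigar manifold to restore uniform parabolicity, shift $\z$ to remove the translation mode, and close a semigroup-plus-Duhamel (equivalently, discrete time-step) bootstrap against the quadratic remainder. But you gloss over what is in fact the hardest point, and in a way that would make the proof fail if executed naively. In the unweighted space $C^\halpha(\M)$ — which is essentially the \emph{only} H\"older space in which the nonlinear flow can be set up when $m>m_2$, because the nonlinearity $f(w)w$ blows up at $u=0$ and the critical weight $\eta_{cr}=\frac{p}{2}-1$ is then positive and cannot be built into the norm — the eigenvalue $\lambda_{01}=-2p$ sits \emph{exactly} at the edge of the essential spectrum of $\Lop$: from \eqref{cinfty}, $c^\infty(0)=-2p=\lambda_{01}$, for every $m$. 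There is no spectral gap, so ``a semigroup estimate for $\Lop$ on the complementary spectral subspace'' does not automatically give $e^{\lambda_{01}t}$; the abstract theory of analytic semigroups yields only $e^{(\lambda_{01}+\eps)t}$. Obtaining the clean $e^{\lambda_{01}t}$ bound is precisely the content of Theorem~\ref{semigroupestimates}, whose proof is an involved maximum-principle argument run in a space-time cone, combined with conjugation to a different weight exponent $\eta$ (using $\eta\neq\eta_{cr}$ to trade off spatial growth against exponential time decay). Without being aware of this obstruction, one would at best land on the $\tau^{1-\eps}$ fallback for all $m$, not just $m=m_2$.

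Relatedly, your explanation of why $m=m_2$ is special is tilted toward the critically weighted Hilbert space of~\cite{MR2126633}, where $\lambda_{01}$ and $\lambda_0^{\rm cont}$ merge only at $p=2$. In the space the paper actually works in, the coincidence $\lambda_{01}=c^\infty(0)$ holds throughout $m\in`]m_0,1`[$; what degenerates at $m=m_2$ is rather that $\eta_{cr}=0$, so the conjugation device underlying Theorem~\ref{semigroupestimates} has no room to maneuver ($\eta=0=\eta_{cr}$). The fix exploited by the paper is that at $\eta_{cr}$ the operator is symmetric with respect to the Riemannian volume on $\M$, and the weighted-$L^2$ moment hypothesis \eqref{te-1} is exactly the condition $w_0\in L^2(\M)$ which lets one fall back on self-adjoint $L^2$ spectral theory (via a nonlinear energy estimate) and then upgrade by parabolic regularization — not an interpolation loss in the sense you describe. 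Finally, a small slip: for $m\le m_n$ the translation mode $\lambda_{10}=-(p+n)$ already lies at or below $\lambda_{01}$, so no shift of $\z$ is required (nor, for $m\le m_1$, definable); the ``analogous moment-based definition for smaller $p$'' you invoke is unnecessary.
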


Let us note that obtaining the sharp estimate \eqref{te-2} without an
$\eps$ in all cases other than $m=m_2$ is not automatic, but requires
a detailed study of the linearized PDE, as given in the proof 
of Thm.~\ref{semigroupestimates}. We rely on the manner in which the
essential spectral radius depends on decay properties
built in the space. The argument in case $m=m_2$ and assuming the
moment condition~\eqref{te-1} is of a different nature, basically a
spin-off of the $L^2$ theory and relying on self-adjointness in this
case. 

The case $m \in `]m_0,m_2`]$ had already been proved by Kim-McCann
\cite{MR2246356},  
subject to a mild moment condition, which is not needed here, except for the
similar moment-type condition \eqref{te-1} in the case $m=m_2$.
The case $m\ge m_n$ had been handled to order $O(1/\tau^{1-\eps})$ by McCann
and Slep\v{c}ev \cite{MR2211152} in the somewhat weaker $L^1$ norm. Before
this, Carrillo and V\'azquez had given the sharp $1/\tau$ rate in the
relative $L^\infty$  norm for radially symmetric data \cite{MR1986060}, and a
weaker $O(1/\tau^{1/2})$ rate in the $L^1$ norm over the whole parameter range
$m>m_0$. The sharp rate for the
case $m_2<m<m_n$ remained open for a while, even though the role of the
translation had been understood in the linearized setting by
\cite{MR2126633} and \cite{MR1986060}, before being resolved independently
in the work of the quartet \cite{BDGV10} and the present manuscript.
The work of the quintet \cite{BBDGV09} gave sharp but implicit
integral rates which extend
even into the finite extinction range of parameters $m < m_0$.
The work of the trio \cite{BGV10}
addresses the exceptional case $m = m_{-2}$,  in which the essential
spectral gap \eqref{essential spectral gap} described below and found
by Denzler and McCann \cite{MR1982656} \cite{MR2126633} vanishes.
Whereas the works of the quintet and its subgroups are entropy-based,
we use a dynamical systems approach that successfully bridges the
functional analytic gap between the linearization argument
of~\cite{MR2126633} and the nonlinear estimates, allowing higher modes
to be accessed.

The proof, which is found in Sec.~\ref{pthm1},
involves preliminary results of fairly different flavors.  A
prominent role is played by a manifold $\M$ with one cylindrical end, which is
called the cigar in the two dimensional case;  the same manifold was employed
independently by the trio \cite{BGV10}. We express the equation in self
similar variables and turn to the relative size $u/\uB$ as main dependent
variable, as do the quintet \cite{BBDGV09} and their successors
\cite{BDGV10} \cite{BGV10} \cite{DT11}.
The equation for the
quotient can be understood as a uniformly parabolic
reaction/transport/nonlinear diffusion equation
on the manifold $\M$.  Well-posedness and smooth dependence
on the initial data follow by suitable adaptations of known
techniques. However, since our setting  deviates
from more standard situations we provide complete proofs for the convenience of
the reader, and for the convenience of having results tailored to our needs.
The (rescaled and time translated) Barenblatt solutions are barriers, and
local existence immediately implies global existence if $m \ge m_0=\frac{n-2}n$.
Below $m < m_0$ the time translation corresponds to an unstable mode,
which is obvious by looking at the Barenblatt solution, where a time shift
is used to adjust the $\tau$ at which the solution extinguishes.

The asymptotics of Theorem \ref{t-1} are determined by the spectrum of the
linearized operator. Its largest eigenvalue $\lambda_{00}$ is zero.  It
corresponds to the rescaling
$$
\rho_\sigma(\tau, \x) = \rho( \tau/\sigma^2, \x/\sigma) 
\;.
$$
The invariant
manifold of this mode is given by the  set of stationary solutions $\uB$, which is
parametrized by $B$. For $m > \frac{n-2}n$, $B$ is determined by the
mass ($L^1$ norm) of the initial data, which is a constant of motion.
Once we adjust the mass, this spectral value becomes irrelevant.

Equation \eqref{pm} is spatially translation invariant. The corresponding
eigenvalue of the linearization is $\lambda_{10}= -n-p$. 
The invariant manifold
is determined by all translations of Barenblatt solutions. For $m > m_1 =
\frac{n-1}{n+1}$, the first moments of the Barenblatt solutions are defined and
conserved, and, by centering $\rho_0$ we get rid of this mode.

Equation \eqref{pm} is also invariant under time translations (which are
equivalent to rescalings of space). The
corresponding spectral value is $\lambda_{01} = -2p$. This
time there is no related conserved quantity, and there seems to be no direct
way of determining the time translation parameter from the initial data
\cite{MR1491842}.
This eigenvalue is responsible for the convergence rate given
in Theorem~\ref{t-1}.
This suggests that we may improve the results of Theorem \ref{t-1}
by modding out the time shift as well, as was achieved in
the entropy sense by the duo \cite{DT11},
and as we hereafter independently achieve in the stronger senses of
Remark \ref{even higher asymptotics} and Theorem \ref{t-superquadratic}.

Indeed, having quantified a rate of contraction,  higher asymptotics become
accessible, as we now explain.  Accounting for the time shift
is merely the first in a series of many improvements
which are possible for $m>m_2$, if we allow ourselves to measure convergence
in weighted norms which suppress information in the tail region
$|\y| \gtrsim \tau^\spread$ and reveal higher modes.
How strongly this information is suppressed depends on the
degree of asymptotic accuracy desired.  It is necessary to introduce weights
since $\lambda_{01}$ turns out to coincide with the threshold of the essential spectrum
in the unweighted H\"older space $C^\halpha(\M)$
of functions on the cigar defined by \eqref{Hoelder norm}. For $m<m_2$
on the other hand, any eigenvalues apart from $\lambda_{01}$ turn out
to be embedded in a spectral continuum and Theorem \ref{fine:p<2}
shows we can obtain very rapid decay, but only for a more restricted
class of initial data, since the weights in this case amplify the
significance of tail information. 

Given $p=2(1-m)^{-1}-n$ and non-negative integers $k,\ell \in \N$,
define
\begin{equation}\label{eigenvalues}
- \lambda_{\ell k} := (\ell + 2k) p + n\ell + 4k(1-\ell-k)
\end{equation}
and
\begin{equation}\label{essential spectral gap}
- \lambda_0^{\rm cont} 
:= (\frac{p}{2} + 1)^2
\;.
\end{equation}
In a certain critically weighted H\"older space,
the $\lambda_{\ell k}$ for $\ell + 2k < 1+ p/2$ will turn out to be
eigenvalues describing the exponential rate of contraction of $u(t,\x)/\uB(\x)$
towards the constant state $1$,  and $\lambda_0^{\rm cont}$ will turn out
to be the threshold of the essential spectrum.
For irrational $m \in `]m_2,1`[$ and any
$\Lambda \in `]\lambda_0^{\rm cont},\lambda_{01}[$,
varying the weights in the linearization carried out below suggests
there exist $u_{\biota}(\x)$ depending on $u(0,\x)$ such that
\begin{equation}\label{conjectured expansion}
\left\|\frac{
(B + |\x|^2)(u(t,\x)/\uB(\x) - 1)
    - \sum_{\Lambda< \biota \cdot \blambda <0}
          u_{\biota}(\x) e^{\biota\cdot \blambda t}}
{(B+ |\x|^2)^{\left(p + 2 - \sqrt{(p+2)^2 +4\Lambda }\right)/4}}
\right\|_{C^\halpha(\M)} = O(e^{\Lambda t})
\end{equation}
as $t \to \infty$,  where  the sum is over integer-valued multi-indices
$\biota = (i_{\ell k})_{\ell,k \in \N}$
constrained so that $i_{00} = 0 = i_{10}$ (assuming the mass and the
center of mass of $u(0,\x)-\uB(\x)$ vanish),
and so that $\biota \cdot \blambda := \sum_{\ell,k=0}^\infty i_{\ell k} \lambda_{\ell k}$
lies strictly between $\Lambda$ and zero.
The presence of essential spectrum also suggests that incorporation of
further terms $u(\x) e^{\lambda t}$ into the sum cannot generally make
the error term smaller than $O(e^{-(\frac{p}{2}+1)^2 t})$.
In this conjecture we see clearly the trade-off introduced by the choice
of weight between weaker norms and the faster rates $\Lambda$ of
decay.  This is a new feature to emerge from the present work, at
least relative to the existing 
entropy-based results of the quartet \cite{BDGV10} and duo \cite{DT11},
and to the conjecture advanced in \cite{MR2126633}.

When $m$ (or equivalently $p$) is rational,  the coefficients
$u_\biota(\x)$ would need to be replaced by a polynomial function
$u_\biota(\x,t)$ of time
in case of an eigenvalue resonance, i.e.
$\biota \cdot \blambda = \biota' \cdot \blambda$
with $\sum i_{\ell k} > 1$, 
as in Angenent \cite{MR956056}. 
The possibility of such resonances can, for example,
be ruled out either by taking $m$ irrational
(in view of the rational dependence of the eigenvalues
\eqref{eigenvalues} on $p$), or by limiting ourselves to a $\Lambda> 2
\lambda_{01}$ within a factor of two of the spectral gap, which is
$\lambda_{01} = \max_{\biota} \biota \cdot \blambda$ for $m>m_2$
and centered mass distributions.
The restriction $\Lambda>2\lambda_{01}$ also simplifies the conjecture in two
other ways,  allowing us to prove the following theorem.
First, it forces the functions $u_{\biota}(\x)/(B+|\x|^2)$
to be eigenfunctions corresponding to the eigenvalue $\biota \cdot \blambda$ of
the linearized operator \eqref{linlop-cartes};
this operator generates the long-time dynamics near the fixed point
in the critically weighted H\"older space $C^\halpha_{\eta_{cr}}(\M)$
defined at \eqref{Cetadef}.
Moreover, since the nonlinearity is analytic,  its effects at this
level are benign:
the quadratic approximation of an orbit by its
tangent is higher order than the accuracy $\Lambda/\lambda_{01}<2$ 
accessible in the following theorem, one of our main results.

\newcount\recalleqn \recalleqn=\csname c@equation\endcsname          
\newcount\recallsec \recallsec=\csname c@section\endcsname           
\newcount\recallthm \recallthm=\csname c@remark\endcsname            

\long\def\rememberthistheorem{
\protect
\begin{theorem}[Higher-order asymptotics in weighted H\"older spaces]
\protect\label{t-subquadratic}
Fix $p= 2(1-m)^{-1} -n>2$ (equivalently $m \in `]m_2,1`[$) and
$\Lambda\in[\lambda_0^{\rm cont},\lambda_{01}]=[-(\frac p2+1)^2,-2p]$
subject to the condition  $2\lambda_{01}<\Lambda$.
If $u(t,\x)$ is a solution to \protect\eqref{transformed}
with center of mass and 
$\lim_{t \to \infty} \|u(t,\x)/\uB(\x) - 1\|_{L^\infty(\Rn)}=0$ both
vanishing, then
there exist a sequence of polynomials $(u_{\ell k}(\x))_{\ell k}$,
each element of which either vanishes or has degree
$\ell+2k \in `]1,\frac{p}{2}+1`[$,
such that
\protect\begin{equation}\protect\label{subquadratic-asymptotics}
\left\|\frac{
(B + |\x|^2)(u(t,\x)/\uB(\x) - 1) -
\sum_{\Lambda< \lambda_{\ell k} <0}
u_{\ell k}(\x) e^{\lambda_{\ell k} t}}%
{(B + |\x|^2)^{%
  \left(p+2 - \sqrt{(p+2)^2 + 4\Lambda}
  \right)/4} }
\right\|_{C^\halpha(\M)}
\!= O(e^{\Lambda t})
\protect\end{equation}
as $t \to \infty$,
where the sum is over non-negative integers $k,\ell \in \bf N$
for which
$\lambda_{\ell k}$ defined by \protect\eqref{eigenvalues} lies in the interval
$`]\Lambda,0`[$, and for which $\ell \le 1$ if $n=1$.
The functions $u_{\ell k}(\x)/(B+|\x|^2)$
lie in the $\lambda_{\ell k}$ eigenspace
of the linear operator \protect\eqref{linlop-cartes} on
$C^\halpha_{\eta_{cr}}(\M)$,
and the norm $\| \cdot \|_{C^\halpha(\M)} \ge
\| \cdot \|_{L^\infty(\Rn)}$ is defined by \protect\eqref{Hoelder norm}.
\protect\end{theorem}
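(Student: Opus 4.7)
Set $w(t,\x) = u(t,\x)/\uB(\x) - 1$, so that \eqref{transformed} becomes a semilinear equation $\partial_t w = \Lop w + Q(w)$ on the cigar $\M$, where $\Lop$ is the linearization \eqref{linlop-cartes} and $Q(w)$ is analytic and at least quadratic at $w=0$. I will (i) use Theorem~\ref{t-1} to place the orbit in the exponentially small regime $\|w(t)\|\le C e^{\lambda_{01} t}$ after a finite time; (ii) decompose the linear semigroup via the spectral projection $P_\Lambda$ onto the (finite-dimensional) sum of eigenspaces with $\lambda_{\ell k}\in(\Lambda,0)$; (iii) close a Duhamel argument using the semigroup bound $\|e^{t\Lop}(I-P_\Lambda)\|\le C e^{\Lambda t}$ of Theorem~\ref{semigroupestimates}, together with the quadratic estimate $\|Q(w(s))\|\le C e^{2\lambda_{01} s}$. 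The crucial condition $2\lambda_{01}<\Lambda$ is exactly what ensures that the nonlinear correction is subleading, so no quadratic or resonance polynomial-in-$t$ terms need to be added to the tangent expansion.

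\textbf{Linear part and Duhamel.} By Denzler--McCann \cite{MR2126633}, the eigenvalues $\lambda_{\ell k}$ with $\ell+2k<p/2+1$ have polynomial eigenfunctions $\psi_{\ell k}(\x)$ of degree $\ell+2k$, and lie below $\lambda_0^{\rm cont}$. The weight in the denominator of \eqref{subquadratic-asymptotics}, namely $\eta_\Lambda := (p+2-\sqrt{(p+2)^2+4\Lambda})/4$, is precisely the one that shifts the essential spectrum down to the threshold $\Lambda$ via an Agmon-type conjugation. The Duhamel formula
\begin{equation*}
w(t) = e^{t\Lop}w(0) + \int_0^t e^{(t-s)\Lop}Q(w(s))\,ds
\end{equation*}
splits, after inserting $P_\Lambda+(I-P_\Lambda)$, into
\begin{equation*}
w(t) = \sum_{\Lambda<\lambda_{\ell k}<0} \psi_{\ell k}(\x)\,c_{\ell k}(t)\,e^{\lambda_{\ell k} t} + R(t),
\end{equation*}
where $c_{\ell k}(t)$ collects the projections of $w(0)$ and of $Q(w(s))$ onto the $\psi_{\ell k}$ eigenspace, and $R(t)$ is the $(I-P_\Lambda)$ complement. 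The latter is $O(e^{\Lambda t})$ by the semigroup estimate applied to $(I-P_\Lambda)e^{t\Lop}w(0)$, together with
\begin{equation*}
\Big\|(I-P_\Lambda)\!\int_0^t\! e^{(t-s)\Lop}Q(w(s))\,ds\Big\| \le C\!\int_0^t\! e^{\Lambda(t-s)}e^{2\lambda_{01} s}\,ds = O(e^{\Lambda t}),
\end{equation*}
which uses $2\lambda_{01}<\Lambda$. The same quadratic bound shows that the $c_{\ell k}(t)$ converge exponentially to finite limits, and setting $u_{\ell k}(\x) := (B+|\x|^2)\psi_{\ell k}(\x)\lim_{t\to\infty} c_{\ell k}(t)$ gives the expansion \eqref{subquadratic-asymptotics}. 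Centering the initial data forces the $\lambda_{10}$ coefficient to vanish, so the surviving $u_{\ell k}$ have degree $\ell+2k\ge 2$.

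\textbf{Main obstacle.} The analytic heart of the argument is \emph{not} the linear/Duhamel combinatorics above but rather the two weighted estimates it consumes. First, the sharp semigroup bound with exponent $\Lambda$ on $(I-P_\Lambda)e^{t\Lop}$ in the $\eta_\Lambda$-weighted Hölder space requires a careful resolvent analysis for $\Lop$ on $\M$, showing that the weight $(B+|\x|^2)^{-\eta_\Lambda}$ conjugates the operator into one whose essential spectrum lies exactly at $\Lambda$ while preserving the polynomial eigenfunctions in $(\Lambda,0)$; this must be delivered by Theorem~\ref{semigroupestimates}, whose proof is the technical engine of the whole section. Second, one must prove a quadratic multiplier estimate for $Q(w)$ in the weighted Hölder norm with a full factor-of-two gain in the exponent; this is delicate because $Q$ mixes $w$, $\nabla w$, and powers of $\uB$, and only works because the weighted Hölder spaces on $\M$ behave like an algebra once the weight bookkeeping is matched to the radial profile of $\uB$ and the asymptotically cylindrical geometry of the cigar.
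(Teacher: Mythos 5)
Your overall strategy---a priori exponential decay from Theorem~\ref{t-1}, spectral projection onto the eigenvalues in $`]\Lambda,0`[$, and a perturbation argument closed by the condition $2\lambda_{01}<\Lambda$---is the right one, and you correctly single out the two technical engines (the sharp weighted semigroup bound of Theorem~\ref{semigroupestimates} and a weighted quadratic estimate for the nonlinearity). But the paper does \emph{not} run a continuous Duhamel formula. It iterates the time-$T$ map $\tilde{w}_{j+1}=\Sop_\eta(T)\tilde{w}_j + G_\eta(w_j)\tilde{w}_j$ in the weighted space $C^\halpha_\eta$ with $\eta=\eta_{cr}-\sqrt{\Lambda-\lambda_0^{\rm cont}}$, then telescopes the discrete recursion and shows each $\Qop^{(i)}_\eta\tilde w_k/\vartheta_i^k$ is Cauchy. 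This discrete route is not just cosmetic: it is where the Schauder smoothing of Lemma~\ref{constant} is secretly consumed.

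Here is the concrete gap in your version. The nonlinearity is not a Nemytskii operator $Q(w)$ but a second-order differential operator applied to $f(w)w$: in the notation of~\eqref{lineq-w}, $Q(w)=\Lopmod\!\bigl(f(w)w\bigr)$ with $\Lopmod$ of order two. Hence $Q(w(s))\notin C^\halpha$ when $w(s)\in C^\halpha$, so the step $\bigl\|\int_0^t e^{(t-s)\Lop}(I-P_\Lambda)Q(w(s))ds\bigr\|\le C\int_0^t e^{\Lambda(t-s)}\|Q(w(s))\|ds$ is not a legitimate estimate as written: you would need two orders of parabolic smoothing from $e^{(t-s)\Lop}$, costing $(t-s)^{-1}$ and producing a non-integrable singularity at $s=t$. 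The divergence structure of $\Lopmod$ does save the day, but only when the smoothing is taken over a whole time interval as in the Schauder estimate~\eqref{HE-est1}; this is precisely what the paper's short-time-step estimate, Lemma~\ref{quadratic-error-weighted}, packages, giving $\|\tilde w_{j+1}-\Sop_\eta(T)\tilde w_j\|_{C^\halpha}\le K\|w_j\|_{L^\infty}\|\tilde w_j\|_{C^\halpha}$ with the $\|w_j\|_{L^\infty}$ factor unweighted. Relatedly, your description of the weighted space as ``behaving like an algebra'' is not accurate: for $\eta>0$ the space $C^\halpha_\eta$ is \emph{not} an algebra, and if it were you would only obtain a product of two $\Lambda$-rate factors, which does not close. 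What actually saves the argument is Corollary~\ref{weighted-algebra-cigar}, $\|(f\circ w)\,w\,h\|_{C^\halpha}\le C\,\|w\|_{L^\infty}\|wh\|_{C^\halpha}$, which crucially pairs the weighted norm of one factor against the \emph{unweighted} $L^\infty$ norm of the other (controlled at rate $\lambda_{01}$ by Theorem~\ref{t-1}). You also have a small formula slip: $u_{\ell k}(\x)$ should be the degree-$(\ell+2k)$ polynomial $\psi_{\ell k}Y_{\ell\mu}$ itself (so that $u_{\ell k}/(B+|\x|^2)=\uB^{1-m}\psi_{\ell k}Y_{\ell\mu}$ is the eigenfunction of $\Lop$), not $(B+|\x|^2)\psi_{\ell k}$.
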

} 

\rememberthistheorem 

\begin{remark}
The coefficients $u_{\ell\,0}(\x)$ 
are given explicitly by Corollary \ref{c-coefficient formulae};
the invariant manifolds corresponding to these particular modes
can be identified using the same idea.
\end{remark}

\begin{remark}\label{eight}
The sum appearing in \eqref{subquadratic-asymptotics} contains up to
eight non-zero terms, depending on $n$, $m_p$ and on $\Lambda$.  These
correspond to eigenvalues from some subset of
$\{\lambda_{01}, \lambda_{02}, \lambda_{03}, \lambda_{11}, \lambda_{12},
\lambda_{20}, \lambda_{21}, \lambda_{30}\}$.
\end{remark}

\begin{proof}[Proof of Remark \ref{eight}]
Formula \eqref{eigenvalues}
implies the monotone dependence of $\lambda_{\ell k} > \lambda_{\ell k+1}$
on $k$ in the admissible range $\ell + 2k \in `]1,\frac{p}{2}+1`[$, so
$\lambda_{\ell k}$ exceeds $`]\Lambda,0`[$ unless
$-\lambda_{\ell\, 0} = \ell(p+n) <  4p = -2 \lambda_{01}$.
This forces $\ell<4$.  On the other hand $\ell + 2k < \frac{p}{2} +1$
implies $(p+n) \ell + 2k (\frac{p}{2} + 1 -\ell) < \lambda_{\ell k}$.
For this to be within $2\lambda_{10}=-4p$ of the origin we need
$k < 4-\ell  - \frac{2\ell^2 + (n-6)\ell + 8}{p- 2\ell + 2} < 4 - \ell$
which establishes the remark
(except for the case $n=1$ which needs to be argued separately).
If $n=3$ and $m =m_{11}$ and $\Lambda/\lambda_{10} \sim 2$ all eight
terms may appear.
\end{proof}

These conclusions translate back to the original variables easily:

\begin{cor}\label{c-subquadratic-rho-asymptotics}
Fix $p= 2(1-m)^{-1} -n>2$ and
$2\lambda_{01}< \Lambda \in [\lambda_0^{\rm cont},\lambda_{01}]$.
If $\rho(\tau,\y)$ is a solution to \eqref{pm}
with vanishing center of mass and satisfying \eqref{reluni}, then
\begin{equation}\label{subquadratic-rho-asymptotics}
\left\|
\frac{ \Dst
\left(\frac{\rhoB(\tau,\origin)}{\rhoB(\tau,\y)} \right)^{\frac{2}{p+n}} 
\left(\frac{\rho(\tau,\y)}{\rhoB(\tau,\y)} - 1 \right) -
\sum_{\Lambda< \lambda_{\ell k} <0}
\frac{u_{\ell k}\big((1+2p\tau)^{-\spread} \y)\big)}%
     {B(1+2p\tau)^{-\lambda_{\ell k}/2p}}  } 
{ (\rhoB(\tau,\origin)/\rhoB(\tau,\y))^{
\big(p+2 - \sqrt{(p+2)^2 + 4 \Lambda  }\big)
/(2p+2n)}
\times \tau^{\Lambda/2p}
}
\right\|_{L^\infty(\Rn)}
\end{equation}
remains bounded as $\tau \to \infty$, 
the notation being the same as in Theorem \ref{t-subquadratic}.
\end{cor}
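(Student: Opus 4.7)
The plan is to translate Theorem \ref{t-subquadratic} from the self-similar variables $(t,\x)$ back to the original variables $(\tau,\y)$ using the change of variables \eqref{scalingtrf}–\eqref{back-trf}, keeping careful track of the rescaling factors and of the explicit form of the Barenblatt profile.

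First, I would observe that applying \eqref{back-trf} to both $\rho$ and $\rhoB$ and dividing yields
\[
\frac{\rho(\tau,\y)}{\rhoB(\tau,\y)} = \frac{u(t,\x)}{\uB(\x)},\qquad t=\tfrac{1}{2p}\ln(1+2p\tau),\quad \x=(1+2p\tau)^{-\spread}\y.
\]
Using the explicit form \eqref{barenblatt} of $\uB$ together with the identity $\tfrac{1}{1-m}=\tfrac{n+p}{2}$, I compute
\[
\frac{\rhoB(\tau,\origin)}{\rhoB(\tau,\y)}=\frac{\uB(\origin)}{\uB(\x)} = \left(\frac{B+|\x|^2}{B}\right)^{(n+p)/2},
\]
so that
\[
\left(\frac{\rhoB(\tau,\origin)}{\rhoB(\tau,\y)}\right)^{\frac{2}{p+n}}\!\left(\frac{\rho(\tau,\y)}{\rhoB(\tau,\y)}-1\right)=\frac{1}{B}(B+|\x|^2)\!\left(\frac{u(t,\x)}{\uB(\x)}-1\right).
\]
Raising this same identity to the exponent $(p+2-\sqrt{(p+2)^2+4\Lambda})/(2p+2n)$ shows that the denominator weight in \eqref{subquadratic-rho-asymptotics} coincides, up to a factor $B^{-(p+2-\sqrt{(p+2)^2+4\Lambda})/4}$, with the denominator weight in \eqref{subquadratic-asymptotics}.

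Next, I would convert the exponential factors in $t$ into algebraic factors in $\tau$. Since $e^{\lambda_{\ell k} t}=(1+2p\tau)^{\lambda_{\ell k}/2p}$ and $\lambda_{\ell k}<0$, dividing the eigenmode terms $u_{\ell k}(\x)e^{\lambda_{\ell k} t}$ by $B$ yields precisely the summands $u_{\ell k}((1+2p\tau)^{-\spread}\y)/(B(1+2p\tau)^{-\lambda_{\ell k}/2p})$ appearing in \eqref{subquadratic-rho-asymptotics}. Similarly $e^{\Lambda t}=(1+2p\tau)^{\Lambda/2p}$, which differs from $\tau^{\Lambda/2p}$ only by a bounded multiplicative factor as $\tau\to\infty$ (taken, say, over $\tau\ge 1$); this harmless constant is absorbed into the boundedness claim.

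Third, I would verify the hypotheses: by \eqref{scalingtrf} the condition \eqref{reluni} on $\rho$ is equivalent to $\lim_{t\to\infty}\|u(t,\cdot)/\uB-1\|_{L^\infty(\Rn)}=0$, and the mass and center of mass are conserved under \eqref{transformed}, so vanishing center of mass of $\rho$ transfers to vanishing center of mass of $u$. These are exactly the hypotheses of Theorem \ref{t-subquadratic}. Finally, because the change of variables $\y\leftrightarrow\x$ is a bijection of $\Rn$ for each fixed $\tau$, taking $\sup_{\y\in\Rn}$ is the same as $\sup_{\x\in\Rn}$, and the embedding $\|\cdot\|_{L^\infty(\Rn)}\le\|\cdot\|_{C^\halpha(\M)}$ (recalled in the last line of Theorem \ref{t-subquadratic}) converts the $C^\halpha(\M)$ bound of \eqref{subquadratic-asymptotics} into the $L^\infty(\Rn)$ bound of \eqref{subquadratic-rho-asymptotics}.

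There is no genuine obstacle here; the result is a bookkeeping exercise. The only point deserving mild attention is that the multiplicative constants picked up along the way (the factor $1/B$, the constant $B^{-(p+2-\sqrt{(p+2)^2+4\Lambda})/4}$, and $(2p)^{\Lambda/2p}$ from passing between $(1+2p\tau)^{\Lambda/2p}$ and $\tau^{\Lambda/2p}$) are all positive and bounded for $\tau\ge 1$, hence do not affect the assertion that the displayed quantity remains bounded as $\tau\to\infty$.
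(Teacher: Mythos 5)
Your proof is correct and takes essentially the same route as the paper: the paper's one-line proof also evaluates \eqref{subquadratic-asymptotics} at $t=\frac{1}{2p}\ln(1+2p\tau)$, converts $e^{\lambda t}$ into $(1+2p\tau)^{\lambda/2p}$, rewrites everything over a common factor of $B$, and then substitutes $\x=(1+2p\tau)^{-\spread}\y$ using \eqref{scalingtrf}--\eqref{d-rhoB}. You have simply spelled out more explicitly the Barenblatt-quotient identities, the transfer of hypotheses, and the comparison of norms that the paper leaves as a ``comparison.''
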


\begin{proof}
Evaluate \eqref{subquadratic-asymptotics}
using $2p\tau  = e^{2pt}-1$ to get
$$
\Bigg\|\frac{
\frac{B + |\x|^2}{B} 
\left(\frac{u( \frac{1}{2p} \log |1+ 2p\tau|,\,\x)}{\uB(\x)} - 1 \right) -
\frac{1}{B}\sum 
{u_{\ell k}(\x)}{(1+2p\tau)^{\lambda_{\ell k}/2p}}  } 
{((B+|\x|^2)/B )^{\left(p+2 - \sqrt{(p+2)^2 + 4\Lambda}\right)/4}}
\Bigg\|
= O(\tau^{\Lambda /2p})
$$
as $\tau \to \infty$.
Setting $\x = (1+2p\tau)^{-\spread} \y$,
comparison with \eqref{scalingtrf}--\eqref{d-rhoB}
establishes
\eqref{subquadratic-rho-asymptotics}.
\end{proof}

\begin{remark}\label{r-half integer multiples of p}
From \eqref{eigenvalues} and $\spread = \frac{1}{2}(1+\frac{n}{p})$,
only products of powers of $\tau^{-1/2}$ and of $\tau^{-1/2p}$ can arise
as cofactors of the polynomials appearing
in \eqref{subquadratic-rho-asymptotics}.
Since $\Lambda/2p>-2$, we can actually replace both occurrences
of $1+2p\tau$ by $2p\tau$
in \eqref{subquadratic-rho-asymptotics}. 
\end{remark}

Taking $\Lambda=\lambda_{01} =-2p$,
the sum in \eqref{subquadratic-rho-asymptotics}
disappears and we recover an estimate in an unweighted norm,
which is Theorem \ref{t-1}.  By taking $\Lambda< \lambda_{01}$,
we now concretely identify the leading order correction appearing
in the weighted asymptotic expansion.  This yields a corollary analogous
in our framework to the entropic improvement found by Dolbeault and
Toscani \cite{DT11}.

\begin{figure}[hb]
    \begin{picture}(178,160)
      \put(  0,  0){\includegraphics{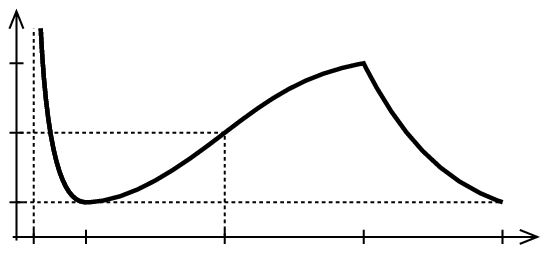}}
      \put(172, 12){$m$}
      \put( 10, 77){$\gamma$}
      \put( 22,  3){$m_0$}
      \put( 37,  3){$m_2$}
      \put( 77,  3){$m_6$}
      \put(117,  3){$m_{n+4}$}
      \put(158,  3){$1$}
      \put( 11, 21){$1$}
      \put( 10, 42){$\frac43$}
      \put( -6, 61){$2\frac{n+2}{n+4}$}
      \put( 35, 61){rate}
      \put(  0,100){\includegraphics{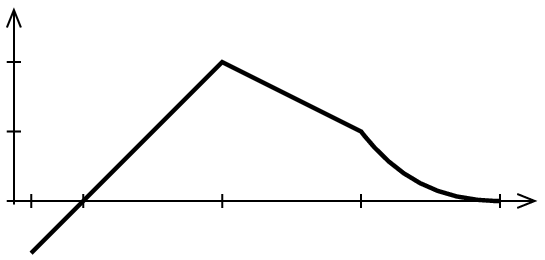}}
      \put(172, 92){$m$}
      \put( 10,153){$\delta$}
      \put( -3,137){$\frac{2}{n+6}$}
      \put( -3,117){$\frac{1}{n+2}$}
      \put(  9, 97){$0$}
      \put( 37, 90){$m_2$}
      \put( 77, 80){$m_6$}
      \put(117, 90){$m_{n+4}$}
      \put( 27,137){weight}
    \end{picture}
\caption{\sl Rate $\gamma$ and weight $\delta$ from Corollary~\ref{c-2} and
  Theorem~\ref{fine:p<2}.
}
\label{graph:t-2}
\end{figure}

\begin{cor}[Second order asymptotics modulo translations and dilations]
  \label{c-2}
\hskip0pt plus 2em 
Suppose $1>m > m_2=\frac{n}{n+2}$  and $\rho(\tau,\y)$ satisfies
the assumptions of Theorem~\ref{t-1}, and the center of mass of~$\rho(0,\y)$
is at the origin.
Then there exists $\tau_0 \in \R$ such that
$$
\limsup_{\tau\to\infty} \sup_{\y} \tau^\gamma
  \left( \frac{\rhoB(\tau,\y)}{\rhoB(\tau,0)} \right)^\delta
   \left|\frac{\rho(\tau, \y)}{\rhoB(\tau-\tau_0,\y)} - 1\right|
< \infty
$$
where, for $n\ge2$, we have
\begin{equation}\label{d-gamma}
\begin{array}{ll}
\gamma  =  \frac{(p+2)^2}{8p} = \frac{[2-(n-2)(1-m)]^2}{8(1-m)[2-n(1-m)]}
    & \Dst\text{ if \ }   m_2 < m \le m_6    \,,
\\[2ex]
\gamma  = \frac{2(p-2)}{p} =  \frac{4-2(n+2)(1-m)}{2-(1-m)n}
    & \Dst \text{ if \ }
       m_6 \le m \le m_{n+4}   \,,
\\[2ex]
\gamma  = \; \frac{n+p}{p} \; =  \frac{2}{ 2-(1-m)n}
    & \Dst \text{ if \ } m_{n+4} \le m < 1   \,,
\end{array}
\end{equation}
and
\begin{equation}\label{d-delta}
\begin{array}{ll}
\delta  = \frac{1}{n+p}(\frac p2-1) = \frac14(2m-n(1-m))
    & \Dst\text{ if \ }   m_2 < m \le m_6    \,,
\\[2ex]
\delta  =  \frac{2}{n+p} = 1-m
    & \Dst \text{ if \ }
       m_6 \le m \le m_{n+4}    \,,
\\[2ex]
\delta  = \frac{1}{n+p}\left(\frac p2-1-\sqrt{(\frac p2-1)^2-2n}\right)
    & \Dst \text{ if \ } m_{n+4} \le m < 1   \;.
\end{array}
\end{equation}
(Recall $m_2=\frac{n}{n+2}$, $m_n=\frac{n-1}{n}$, $m_{n+4}=\frac{n+1}{n+2}$.)

For $n=1$, the first case applies to $m_2<m\le m_{p_*}$, and the third case to
$m_{p_*}\le m<1$, with $p_*=2(\sqrt{2}+1)$, the middle case being omitted.
\end{cor}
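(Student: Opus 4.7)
The plan is to apply Theorem \ref{t-subquadratic} with $\Lambda$ chosen just below the next obstruction to decay---either the essential-spectral threshold $\lambda_0^{\rm cont}$ or the next eigenvalue below $\lambda_{01}$---and then to absorb the unavoidable $u_{01}$ summand by the time translation $\tau_0$. Specifically, I would take $\Lambda = \lambda_0^{\rm cont} = -(p+2)^2/4$ in Case 1 ($p\in `]2,6`]$), $\Lambda = \lambda_{02} = 8-4p$ in Case 2 ($p\in[6,n+4]$), and $\Lambda = \lambda_{20} = -2(n+p)$ in Case 3 ($p \ge n+4$). A direct check confirms $\Lambda \in [\lambda_0^{\rm cont},\lambda_{01}]$ and the required strict inequality $2\lambda_{01} < \Lambda$ in each case, so Theorem \ref{t-subquadratic} applies. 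Using \eqref{eigenvalues}, the vanishing rule $u_{\ell k} \equiv 0$ whenever $\ell+2k \notin `]1,p/2+1`[$, and the centering hypothesis (which eliminates the $\lambda_{10}$ mode), I then verify case by case (using $n\ge 2$) that the sum in \eqref{subquadratic-asymptotics} reduces to just $u_{01}$ in Cases 1 and 3, and to $u_{01}+u_{02}$ in Case 2.

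Next I would absorb the $u_{01}$ summand by the time translation. Substituting $\rho(\tau,\y) = \rhoB(\tau-\tau_0,\y)$ into \eqref{scalingtrf} gives $u_{\tau_0}(t,\x)=R^{-n\spread}\uB(R^{-\spread}\x)$ with $R = 1 - 2p\tau_0 e^{-2pt}$, and expanding in $\tau_0$ yields
\begin{equation*}
(B+|\x|^2)\bigl(u_{\tau_0}(t,\x)/\uB(\x) - 1\bigr) = -2p\spread\tau_0\,(p|\x|^2-nB)\, e^{\lambda_{01}t} + O\bigl((B+|\x|^2)\,\tau_0^2\, e^{2\lambda_{01}t}\bigr).
\end{equation*}
Since the $\lambda_{01}$-eigenfunction is a scalar multiple of $(p|\x|^2-nB)/(B+|\x|^2)$, choosing $\tau_0$ to match the coefficient of $u_{01}$ in the asymptotic expansion of $u(t,\x)/\uB(\x)$ cancels this summand. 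The residual $O((B+|\x|^2)\,\tau_0^2\, e^{2\lambda_{01}t})$ is absorbed into the theorem's error $O((B+|\x|^2)^a e^{\Lambda t})$ because $2\lambda_{01}<\Lambda$ in each regime (providing exponential slack) and $a\ge 1$ (taking up the extra $(B+|\x|^2)$ factor).

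In Case 2 the surviving $u_{02}$ term is absorbed not by $\tau_0$ but by the weight: with $\Lambda=\lambda_{02}$ the exponent $a = (p+2 - \sqrt{(p+2)^2+4\Lambda})/4$ equals $(p+2-|p-6|)/4 = 2$ for $p\ge 6$, while the degree-4 polynomial $u_{02}$ satisfies $|u_{02}(\x)| \lesssim (B+|\x|^2)^2 = (B+|\x|^2)^a$, so the $u_{02}$ contribution fits inside $(B+|\x|^2)^a e^{\Lambda t}$. In every case I therefore arrive at
\begin{equation*}
|u_{\tau_0}(t,\x)/\uB(\x) - 1| \le C\,(B+|\x|^2)^{a-1}\, e^{\Lambda t}.
\end{equation*}
Converting to $\y$-coordinates via $(B+|\x|^2)/B = (\rhoB(\tau,0)/\rhoB(\tau,\y))^{2/(n+p)}$ and $e^{\Lambda t} \sim \tau^{\Lambda/(2p)}$, following the proof of Corollary \ref{c-subquadratic-rho-asymptotics}, yields $\gamma = -\Lambda/(2p)$ and $\delta = 2(a-1)/(n+p)$; substituting the three case-specific values of $\Lambda$ reproduces \eqref{d-gamma}--\eqref{d-delta}. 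The $n=1$ case is analogous, with the restriction $\ell\le 1$ making the $\lambda_{20}$ mode unavailable, rendering Case 2 vacuous, and the transition $p_* = 2+2\sqrt{2}$ arising as the positive root of $(p+2)^2/(8p) = (n+p)/p$, equivalently $p^2-4p-4=0$.

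The main obstacle will be the bookkeeping in the first step: verifying that for each choice of $\Lambda$ the only eigenfunction with $\lambda_{\ell k} > \Lambda$ requiring explicit cancellation (rather than absorption into the weight) is $u_{01}$. This draws on the vanishing rule $\ell+2k \notin `]1,p/2+1`[ \Rightarrow u_{\ell k}\equiv 0$, the centering hypothesis for $u_{10}$, and the ordering of $\lambda_{02}$, $\lambda_{11}$, $\lambda_{20}$, which cross at $p=n+4$. A secondary technicality is the nonlinear residual from the time shift, which is harmless because $2\lambda_{01} < \Lambda$ and $a \ge 1$ in each regime.
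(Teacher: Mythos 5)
Your argument follows the paper's own route: apply Corollary \ref{c-subquadratic-rho-asymptotics} with $\Lambda$ chosen at the second spectral threshold (namely $\lambda_0^{\rm cont}$, $\lambda_{02}$, or $\lambda_{20}$ according to $p$), then cancel the surviving $u_{01}$ summand by a time translation, using that the $\lambda_{01}$-eigenspace is one-dimensional and that the $u_{01}$-coefficient of the translated Barenblatt depends linearly on $\tau_0$. Your expansion of $u_{\tau_0}=R^{-n\spread}\uB(R^{-\spread}\x)$ with $R=1-2p\tau_0 e^{-2pt}$ is a more explicit version of the paper's step, where instead Corollary \ref{c-subquadratic-rho-asymptotics} is applied to $\rhoB(\cdot-\tau_0,\cdot)$ and the two expansions subtracted; both hinge on the same linearity-in-$\tau_0$. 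The derived values of $\gamma=-\Lambda/(2p)$ and $\delta=\frac{1}{n+p}\bigl(\frac p2-1-\sqrt{(\frac p2+1)^2+\Lambda}\bigr)$ check out in each regime.

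One small misreading: the sum in \eqref{subquadratic-asymptotics} runs over $\lambda_{\ell k}$ in the \emph{open} interval $`]\Lambda,0`[$, so when you take $\Lambda=\lambda_{02}$ in Case 2, the term $u_{02}$ is already excluded from the sum by Theorem \ref{t-subquadratic}; there is nothing to absorb. Your observation that for $p\ge 6$ the weight exponent is $a=2$ and a degree-$4$ polynomial would fit underneath is correct, and would be the right rescue if the theorem were stated with a closed interval, but as written it is redundant. The same remark applies in Case 3, where $\lambda_{20}=\Lambda$ is likewise excluded from the sum by openness, rather than by any vanishing rule.

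Finally, your $n=1$ discussion inherits an issue from the paper's own proof that is worth being alert to. For $n=1$ and $p\in`]4,5`[$ the eigenvalue $\lambda_{11}=3-3p$ is admissible ($\ell=1\le 1$ and degree $3\in`]1,p/2+1`[$ since $p>4$), and one checks $\lambda_{11}>\lambda_0^{\rm cont}$ for $p>4$ and $\lambda_{11}>\lambda_{20}$ for $p<5$, so in both subcases $\lambda_{11}\in`]\Lambda,0`[$ and $u_{11}$ appears in the sum. The time shift $\tau_0$ parametrizes only the $\lambda_{01}$-eigenspace, so it cannot cancel $u_{11}$, and the stated rate $\gamma>-\lambda_{11}/(2p)$ would then fail for generic data in that range. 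Since your proposal reproduces the paper's choices of $\Lambda$ verbatim and argues the $n=1$ case ``analogously,'' it reproduces this gap as well.
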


\begin{remark}
On compact sets, or sets that grow at rate no faster that $\tau^\spread$,
we therefore get the improved convergence rate of
$O(\tau^{-\gamma})$, whereas an unweighted estimate that is uniform over all
of~$\Rn$ cannot be obtained from this theorem.
\end{remark}

\begin{remark}[Even higher asymptotics]\label{even higher asymptotics}
Here $\gamma = \Lambda/\lambda_{01}$ and $\delta$
correspond to the choice
$\Lambda = \max\{\lambda_{0}^{\rm cont},\lambda_{02},\lambda_{20}\} =: \lambda$
in Corollary \ref{c-subquadratic-rho-asymptotics}. Corollary \ref{c-2}
asserts that with this choice,  a suitable translation of the solution
$\rho(\tau,\y)$ in time makes the summation in
\eqref{subquadratic-rho-asymptotics} vanish.
Assuming $\rho(\tau,\y)$ itself denotes this translation and letting
$u(t,\x)$ be the rescaled solution \eqref{scalingtrf},  hypothesis
\eqref{amplification-hypothesis} of Theorem \ref{t-superquadratic} is
then satisfied,
which allows us to access even more terms in the asymptotic expansion than
provided by Theorem \ref{t-subquadratic}:  up to
$\Lambda > \max\{\lambda + \lambda_{01},\lambda_0^{\rm cont}\}$.
\end{remark}

Looking ahead at Theorem~\ref{spectrum-from-ARMA}, it will be noticed that
$m_n$ is the value where the eigenvalues $\lambda_{10}$ and $\lambda_{01}$
cross, and that $m_{n+4}$ is where the eigenvalues
$\lambda_{20}=-2p-2n$,
$\lambda_{11}=-3p-n+4$
and~$\lambda_{02}=-4p+8$ cross.

\begin{cor}[Third order asymptotics and affinely self similar solutions]
  \label{c-10}
\hskip0pt plus 2em 
Fix $m=m_p = 1 - 2/(p+n)$ with $p > 4$, 
$n \ge 2$,  and let $\rho(\tau,\y)$ satisfy
the assumptions of Theorem \ref{t-1} with the center of mass of~$\rho(0,\y)$
at the origin.  Then a traceless $n \times n$ symmetric matrix $\Sigma_0$ and
function $\sigma(\tau)$ exist satisfying
$(d\sigma/d\tau)^{p+n} = c_B \det \Sigma(\tau)$, where
$\Sigma(\tau) = \Sigma_0 + \sigma(\tau) I \ge 0$ and $c_B$ is a constant
depending on $(n,p,B)$, such that
$\tilde \rho(\tau,\y) =  
\uB(\Sigma^{-1/2}(\tau) \y) \, \det \Sigma^{-1/2}(\tau)$
solves \eqref{pm} for large $\tau$ and
\begin{equation}\label{affine asymptotics}
\limsup_{\tau\to\infty} \sup_{\y} \tau^{\gamma}
  \left( \frac{\rhoB(\tau,\y)}{\rhoB(\tau,0)} \right)^\delta
   \left|\frac{\rho(\tau, \y)}{\tilde \rho(\tau,\y)} - 1\right|
< \infty
\,,
\end{equation}
where $\gamma = -\lambda_{11}/2p$ and
$\delta = \left(
\frac{p}{2}-1 - \sqrt{(\frac{p}{2}+1)^2 + \lambda_{11}}
\right)/(p+n)$.
\end{cor}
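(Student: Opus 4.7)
The plan is to mimic the strategy behind Corollary~\ref{c-2} while enlarging the reference solution from the one-parameter family of time-shifted Barenblatts to the $n(n+1)/2$-parameter affine family $\{\tilde\rho\}$. Its tangent space at the Barenblatt profile jointly spans the $\lambda_{01}$ eigenspace (through the scalar $\sigma$) and the $(n(n+1)/2-1)$-dimensional $\lambda_{20}$ eigenspace of traceless quadratic spherical harmonics (through $\Sigma_0$), so both may be modded out simultaneously and the next-surviving mode is~$\lambda_{11}$.

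First I would verify that $\tilde\rho$ solves \eqref{pm}. Since the pressure $\tilde\rho^{-(1-m)} = \det(\Sigma)^{(1-m)/2}(B + \y^T\Sigma^{-1}\y)$ is a time-dependent polynomial of degree at most $2$ in $\y$, substituting the ansatz into \eqref{pm} and matching the coefficients of $1$, $\y^T\Sigma^{-1}\y$ and $\y^T\Sigma^{-2}\y$ collapses the PDE to a matrix ODE for $\Sigma(\tau)$. Remarkably, this matrix ODE forces $\partial_\tau\Sigma$ to be a scalar multiple of the identity, so any symmetric $\Sigma_0\ge 0$ chosen initially persists unchanged while $\sigma(\tau)$ satisfies $(d\sigma/d\tau)^{p+n} = c_B\det(\Sigma)$ with $c_B$ an explicit constant depending on $(n,p,B)$.

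Next, transform to the self-similar variables~\eqref{scalingtrf}. Taylor expanding $\tilde u(t,\x)/\uB(\x) - 1$ in powers of $\Sigma_0/\sigma$ and $1/\sigma$ identifies its first-order part as the sum of a $\lambda_{01}$ eigenmode (from the time-translation freedom in $\sigma$) and a $\lambda_{20}$ eigenmode (from $\Sigma_0$). Now apply Theorem~\ref{t-subquadratic}, augmented by Remark~\ref{even higher asymptotics} and Theorem~\ref{t-superquadratic}, with $\Lambda = \lambda_{11}$: the expansion~\eqref{subquadratic-asymptotics} of $\rho/\rhoB - 1$ in $C^\halpha_{\eta_{cr}}(\M)$ contains only finitely many eigenmode terms above rate $\Lambda$. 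Choose the traceless matrix $\Sigma_0$ to cancel the $\lambda_{20}$-coefficients of this expansion, and the integration constant for $\sigma$ to cancel the $\lambda_{01}$-coefficient; the parameter-to-coefficient map is finite-dimensional and invertible at leading order, with explicit inverse along the lines of Corollary~\ref{c-coefficient formulae} extended to the traceless-quadratic case.

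The residual $\rho-\tilde\rho$ then decays at rate $e^{\lambda_{11} t}$ in the weighted H\"older norm. Translating back via~\eqref{back-trf} converts this factor into $\tau^{-\gamma}$ and the H\"older weight into the stated $\delta$-power of $\rhoB(\tau,\y)/\rhoB(\tau,0)$ appearing in~\eqref{affine asymptotics}, in the same manner as the passage from Theorem~\ref{t-subquadratic} to Corollary~\ref{c-subquadratic-rho-asymptotics}. The main obstacle will be controlling the nonlinear coupling inside the ODE: the Laplace expansion $\det(\Sigma_0 + \sigma I) = \sigma^n - \frac12\sigma^{n-2}\tr(\Sigma_0^2) + \cdots$ generates subleading radial corrections that must not interfere with the $\lambda_{11}$ rate, and must instead either be subsumed into the expansion of $\tilde u$ or, should a genuine eigenvalue resonance intervene, be accommodated by polynomial-in-$t$ eigenfunctions in the spirit of Angenent~\cite{MR956056}.
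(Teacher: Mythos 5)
Your proposal follows essentially the same route as the paper: construct the affine family, verify it solves \eqref{pm} via the matrix ODE, apply Theorem~\ref{t-subquadratic} with $\Lambda=\lambda_{11}$ to both $\rho$ and $\tilde\rho$, match the $\lambda_{01}$ and $\lambda_{20}$ coefficients by choosing $\tau_0$ and $\Sigma_0$, and subtract. The structure is correct.

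Your final paragraph raises a spurious obstacle, however. There is no ``nonlinear coupling inside the ODE'' to worry about: because $\tilde\rho$ is an \emph{exact} solution of \eqref{pm} satisfying the hypotheses of Theorem~\ref{t-1}, Theorem~\ref{t-subquadratic} applies to $\tilde\rho$ on the same footing as to $\rho$, so both have expansions with the same finite set of eigenmodes and no subleading ODE corrections contaminate the $\lambda_{11}$ rate. Likewise, the Angenent-style resonance scenario you envisage is already ruled out by the check $\lambda_{11}/\lambda_{01}<2$ in the admissible range; this is precisely the hypothesis $2\lambda_{01}<\Lambda$ in Theorem~\ref{t-subquadratic}. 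The paper also makes more explicit a step you gesture at with ``parameter-to-coefficient map \ldots invertible at leading order'': Corollary~\ref{c-coefficient formulae} shows the $\ell=2$, $k=0$ coefficients are recovered from the traceless part of the moment-of-inertia tensor, a conserved quantity, which is what justifies taking a single time-independent $\Sigma_0$ throughout. With that observation in hand, the matching is exact rather than ``at leading order,'' and no iteration or inverse-function argument is required.
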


A more detailed interpretation of Theorem~\ref{t-subquadratic} will be
given in the next section, after  an exposition of the proofs with
some more technical details. The full proof (relative to all preceding
technical lemmas) will be given in Section~\ref{SecWeights:p>2}. Let
it suffice to say here that there is an intricate interplay between
tail behavior $\delta$ and convergence rate
$\gamma$ that is responsible for the occurrence of the weights
in the theorem.

A partner theorem for the case $m \in `]m_0,m_2`[$ is available below
(Thm.~\ref{fine:p<2}).

Although the results contained herein fall short of 
a full-blown invariant
manifold theory for the fast diffusion equation,
they represent a significant advance in that direction.
Invariant manifolds have a rich history of successful applications
in partial differential equations, as in e.g.~Wayne \cite{MR1465095} and the
references there. However, much of that work has been devoted to 
semilinear heat equations, 
and cannot be directly adapted to the
quasilinear equation now confronted.  Wayne himself raises the question of
whether it is possible to extend such an analysis to nonlinearities which, though
still semilinear, depend on derivatives.  The present work develops the
relevant ideas and techniques, and applies them to
an example which may provide insights and a blueprint for the general problem.
It complements the studies of the porous medium equation $m>1$
by Angenent  \cite{MR956056} \cite{MR0936323} in one-dimension, and
in higher dimensions  by Koch~\cite{KochHabil}.
Still there are some significant ways in which our analysis resembles that
of Wayne \cite{MR1465095} or his results on the convergence of $2D$ Navier-Stokes dynamics
to Oseen vortices with Gallay \cite{MR1912106}:  we study
the size of the solution relative to the fixed point of an appropriately
rescaled dynamics \eqref{scalingtrf} as they do (namely we 
work with $(u-\uB)/\uB$) , and like them we use weighted
spaces to shift the essential spectrum and reveal higher asymptotics in
\eqref{subquadratic-asymptotics}.  However, we are forced to work
in H\"older spaces (like Angenent \cite{MR956056},  but on an unbounded domain),
rather than the Hilbert spaces of Gallay-Wayne --- or for that matter
of McCann-Slepcev \cite{MR2211152}, 
the duo \cite{DT11}, or the quartet \cite{BDGV10} ---
and with a spectral theory for non-self-adjoint operators,
which is more involved than the linear problems analyzed in those works.
Furthermore,  there is a limit $\Lambda \in [\lambda_0^{\rm cont},0]$
to the resolution which we can hope to attain from this linear analysis,  
in sharp contradistinction to the complete asymptotics accessible in the 
problems addressed by Gallay and Wayne, and by Angenent.
Except in the special case $m=m_{-2}$~\cite{BGV10}, we do not know
how dynamical information beyond the threshold imposed by
our continuous spectrum could be resolved.

\section{Overview of Obstructions and Strategies, and Notation}
\label{SecOVNotation}

The basic idea to prove the asymptotic results is taken from dynamical systems
as outlined in \cite{MR2126633}, \cite{MR1465095},  or the references there:
the eigenvalues of the linearization
`ought' to determine the rate of convergence to the equilibrium $\uB$. 
For example, when the analogous smooth finite-dimensional evolution
$$
\frac{d}{dt}x(t) = -V(x(t)) \in \R^n
$$
is linearized around a fixed point $V(x_\infty) = 0$ we get:
$$
\frac{d}{dt}(x(t) - x_\infty)
 =  -DV(x_\infty)  (x(t)-x_\infty) + O(x(t)-x_\infty)^2
\;.
$$
If $\sigma(DV(x_\infty)) = \{ \lambda_1 \le \cdots \le \lambda_n\}$
with eigenvectors $\hat \phi_i$, a naive expectation (albeit not
entirely correct due to the possibility of resonances) is
$$
x(t) = \sum_{i=1}^n c_i \hat \phi_i e^{- \lambda_i t}
+ \sum_{i=1}^n \sum_{j=1}^n 
   c_{ij} \hat \phi_i \hat \phi_j e^{-(\lambda_i+\lambda_j) t}
+ \sum_i \sum_j \sum_k  \cdots
\;.
$$
Notice however,  that we may neglect the iterated summations and 
ignore resonances if we are
content with asymptotics to order $O(e^{- {2} \lambda_1t})$ as $t \to \infty$.
This is the strategy employed in our theorems. 
{\em Differentiability} of $V(x_0)$ or
$x(t)=X(t,x_0)$ with respect to $x_0 \in \R^n$ is crucial.  However for the
infinite dimensional dynamics of $u$, the choice of the right space 
becomes an issue.
Even more than in the semilinear context \cite{MR1210168},
the issue is plagued by conflicting requirements from PDE theory and
from functional analysis.

The Hilbert space setting (a weighted Sobolev space $W^{1,2}_\rho$, which will
be called $W^{1,2}_{\uB}$ here) in which
the spectral analysis was originally carried out in~\cite{MR2126633} is (at
best) inconvenient to deal with the nonlinearity. Its geometric interpretation
in terms of mass transport also restricts the parameter range for~$m$ to the
case where Barenblatt has second moments.

The singularity of the equation near~0 is an obstruction to getting a {\em
smooth\/} semigroup in many Banach spaces: While the parabolic evolution
moves $u$ pointwise away from the singularity 0 immediately, it does not
necessarily move it out of any norm neighborhood of 0, unless weights are built
into the norm that are adjusted to the Barenblatt profile.
On the other hand, as in Angenent \cite{MR956056},
smoothness, rather than mere continuity, of the semiflow
is crucial for the dynamical systems approach via spectrum of a linearization
to have a significance.

The `relative' variable $v=u/\uB$ introduces the appropriate weight, and
V\'azquez' estimate \eqref{reluni} guarantees that after finite time,
every solution is bounded away from 0 in the $L^\infty$ norm
of~$v$. The singularity still shows up in
the fact that the spatial differential operator fails to be {\em uniformly\/}
elliptic in~$\Rn$, due to the decaying Barenblatt whose power multiplies
the Laplacian. But now $\Rn$ can be endowed with a conformally flat
Riemannian metric, such that the Laplace--Beltrami operator with respect to
this metric coincides with the linearized evolution operator in its principal
part.
Working consistently with respect to this metric restores uniform
parabolicity of the equation. $\Rn$ becomes a manifold $\M$ that can be
illustrated by isometrically embedding it into $\R^{n+1}$ like the surface of a
cigar.
$$
\makebox[108bp][l]{\rule{0pt}{51bp}\includegraphics{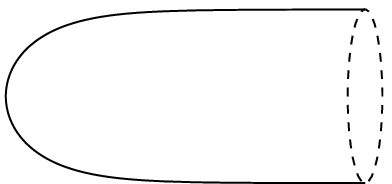}} \quad
\mbox{\raisebox{20pt}{cigar manifold}}
$$
The natural space in which to get a well-posed initial value problem for the
nonlinear equation is a H\"older space, and H\"older quotients need to be
referred to the geodesic distance on the cigar manifold. Likewise, for
derivatives, unit vectors with respect to the Riemannian metric should be used
to get appropriately weighted higher H\"older norms. It is in these
$C^\halpha(\M)$ spaces that we get smoothness of the evolution for the nonlinear
equation.  The detailed discussion of the cigar manifold and the H\"older
spaces are found in Sections~\ref{SecCigarRMfd} and \ref{SecUMHoeld}, respectively.

Section~\ref{SecHeatEq} develops some auxiliary heat estimates which
are used in Section~\ref{SecGlobWP} to carry over
the parabolic Schauder theory for linear and nonlinear equations
from local results in flat space to the uniform setting on the cigar.
(Self-contained proofs for the estimates of Section~\ref{SecHeatEq}
are deferred to an appendix.)
The key point  is that
the cigar can be covered with coordinate charts in which the distortion between
its Riemannian metric and a (local) flat Euclidean metric is bounded with all
derivatives, uniformly over all coordinate charts, even though the global
distortion between the a~priori Euclidean metric on~$\Rn$ and the cigar metric
is of course unbounded. Thereafter, the analytic local
semiflow for the nonlinear equation follows  just like the linear
estimates by contraction mapping, exactly as it would be done in the textbook
model cases on a bounded domain.  Iterative usage of the local estimates can be
controlled by a~priori estimates via the maximum principle to get the global
semiflow.  Let it be stressed that the existence of a semiflow for the FDE has
long been established by Herrero and Pierre \cite{MR797051} for $L^1_{\rm loc}$
data, our concern here is to get  a flow map which depends {\em differentiably\/}
on the initial data.

Before comparing the nonlinear flow with its linearization, let us remember the
obstruction given by V\'azquez \cite[Thm.~1.3]{MR1977429}: There cannot be a
quantitative
convergence rate to Barenblatt for general $L^1$ data. The idea behind this
result is that one can start with small lumps of mass arbitrarily far apart,
and the time until the spreading Barenblatts that arise from these lumps
overlap significantly depends on their distance. So an infinity of lumps (with
finite total mass) can produce arbitrarily slow convergence rates out of
arbitrarily large distances. The spectral analysis two of us developed in
\cite{MR2126633} is unaffected by this obstruction, because it is based on a
linearization within Otto's formalism \cite{MR1842429}, which takes the distance
between lumps into account in the form of transport costs.
Of course, now we have to rephrase the spectral theory within the framework of
the H\"older space $C^\halpha(\M)$, which will introduce some
modifications. However, the key eigenvalues whose geometric meaning was
understood from~\cite{MR2126633} are still  in place, and so is their
interpretation. We do have to cope with tail behavior issues, however, in
particular for small $m$ where the Barenblatt solution has few moments.
The spectral analysis is done in Section~\ref{SecSpecLinEqn}. Much
can be understood in terms of qualitative arguments of asymptotic analysis, only
the precise values of the eigenvalues determining the convergence rates require
the same explicit calculation as in~\cite{MR2126633}. We find it illustrative
to get also the precise spectrum in various weighted H\"older spaces out of the
previously established formulas.

Like for the $L^1$ norm, bounding the H\"older space norm does not prevent us
from starting with lumps that are far away from each other. We therefore cannot
expect a global estimate of the form
$\|u(t)-\uB\|_1\le C(\|u(0)\|_2) \exp[-a t]$
even if we were to attempt to control the constant in terms of a
rather strong norm $\|u(0)\|_2$.   
In contrast to results of the quartet \cite{BDGV10}, our results 
are inherently local. 
It would be interesting to see if Wasserstein distance bounds could
combine our spectral theoretic approach with more global information,
but the functional analytic difficulties to make the `formal infinite
dimensional Riemannian manifold' approach work for the nonlinear
problem have eluded us and appear to be pretty insurmountable. 
The proof of Thm.~\ref{t-1} is carried out as a rather immediate
consequence of the flow properties and spectral analysis developed;
see Sec.~\ref{pthm1}.

Finally, we need to address the influence of tail behavior on the spectral
theory. The Hilbert space in which our first spectral analysis was carried out
carries a power of $\uB$ as a weight, and allows a certain growth rate of
functions in this space if $m>m_2$, while enforcing decay if $m<m_2$. The
growth rates are powers of~$|\x|$, which is exponential in the geodesic
distance~$s$ from the center on~$\M$; and this affects the 
essential spectral radius.
We introduce weighted H\"older spaces $\Ceta^\halpha$, which
allow a spatial growth $|\x|^\eta$  (or equivalently $\cosh^\eta s$
in said geodesic distance from Sec~\ref{SecNLEqCigar}). The
critical growth rate allowed in the Hilbert space is
$\eta_{cr}=\frac{p}{2}-1$ where $p$ is defined by~\eqref{pdef}. In terms of
spectral theory, $C^\halpha(\M)$ is
therefore  closest to the Hilbert space when $m=m_2$, because in this case
$\eta_{cr}=0$.
The eigenfunctions of the linearization have their own characteristic growth
rates (independent of~$m$), which do not automatically coincide with the `no
growth' requirement that came from the need to keep the influence of the
singularity $u=0$ of the nonlinear equation outside a ball around the 
Barenblatt
in the Banach space. There is a trade-off between growth hypotheses and
convergence rates, and once the obvious geometric invariances (space and time
translation) of the unrescaled fast diffusion equation are modded out, finer
asymptotics (like, e.g., from the effect that the diffusion brings anisotropic
initial data closer to the isotropic distribution $\uB$) need to measured in
norms whose weight adjusts to the tail behavior of the linearization in the
corresponding direction in function space, a tail behavior different from the
one of Barenblatt itself. This is what makes Thm.~\ref{t-subquadratic}
and its corollaries look somewhat technical.
But there is good reason to believe that this displays a genuine
phenomenon, not an artefact of the method.
The good news is that, in fact, this discrepancy in tail behavior can be
captured solely in terms of an adjusted weight in the norm in which the
convergence rate is measured, but does not change the convergence rate
itself. This fact arises as a consequence of the maximum principle for the
linearized equation.

Getting precise convergence rates requires an estimate for the semigroup of the
form $O(\exp[\lambda t])$ rather than merely $O(\exp[(\lambda+\eps)t])$.
It is in this step that the tail behavior discrepancy (and its resolution by
juggling the weights) become significant. This is the contents of
Thm.~\ref{semigroupestimates} and its proof. The finer asymptotics provided by
Thm.~\ref{t-subquadratic} and its corollaries
are then proved in Section~\ref{SecWeights:p>2}  as a consequence
in much the same way as
Thm.~\ref{t-1}, after a Lyapunov--Schmidt decomposition of the function space,
with the exponential dichotomy property provided by the spectral gap.

In the following, we will need to rewrite equation \eqref{transformed} in
various forms, bringing to light the various aspects outlined above; different
variable names help to distinguish  the different versions and navigate between
them. The notations collected in Table~\ref{TableNotation} will serve as a
reference in this endeavour.

\begin{table}[htb] \footnotesize
\begin{tabular}{||lp{12cm}||}
\hline
$\rho$  & solution of the unscaled FDE \eqref{pm}
          -- arguments $\tau$ and $\y$  \\
$u$     & solution of the rescaled FDE \eqref{transformed}
          -- arguments $t$ and $\x$\\ 
        & generic name for function on uniform manifold (Sec.~\ref{SecGlobWP})\\
$\uB$   & Barenblatt solution \eqref{barenblatt}, with
          parameter $B$ determining its mass \\
$v$     & $\mbox{}= u/\uB$ solves \eqref{eqv}, \eqref{cyl};
          generic function in heat equation estimates Sec.~\ref{SecHeatEq} \\
$\vb$   & Variable of linearisation with respect to $v$ about 1 \\
$w$     & $w=v-1=u/\uB-1$; generic H\"older function in Sec.~\ref{SecUMHoeld} \\
$v_0$, $\vb_0$
        & initial data at time 0 \\
$\tilde{v}$ & tilde refer to conjugation with a power of $\cosh s$ \\
$\tilde{u}$ & exception: $u=u_0+\tilde{u}$ in Sec.~\ref{SecGlobWP}\\
$v_l,u_l$ & local functions on uniform manifold (pulled back to
         coordinate patch) $v_l=\eta_l u_l$\\
$l$     & index for coordinate patches (distinct from $\ell$!)\\
$\eta_l$ & partition of unity\\
$\chi_l$ & coordinate maps\\
$m_p$   & value of $m$ when $\uB$ has moments below order $p$:
          $m_p=\frac{n+p-2}{n+p}$ \\
$p$     & moment parameter: $n+p=2/(1-m)$ \\
$\Lop$  & linearization operator \eqref{linlop} \\
$\Lop_\eta$ & conjugated linearization operator, see \eqref{conjugated} \\
$\Lup$, $\Lopmod$, $\Lopmod_\eta$
        & various operators with same principal part as $\Lop$, $\Lop_\eta$ \\
$\M$    & cigar manifold \\
$\UM$   & generic uniform manifold (of which $\M$ is a special case) \\
$r$     & $\mbox{}= |\x|$ \\
$s$     & geodesic distance from origin:
          \smash{$B^{1/2}\sinh s= r$};
          $B\cosh^2 s = \uB^{-(1-m)}$%
 \\
$C^\halpha$ & H\"older spaces (space or space-time) --
          Sec.~\ref{SecCigarRMfd} \\
$\Ceta^\halpha$ & H\"older space with weight allowing growth;
         see \eqref{Cetadef} \\
$C_b$     & space of bounded and continuous functions\\
$\halpha$ & H\"older exponent\\
$\spread$ & see \eqref{scalingtrf} \\
$\Qop$  & spectral projection for $\Lop$ or $\Lop_\eta$ on a finite dimensional
      space \\
$\Pop$  & $\mbox{}=1-\Qop$ \\
$\Sop$  & (analytic) linear semigroup generated by $\Lop$  
           -- analogous for $\Sop_\eta$ \\
$\ell$  & angular momentum quantum number -- except for \eqref{cigarmetric} \\
$\lambda_{\ell k}, \lambda_{\ell}^{\rm cont}, \lambda$ 
        & eigenvalue, onset of essential spectrum, spectral parameter \\
$\psi_{\ell k}, v_{\ell k}$ 
        & eigenfunctions in various settings: Thms.~\ref{spectrum-from-ARMA} 
          and \ref{HolderSpectrum} respectively\\
$u_{\ell k}$ & $\mbox{}= v_{\ell k} \uB^{m-1}$\\
$Y_{\ell\mu}$ & spherical harmonics \\
$\eta,\eta_{cr}$
        & parameter for conjugation of operator
          (growth in space); $\eta_{cr}=\frac p2 -1$ \\
$b^\infty(\eta)$ & coefficient of 1st order term of $\Lop_\eta$ at $\infty$ \\
$c^\infty(\eta)$ & coefficient of 0th order term of $\Lop_\eta$ at $\infty$ \\
$T$     & finite time step; we consider time-$T$ maps \\
$\T$    & related to spectral parameter $\lambda$, see
      Fig.~\ref{space+spectrum} on page~\pageref{space+spectrum} \\
\hline
\end{tabular}
\vskip 0.6ex
\caption{\sl Overview of notations used}
\label{TableNotation}
\end{table}

\section{The nonlinear and linear equations in cigar coordinates}
\label{SecNLEqCigar}

We approach the proof of Theorem \ref{t-1} by an analysis  in self-similar
coordinates.

To simplify the discussion we work with the  relative density
$v := u/\uB$
and calculate, using the PDEs for $u$ and $\uB$, 
$$
\begin{array}{r@{}l}
v_t = \uB^{-1}u_t
&\mbox{}
     = \frac1m \uB^{-1}\Laplace(\uB^m v^m)
       + \frac{2}{1-m}\uB^{-1}\Div(\x \uB v)
\,,
\\[2ex]
\uB^{-1}\Laplace(\uB^m v^m)
&\mbox{}
     = \uB^{m-1}\Laplace(v^m)
       + 2 \uB^{-1}\nabla \uB^m \cdot \nabla v^m
       + \uB^{-1}(\Laplace \uB^m) \, v^m
\\[1ex]
&\mbox{}
     = \uB^{m-1}\Laplace(v^m)
       - 2 \frac{m}{1-m}\nabla \uB^{m-1} \cdot \nabla v^m
       - \frac{2m}{1-m} \uB^{-1}\Div(\x \uB) \, v^m
\,,
\\[2ex]
\uB^{-1}\Div(\x \uB v)
&\mbox{}
     = v  \uB^{-1}\Div(\x \uB) + \x\cdot\nabla v
\;.
\end{array}
$$
Now we use Equation \eqref{barenblatt} to get
$$
\uB^{-1} \nabla \cdot (\x \uB)
= n - \frac{2}{1-m}\uB^{1-m} |\x|^2
= n -\frac2{1-m} + \frac2{1-m}\frac{B}{B+ |\x|^2}
\,,
$$
and consequently
\begin{equation}\label{eqv}
\begin{split}
v_t = \mbox{} &
\frac1m \uB^{m-1}\Laplace v^m
+ \frac{2}{1-m}\left(
\x\cdot \nabla (v - 2 v^m) +
\Bigl(n- \frac{2\uB^{1-m}}{1-m}|\x|^2\Bigr)(v-v^m)
\right)
\;.
\end{split}
\end{equation}

This reaction / transport / nonlinear diffusion equation
will play a central role in our analysis.
It will turn out that it is a uniformly parabolic
problem (for $c^{-1} \le v \le c$) on an unbounded
manifold with a cylindrical end, known as the cigar. Its structure can
be seen more easily in polar coordinates. We may and do normalize the
parameter $B$ to $B=1$.

With $|\x|=:r$, we can write $\Laplace =
\d_r^2 + \frac{n-1}{r}\d_r + \frac{1}{r^2}\Laplace_\Sph$, where
$\Laplace_\Sph$ is the Laplace-Beltrami operator on the unit sphere.
So we want to rewrite~\eqref{eqv} as

\begin{equation} \label{polar}
\begin{split}
v_t =\mbox{} &  \textstyle
\frac1m \Bigl( 1 +  r^2\Bigr)
    \Bigl(\frac{\d^2}{\d r^2} v^m + \frac{n-1}r \frac{\d}{\d r} v^m\Bigr)
+ \frac1m \Bigl( \frac1{r^2} + 1 \Bigr) \Laplace_\Sph v^m
 \\
& \textstyle
+ \frac{2}{1-m} r \frac{\d}{\d r} (v-2v^m)
+ \left(\frac{2}{1-m}\right)^2 \Bigl(\frac{n(1-m)}{2} - 1
+ (1 + r^2)^{-1}\Bigr) (v-v^m)
\\ \mbox{}=\mbox{}&\textstyle
\frac1m \frac{\d}{\d r}
\left[ (1+ r^2) \frac{\d}{\d r} v^{m} \right]
+ \frac1m \Bigl( \frac1{r^2} + 1 \Bigr) \Laplace_\Sph v^m
\\&\textstyle
+\frac1m\left[ -\frac{2(1+m)}{1-m} r
     + \frac{n-1}r + (n-1)r
\right]\frac{\d}{\d r} v^m
 \\
& \textstyle
+ \frac{2}{1-m} r\frac{\d}{\d r}  v
+ \left(\frac{2}{1-m}\right)^2 \Bigl(\frac{n(1-m)}{2} - 1
+ (1 + r^2)^{-1} \Bigr) (v-v^m)
\;.
\end{split}
\end{equation}

We set  $r= \sinh s$, hence
$
\frac{\d}{\d r} = \frac{\d s}{\d r} \frac{\d}{\d s} =
 (\cosh s)^{-1}  \,   \frac{\d}{\d s}
$
and
$$
\begin{array}{r@{}l}
\frac1m \frac{\d}{\d r}
\left[ (1+ r^2) \frac{\d}{\d r} v^{m} \right]
= \mbox{}
&
\frac1m (\cosh s)^{-1} \,\frac{\d}{\d s}
\left( (1+\sinh^2 s)(\cosh s)^{-1} \, \frac{\d}{\d s} v^m \right)
\\[1.5ex]
\mbox{} =\mbox{} &
\frac1m \frac{\d^2}{\d s^2} v^m
+ \frac1m  \tanh s \, \frac{\d}{\d s} v^m
\;.
\end{array}
$$
This~$s$ will be  the geodesic distance from
the origin in the Riemannian metric of the cigar manifold introduced
below.
We can rewrite equation \eqref{polar}  as
\begin{equation}\label{cyl}
\begin{split}
v_t = \mbox{} &
\frac1m \left(   \frac{\d^2}{\d s^2}v^m
 +  \frac{2(n-1)}{\sinh (2s)} \frac{\d}{\d s} v^m
 +  (\tanh s)^{-2} \, \Laplace_{\Sph}v^m \right)
\\ &
+ \frac1m
 \Bigl(n-\frac{2(m+1)}{1-m}\Bigr) \tanh s
 \frac{\d}{\d s} v^m
\\  &
+  \frac{2}{1-m}(\tanh s)    \frac{\d}{\d s} v
+ \frac{4}{(1-m)^2}\Bigl(  n\frac{1-m}2 - 1 + (\cosh s)^{-2} \Bigr)
(v-v^m)
\;.
\end{split}
\end{equation}

It is convenient to introduce the operator $\Lop$ as the linearization
about~1 of the operator in \eqref{cyl}. Namely, 
\begin{equation}\label{linlop}
\begin{split}
\Lop := \mbox{}&
     \d_s^2 + \frac{2(n-1)}{\sinh2s}\d_s + (\tanh s)^{-2} \,\Laplace_\Sph
\\&
+ \Bigl( n-\frac{2m}{1-m} \Bigr) \tanh s \, \d_s
+ 2n-\frac{4}{1-m}\tanh^{2} s
\;.
\end{split}
\end{equation}
For later reference, we also calculate~$\Lop$ in cartesian coordinates
from~\eqref{eqv}:
\begin{equation}\label{linlop-cartes}
\begin{array}{r@{}l}
\Lop \vb   & \Dst\mbox{} =
\uB^{m-1}\Laplace \vb
+\frac{2(1-2m)}{1-m} \x\cdot \nabla \vb
+ \Bigl(n- \uB^{1-m}\frac{2}{1-m}|\x|^2\Bigr)2 \vb
\\[1ex]&\Dst\mbox{}
= \uB^{-1}\Div (\uB\nabla(\uB^{m-1}\vb))
\;.
\end{array}
\end{equation}
Using $\Lop$, we will also find it convenient to rewrite \eqref{eqv} 
for $w=v-1$ as
\begin{equation}\label{eqv-principal}
w_t= \Lop h(w) + 
\frac{2}{1-m} \x\cdot\nabla (w-h(w)) 
+ \frac{2}{1-m}
   \Bigl( n - \frac{2\uB^{1-m}}{1-m}|\x|^2 \Bigr) (w-h(w))
\,,
\end{equation}
where $h(w)=\frac{(1+w)^m-1}{m} = w + O(w^2)$.

Later on we will conjugate operator \eqref{linlop} by $\cosh^\eta s$. In order
to keep these calculations at one place we calculate
$$
\cosh^{-\eta}s \circ \d_s \circ\cosh^\eta s = \d_s + \eta \tanh s
$$
and
$$
\cosh^{-\eta}s \circ \d_s^2 \circ \cosh^\eta s= \d_s^2 + 2\eta \tanh s\, \d_s
+ \eta(\eta-1) \tanh^2s + \eta
\,,
$$
which gives after some calculation
\begin{equation}  \label{conjugated}
\begin{split}
   \Lop_\eta := &
 \cosh^{-\eta}s \circ \Lop \circ \cosh^\eta s
\\
  = & \left(\d_s^2 + \frac{2(n-1)}{\sinh2s}\d_s + (\tanh s)^{-2} \, \Laplace_\Sph \right)
   \\
 & \textstyle -2 \left(\frac{1}{1-m} - \frac{n}{2} - 1 -\eta \right) \tanh s \,
  \d_s \\
 & \textstyle
 + \left(\frac{1}{1-m}-\frac{n}{2} - 1 -\eta\right)^2
 - \left( \frac{1}{1-m} -\frac{n}{2} + 1\right)^2 \\
 & \textstyle
   + \Bigl( \left(\frac{1}{1-m} + 1\right)^2  -\left(\frac{1}{1-m} -1 -\eta \right)^2\Bigr)
    \Dst \frac{1}{\cosh^2 s}  
\;.
\end{split}
\end{equation}

Since the leading order terms coincide with the Laplace-Beltrami operator
\eqref{Laplace-Beltrami}, we can see $-\Lop_\eta$ as a Schr\"odinger
operator for a quantum particle moving in a potential-well on the cigar 
manifold,  perturbed by a transport term which spoils self-adjointness.  
The potential-well has a universal $(\cosh s)^{-2}$ profile of depth
$$
d(\eta) = (2(1-m)^{-1}-\eta)(2+\eta)
         =  (p + n -\eta)(2 + \eta)
\,,
$$
and is asymptotic to the constant
\begin{equation}\label{cinfty}
-c^\infty(\eta) = (2(1-m)^{-1} - n -\eta)(2+\eta)
                = (p-\eta)(2+\eta)
\end{equation}
at the $s\to \infty$ end of the cigar.
The transport term shifts mass along the cigar
with outward velocity asymptotic to
\begin{equation}\label{binfty}
-b^{\infty}(\eta) = 2 ((1-m)^{-1} -\eta - 1 - n/2)
=  p - 2\eta -2 
\end{equation}
as $s \to \infty$,
making it harder for the operator to support eigenstates
unless $\eta \ge \frac p2 -1$.

The value $\eta_{cr} = \frac{m}{1-m} - \frac{n}2 = \frac{p}{2}-1$ plays a
special role,  causing the transport term to disappear and restoring the
symmetry property
$$
\int_0^\infty \!\!\int_{\mathbb{S}^{n-1}}  \!\!
(\Lop_{\eta_{cr}}  \vb_1 )\,  \vb_2 \, \tanh^{n-1}s \, d\omega_{n-1} ds
=  \int_{0}^\infty\!\! \int_{\mathbb{S}^{n-1}}  \!\!
\vb_1 \, (\Lop_{\eta_{cr}}  \vb_2 ) \, \tanh^{n-1}s\, d\omega_{n-1} ds
$$
of the operator $\Lop_{\eta_{cr}}$ with respect to
the volume
element \eqref{cigarvol} on the cigar manifold.
Equivalently, $\Lop_0$ is symmetric with respect to
$\uB^{m}$ times the Euclidean volume element in~$\Rn$,
which is the analog of the self-adjointness in \cite{MR2126633}.

This same value $\eta_{cr} = \frac{m}{1-m} - \frac{n}2 = \frac{p}{2}-1$
minimizes $c^\infty(\eta)$, producing a quantity
$$
c^\infty(\eta_{cr})
= - \left(\frac{1}{1-m} - \frac{n}{2} + 1 \right)^2
= - \left(\frac{p}{2}+1\right)^2
$$
which gives the onset of the continuous spectrum,
the negative of what was called $\lambda_0^{\rm  cont}$ in \cite{MR2126633},
see~Thm.~\ref{spectrum-from-ARMA} below.  It combines with depth $d(\eta_{cr})$
to produce a vanishing ground state energy for the Schr\"odinger operator
$\Lop_{\eta_{cr}}$ on $L^2(\M)$; c.f.~Thm.~\ref{HolderSpectrum}(1).
We note that $\eta_{cr}=0$ iff $m= m_2=\frac{n}{n+2}$, it is positive for
$m>m_2$ and negative for $m < m_2$. This is the reason for the case distinction
between Thm.~\ref{t-subquadratic} (proved in Sec.~\ref{SecWeights:p>2}) and
Thm.~\ref{fine:p<2}.

In the remaining sections of this paper we will explore the
consequences of our calculations.  We shall see next section that the
metric defined by the leading part of our operators has a very natural
interpretation as a Riemannian metric on a well-known manifold, the
cigar manifold.

The nonlinear differential equation \eqref{cyl} is (for $v$ bounded above, and
bounded away from 0)
a uniformly parabolic equation on the cigar manifold, and
well-posedness is a standard consequence of estimates for the
linearized equations, which is basically standard parabolic regularity
theory. Therefore we expect that the asymptotic behaviour is controlled by
the spectral properties of the linearized differential equation.
But since the cigar manifold is not
compact, there is nontrivial essential spectrum for the
linear semigroups $\Sop(t) = \exp t\Lop$
and $\Sop_\eta(t) = \exp t \Lop_\eta$ defined through \eqref{linlop}
and \eqref{conjugated}. One may easily read off the
essential spectral radius: It is $e^{c^\infty(\eta)t}$ and it depends
on $\eta$.

We are mainly interested in modes related to eigenvalues outside the
essential spectrum. All eigenvalues and
eigenfunctions have been determined by \cite{MR2126633}, and we can use these
results, with due adaptation, which is done in Sec.~\ref{SecSpecLinEqn}.
In case $\eta=\eta_{cr}$,  we can already anticipate that the number of
eigenvalues will be finite and increase with the depth
$d(\eta_{cr}) =  (\frac{p}{2} + 1 + n)(\frac{p}{2}+1)$
of the potential-well.  In one-dimension the Schr\"odinger operator $\Lop_{\eta_{cr}}$
has been studied in connection with transparent scattering, and
$k(k+1)(\cosh s)^{-2}$ for $k\in\N$ are
known as the Bargmann potentials in that context; a reference for these 
is Sec.~2.6, Exercise 11 of~\cite{Lamb80}, as well as Secs.~2.5 and 3.5 there.
In our case $k=\frac{p}2$.
As mentioned in Section~\ref{SecOVNotation}, we will need the conjugated
operator to accommodate the growth of relevant eigenfunctions.
The dependence of the essential spectral radius on $\eta$ will turn out to be
useful in estimating sharp decay for the linear semigroup.

\section{The cigar as a Riemannian manifold}
\label{SecCigarRMfd}

The cigar is an analytic  Riemannian manifold which we denote by $\M$.
It can be described as $\Rn$ equipped with the metric
\begin{equation}\label{cigarmetric}
d\ell^2_\M := \Bigl(1+|\x|^2\Bigr)^{-1} \sum dx_i^2
= \left( ds^2 + \tanh^2 s\, d\ell_\Sph^2 \right)
\,,
\end{equation}
where $d\ell_{\Sph}^2$ is the length element on the unit sphere and 
$\sinh s = |\x|$ was already introduced above.
It is immediate from this formula that~$s$ is
the geodesic distance from the origin.

In two dimensions, the cigar is a soliton solution for the Ricci flow 
(see ch.2 of~\cite{MR2061425}). For the fast diffusion problem, the
cigar metric is appropriate for various reasons:  The leading part of~$\Lop$ 
\eqref{linlop} coincides with the Laplace-Beltrami operator
\begin{equation}\label{Laplace-Beltrami}
\begin{array}{r@{}l}
\Laplace_{\M} &\Dst \mbox{}
 =
\Bigl(1+|\x|^2\Bigr)^{n/2} \circ \d_i \circ
\Bigl(1+|\x|^2\Bigr)^{-n/2+1} \circ \d_i
\\[1.5ex]&\Dst \mbox{}
=
 \d_s^2 + \frac{2(n-1)}{\sinh 2s} \d_s +
  (\tanh s)^{-2}\Laplace_{\Sph} 
\,,
\end{array}
\end{equation}
which is also the leading part of the operator  $\sum_{i=1}^n X_i^2$,
where $X_i$ are the obvious orthonormal vector fields
$$
X_i := \Bigl(1+|\x|^2\Bigr)^{1/2} \e_i
$$
and $\e_i:=\d/\d x_i$ is the standard basis.
Specifically,
$$
\sum_{i=1}^n X_i^2 = \Laplace_{\M} + (n-1)\tanh s\, \d_s
\,.
$$

As a consequence, we shall see that the parabolic estimates become uniform when
distances are measured with respect to the cigar metric.

The Riemannian volume element is
\begin{equation} \label{cigarvol} \textstyle
d\mu :=  \tanh^{n-1}s\, ds d\omega_{n-1}
\;.
\end{equation}
The vector fields $X_i$ do not commute, but
their commutators of any order can be written as a bounded linear combination
of these vector fields. For instance,
$$
[X_i, X_j]=
\frac{x_i}{(1+|\x|^2)^{1/2}} X_j -
\frac{x_j}{(1+|\x|^2)^{1/2}} X_i
\;.
$$

\section{Uniform manifolds and H\"older spaces }
\label{SecUMHoeld}

We will need uniform Schauder estimates for parabolic equations on~$\M$. The
possibility of such {\em uniform\/} estimates relies on the fact that {\em
  locally\/}, $\M$ can be mapped into~$\Rn$ with bounded distortion, the bound
being global, even though a global map with bounded distortion is not possible
between $\M$ and $\Rn$. It seems expedient to elaborate on this principle in
its natural generality. So we will prove these estimates for parabolic
equations on a uniform manifold~$\UM$, of which the cigar~$\M$ is the example
we are interested in: We study
manifolds $\UM$ (not necessarily Riemannian) with a distance $d$ which turns
$\UM$ into a geodesic  space: A metric space $\UM$ is called a geodesic
space if, given two points $x,y\in\UM$  there exists a path
$\gamma:[0,1]\to\UM$ from $x$ to $y$ such that $d(\gamma(s),\gamma(t))
= |s-t| d(x,y)$. 
This requirement is in particular satisfied
for the geodesic distance of a connected closed Riemannian manifold.

\begin{definition}[Uniform manifold] \label{Def-UM}
Let $\UM$ be a manifold with a metric $d$ which turns $\UM$ into a geodesic
space.
We say $(\UM,d)$ has a uniform $C^k$ structure (resp.\ uniform
analytic structure) if there exist two constants $R>0$ and
$C>0$ and coordinate maps $\chi_x: B_R(x) \to \Rn$ for each $x\in\UM$
such that
\begin{equation}\label{bilipschitz}
C^{-1} d(y,z) \le |\chi_x(y) - \chi_x(z)| \le C d(y,z)
\quad \text{ for all } x \in \UM,\; y,z \in B_R(x)
\end{equation}
with $C^k$ (resp analytic) coordinate changes
$\chi_y \circ \chi_x^{-1}$ satisfying
\begin{equation} \label{analytic}
|\d^\beta (\chi_y \circ \chi_x^{-1})|
\le
C^{|\beta|+1} \beta! \qquad \text{ in  }
\chi_x ( B_R(x) \cap B_R(y))
\end{equation}
for any multi-index~$\beta$ of length $|\beta|\le k$ (resp.\ for any
multi-index). We call $R$ (and $C$) a radius (and constant) of
uniformity for~$\UM$.
\end{definition}

The first condition says that balls of radius $B_R$ are uniformly
bilipschitz equivalent to subsets of $\Rn$. The second
condition \eqref{analytic} implies that the coordinate maps are
uniformly $C^k$ (resp.\ analytic).

For our purposes, $C^3$ would sufficient regularity, but
the cigar manifold is even analytic; and
it is not hard to see that the exponential map provides such
coordinate maps for the cigar manifold with $R=1$ and a suitable
constant~$C$.  Observe that we do not require a Riemannian structure
on the manifold. But if the manifold carries  a Riemannian metric
then we use the associated metric, which  turns the manifold into a geodesic
space.

The following lemma implies that the volume of balls is at most exponential in
the radius.
Given an open subset $A$ of $\UM$ and $r>0$, we denote by $N(A,r)$ the maximal
number of disjoint balls of radius $r$ in $A$ (if such a maximum exists; else
$N(A,r):=\infty$).

\begin{lemma}[Packing $r$-balls into $\rho$-balls] \label{Lem-ballpacking}
Suppose that $\UM$ is a uniform manifold of dimension~$n$ with a radius and
constant of uniformity~$R$ and~$C$.
Then  the following bound  holds
$$
N(B_\rho(x_0),r) \le \left\{
\begin{array}{ll}
\left( \frac{C^2\rho}{r}\right)^n
                & \text{ if } r < \rho \le R \,,
\\
 ( \max\{C^2R/r,1\} )^n     (5C^2)^{5n\rho/R}
                & \text{ if } \max\{r,R\} <\rho \;..
\end{array} \right.
$$
\end{lemma}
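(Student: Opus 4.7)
\textbf{Proof plan for Lemma~\ref{Lem-ballpacking}.}

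The first bound is essentially the Euclidean volume comparison, transported through one coordinate chart. If $\rho\le R$, then $B_\rho(x_0)\subset B_R(x_0)$, so the map $\chi_{x_0}$ is defined on all of $B_\rho(x_0)$ and is $C$-bilipschitz. Under $\chi_{x_0}$, the ball $B_\rho(x_0)$ is contained in the Euclidean ball of radius $C\rho$ around $\chi_{x_0}(x_0)$, while any disjoint family of metric $r$-balls in $B_\rho(x_0)$ maps to disjoint open sets in $\R^n$, each containing a Euclidean ball of radius $r/C$. Comparing $n$-dimensional Lebesgue volumes yields $N\,(r/C)^n\le (C\rho)^n$, hence $N\le (C^2\rho/r)^n$.

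For $\rho>R$, the plan is to cover $B_\rho(x_0)$ by a controlled number of balls of some radius $\le R$ (e.g.\ radius $R/5$), and then apply the first case inside each. To build such a cover I exploit the geodesic-space hypothesis: every $y\in B_\rho(x_0)$ is joined to $x_0$ by a geodesic of length $<\rho$, which I subdivide into at most $\lceil 5\rho/R\rceil$ segments of length $R/5$. This lets me construct a covering tree whose root is $x_0$ and whose depth is at most $5\rho/R$: at each step I extend the current cover of $B_{kR/5}(x_0)$ to $B_{(k+1)R/5}(x_0)$ by, for each already-placed center $y$, adding the centers of a maximal $(R/5)$-separated subset of $B_R(y)$. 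The first case, applied inside $B_R(y)$ with $r=R/10$, bounds the branching factor at each node by $(10C^2)^n$ (after minor adjustment one can arrange a bound $(5C^2)^n$, or just absorb the constant into the exponent), so the total number of centers is at most $(5C^2)^{5n\rho/R}$.

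Once this cover $\{B_{R/5}(y_j)\}_{j\le M}$ is in hand, I bound the number of disjoint $r$-balls whose centers lie inside a given $B_{R/5}(y_j)$: all such balls are contained in $B_{R/5+r}(y_j)$, which, if $r<R$, is a ball of radius $<2R$ that I cover by boundedly many $R$-balls and analyze via the first case, producing a local count at most $(C^2R/r)^n$ (or $1$ when $r\ge R$, since two centers in $B_{R/5}(y_j)$ would lie within $2R/5<2r$). Summing over the $M\le (5C^2)^{5n\rho/R}$ charts gives the stated bound $(\max\{C^2R/r,1\})^n(5C^2)^{5n\rho/R}$. The main technical obstacle is the bookkeeping in the inductive/tree-based cover in the geodesic space — keeping the branching factor and depth genuinely independent of $\rho$ and of the global geometry, using only \eqref{bilipschitz} and the existence of length-minimizing geodesics; the constants $1/5$ and $5$ are essentially arbitrary and chosen so that separation, overlap and inductive step all fit together cleanly.
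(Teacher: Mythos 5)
Your plan is sound and rests on the same two pillars as the paper's argument: (i) a Euclidean volume comparison in a single bilipschitz chart for $\rho\le R$, and (ii) an iterated covering of $B_\rho$ by $O(1)$-many balls of radius $\le R$ when $\rho>R$, with the iteration length controlled by the geodesic property and the per-step multiplicity controlled by step (i). The genuine difference is in how the cover is produced. You build an explicit covering tree by subdividing geodesics from $x_0$ and placing maximal separated nets inside each $B_R(y)$; the paper instead takes a \emph{maximal packing} of $(R/5)$-balls in $B_\rho$ and exploits the standard duality ``maximal packing $\Rightarrow$ double-radius covering,'' combined with the geodesic property, to extend the covering to $B_{\rho+R/5}$ while multiplying the count by at most $(5C^2)^n$. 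The paper's version is tighter on constants: your separated-net branching naturally gives $(10C^2)^n$ per level (from disjoint $(R/10)$-balls), and the balls you then need to chart, such as $B_{R/5+r}(y_j)$ or $B_{R+R/10}(y)$, can slightly exceed radius $R$, forcing you to stitch together several charts or shrink radii; the paper avoids both issues because the $B_{5r}(y_j)=B_R(y_j)$ exactly fit a single chart and the per-level factor comes out as $(5C^2)^n$ on the nose. So your route proves a bound of the same exponential form in $\rho/R$ and is perfectly serviceable for the rest of the paper, but to recover the \emph{stated} constant $(5C^2)^{5n\rho/R}$ you would need to tune the separation parameters (e.g.\ separate at $2R/5$, cover with $2R/5$-balls) rather than absorb the discrepancy into the exponent; as written your sketch only promises a bound up to harmless constant adjustments, which you acknowledge.
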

\begin{proof}

Suppose first that $r<\rho\le R$ and consider a ball
$B_r(x_1)$ contained in  $B_\rho(x_0) \subset B_R(x_0)$. Then
$$
B_{r/C}( \chi_{x_0}(x_1)) \subset \chi_{x_0}(B_r(x_1)) \subset \chi_{x_0}(B_\rho(x_0))
\subset B_{C\rho}(\chi_{x_0}(x_0))
\;.
$$
Comparing the volumes of the images we see that there can be at most
$(C^2 \rho/r)^n$
such disjoint balls in $B_\rho(x_0)$.

Secondly,  let's suppose $r=\frac{R}5$, and $\rho$ arbitrary. The previous case
$\rho\le R$ will serve as the beginning of an induction with steps from $\rho$
to $\rho+r$. So suppose $\{B_r(y_j)\mid j=1,\ldots,N\}$ is a maximal packing of
$r$-balls in $B_\rho(x_0)$. Then the $B_{2r}(y_j)$ will cover $B_{\rho-r}(x_0)$
as a consequence of the maximality of the packing. But then the $B_{4r}(y_j)$
will cover $B_{\rho+r}(x_0)$.
This is because for any $z\in B_{\rho+r}(x_0)$, we can find a $z'\in
B_{\rho-r}(x_0)$ that has distance  $2r$ from $z$; and then
$z\in B_{4r}(y_j)$ whenever $z'\in B_{2r}(y_j)$. Having constructed this
covering, we now take a family of disjoint balls $\{B_r(z_i)\mid i\in I\}$ in
$B_{\rho+r}(x_0)$. As each $z_i$ must be in some $B_{4r}(y_j)$, the $B_r(z_i)$
lies in $B_{5r}(y_j)$. As there are at most $(5C^2)^n$ such $B_r(z_i)$ in each
given $B_{5r}(y_j)$, and there are only $N$ many $y_j$'s, we conclude
$N(B_{\rho+r}(x_0), r) \le (5C^2)^n N(B_\rho(x_0),r)$.

We have thus proved inductively that
$N(B_\rho(x_0),R/5) \le (5C^2)^{5n\rho/R}$.

Now we turn to the general case: If $r>\frac{R}{5}$, the estimate for
$r=\frac{R}{5}$ is still valid trivially. If $r<\frac{R}{5}$,
suppose we have a maximal collection of $B_r(x_i)$ in $B_\rho(x_0)$.
We also
place a maximal collection of $B_{R/5}(y_j)$ in $B_\rho(x_0)$. As before,
the maximality then ensures that the collection of $B_{3R/5}(y_j)$ covers
all of $B_\rho(x_0)$ and each $B_r(x_i)$ is contained in some
$B_{r+3R/5}(y_j)\subset B_{4R/5}(y_j)$.

By our second estimate, there are no more than $(5C^2)^{5n\rho/R}$ of the
$B_{R/5}(y_j)$ fitting in $B_{\rho}(x_0)$, thus bounding the number of $y_j$,
and by the first estimate no more than $(C^2R/r)^n$ of the $B_r(x_i)$ fitting
in each $B_{4R/5}(y_j)$.
\end{proof}

As a consequence, given $r<R/3$ there is a sequence of points $(x_j)$
with distance at least $r$ such that $\UM$ is covered
by the balls $B_{3r}(x_j)$.
There  are coordinate maps $\chi_{x_j}$ defined on these balls since $r<R/3$.
No point lies in more than $(3C^2)^n$ of the balls $B_{3r}(x_j)$.
We fix $r$ and the points $x_j$ in the sequel. There is a partition of
unity $(\eta_l^2)$ subordinate to this covering with uniform bounds on
derivatives (up to fixed order) of $\eta_l \circ \chi_{x}^{-1}$
on $\chi_{x_l}(B_R(x_l))$; we refer to this property as
`uniform smoothness'.

We may choose such a uniformly smooth partition of unity.
The structure of uniform manifolds  allows to construct useful functions,
specifically a smooth approximation to the radial coordinate $d(\cdot,x_0)$:

\begin{lemma}[Approximate radial coordinate on a uniform manifold] \label{growth}
Choose $x_0 \in \UM$. There exists a function $\rho:\UM\to\R$
 with bounded derivatives such that $ \rho  (x) -d(x,x_0)$ is bounded.
\end{lemma}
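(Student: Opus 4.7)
The natural strategy is to smooth out the $1$-Lipschitz but non-smooth function $x \mapsto d(x,x_0)$ using the uniformly smooth partition of unity $(\eta_l^2)$ subordinate to the cover $(B_{3r}(x_l))$ constructed above. Concretely, I would define
\begin{equation*}
\rho(x) := \sum_l d(x_l,x_0)\, \eta_l^2(x),
\end{equation*}
where the sum runs over the covering centers $x_l$. For each fixed $x$, only those indices $l$ with $x\in B_{3r}(x_l)$ contribute, and there are at most $(3C^2)^n$ such indices by Lemma \ref{Lem-ballpacking}. Since $\sum_l \eta_l^2 \equiv 1$, the function $\rho$ is a convex combination of the numbers $d(x_l,x_0)$ taken over indices with $d(x,x_l)\le 3r$, so the triangle inequality gives $|\rho(x) - d(x,x_0)|\le 3r$ uniformly.

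For the derivative bound, the point is that although the individual coefficients $d(x_l,x_0)$ are unbounded, one can subtract a constant using $\sum_l \eta_l^2 \equiv 1$. Working in a coordinate chart $\chi_{x_k}$ around a point $x\in B_{3r}(x_k)$, I would write
\begin{equation*}
\partial^\beta(\rho\circ \chi_{x_k}^{-1})
= \sum_l \bigl(d(x_l,x_0) - d(x_k,x_0)\bigr)\, \partial^\beta(\eta_l^2\circ \chi_{x_k}^{-1}),
\end{equation*}
valid for any multi-index $\beta$ with $|\beta|\ge 1$ because the pulled-back partition-of-unity identity $\sum_l \eta_l^2\circ \chi_{x_k}^{-1}\equiv 1$ kills all derivatives of the constant. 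Each surviving term has $d(x_l,x_k)\le 6r$, hence $|d(x_l,x_0)-d(x_k,x_0)|\le 6r$, and uniform smoothness of the partition of unity combined with the uniformly bounded multiplicity $(3C^2)^n$ of the cover shows $|\partial^\beta(\rho\circ\chi_{x_k}^{-1})|$ is bounded independent of $x_k$ and $\beta$ (up to whatever fixed order was built into the partition of unity).

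The only real obstacle is the one just addressed: the naive bound on $\partial \rho$ is infinite because $d(x_l,x_0)$ grows without bound as $l$ ranges over $\UM$. The subtraction trick using $\sum \eta_l^2\equiv 1$ is what makes the argument work; otherwise everything is bookkeeping on the fixed-multiplicity cover guaranteed by the uniform-manifold hypothesis and Lemma \ref{Lem-ballpacking}. Since derivatives of $\rho$ computed in the charts $\chi_{x_k}$ are precisely the components of the derivatives of $\rho$ with respect to the uniformly bilipschitz charts, this gives bounded derivatives in the sense required.
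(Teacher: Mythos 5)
Your proof is correct, and it takes a genuinely cleaner route than the paper's. The paper's proof forms, chart by chart, the merely Lipschitz function $(\eta_l\circ\chi_{x_l}^{-1})\cdot\bigl(d(\chi_{x_l}^{-1}(\cdot),x_0)-d(x_l,x_0)\bigr)$ and then \emph{regularizes} each piece before assembling $\rho$ via the partition of unity. You avoid the regularization step entirely by evaluating $d(\cdot,x_0)$ only at the discrete centers $x_l$, so that $\rho = \sum_l d(x_l,x_0)\,\eta_l^2$ is a locally finite sum of smooth bump functions with \emph{constant} coefficients and is therefore automatically as smooth as the partition of unity. Both arguments rely on the same two ingredients — the uniformly bounded multiplicity of the cover from Lemma~\ref{Lem-ballpacking}, and a subtraction of the large ``local constant'' $d(x_k,x_0)$ using $\sum\eta_l^2\equiv 1$ so that only bounded differences $|d(x_l,x_0)-d(x_k,x_0)|\le 6r$ survive in derivative estimates — but yours requires no smoothing of $d$. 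One minor point worth making explicit: the uniform bound on $\partial^\beta(\eta_l^2\circ\chi_{x_k}^{-1})$ for $l\ne k$ uses the chain rule together with the uniform bounds \eqref{analytic} on the transition maps $\chi_{x_l}\circ\chi_{x_k}^{-1}$, since the partition of unity is declared uniformly smooth only in its ``own'' charts $\chi_{x_l}$; this is routine but should be stated. With that said, the argument is complete and in some ways preferable to the paper's, since it makes the smoothness of $\rho$ structural rather than obtained by mollification.
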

\begin{proof}
We define the functions $\rho_l$ on the $l^{\rm th}$ coordinate chart
$\chi_{x_l}(B_R(x_l))$  as regularizations of
$(\eta_l \circ\chi_{x_l}^{-1}) \cdot
(d(\chi_{x_l}^{-1}(\cdot),x_0)-d(x_l,x_0))$,
and then we can take
${ \rho = \sum \eta_l \cdot (\rho_l \circ \chi_{x_l} +  d(x_l,x_0) ) }$.
\end{proof}

Given a uniformly smooth partition of unity $\eta_l^2$ (as explained before
the lemma) on the uniform manifold,
we define H\"older norms
$$
\| f \|_{C^{\halpha}(\UM)} := \sup_l
 \| (\eta_l f) \circ \chi_{x_l}^{-1} \Vert_{C^\halpha
   (\chi_{x_l}(B_R(x_l)))}
\,,
$$
where on subsets $X \subset \Rn$,
\begin{equation}\label{Hoelder norm}
\| f \|_{C^\halpha(X)} := \max\{ [f]_\halpha, \|f\|_{L^\infty} \}
\end{equation}
with
$$
[f]_\halpha:=
\sup_{0<|x'-x|\le 1}
\frac{ |f(x')-f(x)|}%
     {|x'-x|^\halpha}  
\;.
$$
This definition depends on the choice of the
partition of unity, but different choices lead to equivalent norms.
The presence of the $L^\infty$-norm in~\eqref{Hoelder norm} permits to drop the
constraint $|x'-x|\le1$ altogether in the definition of~$[f]_{\halpha}$, or
else to replace it with a different bound $|x'-x|\le R$, again providing
equivalent norms.
Likewise, it is easy to see that an equivalent norm can be defined without
reference to a partition of unity by
\begin{equation}\label{Hoelder-norm-global}
\|f\|^\circ_{C^\halpha(\UM)} := \max\{ \|f\|_{L^\infty(\UM)} , [f]^\circ_\halpha \}
\end{equation}
with
$$
[f]^\circ_\halpha := \sup_{0<d(x',x)}
\frac{ |f(x')-f(x)|}%
     {d(x',x)^\halpha}  
\;.
$$
We will use the variant \eqref{Hoelder norm} for the general proof
of the Schauder estimates, but the equivalent variant
\eqref{Hoelder-norm-global}, when dealing with $\M$ specifically. In this case,
$d$ obviously refers to the cigar metric on~$\M$, not the euclidean metric
on~$\Rn$.

We can similarly define the spaces $C^{k,\halpha}$ by the maximum of the
$C^\halpha$ norms \eqref{Hoelder norm} of all the derivatives up to order~$k$.

H\"older spaces $C^\halpha([0,\infty)\times \UM)$  on
space-time cylinders can be understood  using the parabolic metric
$$
d_P \bigl( (t_1,x_1)\,;\;(t_2,x_2)\bigr) =
\max \bigl\{ |t_2-t_1|^{\frac12} ,
d(x_1, x_2) \bigr\}
\;.
$$
This leads to
$$
\| f \|_{C^{\halpha}([0,T]\times\UM)} := \sup_l
 \| (\eta_l f) \circ \chi_{x_l}^{-1} 
             \|_{C^\halpha([0,T]\times\chi_{x_l}(B_R(x_l)))}
\,,
$$
where on subsets $X \subset \Rn$\,
\begin{equation}\label{xtHoelder norm}
\| f \|_{C^\halpha([0,T]\times X)} :=
\max\{ [f]_{x;\halpha}, [f]_{t;\halpha/2},  \|f\|_{L^\infty} \}
\end{equation}
with
$$
\begin{array}{l}
[f]_{x;\halpha} := \sup_t
\sup_{0<|x'-x|\le 1}
\frac{ |f(t,x')-f(t,x)|}%
     {|x'-x|^\halpha}
\;,
\\[1ex]
[f]_{t;\halpha/2} := \sup_x
\sup_{0<|t'-t|\le 1}
\frac{ |f(t',x)-f(t,x)|}%
     {|t'-t|^{\halpha/2}}
\;.
\end{array}
$$
Similar comments about equivalent norms in the style of
\eqref{Hoelder-norm-global} apply in an obvious manner.
In the literature, these spaces are sometimes  denoted as
$C^{\halpha/2,\halpha}$, and slight differences in the definitions lead to
different, but trivially equivalent, norms.

We will henceforth find the abbreviations
\begin{equation} \label{defMT,UMT}
\UMT := [0,T]\times\UM  \;,\quad  \MT := [0,T]\times\M
\;,\quad \RnT := [0,T]\times\Rn
\end{equation}
useful.

Using multi-index
notation $\beta:= (\beta_0,\beta_1,\ldots,\beta_n)$ with the parabolic weight
$|\beta|:= 2\beta_0 + \beta_1+\ldots+\beta_n$ and
$\d^\beta:=
\d_t^{\beta_0} X_1^{\beta_1} \cdots X_n^{\beta_n}$,
we let, for functions on~$\Rn$,
$$
\|f\|_{C^{k,\halpha}} := \max \left\{
\sup_{|\beta|\le k} \|\d^\beta f\|_{C^\halpha} ,
\sup_{|\beta|\le k-1} \sup_{x, |t_1-t_2|\le 1}
   [\d^\beta f]_{t;(1+\halpha)/2}
\right\}
$$
and can obtain $C^{k,\halpha}$ norms on~$\UMT$ or on~$[0,\infty`[\times\UM$ by
partitions of unity as before.

We note that not all authors include the term
$[\d^\beta f]_{t;(1+\halpha)/2}$
in their definition of the parabolic H\"older
norm (e.g., Krylov \cite{MR1406091} and Friedman \cite{MR0181836} don't, but
Ladyzhenskaya et al.~\cite{MR0241822} do). For $k=2$, this term is
indeed controlled by the other terms, as can be seen in Krylov's book
from his Ex.~8.8.6 in connection with the proof of Thm.~8.8.1.
The argument will generalize for even~$k$. The non-equivalence of the two
styles of parabolic H\"older norms for odd~$k$ will not be an issue for our
purposes.
We find it convenient to repeat the simple norm equivalence argument here,
for easy reference.
It suffices to do the argument, which is
local, in flat space because of the local equivalence of the metrics.
For the rest of the section, we simplify notation by giving the arguments for
one space dimension (writing a scalar $x$), with the generalization to higher
dimensions being obvious. We estimate
$$
\begin{array}{l}\Dst
|w_x(t,x)-w_x(s,x)| \le \mbox{}
\\[1.5ex]\Dst \kern 2em \le
\left| w_x(t,x) - \frac{w(t,x+h)-w(t,x)}{h} -
       w_x(s,x) + \frac{w(s,x+h)-w(s,x)}{h} \right|
\\[1.5ex]\Dst \kern 3em
+
\left| \frac{w(t,x+h)-w(t,x)}{h} - \frac{w(s,x+h)-w(s,x)}{h} \right|
\\[1.5ex]\Dst \kern 2em
\le
h \int_0^1\int_0^1 |w_{xx}(t,y+lrh)-w_{xx}(s,y+lrh)|\,l\,dl\,dr
\\[1.5ex]\Dst \kern 3em
+
|t-s| \, \left| \frac{w_t(\theta,x+h) - w_t(\theta,x)}{h} \right|
\,,
\end{array}
$$
where in the second term the mean value theorem was applied to the function
$g(t):= \frac{w(t,x+h)-w(t,x)}{h}$.
Using the H\"older estimates for $w_{xx}$ and $w_t$, we conclude
$$
|w_x(t,x)-w_x(s,x)| \le
h|t-s|^{\halpha/2}\, [w_{xx}]_{t; \halpha/2} +
|t-s| h^{\halpha-1}\, [w_t]_{x;\halpha}
\;.
$$
We now obtain
$
|w_x(t,x)-w_x(s,x)| \le  |t-s|^{(1+\halpha)/2}
([w_{xx}]_{t; \halpha/2} +  [w_t]_{x;\halpha})
$
by letting $h:= |t-s|^{1/2}$.

We will heavily rely on spaces of H\"older continuous functions. They
have nice algebraic properties: products and composition have good
properties in these spaces and there are optimal estimates for linear
parabolic equations in these function spaces. We begin with the
algebraic side, for use in the next section.

\begin{lemma}[Estimates for some nonlinearities in H\"older spaces]
  \label{calculus}
  For $D\subset\R^2$ open and convex, $h\in C^{k+1}(D)$ and a pair of functions
  $(f,g)$ with range in a compact
  subset of~$D$, the following estimates hold on $\M$ and on $\MT$:
  \begin{equation}\label{c1}
  \| h\circ (f,g) \|_{C^{k,\halpha}} \le
  C\left( \| f \|_{C^{k,\halpha}} , \| g \|_{C^{k,\halpha}}
  \right)
  \end{equation}
  where bounds on $h$ and its derivatives on the convex hull of the range of
  $f$ and $g$
  enter into the dependence of~$C$ on the norms $\|f\|$ and $\|g\|$;
  \begin{equation}\label{c2}
  \| fg \|_{C^{k,\halpha}} \le c
   \| f \|_{L^\infty} \| g \|_{C^{k,\halpha}} +
  c\| g\|_{L^\infty} \| f \|_{C^{k,\halpha}}
  \;;
  \end{equation}
  \begin{equation}\label{c3}
  \| h\circ (f,g) g^2 \|_{C^{k,\halpha}} \le
  C( \| f\|_{C^{k,\halpha}}, \| g \|_{C^{k,\halpha}})
  \| g \|_{L^\infty} \| g \|_{C^{k,\halpha}}
  \;.
  \end{equation}
  If $h$ is analytic, then the map
  \begin{equation} \label{c4}
     C^{k,\halpha}\times C^{k,\halpha} \ni (f, g) \longmapsto h(f,g) g^2
     \in C^{k,\halpha}
  \end{equation}
  is an  analytic map of Banach spaces.
\end{lemma}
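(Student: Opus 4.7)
The plan is to reduce everything to well-known estimates for H\"older spaces on open subsets of $\Rn$, and then assemble the results using the uniform structure of $\M$ (respectively $\MT$). By Definition \ref{Def-UM}, the coordinate charts $\chi_{x_l}$ are uniformly $C^\infty$ with uniformly bounded derivatives (of any prescribed order), the partition of unity $\eta_l^2$ is uniformly smooth, and a point lies in at most $(3C^2)^n$ of the charts. Consequently $\|\cdot\|_{C^{k,\halpha}(\M)}$ (respectively its parabolic analogue on $\MT$) is comparable to the supremum over $l$ of the flat-space $C^{k,\halpha}$ norms of the localized pull-backs; hence it suffices to prove the four statements for functions compactly supported in a fixed ball of $\Rn$ (respectively of $\RnT$), which is what I treat next.

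The product estimate \eqref{c2} is the classical tame estimate: expanding $\d^\beta(fg)$ by Leibniz and interpolating the intermediate H\"older norms via $\|f\|_{C^{j,\halpha}}\le C\|f\|_{L^\infty}^{1-j/k}\|f\|_{C^{k,\halpha}}^{j/k}$ (and its parabolic counterpart, which holds for the norms defined in \eqref{xtHoelder norm}), one obtains \eqref{c2} up to a dimension- and $k$-dependent constant. The composition estimate \eqref{c1} is then proved by induction on $k$. For $k=0$ it follows from the mean value theorem applied to $h$ on the convex hull of the range of $(f,g)$, which is compact and lies in $D$. For the inductive step, apply a chain-rule (Fa\`a di Bruno) expansion to $\d^\beta (h\circ(f,g))$: each term is a product of $\d^{\gamma_i} f$'s and $\d^{\delta_j} g$'s times a partial derivative of $h$ evaluated at $(f,g)$, whose $C^\halpha$ norm is controlled by \eqref{c2} together with the inductive hypothesis applied to $\d_1^a \d_2^b h$ (for $a+b\le k$, this only requires $h\in C^{k+1}(D)$). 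Collecting terms yields \eqref{c1}.

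Estimate \eqref{c3} is then immediate: by \eqref{c2},
\[
\|h\circ(f,g)\,g^2\|_{C^{k,\halpha}}\le c\,\|h\circ(f,g)\|_{L^\infty}\|g^2\|_{C^{k,\halpha}}+c\,\|g^2\|_{L^\infty}\|h\circ(f,g)\|_{C^{k,\halpha}},
\]
and a second application of \eqref{c2} gives $\|g^2\|_{C^{k,\halpha}}\le 2c\|g\|_{L^\infty}\|g\|_{C^{k,\halpha}}$, while \eqref{c1} controls $\|h\circ(f,g)\|_{C^{k,\halpha}}$ in terms of $\|f\|_{C^{k,\halpha}}$ and $\|g\|_{C^{k,\halpha}}$; combining these yields a bound of the asserted form.

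The main obstacle is the analyticity assertion \eqref{c4}, where one must upgrade the qualitative bounds above to a convergent power series expansion in the Banach space $C^{k,\halpha}\times C^{k,\halpha}$. Around a fixed pair $(f_0,g_0)$ in this space, expand $h$ in a convergent multivariable Taylor series $h(x,y)=\sum_{i,j\ge 0} a_{ij}(x-f_0(\cdot))^i(y-g_0(\cdot))^j$ valid on a neighbourhood of the (compact) range of $(f_0,g_0)$, with coefficients satisfying $|a_{ij}|\le M r^{-(i+j)}$ for some $M,r>0$ by analyticity of $h$. Formally substituting, one obtains $h(f,g)g^2 = \sum_{i,j} a_{ij}(f-f_0)^i(g-g_0)^j g^2$. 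Iterating \eqref{c2} gives $\|(f-f_0)^i(g-g_0)^j g^2\|_{C^{k,\halpha}}\le K^{i+j}\|f-f_0\|_{C^{k,\halpha}}^i\|g-g_0\|_{C^{k,\halpha}}^j\,\|g\|_{L^\infty}\|g\|_{C^{k,\halpha}}$ for some constant $K$ depending only on $c$ and $k$ (the crucial point is the linear dependence of \eqref{c2} in each factor, which produces only a geometric blow-up in the degree). Provided $\|f-f_0\|_{C^{k,\halpha}}$ and $\|g-g_0\|_{C^{k,\halpha}}$ are smaller than $r/(2K)$, the series converges absolutely in $C^{k,\halpha}$ and its partial sums agree pointwise with $h(f,g)g^2$; this exhibits the map as analytic at $(f_0,g_0)$. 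The delicate point is to verify that the constants $K$ coming from repeated application of \eqref{c2} are indeed independent of $i,j$, which follows from the observation that the Leibniz expansion of $\d^\beta$ applied to an $(i+j)$-fold product, after interpolation, produces a total contribution that grows only geometrically in $i+j$ --- this is the standard submultiplicativity of the $C^{k,\halpha}$ norm modulo $L^\infty$ interpolation.
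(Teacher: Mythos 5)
Your treatment of \eqref{c1}--\eqref{c3} follows essentially the same route as the paper: \eqref{c2} via Leibniz plus H\"older-norm interpolation (the paper cites Krylov for exactly this and also mentions a Littlewood--Paley alternative via Runst--Sickel, omitting details either way), \eqref{c1} by induction using Fa\`a di Bruno, and \eqref{c3} as an immediate consequence. The remarks about localizing through the uniform chart structure of $\M$ are consistent with what the paper does implicitly. (A minor slip: the interpolation exponent should be $\theta=(j+\halpha)/(k+\halpha)$ rather than $j/k$, but this does not affect the argument.)

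For the analyticity assertion \eqref{c4}, however, you take a genuinely different route from the paper, and that route has a gap. The paper complexifies the H\"older spaces, observes that $(f,g)\mapsto h(f,g)g^2$ is \emph{weakly holomorphic} (locally bounded by \eqref{c3}, and complex Gateaux differentiable along affine lines), and then invokes the abstract theorem that weakly holomorphic maps between complex Banach spaces are holomorphic, whence the real restriction is real analytic. You instead try to produce a convergent power series directly. The difficulty is in the displayed expansion $h(x,y)=\sum_{i,j}a_{ij}(x-f_0(\cdot))^i(y-g_0(\cdot))^j$: as written, the center of the Taylor expansion is the \emph{varying} point $(f_0(\xi),g_0(\xi))$, so the coefficients are themselves functions, $a_{ij}(\xi)=\frac{1}{i!\,j!}\partial_1^i\partial_2^j h(f_0(\xi),g_0(\xi))$, and the bound $|a_{ij}|\le Mr^{-(i+j)}$ you quote is only an $L^\infty$ bound. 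What you actually need to close the argument is a bound on $\|a_{ij}\|_{C^{k,\halpha}}$, which is a statement about a composition and requires \eqref{c1} applied to $\partial_1^i\partial_2^j h$; obtaining a uniform-in-$(i,j)$ estimate of the form $\|a_{ij}\|_{C^{k,\halpha}}\le C\,(i+j)^{O(k)}r^{-(i+j)}$ is plausible for analytic $h$, but it is precisely the technical content that the proof cannot omit. The alternative reading --- expanding around a \emph{fixed} center so the $a_{ij}$ are scalars --- fails for a different reason: a single polydisc of convergence of $h$ need not contain the whole compact range of $(f_0,g_0)$, so the series does not represent $h$ globally. Either way, your convergence claim ``its partial sums agree pointwise with $h(f,g)g^2$'' is not established. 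The paper's abstract weak-holomorphy argument sidesteps both issues, which is why it is the cleaner route; if you want the power-series approach to work, you must either estimate the $C^{k,\halpha}$ norms of the varying Taylor coefficients, or else verify analyticity by directly bounding the higher Fr\'echet derivatives $D^n\big(h(f,g)g^2\big)$ and showing the standard $M\,n!\,r^{-n}$ estimates.
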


\begin{remark}
The lemma  will be applied in a slightly more general situation with $h$
depending also on $x$ and $t$. This however follows from the lemma above
because the statement remains correct if we consider vector valued functions
$f$ and $g$, and we may replace $f$ by the vector $(t,x,f)$ for bounded
functions $x$ and $t$. Typically we will consider local coordinates with
uniform estimates for all quantities which occur.
\end{remark}

\begin{remark}[Notions of Analyticity] 
  There are different equivalent notions of analyticity
  of maps defined on open sets in real or complex Banach spaces. We
  refer to section 15.1 in chapter 4 of \cite{MR787404} for a detailed
  exposition of the facts stated below.  Let $X$ and $Y$ be complex
  Banach spaces and $U \subset X$ an open subset. A map $f: U \to Y$
  is called weakly holomorphic if it is locally bounded and if for every
  one dimensional affine subspace $L \subset X$ and every $l \in Y^*$
  the map $ L \cap U \ni x \longmapsto l(f(x)) \in \C$ is complex
  differentiable. It is an immediate consequence (see exercise 2 in
  section 15 of \cite{MR787404}) of the Cauchy integral formula that a
  weakly holomorphic function is continuously differentiable and
  complex differentiable. Locally bounded complex differentiable
  functions are analytic in the sense that the Taylor expansion
  converges in a ball. Real analyticity can be defined in terms of
  convergent Taylor expansions. Real analytic functions can be
  extended to complex analytic functions, and conversely, any
  restriction to a real subspace (if $X$ is the complexification of a
  real Banach space) of the real part of a holomorphic function
  (assuming that $Y$ is the complexification of a real Banach space)
  is analytic.  Moreover both the real analytic and complex inverse
  function theorem holds. In the sequel we do not distinguish between
  different definitions of analyticity and use whatever is convenient.
  The results of this paper do not depend heavily on the notion of
  analyticity. Its main purpose is a nontechnical approach to
  differentiability and higher differentiability in the previous
  lemma.
\end{remark}

\begin{proof}
The first inequality \eqref{c1} requires a standard
calculation.
We may restrict ourselves to $h\circ f$ and verify
$$
\| h\circ f \|_{C^{k,\halpha}} \le C (\| f \|_{C^{k,\halpha}})
$$
by allowing vector valued
functions $f$. It is a routine induction over $|\beta|$
that for any multiindex $\beta$
with $|\beta |\le k$ all terms of $\d^\beta h(f)$ can be estimated
in the desired way.

We  get the $k=0$ case of~\eqref{c2} (which is the most important
for us) from
$$
f(x)g(x)-f(x')g(x') = f(x) (g(x)-g(x')) + (f(x)-f(x')) g(x') \;.
$$
The case $k\ge1$ of \eqref{c2} is more subtle
and is of an interpolation type, since naturally occurring intermediate
norms as, e.g., $\|f\|_{C^1}\|g\|_{C^{0,\halpha}}$ must implicitly be controlled
by the extreme norms on the right hand side. It suffices to prove \eqref{c2}
for fixed compact support, since the general case can be reduced to this
special one by means of a locally finite partition of unity.
In that case however, due to the local equivalence of metrics, one can prove
the estimate in flat space. A tedious, but elementary, proof can be given
inductively. For instance, to estimate a term like
$\|f'\|_{L^\infty}[g]_{\halpha}$, one can use interpolation inequalities as
Krylov's \cite{MR1406091} Thm.~3.2.1 in the elliptic case, or Thm.~8.8.1 in the
parabolic case;  the scaling weight~$\eps$ contained therein will be chosen
in terms
of the norms, such as to remove one higher norm from products like
$\eps^{1+\halpha}\|f\|_{C^{1,\halpha}}\|g\|_{C^{1,\halpha}}$.

A less tedious,  but not as elementary, proof in the
case of $\Rn$ can be found in the book of  Runst and Sickel \cite{MR1419319},
Sec.~4.6.4, and the argument there generalizes to our setting.
(Note that their setting in Besov spaces includes H\"older spaces
$C^{k,\halpha} = B^{k+\halpha}_{\infty,\infty}$.)
The Littlewood-Paley decomposition (dyadic partition of unity in Fourier space)
that enters implicitly already in the definition of Besov spaces (pg.~8
of~\cite{MR1419319})  needs to be made anisotropic to reflect
the parabolic scaling.
We omit tedious details, which would only distract from the ideas of the
present paper.

Inequality \eqref{c3} is a consequence of the previous bounds:
$$
\| h\circ(f,g) g^2 \|_{C^{k,\halpha}}
\le
c \| h\circ(f,g) \|_{L^\infty} \| g \|_{L^\infty} \| g \|_{C^{k,\halpha}}  +
c \|  h\circ(f,g) \|_{C^{k,\halpha}} \| g \|_{L^\infty}^2
\;.
$$

Now suppose that $h$ is analytic. It can be extended to a holomorphic
complex  map.  We complexify the H\"older spaces by considering functions with
complex values.
Left composition with $h$ is clearly a weakly holomorphic map, hence
holomorphic. The restriction to real valued functions is therefore real
analytic.
\end{proof}

\begin{remark}
The reason why we insist on~\eqref{c2} rather than the simpler algebra property
$\|fg\| \le c\|f\|\,\|g\|$ in $C^{k,\halpha}$ is the following: While the
H\"older spaces are needed for a clean PDE theory,  we want to prove a
theorem whose natural hypothesis is about the $L^\infty$ norm, rather than
having to introduce an unnatural smallness condition in a H\"older norm (see
the proof of Thm.~\ref{t-3}). In particular, when estimating the nonlinearity,
we need to use a~priori smallness in the $L^\infty$ norm to avoid the
singularity at $u=0$, but may need to work in a
subset of $C^\halpha$ that does not impose smallness of the $C^\halpha$ norm.
\end{remark}

We will also use a variant of Lemma~\ref{calculus} that deals with certain
weight functions:
\begin{lemma}[Weighted product estimate]\label{weighted-algebra}
  Let $f\in C^1(`]-a,a`[\to\R)$ with $f(0)=0$ and
  let $0<h\in C^1(\Rn\to\R)$ be a
  positive weight function with $|\nabla\ln h|$ bounded. Then
  $$
  \|(f\circ w)wh\|_{C^\halpha(\Rn)} \le
     C 
       \|w\|_{L^\infty(\Rn)} \|wh\|_{C^\halpha(\Rn)}
  $$
  where the constant $C$ depends on the $C^1$ norm of~$f$ and $|\nabla\ln h|$.
  The same estimate holds with space-time norms over $\RnT$, provided
  $|\d_t\ln h|$ is also bounded.
\end{lemma}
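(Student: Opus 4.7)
The plan is to apply the discrete product rule
$$
(f\circ w)(x)\,wh(x)-(f\circ w)(x')\,wh(x') = \bigl[f(w(x))-f(w(x'))\bigr]\,wh(x)+f(w(x'))\bigl[wh(x)-wh(x')\bigr]
$$
and handle each piece separately. The $L^\infty$ part is trivial: since $f(0)=0$, the mean value theorem gives $|f(w)|\le \|f'\|_{L^\infty}\,\|w\|_{L^\infty}$, hence $\|(f\circ w)wh\|_{L^\infty}\le \|f'\|_{L^\infty}\,\|w\|_{L^\infty}\,\|wh\|_{L^\infty}$. For the Hölder seminorm the second summand is also easy: $|f(w(x'))|\le \|f'\|_{L^\infty}\,\|w\|_{L^\infty}$ paired with $|wh(x)-wh(x')|\le [wh]_\halpha |x-x'|^\halpha$ already gives the target form.

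The only nontrivial contribution is $[f(w(x))-f(w(x'))]\,wh(x)$, which I bound by $\|f'\|_{L^\infty}\,|w(x)-w(x')|\,|wh(x)|$ and then convert the increment of $w$ into one of $wh$ via $w=(wh)/h$:
$$
w(x)-w(x') = \frac{wh(x)-wh(x')}{h(x)} + \frac{wh(x')}{h(x')}\cdot\frac{h(x')-h(x)}{h(x)}.
$$
Multiplying by $|wh(x)|=|w(x)|h(x)$ cancels the unwanted factor of $h(x)$ in the first term and produces $|w(x)||w(x')|\cdot|h(x)-h(x')|$ in the second. The key step is the elementary observation that $|\nabla\ln h|$ bounded yields $|\ln h(x)-\ln h(x')|\le \|\nabla\ln h\|_{L^\infty}|x-x'|$, so for $|x-x'|\le 1$ the estimate $|1-e^t|\le C|t|$ gives $|h(x)-h(x')|/h(x)\le C|x-x'|\le C|x-x'|^\halpha$. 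Thus
$$
|wh(x)|\cdot|w(x)-w(x')|\le \|w\|_{L^\infty}\,[wh]_\halpha\,|x-x'|^\halpha + C\,\|w\|_{L^\infty}\,\|wh\|_{L^\infty}\,|x-x'|^\halpha,
$$
which combines to the desired $C\,\|w\|_{L^\infty}\,\|wh\|_{C^\halpha}\,|x-x'|^\halpha$.

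The main conceptual obstacle, and the reason the statement is nontrivial at all, is this conversion between increments of $w$ and increments of $wh$: one factor of $w$ can be kept in $L^\infty$, but the companion Hölder information has to be read off $wh$ rather than $w$ itself, and the bounded logarithmic gradient of $h$ is exactly the tool that lets one move across the weight on unit scales. For the space-time version over $\RnT$ one runs the identical decomposition but also estimates purely temporal increments $|t-t'|\le 1$ by replacing $\|\nabla\ln h\|_{L^\infty}$ with $\|\d_t\ln h\|_{L^\infty}$ in the elementary exponential bound, which yields $|h(t,x)-h(t',x)|/h(t,x)\le C|t-t'|\le C|t-t'|^{\halpha/2}$ — exactly the parabolic scaling prescribed by \eqref{xtHoelder norm}. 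Combining spatial and temporal estimates and passing to the partition-of-unity definition of $C^\halpha(\RnT)$ (with a global partition used only to confirm independence of the localization) finishes the proof; no genuinely new idea is needed beyond the algebraic identity and the logarithmic gradient trick already used in the spatial case.
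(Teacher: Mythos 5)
Your proof is correct; the conclusion matches and all the key steps hold. The one difference from the paper is the algebraic starting point. You perform a direct Leibniz split
\[
\Delta\bigl[(f\circ w)\,wh\bigr]=\bigl[\Delta(f\circ w)\bigr]\,wh(x)+(f\circ w)(x')\,\Delta(wh),
\]
and then convert $\Delta w$ into $\Delta(wh)$ by the telescoping identity $\Delta w=\Delta(wh)/h(x)+w(x')\Delta h/h(x)$, multiplying by $|wh(x)|$ so the $1/h(x)$ cancels against $|w(x)|\le\|w\|_{L^\infty}$. The paper instead multiplies the increment through by $w(x_1)-w(x_2)$ to obtain a symmetric two-term identity involving difference quotients of $r\mapsto f(r)r$ and of $f$, divides by $w_1-w_2$, and patches up the case $w_1=w_2$ by continuity. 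Both proofs exploit exactly the same two facts -- $f(0)=0$ so that the derivative of $f(r)r$ (or just $|f(r)|$) is $O(\|w\|_{L^\infty})$ on the range of $w$, and the bounded logarithmic gradient of $h$ giving $|\Delta h|/h\le C|x-x'|\le C|x-x'|^\halpha$ on unit scales -- but yours avoids the case distinction $w_1\ne w_2$ at the price of a slightly less symmetric decomposition. One incidental remark: the closing sentence about a partition of unity is not needed for the lemma itself, which is stated on $\Rn$ and $\RnT$; the partition-of-unity machinery only enters for the manifold corollary, where the paper simply replaces $|x-x'|$ by the cigar distance and uses $d(x,x')\ge|s-s'|$.
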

The crucial issue here is that there is no second term in which the H\"older
norm would apply to  the unweighted function.
Specifically, we are after the following
\begin{cor}\label{weighted-algebra-cigar}
  Let $h(x):= (\cosh s)^{-\eta}$ on the cigar manifold $\M$. Under the same
  hypotheses as in Lemma~\ref{weighted-algebra}, we have
  $$
    \|(f\circ w)wh\|_{C^\halpha(\MT)} \le
       C
         \|w\|_{L^\infty(\MT)} \|wh\|_{C^\halpha(\MT)} \;.
  $$
\end{cor}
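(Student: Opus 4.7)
\textbf{Proof proposal for Corollary \ref{weighted-algebra-cigar}.}

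The plan is to deduce the corollary from Lemma \ref{weighted-algebra} by exploiting that $\M$ is a uniform analytic manifold in the sense of Definition \ref{Def-UM}, so all estimates transfer locally with uniform constants.

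First I verify that the specific weight $h(x) = (\cosh s)^{-\eta}$ satisfies the hypothesis $|\nabla \ln h|$ bounded required by Lemma \ref{weighted-algebra}, interpreted intrinsically on the cigar. Since $h$ is time-independent, $\partial_t \ln h \equiv 0$. Because $s$ is the geodesic distance from the origin, $|\nabla_\M s|_\M = 1$, and
$$
\nabla_\M \ln h = -\eta\,\tanh s \cdot \nabla_\M s, \qquad |\nabla_\M \ln h|_\M \le |\eta|
$$
uniformly on $\M$. This is the only property of $h$ that the flat-space lemma uses.

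Next I localize. Following Section \ref{SecUMHoeld}, cover $\M$ by balls $B_R(x_l)$ with uniformly bilipschitz coordinate charts $\chi_{x_l}$ and a uniformly smooth partition of unity $\eta_l^2$, so that by definition
$$
\|g\|_{C^\halpha(\MT)} \asymp \sup_l \big\|(\eta_l g)\circ \chi_{x_l}^{-1}\big\|_{C^\halpha([0,T]\times \chi_{x_l}(B_R(x_l)))}.
$$
The bilipschitz bound \eqref{bilipschitz} and the $C^k$ bound \eqref{analytic} imply that the Euclidean gradient of $\ln h \circ \chi_{x_l}^{-1}$ is bounded by a constant multiple of $|\nabla_\M \ln h|_\M$, hence by $C|\eta|$ independent of $l$. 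Thus the hypothesis of Lemma \ref{weighted-algebra} holds uniformly in every chart.

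Applying Lemma \ref{weighted-algebra} in each chart to the pulled-back functions yields, with a constant independent of $l$,
$$
\big\|(f\circ w)\,wh\,\chi_{x_l}^{-1}\big\|_{C^\halpha} \le C\,\|w\|_{L^\infty} \big\|wh\,\chi_{x_l}^{-1}\big\|_{C^\halpha}.
$$
To insert the cutoff $\eta_l$ needed by the definition of the cigar norm, I use the product estimate Lemma \ref{calculus}\eqref{c2} together with the uniform smoothness of $\eta_l$. Taking the supremum over $l$ produces the claimed estimate on $\MT$.

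The main subtlety I anticipate is the coexistence of the chart cutoff $\eta_l$ with the nonlinearity $f\circ w$: one cannot naively replace $w$ by $\eta_l w$ inside $f$. If the route via Lemma \ref{calculus}\eqref{c2} proves awkward, the alternative is to redo the pointwise argument of Lemma \ref{weighted-algebra} directly on $\M$, restricted to pairs with $d_\M(x,y) \le 1$ as permitted by \eqref{Hoelder-norm-global}. The only cigar-specific input needed is $|h(x)/h(y) - 1| \le C\,d_\M(x,y)$, which is an immediate consequence of the uniform bound $|\nabla_\M \ln h|_\M \le |\eta|$ already established, and then the identity $(w(x)-w(y))h(x) = wh(x) - wh(y) + wh(y)\bigl(1 - h(x)/h(y)\bigr)$ together with $f(0)=0$ gives the seminorm bound with the crucial $\|w\|_{L^\infty}$ factor on the right-hand side.
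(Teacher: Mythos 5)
Your fallback route---redoing the pointwise H\"older quotient estimate directly on $\M$ using the global norm \eqref{Hoelder-norm-global}---is exactly what the paper does. The paper proves Lemma~\ref{weighted-algebra} by direct manipulation of H\"older quotients and then obtains the corollary in one line by replacing $|x_1-x_2|$ with $d(x_1,x_2)\ge|s_1-s_2|$; this works precisely because $|\partial_s\ln h|=|\eta\tanh s|\le|\eta|$, which is the bound you establish. The identity you write at the end, together with $f(0)=0$, is the same algebraic manipulation the paper uses to secure the crucial single $\|w\|_{L^\infty}$ factor.

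Your primary (localization) route is more involved and has a gap you only half address. Applying Lemma~\ref{weighted-algebra} in a chart produces, on the right, the \emph{chart-local} $C^\halpha$ norm of $wh\circ\chi_{x_l}^{-1}$ \emph{without} a cutoff, while the target quantity $\|wh\|_{C^\halpha(\MT)}$ is defined as $\sup_l\|\eta_l\,wh\circ\chi_{x_l}^{-1}\|_{C^\halpha}$. Bridging the two requires writing $wh=\sum_k\eta_k^2\,wh$ on the chart, invoking the uniform local finiteness of the cover (Lemma~\ref{Lem-ballpacking}) and the uniform control of coordinate changes from \eqref{analytic}, exactly as is done for $u_l$ in the proof of Theorem~\ref{linearth}. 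That step is routine but nontrivial, and invoking \eqref{c2} alone does not close it. You also correctly flag that the cutoff $\eta_l$ cannot be pushed inside $f\circ w$; this is handled by \eqref{c2} applied with $\eta_l$ as the factor carrying the $L^\infty$ bound, but it is one more place where care is needed. Since the direct pointwise argument sidesteps all of this, the paper's route (your alternative) is the cleaner one, and you should lead with it.
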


\begin{proof}[Proof of Lemma~\ref{weighted-algebra} and
    Cor.~\ref{weighted-algebra-cigar}]
  Clearly $\|(f\circ w)wh\|_{L^\infty}$ is estimated in the claimed way; so we
  only need to estimate the H\"older quotient.
  We estimate, abbreviating $w(x_i)=: w_i$ and $h(x_i)=:h_i$,
  $$
  \begin{array}{l}
  (w_1-w_2)(f(w_1)w_1h_1 - f(w_2)w_2h_2) =  \mbox{}
  \\[1ex] \kern3em \mbox{}
  (f(w_1)w_1-f(w_2)w_2)(w_1h_1-w_2h_2)
  + (h_2-h_1)w_1w_2(f(w_1)-f(w_2))
  \;.
  \end{array}
  $$
  With no loss of generality we assume $h_2\le h_1$. At least if $w_1\neq w_2$,
  we can obtain
  $$
  \begin{array}{l}\Dst
  \frac{f(w_1)w_1h_1 - f(w_2)w_2h_2}{|x_1-x_2|^\halpha} =  \mbox{}
  \\[2ex] \Dst\kern3em \mbox{}
  \frac{f(w_1)w_1-f(w_2)w_2}{w_1-w_2}\,\frac{w_1h_1-w_2h_2}{|x_1-x_2|^\halpha}
  \\[2ex]\Dst \kern4em \mbox{}
   + h_1w_1 w_2 \frac{f(w_1)-f(w_2)}{w_1-w_2}\,
     \frac{\exp[\ln h_2-\ln h_1]-1}{|x_1-x_2|^\halpha}
  \;.
  \end{array}
  $$
  We then get the estimate
  $$
  \begin{array}{l}\Dst
  \frac{|f(w_1)w_1h_1 - f(w_2)w_2h_2|}{|x_1-x_2|^\halpha} \le  \mbox{}
  \\[2ex] \Dst\kern5em \mbox{}
  \left(\sup_{|r|\le\|w\|_{L^\infty}}\Bigl|\frac{d}{dr}(f(r)r)\Bigr| \right)\,
  \frac{|w_1h_1-w_2h_2|}{|x_1-x_2|^\halpha}
  \\[2ex]\Dst \kern6.5em \mbox{}
   + \|wh\|_{L^\infty}\|w\|_{L^\infty}
      \left(\sup_{|r|\le\|w\|_{L^\infty}}|f'(r)|\right)\,
     \frac{\min\{1, |\ln h_2 - \ln h_1|\}}{|x_1-x_2|^\halpha}
  \\[2ex] \Dst\kern3em \mbox{}
  \le
  C \|w\|_{L^\infty}\, \|wh\|_{C^\halpha} +
  Ca \|w\|_{L^\infty}\, \|wh\|_{L^\infty} \, \min_{d>0}\Bigl\{d^{-\halpha},
  \|\nabla\ln h\|_{L^\infty} d^{1-\halpha}\Bigr\}
  \;.
  \end{array}
  $$
  The estimate persists by continuity if $w_1=w_2$.
  If space-time norms are desired, the same estimate applies to time H\"older
  quotients. This proves the lemma.

  For the corollary on the cigar manifold, we only have to replace $|x_1-x_2|$
  with $d(x_1,x_2)\ge |s_1-s_2|$, according
  to~\eqref{cigarmetric}.
\end{proof}

\section{Schauder estimates for the heat equation}
\label{SecHeatEq}

The results and methods in this chapter are basically well-known. However, we
need to elaborate on some details since we are also relying on some less
familiar  weighted norms.

Also the impact of low regularity inhomogeneities
needs to be taken into account cleanly to cope with applications to the
quasilinear case.
We will use these estimates in the next section to get local existence for
parabolic equations on uniform manifolds. Again, this will be done along
traditional lines; however to control the noncompactness of the domain by means
of uniform estimates requires recalling  the proof details from the
traditional case for inspection and reference.

Some comments concerning the semigroup point of view and the issue of
big vs.\ little H\"older spaces will be given in
Rmk.~\ref{analytic-semigroup} below; suffice it to say here that these
issues are not of any real significance for our purposes.

So we begin by studying the heat equation
\begin{equation}   \label{heateq}
 v_t - \Laplace v = \d_{ij}^2 f^{ij}(t,x) + \d_i b^i(t,x) + c(t,x), \qquad v(0,x)=
  v_0(x)
\end{equation}
and use the following lemma, which is basically standard; but need to implement
some modifications to the usual versions. Namely, in order to prove
a regularization estimate,  we will also  work with  space-time
norms that deteriorate as $t\to0$: We define the space $C^\halpha_*(\RnT)$
to consist of those functions for which the norm
$$
\|u\|_{C^\halpha(\RnT)}^* := \|u\|_{C^\halpha_*(\RnT)} := \max\Bigl\{
\sup_{0<t\le T} t^{\halpha/2} \|u\|_{C^\halpha([t,T]\times\Rn)},
\|u\|_{L^\infty(\RnT)} \Bigr\}
$$
is finite. We prefer the first notation for typographical clarity even
when referring to functions in the larger space $C^\halpha_*(\RnT)$.
These norms reflect the short-time scaling $\|v(t)\|_{C^\halpha(\Rn)}\le
Ct^{-\halpha/2}\|v_0\|_{L^\infty(\Rn)}$, which is satisfied by the homogeneous
heat equation. The $L^\infty$ norm is included in the definition in order to
salvage
the algebra properties $\|uv\|_{C^\halpha(\RnT)}^* \le C
\|u\|_{C^\halpha(\RnT)}^* \|v\|_{C^\halpha(\RnT)}^*$ and also the stronger
\begin{equation}\label{strong-alg-prop*}
\|uv\|_{C^\halpha(\RnT)}^* \le C \left(
\|u\|_{C^\halpha(\RnT)}^* \|v\|_{L^\infty} +
\|u\|_{L^\infty} \|v\|_{C^\halpha(\RnT)}^* \right)
\;,
\end{equation}
which is a straightforward consequence of~\eqref{c2}.
The first
component alone
would not control the $L^\infty$ norm as the function $u(t,x)=\ln t$ shows.
\begin{lemma}[H\"older smoothing of rough sources by the heat flow]
  \label{constant}
There exists a unique solution to the heat equation \eqref{heateq}
in the space~$C^\halpha(\RnT)$.
It satisfies
\begin{equation}  \label{HE-est1}
\begin{array}{l} \Dst
\| v \|_{C^\halpha(\RnT)}
\le  C \left(
   \| v_0 \|_{C^\halpha(\Rn)}
 + \| f \|_{C^\halpha(\RnT)}  \phantom{T^{\frac12}}\right.
\\[1ex]\kern3.8cm \Dst \left. \mbox{}
 + T^\frac12 \| b \|_{C^\halpha(\RnT)}
 + T^{1-\frac12\halpha} \| c \|_{C^\halpha(\RnT)}
\right)
\;,
\end{array}
\end{equation}
and
\begin{equation} \label{HE-est2}
\begin{array}{l} \Dst
 \| v \|_{C^\halpha(\RnT)}
\le  C \left(
   \| v_0 \|_{C^\halpha(\Rn)}
 + \| f \|_{C^\halpha(\RnT)}  \phantom{T^{\frac12}}\right.
\\[1ex]\kern5mm \Dst \left. \mbox{}
 + T^{\frac{1-\halpha}2} \| b \|_{L^\infty(\RnT)}
 + T^{1-\frac12\halpha} \| c \|_{L^\infty(\RnT)}
 \right)
\,,
\end{array}
\end{equation}
and the regularization estimate
\begin{equation}  \label{HE-est3}
\begin{array}{l} \Dst
\| v \|_{C^\halpha(\RnT)}^*
\le  C \left(
   \| v_0 \|_{L^\infty(\Rn)}
 + \| f \|_{C^\halpha(\RnT)}^*  \phantom{T^{\frac12}}\right.
\\[1ex]\kern3.8cm \Dst \left. \mbox{}
 + T^\frac12 \| b \|_{C^\halpha(\RnT)}^*
 + T^{1-\frac12\halpha} \| c \|_{C^\halpha(\RnT)}^*
\right)
\;,
\end{array}
\end{equation}
where the constant $C$ may depend on an upper bound for~$T$.
\end{lemma}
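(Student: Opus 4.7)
The plan is to represent the solution via Duhamel's formula with the Gaussian heat kernel $G_t$, moving derivatives from the source terms onto the kernel by integration by parts:
\begin{equation*}
v(t,\x) = G_t * v_0 + \int_0^t \partial_i\partial_j G_{t-s} * f^{ij}(s,\cdot)\,ds - \int_0^t \partial_i G_{t-s} * b^i(s,\cdot)\,ds + \int_0^t G_{t-s} * c(s,\cdot)\,ds
\end{equation*}
(summation over repeated indices). Uniqueness follows from the maximum principle applied to the difference of two solutions with zero initial data.

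The heat semigroup is bounded on $C^\halpha(\R^n)$ uniformly in $t\in[0,T]$, which yields the $\|v_0\|_{C^\halpha}$ contribution in \eqref{HE-est1} and \eqref{HE-est2}; for the regularization estimate \eqref{HE-est3}, the smoothing bound $\|G_t*v_0\|_{C^\halpha(\R^n)} \le C t^{-\halpha/2}\|v_0\|_{L^\infty}$ together with $\|G_t*v_0\|_{L^\infty} \le \|v_0\|_{L^\infty}$ exactly matches the $t^{\halpha/2}$ weight built into the starred norm. The $f$-contribution is a parabolic Calder\'on--Zygmund singular integral whose $C^\halpha\to C^\halpha$ boundedness is classical parabolic Schauder theory (cf.\ Krylov \cite{MR1406091}, Ladyzhenskaya et al.\ \cite{MR0241822}, or Friedman \cite{MR0181836}): the standard argument splits $f^{ij}(s,\y)=f^{ij}(s,\x)+[f^{ij}(s,\y)-f^{ij}(s,\x)]$ in the convolution integrand, exploits the mean-zero property of $\partial_i\partial_j G_{t-s}$ on spheres to kill the constant piece, and controls the H\"older remainder by the sharp decay of the kernel.

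For the $b$- and $c$-contributions, kernel scaling gives $\|\partial_i G_\tau\|_{L^1}=O(\tau^{-1/2})$ and $\|\partial_i\partial_j G_\tau\|_{L^1}=O(\tau^{-1})$; interpolating between $L^\infty$ and gradient bounds yields
\begin{equation*}
\|\partial_i G_{t-s} * b\|_{C^\halpha(\R^n)} \le C(t-s)^{-(1+\halpha)/2}\|b\|_{L^\infty}, \qquad
\|G_{t-s}*c\|_{C^\halpha(\R^n)} \le C(t-s)^{-\halpha/2}\|c\|_{L^\infty},
\end{equation*}
and integration in $s\in[0,t]\subseteq[0,T]$ produces the $T^{(1-\halpha)/2}$ and $T^{1-\halpha/2}$ factors of \eqref{HE-est2}. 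The sharper powers $T^{1/2}$ and $T^{1-\halpha/2}$ in \eqref{HE-est1} follow by bounding $\|\partial_i G_{t-s} * b\|_{C^\halpha} \le \|\partial_i G_{t-s}\|_{L^1}\|b\|_{C^\halpha}$ using the algebra property \eqref{c2}, and similarly for $c$. For the starred estimate \eqref{HE-est3} the same convolution bounds apply once the $s^{\halpha/2}$ weight is inserted on the source norms; the singularities at $s=t$ remain integrable, and routine bookkeeping shows that the weights propagate consistently through each Duhamel contribution. The main technical obstacle is the singular-integral step for the $f$-term, which demands careful parabolic cancellation; that is already classical, so the real work here is simply tracking the dependence on $T$ and on the starred weights, which amounts to a scaling argument in $s$.
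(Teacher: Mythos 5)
Your high-level strategy — Duhamel with the Gaussian kernel, integrating the derivatives from the sources onto the kernel, citing classical parabolic Schauder theory for the principal $f$-term, and deriving the $b$/$c$ factors by kernel scaling — is exactly the route the paper takes, both in the body of Section~\ref{SecHeatEq} and in the self-contained appendix. Two of the three named ingredients survive inspection; but there are genuine gaps.

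\textbf{Time H\"older quotients are not addressed.} The parabolic H\"older norm in \eqref{xtHoelder norm} is the maximum of three quantities: $[v]_{x;\halpha}$, $[v]_{t;\halpha/2}$, and $\|v\|_{L^\infty}$. Your bounds $\|\partial_i G_{t-s}*b\|_{C^\halpha(\R^n)}\le C(t-s)^{-(1+\halpha)/2}\|b\|_{L^\infty}$ and $\|G_{t-s}*c\|_{C^\halpha(\R^n)}\le C(t-s)^{-\halpha/2}\|c\|_{L^\infty}$, after integration in $s$, control only $\sup_t[v(t,\cdot)]_{x;\halpha}$ and $\|v\|_{L^\infty}$. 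The seminorm $[v]_{t;\halpha/2}$ does not follow from those two by any general principle; one must estimate $|v(t',x)-v(t,x)|$ directly by splitting the Duhamel integral at $\tau=t$ and (for the $[0,t]$ piece) inserting a kernel difference $\Gamma(t'-\tau,\cdot)-\Gamma(t-\tau,\cdot)$. For the $c$-term in particular, the naive bound produces a logarithmic factor (as the paper notes explicitly), and eliminating it to get the stated $T^{1-\halpha/2}$ requires the incomplete-gamma-function argument the paper performs. So the $b$- and $c$-parts of \eqref{HE-est1}, \eqref{HE-est2} are not complete as sketched.

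\textbf{The regularization estimate \eqref{HE-est3} is not ``routine bookkeeping'' for the $f$-term.} The starred norm carries a weight $t^{\halpha/2}\|v\|_{C^\halpha([t,T]\times\Rn)}$ that deteriorates at $t\to0$, while the source $f$ is controlled in the same deteriorating norm; the singularities at $s=t$ and $s=0$ compete, and a direct convolution bound does not close. The paper's device is to split $v=v_1+v_2$, where $v_1$ is driven only by $\chi_{[0,\tau/2]}f$ (controlled pointwise at time $3\tau/4$ and then regularized to $C^\halpha$) and $v_2$ is driven by $\chi_{[\tau/2,T]}f$ (where the classical estimate \eqref{HE-est1} applies on $[\tau/2,T]$). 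Nothing in your sketch performs this decomposition, and without it the $f$-contribution in \eqref{HE-est3} does not follow.

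\textbf{A smaller inaccuracy.} To get $\|\partial_i G_{t-s}*b\|_{C^\halpha}\le\|\partial_i G_{t-s}\|_{L^1}\|b\|_{C^\halpha}$, the tool is Minkowski's integral inequality applied to H\"older quotients (i.e.\ that convolution with an $L^1$ kernel is bounded on $C^\halpha$), not the product estimate \eqref{c2}; the latter concerns pointwise products, not convolutions. The conclusion is of course correct.
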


Going through the proofs, we will also see:
\begin{cor}\label{DBCconstant}
  The very same estimates as in Lemma~\ref{constant} hold
  in the half space with Dirichlet (or Neumann) boundary conditions.
\end{cor}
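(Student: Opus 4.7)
My strategy is to mimic the proof of Lemma \ref{constant} with the full-space heat kernel $\Gamma(t, x-y) = (4\pi t)^{-n/2} \exp(-|x-y|^2/(4t))$ replaced by the half-space kernel obtained via the method of images,
\begin{equation*}
\Gamma_{\pm}(t, x, y) := \Gamma(t, x-y) \pm \Gamma(t, x-y^*), \qquad y^* := (y_1, \ldots, y_{n-1}, -y_n),
\end{equation*}
with $-$ yielding the Dirichlet and $+$ the Neumann heat semigroup on $\{x_n > 0\}$. The key point is that $|x-y^*| \ge |x-y|$ whenever $x_n, y_n > 0$, so the reflected copy of $\Gamma$ enjoys Gaussian decay at least as strong as the direct copy. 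Consequently every pointwise, integral, and H\"older estimate on $\Gamma$ and its derivatives used in the proof of Lemma \ref{constant} carries over to $\Gamma_{\pm}$, with only a doubling of constants.

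The Duhamel representation
\begin{equation*}
v(t, x) = \int_{\R^n_+} \Gamma_{\pm}(t, x, y) v_0(y)\, dy + \int_0^t\!\! \int_{\R^n_+} \Gamma_{\pm}(t-s, x, y) \bigl[\d_{ij}^2 f^{ij} + \d_i b^i + c\bigr](s,y)\, dy\, ds
\end{equation*}
is then reduced to volume convolutions of the type treated in Lemma \ref{constant} by integrating by parts in $y$ so as to transfer derivatives onto the kernel. Boundary contributions arising at $\{y_n = 0\}$ are partially neutralized by the cancellation identities $\Gamma_-(t,x,y)|_{y_n = 0} \equiv 0$ (Dirichlet case) and $\d_{y_n} \Gamma_+(t, x, y)|_{y_n = 0} \equiv 0$ (Neumann case), which kill the boundary terms coming from the first-order integration by parts on $\d_i b^i$ in the Dirichlet case, and from the double integration by parts on $f^{nn}$ in the Neumann case. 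The residual boundary contributions (from $b^n$ in Neumann and from $f^{nn}$ in Dirichlet) reduce to Poisson-type integrals of the trace of the source on $\{x_n = 0\}$; these have the same parabolic scaling as the volume kernel and are estimated by the same Gaussian-decay arguments used for $\Gamma$, together with the trivial trace inequality $\|g(\cdot, 0)\|_{C^\halpha(\R^{n-1})} \le \|g\|_{C^\halpha(\R^n_+)}$.

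The main, though minor, obstacle is the careful bookkeeping of these boundary terms, and verifying that the Poisson-type contributions satisfy the same $L^\infty$-versus-Hölder trade-offs in $T$ as recorded in \eqref{HE-est1}--\eqref{HE-est3}. Both follow from the explicit form of the Poisson kernel $P(t,x,y') = (4\pi t)^{-n/2}(x_n/t) \exp[-(|x'-y'|^2+x_n^2)/(4t)]$ and its parabolic scaling. Once these boundary contributions are absorbed via the two cancellation identities together with the Poisson-type estimate, the proofs of \eqref{HE-est1}, \eqref{HE-est2}, and the regularization estimate \eqref{HE-est3} transfer verbatim from $\R^n_T$ to the half-space $[0,T]\times \{x_n > 0\}$ with either Dirichlet or Neumann boundary condition, yielding Corollary \ref{DBCconstant}.
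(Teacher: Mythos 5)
Your proposal takes the same basic route as the paper --- the method of images, replacing $\Gamma$ by the half-space kernel $\Gamma_\pm$ and noting that the reflected copy inherits the Gaussian bounds because $|x-y^*|\ge|x-y|$ whenever $x_n,y_n>0$. Where you go further is in tracking the boundary contributions at $\{y_n=0\}$ that arise when the $y$-derivatives in $\d_{ij}^2 f^{ij}+\d_i b^i$ are transferred onto $\Gamma_\pm$ by parts; the paper's own proof is silent on this and simply asserts that ``the same Gaussian estimates'' carry over with ``no further changes.'' You are right that this deserves attention, and your Dirichlet bookkeeping is correct: the single integration by parts for $b^n$ produces a boundary integral carrying the vanishing factor $\Gamma_-|_{y_n=0}=0$, while the double integration by parts on $f^{nn}$ produces, after the first boundary term vanishes for the same reason, a surviving Poisson-type residual $\int_{\{y_n=0\}}\d_{y_n}\Gamma_-\,f^{nn}\,dy'$ involving only the boundary trace of $f^{nn}$, which has the correct parabolic scaling.

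There is a gap in the Neumann part of your argument, however. The double integration by parts on $\int_{\R^n_+}\Gamma_+\,\d_{n}^2 f^{nn}\,dy$ produces \emph{two} boundary terms: $-\int_{\{y_n=0\}}\Gamma_+\,\d_n f^{nn}\,dy'$ from the first step and $+\int_{\{y_n=0\}}\d_{y_n}\Gamma_+\,f^{nn}\,dy'$ from the second. The cancellation identity $\d_{y_n}\Gamma_+|_{y_n=0}\equiv 0$ kills only the second; the first survives, and since it involves a normal derivative of $f^{nn}$ (with $\Gamma_+|_{y_n=0}=2\Gamma\ne0$), it is not a ``Poisson-type integral of the trace of the source'' and is not controlled by $\|f\|_{C^\halpha}$. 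As written, your argument does not close in the Neumann case; one would need either a compatibility hypothesis such as $\d_n f^{nn}|_{y_n=0}=0$, or a different representation (e.g.\ even reflection of the data, reducing to the full-space lemma). Worth making explicit, for the Dirichlet case as well: the paper's terseness is justified because in the only place the corollary is invoked --- the localized fixed-point scheme behind Theorem~\ref{linearth} and Corollary~\ref{linearthDBC} --- the quantities substituted for $f^{ij}$ and $b^i$ in \eqref{BFP-localization} all carry a factor of $\eta_l u_l$, which vanishes on the Dirichlet boundary. So every boundary term you are worrying about is identically zero in the application, and no Poisson estimate is actually needed there.
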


\begin{proof}[Proof of Lemma~\ref{constant}]
By superposition, the estimates are assembled from four cases, in each of which
exactly one of the quantities $f,b,c,v_0$ is non-zero. The
estimate~\eqref{HE-est1}
for $v_0$ is classical, see for instance \cite{MR0241822}, IV, (2.2), with the
critical part being (2.13) there.
Likewise we can refer to (\cite{MR0241822},
IV, (2.1), and (2.8),(2.9) for the estimating $w^{ij}_t-\Laplace
w^{ij}=f^{ij}$, and obtain our case with $v=\d_{ij}^2w^{ij}$.

We also note that the arguments for $b^i$ and~$c$ in~\eqref{HE-est1}
can be given as (simpler) modifications of the estimates for the
$f^{ij}$, thus concluding the proof of~\eqref{HE-est1}. However, as we
will use the strengthened version~\eqref{HE-est2}, we will give these
estimates explicitly in a moment. At the same time, we will later
refer to~\eqref{HE-est1} in order to derive~\eqref{HE-est3} from it.
For the convenience of the reader, we will also redo the detailed
calculations pertaining to \eqref{HE-est1}--\eqref{HE-est3} in a
self-contained manner in an appendix, with no-pretense to novelty.

By~$\Gamma$, we denote the heat kernel:
$\Gamma(t,x):= (4\pi t)^{-n/2} \exp[-|x|^2/4t]$.

We estimate \underline{the contributions of $\|b\|_{L^\infty}$ to the alternate
  bound~\eqref{HE-est2}:}

In terms of $\|b\|_{L^\infty}$, we can estimate the spatial
H\"older quotient as
$$
\begin{array}{l} \Dst
\frac{|v(t,x')-v(t,x)|}{|x'-x|^\halpha}
\le
  \int_0^{t} \int_{\Rn}
\frac{|\nabla\Gamma(\tau,x'-y)-\nabla\Gamma(\tau,x-y)|}%
     {|x'-x|^\halpha} \|b\|_{L^\infty}
\,dy\,d\tau
\;.
\end{array}
$$
With $z:= (x-y)/\sqrt{4\tau}$ and
$h(z):= -2z \exp(-|z|^2) = \nabla\Gamma(\frac14,z)$,
this becomes
$$
\begin{array}{l} \Dst
\frac{|v(t,x')-v(t,x)|}{|x'-x|^\halpha}
\le c
  \int_0^{t} \tau^{-(1+\halpha)/2} \int_{\Rn}
\frac{|h(z+\zeta)-h(z)|}{|\zeta|^\halpha}
\,dz\,d\tau\, \|b\|_{L^\infty}
\;,
\end{array}
$$
where $\zeta=(x'-x)/\sqrt{4\tau}$.

Now for $|s|\le1$, we can estimate the inner integrand by the integrable
quantity $\sup_{B_1(z)} |Dh|$; for $|s|\ge1$, we can estimate the inner
integral by $2\int|h(z)|\,dz$, and combining the two cases, we get
$[v]_{x,\halpha}\le C T^{(1-\halpha)/2}\|b\|_{L^\infty(\RnT)}$.  The estimate
of $\|v\|_{L^\infty}$ is
similar, but simpler, providing a $T^{1/2}$ factor.

For the time H\"older quotient, we estimate
$$
\begin{array}{l} \Dst
|v(t',x)-v(t,x)| \le \int_t^{t'}\int_{\Rn}|\nabla\Gamma(t'-\tau,y)|\,
|b(\tau,x-y)|\,dy\,d\tau
\\[2ex]\kern4em\Dst \mbox{}
+
\int_0^t \int_{\Rn} |\nabla\Gamma(t'-\tau,y) - \nabla\Gamma(t-\tau,y)|\,
|b(\tau,x-y)|\,dy\,d\tau
\\[2ex]\Dst\mbox{}
\le C(\sqrt{t'}-\sqrt{t})\|b\|_{L^\infty} +
\|b\|_{L^\infty}
\int_0^t \int_{\Rn}\Bigl|\int_{t-\tau}^{t'-\tau} \nabla\Gamma_s(s,y)
\, ds\Bigr|\,dy\,d\tau
\,.
\end{array}
$$
We only have to estimate the last integral yet. Carrying out the $y$
integration first gives $O(s^{-3/2})$, then the $s$ integration yields
$$
C\int_0^t [ (t'-\tau)^{-1/2} - (t-\tau)^{-1/2} ]\, d\tau = C(t'-t)^{1/2}
\;.
$$
Hence $[v]_{t;\halpha/2} \le C T^{(1-\halpha)/2}\|b\|_{L^\infty}$.

We estimate \underline{the contributions of $\|c\|_{L^\infty}$ to the alternate
  bound~\eqref{HE-est2}:}

Finally, the $c$-term in~\eqref{HE-est2},  can be estimated in a similar way,
only with $\Gamma$ instead of $\nabla\Gamma$. For the spatial H\"older quotient
we obtain $O(T^{1-\frac\halpha2})$.
The $L^\infty$ estimate gives a factor~$T$ in a straightforward way.

For the time H\"older quotient we can easily estimate
$$
|v(t',x)-v(t,x)|/\|c\|_{L^\infty} \le
C\int_0^t[\ln(t'-\tau)-\ln(t-\tau)]\,d\tau
\,,
$$
which gives already the desired result, except for an extra logarithmic term;
and this result would be sufficient for our purposes. But for good measure,
let's prove the claimed, optimal, estimate:
$$
\begin{array}{l} \Dst
|v(t',x)-v(t,x)| \le \int_t^{t'}\int_{\Rn}\Gamma(t'-\tau,y)\,
|c(\tau,x-y)|\,dy\,d\tau
\\[2ex]\kern4em\Dst \mbox{}
+
\int_0^t \int_{\Rn} |\Gamma(t'-\tau,y) - \Gamma(t-\tau,y)|\,
|c(\tau,x-y)|\,dy\,d\tau
\\[2ex]\Dst\mbox{}
\le (t'-t+I)\|c\|_{L^\infty}
\end{array}
$$
with
\begin{equation}\label{Hkerndiffint}
I:= \int_0^t\int_\Rn |\Gamma(t'-\tau,y)-\Gamma(t-\tau,y)|\,dy\,d\tau
\;.
\end{equation}
Now for $t_1>t_0$, we have
$$
\Gamma(t_1,y) \glresp \Gamma(t_0,y)
\iff
|y|^2 \glresp r_*^2 := \frac{2n(\ln t_1-\ln t_0)}{t_1-t_0} t_0 t_1
\;.
$$
Hence
$$
\begin{array}{l} \Dst
\int_\Rn |\Gamma(t_1,y)-\Gamma(t_0,y)|\,dy
\\[2ex]\Dst\kern3em\mbox{}
=
\frac{|S^{n-1}|}{\pi^{n/2}} \,
2\int_0^{r_*} \left((4t_0)^{-n/2}e^{-r^2/4t_0} -
                    (4t_1)^{-n/2}e^{-r^2/4t_1}\right) \,
r^{n-1}\,dr
\\[2ex]\Dst\kern3em\mbox{}
=
\frac{|S^{n-1}|}{\pi^{n/2}} \,
\left(\gamma\bigl(\frac n2 \,\frac{\ln t_1 - \ln t_0}{t_1-t_0}\, t_1 \bigr)
    - \gamma\bigl(\frac n2 \,\frac{\ln t_1 - \ln t_0}{t_1-t_0}\, t_0 \bigr)
\right)
\end{array}
$$
where $\gamma(u):= \int_0^u e^{-s} s^{n/2-1}\,ds$ is the incomplete gamma
function, whose second argument $\frac n2$ we suppress, and of which we only
need that it is increasing with finite limit at $+\infty$, and with bounded
derivative.
With $t_1:= t'-\tau$, $t_0:= t-\tau$, $t_1-t_0=t'-t=:d$, and
$(t-\tau)/d=:\sigma$ as a new integration variable, we get
from~\eqref{Hkerndiffint}
$$
I= Cd \int_0^{t/d} \Bigl(
  \gamma\bigl({\Tst \frac n2 (s+1)\ln\frac{s+1}{s}}\bigr)
  -
  \gamma\bigl({\Tst \frac n2 s \ln\frac{s+1}{s}}\bigr) \Bigr) \,ds
\;.
$$
As $\sup|\gamma'|$ is finite, the integrand is bounded by $C
\ln\frac{s+1}{s}=O(\frac1s)$ as $s\to\infty$.
Therefore $I\le Cd(1+\ln(1+\frac td))\le Cd + Cd\ln(1+\frac Td)$.
From this, it follows
$$
[v]_{t;\halpha/2} \le \Bigl(C T^{1-\halpha/2} + C \sup_{d>0}
d^{1-\halpha/2}\ln(1+{\textstyle\frac Td}) \Bigr)
\|c\|_{L^\infty(\RnT)}
\;.
$$
The sup is taken on when $d=c_0T$ with $c_0$ determined by a transcendental
equation, and the value of the sup is therefore $C T^{1-\halpha/2}$, which
proves the estimate for the time H\"older quotient.

We have thus concluded the proof of \eqref{HE-est2} and now turn to
\eqref{HE-est3}.

For the homogeneous heat equation with initial data $v_0$, we have
first
\begin{equation}\label{vmax}
  |v(t,x)|\le \|v_0\|_{L^\infty}
\;.
\end{equation}
Then we have
\begin{equation}\label{vxalpha}
  |v(t,x')-v(t,x)| \le C \frac{|x'-x|}{t^{1/2}} \|v_0\|_{L^\infty}
\end{equation}
because of the estimate
$$
\begin{array}{l} \Dst
  v(t,x')-v(t,x) =
  \int_{\Rn}(\Gamma(t,x'-y)-\Gamma(t,x-y))\,v_0(y)\,dy
\\[1.5ex]\Dst \phantom{ v(t,x')-v(t,x) }
  = \int_{\Rn}\int_0^1
  (x'-x)\cdot\nabla\Gamma(t,x-y+s(x'-x))\, v_0(y)\,ds\,dy
  \,,
\\[2ex]\Dst
  |v(t,x')-v(t,x)| \le |x'-x|\,\|v_0\|_{L^\infty}
  \int_{\Rn}|\nabla\Gamma(t,z)|\,dz
\;.
\end{array}
$$
\eqref{vxalpha} and \eqref{vmax} combined provide
for $t\ge \tau$,
$$
\tau^{\halpha/2}\frac{|v(t,x')-v(t,x)|}{|x'-x|^\halpha}
\le \min\left\{ C\Bigl(\frac{|x'-x|}{\tau^{1/2}}\Bigr)^{1-\halpha} \,,\;
                2 \frac{\tau^{\halpha/2}}{|x'-x|^\halpha} \right\}
    \, \|v_0\|_{L^\infty}
\le
C'\|v_0\|_{L^\infty}
\;.
$$
Similarly we can prove, for $t'>t\ge\tau$, that
\begin{equation}\label{vtalpha}
    |v(t',x)-v(t,x)| \le C \frac{t'-t}{t} \|v_0\|_{L^\infty}
\end{equation}
and combine it with \eqref{vmax} to estimate the time H\"older
quotient in the same manner.

Taking the supremum over $\tau\in`]0,T]$, we conclude that
$\|v\|^*_{C^\halpha(\RnT)} \le \|v_0\|_{L^\infty}$.

Now let's look at the estimate of $(\d_t-\Laplace)v = \d_{ij}^2
f^{ij}$ with initial data~0.
We decompose $v=v_1+v_2$ where
$$
\begin{array}{ll}\Dst
(\d_t-\Laplace) v_1 = \chi_{[0,\tau/2]}(t)\, \d_{ij}^2 f^{ij}
 & v_1|_{t=0}=0
\,,
\\[1.5ex]
(\d_t-\Laplace) v_2 = \chi_{[\tau/2,T]}(t)\, \d_{ij}^2 f^{ij}
 & v_2|_{t=0} = 0 = v_2|_{t=\tau/2}
\;.
\end{array}
$$
For $v_2$, the classical Schauder estimate \eqref{HE-est1} applies on
the time interval $[\tau/2,T]$  and
gives
$$
\tau^{\halpha/2}\|v_2\|_{C^\halpha([\tau,T]\times\Rn)} \le
\tau^{\halpha/2}\|v_2\|_{C^\halpha([\tau/2,T]\times\Rn)} \le
C\tau^{\halpha/2} \|f\|_{C^\halpha([\tau/2,T]\times\Rn) }
\;.
$$
For $v_1$, we note that
$$
v_1({\Tst\frac34}\tau,x) = \int_0^{\tau/2} \int_{\Rn}
\d_{ij}^2\Gamma({\Tst\frac34}\tau-s,x-y) f^{ij}(s,y)\,dy\,ds
\;.
$$
Hence
$$
\begin{array}{l}\Dst
({\Tst\frac14}\tau)^{\halpha/2} \|v_1\|_{C^\halpha([\tau,T]\times\Rn)}
\le
C \|v_1({\Tst\frac34}\tau,\cdot)\|_{L^\infty(\Rn)}
\le
\int_0^{\tau/2}
({\Tst\frac34}\tau-s)^{-1}\|f(s,\cdot)\|_{L^\infty(\Rn)}\,ds

\\[1.5ex]\Dst \kern3em
\le
C \|f\|_{L^\infty([0,\tau/2]\times\Rn)}
\;.
\end{array}
$$
In conclusion
$$
\tau^{\halpha/2}\|v\|_{C^\halpha([\tau,T]\times\Rn)} \le
C \Bigl(\tau^{\halpha/2}\|f\|_{C^\halpha([\tau/2,T]\times\Rn)}
    + \|f\|_{L^\infty(\RnT)} \Bigr)
$$
If we can also estimate $\|v\|_{L^\infty}$  in like manner, taking the
supremum over $\tau$ establishes the $f$ part of~\eqref{HE-est3}
immediately. And indeed, we can estimate
$$
\begin{array}{l}\Dst
v(t,x) = \int_0^t\int_\Rn \d_{ij}^2\Gamma(t-s,x-y) f^{ij}(s,y)\,dy\,ds
\\[1.5ex]\Dst \phantom{v(t,x)}
= \int_0^t\int_\Rn \d_{ij}^2\Gamma(t-s,x-y)
                    (f^{ij}(s,y) - f^{ij}(s,x)) \,dy\,ds
\,,
\\[2ex]\Dst
|v(t,x)| \le \int_0^t \int_{\Rn}
    \frac{C}{(t-s)^{n/2+1}} e^{-|x-y|^2/5(t-s)} s^{-\halpha/2}
    |x-y|^\halpha \|f\|_{C^\halpha}^* \, dy\, ds
\\[1.5ex]\Dst \phantom{|v(tx)|}
\le C\int_0^t s^{-\halpha/2}(t-s)^{\halpha/2-1}\,ds\,
   \|f\|_{C^\halpha}^*
\le C \|f\|_{C^\halpha}^*
\;.
\end{array}
$$
This concludes the proof of the $f$ part of~\eqref{HE-est3}. The
impact of $b$ and $c$ is obtained in a similar manner.
\end{proof}

\begin{proof}[Proof of Cor.~\ref{DBCconstant}]
The proof for Dirichlet boundary conditions is essentially the same:
In~\eqref{HQx:f} and similar estimates, we
merely have to replace $\Gamma(t-\tau,x-y)$ with
$\Gamma(t-\tau,x-y)-\Gamma(t-\tau,x-y^*)$, where $y^*$ is the reflection of $y$
at the hyperplane bounding the half space; and of course we integrate
only over the
half space instead of $\Rn$. This half-space heat kernel obeys the same
Gaussian estimates, and no further changes are needed. In \eqref{HQx:b} and
similar equations, where we had conveniently put the $y$ into $\Gamma$ and the
$x-y$ into the right hand side term, we can again swap them and now get
expressions under the integral like $\bigl(\d_i\Gamma(t-\tau,x-y) -
\d_i\Gamma(t-\tau,x-y^*)\bigr) (b^i(\tau,y)-b^i(\tau,y+x'-x))$, supporting the
same estimates. The integrand within \eqref{Hkerndiffint} now becomes
$$
\begin{array}{l}
|\Gamma(t'-\tau,x-y)-\Gamma(t'-\tau,x-y^*)
-\Gamma(t-\tau,x-y)+\Gamma(t-\tau,x-y^*)|
\\[1ex]\kern1em \mbox{}
\le
|\Gamma(t'-\tau,x-y)-\Gamma(t-\tau,x-y)|
+|\Gamma(t'-\tau,x-y^*)-\Gamma(t-\tau,x-y^*)|
\end{array}
$$
and we may enlarge the integration over all of $\Rn$ and re-use the old
estimates.

The same applies to Neumann boundary conditions.
\end{proof}

\section{Quantitative global well-posedness of the linear and
  nonlinear equations in H\"older spaces}
\label{SecGlobWP}

Consider a smooth function on a uniform manifold $\UM$. Its derivative is a
section in the cotangent bundle and its second derivative is a section in a
suitable vector bundle. We consider differential equations
\begin{equation} \label{para}
   u_t = F(t,x,u, Du, D^2u)
\end{equation}
governing functions $u$ on~$\UM$.
The precise structure of $F$ is not of importance in this section.
We only need that there is such a formulation.
In local coordinates, it reduces to an equation of the same type,
but now with standard derivatives in $\Rn$.

We assume that the equation can be
written in local coordinates (and with the Einstein summation convention being
in effect) as
\begin{equation} \label{adjoint}
  u_t = \d_{ij}^2 ( f^{ij} (x,u)) +  \d_i (b^i(x,u))
  + c(x,u)
\end{equation}
with nonlinearities defined for $u$ in an open interval $U\subset\R$.

It is not difficult to check that this property is independent of our choice of
local coordinates and the partition of unity. We say that the equation is
uniformly parabolic if there exist positive constants $\lambda, \Lambda$ such
that
\begin{equation}
\lambda |\xi|^2 \le \d_u  f^{ij} (x,u) \xi_i \xi_j \le \Lambda |\xi|^2
\end{equation}
holds in such local coordinates as in Def.~\ref{Def-UM},
for $u$ in a compact subinterval $V\subset\subset U$, where $\lambda,\Lambda$
may depend on $V$. We call the equation
uniformly analytic  if in addition for any compact $V\subset U$,
all $k \ge 0$ and all multiindices~$\beta$
\begin{equation}
| \d^k_u \d^\beta f^{ij} (x,u)| + | \d^k_u \d^\beta  b^i(x,u)| + |\d^k_u
\d^\beta c(x,u)| \le C r^{k+|\beta|} (k+|\beta|)^{k+|\beta|}
\end{equation}
with $r$ depending on $V$. Again this property is independent of the chosen
points $x_j$ and $R$ -- up to changing $C$ and $r$.

To address~\eqref{adjoint}, we will also be interested in linear equations,
which can be written in local coordinates as
\begin{equation} \label{linear}
u_t - \d_{ij}^2 (a^{ij} u) - \d_i (b^i u) - c u = f + \d_i g^i
\;,\qquad u(x,0) = u_0(x)
\end{equation}
assuming that the coefficients $a^{ij}$ are uniformly elliptic and continuous.

Let us tackle the linear case first:

\begin{theorem}[Inhomogeneous linear parabolic flows on uniform manifolds]
\label{linearth}\mbox{} 
  Suppose that $u_0 \in C^\halpha(\UM)$, $0<\halpha<1$. Then
  there exists a unique weak solution $u \in C^\halpha([0,T]\times \UM)$ to
  \eqref{linear}, and it satisfies
  \begin{equation}\label{ball-estimate}
   \| u \|_{C^\halpha(\UMT)} \le C
    \left( \| u_0 \|_{C^\halpha(\UM)} + \| f \|_{L^\infty(\UMT)} +
    \| g \|_{L^\infty(\UMT)} \right).
  \end{equation}
  where the constant $C$ depends on~$T, \lambda, \Lambda, n ,
   \| a^{ij} \|_{C^\halpha}, \| b^i \|_{L^\infty},
   \| c \|_{L^\infty(\UMT)}$.

  Likewise, a unique weak solution to~\eqref{linear} exists in
  $C^\halpha_*(\UMT)$  for $u_0\in   L^\infty(\UM)$, and it satisfies the
  similar estimate \eqref{ball-estimate*} below.

  In the homogeneous case ($f=0$, $g=0$), the equation satisfies a comparison
  principle ($u \ge 0$ if $u_0\ge0$)   and,
  provided the $a^{ij},b^i,c$ are independent of time,
  defines an analytic semigroup on $C^\halpha(\UM)$
  in a sense made  precise in the following
  Remark~\ref{analytic-semigroup}.
  The equation also defines an analytic (in the sense of the
  following remark) semigroup on $\BC(\UM)$ (the space of bounded continuous 
  functions with the $L^\infty(\UM)$ norm)
  as well,  and it satisfies, for $0<t\le T$, the regularization estimate
  $\|u(t)\|_{C^\halpha(\UM)}  \le C t^{-\halpha/2} \|u_0\|_{L^\infty(\UM)}$,
  where $C$ may depend on~$T$.
\end{theorem}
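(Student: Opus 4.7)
The plan is to reduce the theorem to the flat Euclidean Schauder estimates of Lemma~\ref{constant} and Corollary~\ref{DBCconstant} by localizing through the charts $\chi_{x_j}$ and the partition of unity $\{\eta_j^2\}$ of Section~\ref{SecUMHoeld}, using the bounded overlap of $\{B_{3r}(x_j)\}$ supplied by Lemma~\ref{Lem-ballpacking} to pass from local to global. First I would establish the a priori bound \eqref{ball-estimate}. In each chart the pullback $u_j := u\circ\chi_{x_j}^{-1}$ satisfies an equation of the form \eqref{linear} whose coefficients have $C^\halpha$ and $L^\infty$ norms uniformly controlled by \eqref{bilipschitz}--\eqref{analytic}. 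Multiplying by $\eta_j$ produces an equation for $\eta_j u_j$ on a fixed ball in $\Rn$ in which the commutator $[\Laplace,\eta_j]$ contributes only a lower-order forcing, estimated via Lemma~\ref{calculus}. Freezing the principal coefficients at $\chi_{x_j}(x_j)$ and moving the resulting deviation to the right-hand side, Lemma~\ref{constant} (or Cor.~\ref{DBCconstant}) yields a bound of the form
\[
\|\eta_j u_j\|_{C^\halpha([0,T_0]\times\Rn)}\le C\bigl(\|\eta_j u_0\|_{C^\halpha}+\|f\|_{L^\infty}+\|g\|_{L^\infty}+o_{T_0}(1)\|u\|_{C^\halpha(\UMT)}\bigr),
\]
the $o_{T_0}(1)$ coming from the factors $T_0^{1/2},T_0^{1-\halpha/2}$ in \eqref{HE-est2} times the uniformly bounded deviation. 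Choosing $T_0$ small and taking a supremum over $j$ absorbs this term and gives \eqref{ball-estimate} on $[0,T_0]$; a finite iteration delivers it on $[0,T]$.

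With this bound in hand, existence follows from a Banach fixed-point argument: the map sending $u$ to the solution $v$ of the inhomogeneous heat equation $v_t-\Laplace v=(\text{all other terms})$ reconstructed chart-by-chart is a contraction in $C^\halpha(\UMT)$ for $T$ small, and its fixed point is the desired weak solution. Uniqueness in the $C^\halpha$ class is immediate from \eqref{ball-estimate} applied to a difference. The case $u_0\in L^\infty(\UM)$ is handled identically but using the regularization bound \eqref{HE-est3} in place of \eqref{HE-est2}; this both produces a solution in $C^\halpha_*(\UMT)$, with the analogous estimate
\[
\|u\|_{C^\halpha_*(\UMT)}\le C\bigl(\|u_0\|_{L^\infty(\UM)}+\|f\|_{L^\infty(\UMT)}+\|g\|_{L^\infty(\UMT)}\bigr),
\]
and delivers the regularization inequality $\|u(t)\|_{C^\halpha(\UM)}\le Ct^{-\halpha/2}\|u_0\|_{L^\infty(\UM)}$. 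The comparison principle for the homogeneous equation is the classical pointwise maximum argument, local in nature and therefore automatic on $\UM$ once uniform parabolicity is in hand.

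For the analyticity of the semigroup $\Sop(t)$ on $\BC(\UM)$ and on $C^\halpha(\UM)$, I would proceed through sectorial resolvent estimates: the same chart-localized Schauder argument applied to the stationary equation $(\lambda-\Lop)u=g$ yields $\|u\|_{\BC}\le C|\lambda|^{-1}\|g\|_{\BC}$ for $\lambda$ in a sector $\{|\arg\lambda|<\tfrac{\pi}{2}+\delta\}$ of sufficiently large modulus, from which analyticity in the usual sense (holomorphic extension of $t\mapsto \Sop(t)u_0$ to a complex sector) follows by the standard contour-integral representation of $\Sop(t)$.

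The main obstacle throughout is verifying that all constants coming out of the localization depend only on the uniformity data $R,C$ of Definition~\ref{Def-UM} and on the structural data $\lambda,\Lambda,\|a^{ij}\|_{C^\halpha},\|b^i\|_{L^\infty},\|c\|_{L^\infty}$, never on the particular chart $j$. This is precisely what Definition~\ref{Def-UM} and the uniformly smooth partition of unity of Section~\ref{SecUMHoeld} were engineered to guarantee, and it is what allows the global estimate on $\UMT$ to emerge as a finite-overlap sum of the local Euclidean estimates.
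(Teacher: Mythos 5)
Your localization-by-charts and Banach fixed-point scheme is exactly the paper's strategy for existence, uniqueness, and the estimate \eqref{ball-estimate}, and the $L^\infty$/$C^\halpha_*$ variant is likewise handled the same way. Two places, however, are glossed over in a way that conceals real work.

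First, the assertion that the comparison principle is ``local in nature and therefore automatic'' once uniform parabolicity holds is not correct as stated. The manifold $\UM$ is noncompact, so the pointwise maximum argument can fail: a nonnegative subsolution could drift to its infimum along a sequence escaping to infinity. The paper handles this by constructing the approximate radial coordinate $\rho$ of Lemma~\ref{growth} and comparing $u$ against a barrier of the form $u+\eps(\mu t+\rho)$, which diverges at infinity; only then does the local maximum principle apply. Moreover, the coefficients $a^{ij},b^i$ are only $C^\halpha$ and the equation is in divergence form, so the classical pointwise argument (which needs to commute multiplication past differentiation) does not apply directly; the paper first regularizes the coefficients, invokes the Lipschitz dependence estimate \eqref{Lip-dependence}, and passes to the limit. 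Your proposal should flag and resolve both issues rather than call the step automatic.

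Second, for analyticity you propose resolvent estimates $\|(\lambda-\Lop)^{-1}\|\le C|\lambda|^{-1}$ on a sector plus the contour-integral representation. That route is classical and would succeed, but it requires proving a fresh family of elliptic Schauder estimates uniform over the sector, which is a nontrivial amount of additional work beyond what you have already set up. The paper takes a cheaper route: introduce a complex time-scaling parameter $\tau$ in $u_t=\tau(\dots)$ and observe that the \emph{same} parabolic fixed-point contraction continues to hold for $\tau$ in a small complex sector about $\tau_0>0$, giving holomorphy of the solution map in $\tau$ essentially for free. This reuses the machinery already in place and also dovetails with Remark~\ref{analytic-semigroup}'s weakened notion of analytic semigroup (which deliberately drops strong continuity at $t=0$ in the big H\"older space). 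If you pursue the resolvent route, you must still confront the failure of strong $C^\halpha$-continuity at $t=0$ and adopt the same weakened definition; the contour integral itself does not rescue that.
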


\begin{remark}[Analytic Semigroups] \label{analytic-semigroup}
(a) By an analytic semigroup in $C^\halpha$, 
we mean a family $t\in [0,\infty`[ \longrightarrow S(t)$
of linear operators on $C^\halpha$ 
satisfying $S(t_1+t_2) = S(t_1)S(t_2)$ for all $t_1,t_2 \ge 0$,
which extends to a complex sector and is holomorphic in the interior
of that sector, satisfies a  bound  $\|S(t)\|\le C e^{C \re t}$,
and for which $(t,x) \mapsto S(t)u_0(x)$
is in $C^\halpha$ with respect to $t$ and $x$, including at $t=0$.
(As $t \to 0$, this implies $S(t)u_0 \to u_0$ uniformly, but not necessarily
in the $C^\halpha$ norm as would be required for the standard
definition of analytic semigroup, which includes strong continuity at $t=0$.)

The analogous definition applies for an analytic semigroup in~$\BC$;
the continuity requirement in this case is that the mapping $(t,x)\mapsto
S(t)u_0(x)$ is in $\BC$ with respect to both variables, including $t=0$.
This still implies $S(t)u_0\to u_0$ pointwise (but not necessarily
uniformly) as $t\to0+$.

(b) Indeed, strong continuity at $t=0$ cannot hold in the space $C^\halpha$
because, for $u_0$ {\em not\/} in the closure of $C^\infty$ within $C^\halpha$,
smoother functions like $S(t)u_0$ cannot converge to $u_0$ in the
$C^\halpha$-norm.  Our estimates do imply that
$\|S(t)u_0-u_0\|_{L^\infty}\to0$ as $t\to0$. Moreover, if $u_0\in C^\hbeta$
with $\hbeta>\halpha$, then $\|S(t)u_0-u_0\|_{C^\halpha}\to0$ as~$t\to0$,
because by two independent easy estimates,
$$
\frac{|u(t,x')-u_0(x') - u(t,x)+u_0(x)|}{|x'-x|^\halpha} \le \min\left\{
\begin{array}{c}
  ([u(t)]_{x;\hbeta} +[u_0]_{x;\hbeta}) |x'-x|^{\hbeta-\halpha} \,,
\\[1ex]
   2\|u(t)-u_0\|_{L^\infty}/|x'-x|^\halpha
\end{array}
\right\}
$$
and this minimum tends to~0 uniformly in space as $t\to0$.

(c) Alternatively we could work in the `little H\"older spaces'~$\oC^\halpha$,
the closure of $C^\infty$ in $C^\halpha$ instead of $C^\halpha$ itself. The
estimates are the same, and the changes in the proofs are marginal.
In $\oC^\halpha$, the strong continuity of the semigroup is restored, because
$u_0\in\oC^\halpha$ can be approximated by $\phi\in C^\hbeta$ for arbitrary
$\hbeta\in`]\halpha,1`[$, and using the standard estimate
$$
\|S(t)u_0-u_0\|_{C^\halpha} \le
\|S(t)\phi-\phi\|_{C^\halpha} + \|S(t)(u_0-\phi)\|_{C^\halpha}  +
\|u_0-\phi\|_{C^\halpha}
\;.
$$

(d) These observations ensure that solutions are continuous up to $t=0$ 
and the initial condition is understood in the obvious sense, albeit 
(for the classical H\"older spaces) not in the functional analytic sense
of norm continuity. Basically, these
subtleties at $t=0$ are of no concern to us as we are interested in
asymptotics for $t\to\infty$; on a technical level, this is borne out
by taking iterative discrete time steps, and the finite time flow map
has all the continuity, and even smoothness, we want, as elaborated
upon at the end of the proof of~Thm.~\ref{linearth}.
\end{remark}

\begin{proof}[Proof of Thm.~\ref{linearth}]
We turn to Equation~\eqref{linear} on the uniform manifold, i.e.,
$$
u_t - \d_{ij}^2 (a^{ij} u) - \d_i (b^i u) - c u = f +\d_i g^i
\;,\qquad u(x,0) = u_0(x)
\;,
$$
where the data $a^{ij},b^i,c,f,u_0$ are H\"older continuous,
and introduce  a
small parameter $\gamma$ to be chosen later. By the
H\"older continuity of
the inital data $u_0$ there exists $\delta$ such that
$|a^{ij}(x)-a^{ij}(y)|\le\gamma$
if $d(x,y) < \delta$.
We may decrease $\delta$ such that $3\delta$ is still a radius of uniformity.

We use a partition of unity $\sum\vareta_l^2$ subordinate to a locally finite
covering with balls of diameter $\delta$, with local coordinate maps $\chi_l$
from these balls into $\Rn$.
As the $a^{ij}$ are almost constant on these balls, we can, after an affine
change of coordinates, achieve that $a^{ij}-\delta^{ij}$ is $L^\infty$-small on
these balls.  This introduces constants that can be bounded in terms of the
ellipticity parameters $\lambda,\Lambda$.
We decompose~$u$ by
$$
\Tst
v_l := (\vareta_l u)\circ\chi_l^{-1}, \qquad
u = \sum_l \vareta_l (v_l\circ\chi_l)   \;.
$$
In order to avoid clogging up the calculation with coordinate maps we define
$\eta_l:=\vareta_l \circ\chi_l^{-1}$ and $u_l:= u\circ\chi_l^{-1}$ for the
pullbacks of the partition of unity and of the function $u$ to the respective
coordinate patches in~$\Rn$. This way, in the equations below, the coefficient
functions as well as the quantities indexed with $l$ live on $\RnT$ (or, in the
case of $u_l$, a subset thereof), whereas the function $u$ lives on $\UMT$.

Then $v_l$ satisfies the initial condition $v_l(0)=\eta_l u_{0,l}$ and
\begin{equation}\label{BFP-localization}
\begin{array}{r@{}l}  \Dst
\d_t v_l - \Laplace v_l  &\Dst\mbox{} = \eta_l \d_t u_l -\Laplace(\eta_l u_l)
\\[1ex]&\Dst\mbox{} =
\d_{ij}^2 \bigl[ (a^{ij}- \delta^{ij})  \eta_l  u_l   \bigr] +
\d_i\bigl[ b^i\eta_l u_l - 2 a^{ij}(\d_j\eta_l) u_l + g^i\eta_l \bigr]
\\[1ex]&\Dst\kern1.2em\mbox{}
+ c\eta_l u_l + f \eta_l - g^i (\d_i\eta_l)
+ \bigl[a^{ij} \d_{ij}^2 \eta_l - b^i (\d_i\eta_l) \bigr] u_l
\;.
\end{array}
\end{equation}
On the other hand, given $u$, we can use this equation to define the $v_l$, and
hence a mapping $\mapT: u\mapsto \sum(\eta_l v_l)\circ \chi_l$, for which we
want to find a fixed point by means of the  Banach fixed point theorem (i.e.,
contraction mapping principle)
and Lemma \ref{constant}. On the support of $\eta_l$, $a^{ij}-\delta^{ij}$ is
small in the $L^\infty$ norm, and otherwise all coefficients are bounded in
$C^\halpha$. The $\eta_l$ are bounded in the $L^\infty$ norm, but their
$C^\halpha$ norm will be large for small~$\delta$.
Given $u$,  \eqref{HE-est1} yields, for each~$l$,
\begin{equation}\label{pre-contraction}
\begin{array}{l}
\|v_l\|_{C^\halpha(\RnT)}
\le C \Bigl(
\| \eta_l u_{0,l} \|_{C^\halpha} +
\gamma \|\eta_lu_l\|_{C^\halpha}
+\|a^{ij}-\delta^{ij}\|_{C^\halpha}\|\eta_lu_l\|_{L^\infty}
\\[1.5ex] \kern7em \mbox{}
+T^{1/2} \|\eta_l\|_{C^{2,\halpha}}
    \|(a^{ij},b^i,c)\|_{C^\halpha} \|u_l\|_{C^\halpha}
\\[1.ex] \kern7em \mbox{}
+ T^{1-\frac{\halpha}2}\|\eta_l\|_{C^\halpha}\|f\|_{C^\halpha}
+ T^{\frac12}\|\eta_l\|_{C^{1,\halpha}}\|g^i\|_{C^\halpha}
\Bigr)
\,,
\end{array}
\end{equation}
where we have used the fine algebra estimate \eqref{c2} on the term with the
highest derivatives, but the simpler estimate $\|uw\|_{C^\halpha}\le
C\|u\|_{C^\halpha}\,\|w\|_{C^\halpha}$ suffices for the other terms
since their coefficients can be made small by choosing $T$ small.
Since $u_l$ is the restriction of $\sum_k \eta_k^2 u_k^{}$ to the $l^{th}$
coordinate patch, and at most a fixed number $N$ of terms (given by the ball
packing Lemma~\ref{Lem-ballpacking}) contributes to this sum for each~$l$, we can
estimate $\|u_l\|_{C^\halpha}\le NC_\delta\|u\|_{C^\halpha(\UMT)}$.

We want this estimate \eqref{pre-contraction} for later reference, but we can
strengthen it immediately: namely, a similar estimate based on~\eqref{HE-est2}
is possible with  only  the $L^\infty$ norms of
$b,c$ instead of the H\"older norm; and also only the $L^\infty$ norms
of~$f,g$, and correspondingly smaller exponents for $T$.

Similarly, given two sources $u$ and $\bar u$ for~\eqref{BFP-localization},
with the same initial trace $u_0$, we get for the difference $v_l-\bar v_l$ of
the solutions
the same estimate with $u$ replaced by $u - \bar u$ on the right hand side
and $u_0=0$, $f=0=g$.  Thus
\begin{equation}\label{aux-contraction}
\begin{array}{r@{}l}
\|\eta_l (v_l-\bar v_l)\|_{C^\halpha(\RnT)}
&\mbox{}\le
\|v_l-\bar v_l\|_{C^\halpha(\RnT)} +
\|\eta_l\|_{C^\halpha(\Rn)} \|v_l-\bar v_l\|_{L^\infty(\RnT)}
\\[1ex]&\mbox{}\le
(1+T^{\halpha/2}\|\eta_l\|_{C^\halpha(\Rn)})
  \|v_l-\bar v_l\|_{C^\halpha(\RnT)}
\end{array}
\end{equation}
and similar with products $\eta_l\eta_k$ of cutoff functions.
Recall that $\|\mapT u\|_{C^\halpha(\UMT)} := \sup_k
\|\sum_l\eta_lv_l\eta_k\|_{C^\halpha(\RnT)}$. Here, for each $k$, the sum
consists of at most $N$ terms, where $N$ describes  the {\em uniform\/} local
finiteness of the partition $\{\vareta_l^2\}$ as given by
Lemma~\ref{Lem-ballpacking}.

Therefore, in order to show that the map $u \mapsto \mapT u:= \sum (\eta_l
v_l)\circ\chi_l$ is a contraction, with contraction constant $\vartheta<1$,  we
need to prove a similar contraction estimate (with smaller contraction
constant~$\vartheta/N$) for the maps $u\mapsto v_l$.
Choosing  $\gamma=1/(2CN)$ we determine the necessary $\delta$ and obtain
a (large but fixed) bound for $\sup_l\|\eta_l\|_{C^{k,\halpha}}$.
All terms in \eqref{pre-contraction} that contain potentially large norms of
$\eta_l$ are multiplied by a power of $T$, which we choose small  to compensate
for the norm of the cutoff ($u_0$ doesn't occur in the difference, and we again
note that
$\|\bar u-u\|_{L^\infty(\UMT)} \le T^{\halpha/2}\|\bar u-u\|_{C^\halpha(\UMT)}$
because $(\bar u - u)|_{t=0}=0$).
So we conclude, for $T$ sufficiently small
(independent of~$u_0$), that $\mapT$ is
a contraction (globally on all of~$C^\halpha(\UMT)$).
An iterative procedure gives existence and uniqueness.

An estimate
\begin{equation} \label{ball-estimate2}
   \| u \|_{C^\halpha(\UMT)} \le C
    \left( \| u_0 \|_{C^\halpha(\UM)} + \| f \|_{C^\halpha(\UMT)} +
    \| g \|_{C^\halpha(\UMT)} \right)
\end{equation}
similar to \eqref{ball-estimate} in
Theorem \ref{linearth} holds because
Equation~\eqref{pre-contraction} implies
\begin{equation}\label{Hoelder-ball-est}
\begin{array}{l}
\|\mapT u\|_{C^\halpha(\UMT)}
\le
\frac12 \|u\|_{C^\halpha(\UMT)}
 + CN  T^{1-\frac\halpha2} C_\delta\|f\|_{C^\halpha}
 + CN  T^{\frac12} C_\delta\|g\|_{C^\halpha} +
\\[1ex]\kern6em\mbox{}
 + CN \bigl( C_\delta \|u_0\|_{C^\halpha} + C_\delta\|u\|_{L^\infty} +
             T^{1/2}C_\delta \|u\|_{C^\halpha} \bigr)
\,,
\end{array}
\end{equation}
with constants now depending on norms of $a^{ij},b^i,c$. We again estimate
$\|u\|_{L^\infty} \le \|u_0\|_{L^\infty} + T^{\halpha/2}
\|u-u_0\|_{C^\halpha}\le (1+T^{\halpha/2})\|u_0\|_{C^\halpha} +
T^{\halpha/2}\|u\|_{C^\halpha}$ and solve for $\|\mapT
u\|_{C^\halpha}=\|u\|_{C^\halpha}$.
Likewise, by using the strengthened version of~\eqref{pre-contraction}, based
on~\eqref{HE-est2}, we get \eqref{ball-estimate}.

Next, we prove the regularization estimate.
Using~\eqref{HE-est3} instead of~\eqref{HE-est1}, we can construct a local
solution by Banach's fixed point theorem as before, with the analog
of~\eqref{pre-contraction} involving
$\|\eta_lu_0\|_{L^\infty(\UM)}\le\|u_0\|_{L^\infty(\UM)}$  instead of
$\|\eta_lu_0\|_{C^\halpha(\UM)}$, and otherwise $\|\cdot\|_{C^\halpha(\UMT)}^*$
instead of $\|\cdot\|_{C^\halpha(\UMT)}$.
A slight modification of~\eqref{aux-contraction} is needed: we use that
$\|\eta_l\|_{C^\halpha(\RnT)}^*\le
\max\{1, T^{\halpha/2}\|\eta_l\|_{C^\halpha(\Rn)}\}$.
Similarly, in the analog of~\eqref{pre-contraction}, the term
$\|a^{ij}-\delta^{ij}\|_{C^\halpha(\RnT)}^*\|\eta_lu_l\|_{L^\infty}$
can be estimated by
$\max\{\gamma, T^{\halpha/2}\|a^{ij}-\delta^{ij}\|_{C^\halpha(\Rn)}\}\,
\|\eta_lu_l\|_{C^\halpha(\RnT)}^*$.

This argument proves
the short term existence and uniqueness for $L^\infty$ initial
data; it also provides, as in
\eqref{Hoelder-ball-est}, an analog to \eqref{ball-estimate}:
\begin{equation}\label{ball-estimate*}
\| u \|_{C^\halpha(\UMT)}^* \le C
    \left( \| u_0 \|_{L^\infty(\UM)} +
    \| f \|_{C^\halpha(\UMT)}^* +     \| g \|_{C^\halpha(\UMT)}^*
\right)
\,,
\end{equation}
from which
$\| u(t) \|_{C^\halpha(\UM)} \le C t^{-\halpha/2} \| u_0 \|_{L^\infty(\UM)}$
follows immediately when $f=0$, $g=0$.

Let us briefly note that these same estimates can also be used for
differences $u_n- u_m$ of solutions with, say, the same
initial data $u_0$ and with $f=0=g$, but different coeeficients
$a^{ij}_n, b^i_n, c_n$  and
$a^{ij}_m, b^i_m, c_m$  respectively. They will
then provide in analogy to \eqref{pre-contraction} that
$$
\begin{array}{l}
\|v_{ln}-v_{lm}\|_{C^\halpha(\RnT)} \le \mbox{}
\\[1.5ex] \kern4em \mbox{}
\le C \Bigl(
\gamma \|\eta_l(u_{ln}-u_{lm})\|_{C^\halpha}
+\|a^{ij}_n-\delta^{ij}\|_{C^\halpha}\|\eta_l(u_{ln}-u_{lm})\|_{L^\infty}
\\[1.5ex] \kern7em \mbox{}
+T^{1/2} \|\eta_l\|_{C^{2,\halpha}}
    \|(a^{ij}_n,b^i_n,c_n)\|_{C^\halpha} \|u_{ln}-u_{lm}\|_{C^\halpha}
\\[1.5ex] \kern7em \mbox{}
+ 2\gamma \|\eta_l u_{lm}\|_{C^\halpha}
+\|a^{ij}_n-a^{ij}_m\|_{C^\halpha}\|\eta_l u_{lm}\|_{L^\infty}
\\[1.5ex] \kern7em \mbox{}
+T^{1/2} \|\eta_l\|_{C^{2,\halpha}}
    \|(a^{ij}_n-a^{ij}_m,b^i_n-b^i_m,c_n-c_m)\|_{C^\halpha}
    \|u_{lm}\|_{C^\halpha}
\Bigr)
\;.
\end{array}
$$
Then, our estimate, applied to the respective fixed points $u_n$ and $u_m$,
provides continuous dependence of the solution on the coefficients:
\begin{equation}\label{Lip-dependence}
\|u_n-u_m\|_{C^\halpha(\RnT)} \le C \|u_m\|_{C^\halpha(\RnT)}
\,
    \|(a^{ij}_n-a^{ij}_m,b^i_n-b^i_m,c_n-c_m)\|_{C^\halpha}
\;.
\end{equation}

The comparison statement (nonnegative initial data give nonnegative
solutions) would follow by commuting multiplication and
differentiation and classical maximum principles if the coefficients
were twice continuously differentiable, and if the manifold were
compact. The first obstacle can easily be dealt with by
regularization: Approximate the $a^{ij},b^i$ uniformly by smooth
$\tilde{a}^{ij}_n, \tilde{b}^i_n$
with bounded $C^\halpha$ norms, and such that the
$\tilde{a}^{ij}_n, \tilde{b}^i_n$ converge to $a^{ij},b^i$ in a
(weaker) $C^\beta$ norm, with $0<\beta<\halpha$. The corresponding solutions
$\tilde u_n$ then converge to $u$ in the $C^\beta$ norm by the continuous
dependence estimate \eqref{Lip-dependence}.

So we now write, for smooth coefficients in local coordinates
$$
u_t -  a^{ij} \d_{ij}^2 u - \tilde b^i \d_i u - \tilde c u = 0
\;.
$$
Replacing $u$ by $e^{-\lambda t} u  $ if necessary, we may assume
that $\tilde c \le 0$.  Let $\rho$ be the radius function constructed in
Lemma~\ref{growth}. Then  $- \eps (\mu t + \rho )$
is a nonpositive subsolution, provided
$\mu$ is sufficiently big. Hence, if $u_0$ is nonnegative and $f=0$, $g=0$,
then $u + \eps (\mu t +\rho) $ is a supersolution which tends to
$\infty$  as $ x \to \infty$ and which is nonnegative at $t=0$.  By
the maximum principle it does not assume its infimum for $0 < t \le
T$ and hence it is nonnegative. We let $\eps \to 0$ to arrive at the
desired conclusion.

The construction for short time~$T$ can be iterated to get global existence in
time, and hence a semigroup.
The fact that we have an {\em analytic\/} semigroup follows from a 
standard argument: We introduce 
a complex parameter $\tau$ and study, in a space of complex functions,
\begin{equation} \label{lineartau}
u_t = \tau \Bigl(    \d_{ij}^2 (a^{ij} u) + \d_i
  (b^i u) + c u \Bigr)  \;, \qquad u(x,0) = u_0(x)
\;.
\end{equation}
The previous arguments work for all $\tau>0$. We rewrite the equation as
$$
u_t -    \tau_0\bigl(\d_{ij}^2 (a^{ij} u)  - \d_i  (b^i u) - c u   \bigr)
=
(\tau-\tau_0) \bigl(   \d_{ij}^2 (a^{ij} u) - \d_i  (b^i u) - c u
\bigr)
\,.
$$
The right hand side is analytic in $\tau$.
We obtain an analytic dependence on $\tau$, which is equivalent to having 
an analytic semigroup -- as here we don't require norm continuity at time $t=0$.
Specifically, the same estimates we have made for $\tau=1$ apply to the present
case, for $\tau\in\C$, as long as $|\tau-\tau_0|/\tau_0$ is sufficiently small
(with the right hand side not going into~$f$, but distributing over all the
other terms in~\eqref{pre-contraction}).  So our estimates extend to a small
sector in the complex time-plane.

Our estimates using the space $C^\halpha_*$ for $L^\infty$ initial data 
will swiftly imply 
that  the solution of \eqref{linear} defines an analytic semigroup
on~$\BC(\UM)$, again in the sense of~Remark~\ref{analytic-semigroup}. 
(We make the statement for $\BC$ rather than $L^\infty$ to avoid 
technicalities in which sense initial data are taken on, technicalities that 
would not pertain to the focus of our problem.)

To see this, let us first assume that our initial data $u_0$ are 
{\em uniformly\/} continuous. Then they can be approximated uniformly by 
$C^\halpha$ initial data $u_0^{[k]}$. Using the 
estimate~\eqref{ball-estimate*}, we infer that the corresponding 
solutions~$u^{[k]}$ converge in~$C^\halpha_*$ and hence uniformly to 
a (the) solution $u$ for initial dat $u_0$. But the $u^{[k]}$ themselves were
continuous (actually they are in the unmodified $C^\halpha(\UMT)$), so their 
limit is (uniformly) continuous. 

Non-uniform continuity of $u_0$ can be handled by noting that for any 
continuous weight function $w$ vanishing at infinity, $wu_0$ will be 
{\em uniformly\/} continuous if $u_0\in\BC(\UM)$ is continuous.
Such a weight function can be constructed in a smooth manner based 
on Lemma~\ref{growth}, and then rewriting~\eqref{linear} for the corresponding 
$wu_0$ results in an equation of the same type, so that the result with 
uniformly continuous data for the conjugated equation gives the continuity  
result for the original equation with (not necessarily uniformly) continuous
data~$u_0$. The boundedness is of course maintained from the argument with the 
unconjugated equation. (In the specific situation $\UM=\M$ that interests us 
for the fast diffusion equation, we can take $w=(\cosh s)^{-\eta}$.) 
\end{proof}

Basically the same proof yields
\begin{cor}[Variant for Dirichlet boundary data]\label{linearthDBC}
For equation~\eqref{linear} on a smoothly bounded domain on~$\UM$, with
Dirichlet boundary conditions $u=0$ and $C^\halpha$ initial data, there exists
a unique solution in~$C^\halpha$, and estimate~\eqref{ball-estimate} continues
to apply, as well as all the other conclusions from Thm.~\ref{linearth}.
\end{cor}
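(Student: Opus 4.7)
The plan is to run exactly the argument of Theorem~\ref{linearth}, but with a partition of unity adapted to the boundary so that the half-space estimates of Corollary~\ref{DBCconstant} replace the full-space estimates of Lemma~\ref{constant} in the coordinate patches that meet $\partial\Omega$. Concretely, I would cover the bounded domain $\Omega \subset \UM$ by finitely many patches of two kinds: \emph{interior patches}, contained in $\Omega$ and treated exactly as in the proof of Theorem~\ref{linearth}; and \emph{boundary patches}, centered at points of $\partial\Omega$, in which a local coordinate chart (obtained by composing $\chi_{x_l}$ with a smooth boundary-straightening diffeomorphism available because $\partial\Omega$ is smooth) maps the patch to a neighbourhood of the origin in a half-space, sending $\partial\Omega$ to the bounding hyperplane. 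This boundary-straightening has bounded $C^{k,\halpha}$ distortion, so the localized coefficients $a^{ij},b^i,c$ retain their $C^\halpha$ bounds, and we may again shrink the diameter of the patches so that $|a^{ij}-\delta^{ij}|<\gamma$ on each one.

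Next I would repeat the localization \eqref{BFP-localization}: for $\vareta_l$ supported in a boundary patch, the cutoff $v_l = \vareta_l u$ vanishes on the straightened boundary (since $u$ does), so $v_l$ satisfies a half-space heat equation with Dirichlet data and the same inhomogeneous right-hand side as in \eqref{BFP-localization}. Corollary~\ref{DBCconstant} then gives the Schauder bounds \eqref{HE-est1}--\eqref{HE-est3} for each such $v_l$, with constants identical to the interior case. From here the Banach fixed-point step, the estimate \eqref{pre-contraction}, its summation over the finitely many patches, and the strengthened versions for $L^\infty$ data and the $C^\halpha_*$ scale all go through verbatim, yielding existence, uniqueness, the estimate \eqref{ball-estimate}, the regularization estimate \eqref{ball-estimate*}, and analyticity in the complex-time parameter $\tau$ as in \eqref{lineartau}.

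For the comparison principle, the proof simplifies compared to Theorem~\ref{linearth}: after regularizing the coefficients and reducing to the case $\tilde c \le 0$ by absorbing an exponential factor, $\Omega$ is bounded, so no auxiliary function $\varepsilon(\mu t + \rho)$ is needed to control behaviour at infinity. The classical strong maximum principle for smooth parabolic equations on a bounded domain with Dirichlet data applies directly to the regularized problem, and passing to the limit in the $C^\hbeta$ topology via \eqref{Lip-dependence} (whose derivation is insensitive to the boundary condition) transfers nonnegativity to $u$. The analytic semigroup property on $C^\halpha(\bar\Omega)$ and on $\BC(\bar\Omega)$, together with the regularization estimate $\|u(t)\|_{C^\halpha}\le Ct^{-\halpha/2}\|u_0\|_{L^\infty}$, then follow by the same iteration and approximation arguments used at the end of the proof of Theorem~\ref{linearth}; the construction of a uniformly continuous approximating weight is in fact easier since $\bar\Omega$ is compact.

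The one step that requires a moment of care, and which I would expect to be the main technical irritation, is verifying that the $C^\halpha$ and $C^\halpha_*$ norms on $\bar\Omega$ defined via the boundary-adapted partition of unity are equivalent to any intrinsic formulation (for example, the analogue of \eqref{Hoelder-norm-global} using the geodesic distance on $\bar\Omega$), and that the cutoffs $\vareta_l$ supported in boundary patches have $C^{k,\halpha}$ norms on the straightened half-space controlled independently of the patch. Both points reduce to smoothness of $\partial\Omega$ and compactness of $\bar\Omega$, so the dependence of constants can be absorbed into the $C$ of \eqref{ball-estimate}; no genuinely new analytic ingredient is required beyond Corollary~\ref{DBCconstant}.
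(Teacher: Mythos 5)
Your proposal is correct and matches the paper's intent: the paper itself offers only the one-line remark ``Basically the same proof yields,'' and your write-up supplies exactly the expected elaboration --- interior patches handled as in Theorem~\ref{linearth}, boundary patches straightened and fed through Corollary~\ref{DBCconstant}, the cutoffs $\vareta_l u$ inheriting the zero Dirichlet trace, and the comparison principle simplifying because $\Omega$ is bounded. The points you flag as needing ``a moment of care'' (uniformity of the boundary-adapted cutoffs and equivalence of the localized norms) are genuine but routine on a compact $\bar\Omega$ with smooth boundary, as you say.
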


Now we can tackle the {\em short-term\/} estimates for the nonlinear equation
by means of a slight modification of the proof of the linear theorem:

\begin{theorem}[Short time smooth dependence on data] \label{existence}
  Suppose equation \eqref{para} can be written in local
  coordinates as \eqref{adjoint}, namely
  $$
    u_t = \d_{ij}^2 ( f^{ij} (x,u)) +  \d_i (b^i(x,u))  + c(x,u)
  $$
  with nonlinearities defined for $u$ in an open interval $U\subset\R$,
  and that this equation  is uniformly both parabolic
  and analytic.  Let $0< \halpha <1$, 
  and choose two subintervals
  $V \subset \subset  W \subset \subset U$.
  There exists $T>0$ depending  on $V$ and $W$, $\|u_0\|_{C^\halpha(\UM)}$, and
  the nonlinearities of the operator,
  such that for $u_0 \in C^\halpha(\UM)$ with $u_0(\UM) \subset V$ there exists
  a unique weak solution $u \in C^\halpha(\UMT)$
  to \eqref{adjoint}. Its values are in $W$.
  For every $u_0\in C^\halpha(\UM)$ with range in $V$, there exists an 
  $R$-ball in $C^\halpha(\UM)$ and a $T$ such that the map
  $$
  C^\halpha(\UM)\supset B_R(u_0) \ni u_0 \longmapsto u \in C^\halpha(\UMT)
  $$
  is analytic.
\end{theorem}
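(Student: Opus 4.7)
The plan is to recast \eqref{adjoint} as a fixed-point problem for the deviation $\tilde u := u - u_0$ and then to invoke the analytic implicit function theorem. Freeze coefficients at $u_0(x)$ by setting $a^{ij}(x) := \partial_u f^{ij}(x,u_0(x))$, $\beta^i(x) := \partial_u b^i(x,u_0(x))$ and $\gamma(x) := \partial_u c(x,u_0(x))$, and rewrite the equation as
$$\tilde u_t - \partial_{ij}^2(a^{ij}\tilde u) - \partial_i(\beta^i \tilde u) - \gamma \tilde u = \partial_{ij}^2 G^{ij}(x,\tilde u) + \partial_i H^i(x,\tilde u) + K(x,\tilde u) + \mathcal S(x),$$
with $\tilde u(0)=0$, where $G^{ij},H^i,K$ collect the second and higher order parts of the Taylor expansions of $f^{ij},b^i,c$ in $\tilde u$ about $u_0$, and $\mathcal S$ is a source depending only on $u_0$ (accounting for $u_0$ not being a stationary solution). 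By the uniform analyticity hypothesis each of $G^{ij},H^i,K$ has the form $h(x,\tilde u)\tilde u^2$ with $h$ analytic in both entries, so Lemma~\ref{calculus}, in particular \eqref{c4}, makes the map $\tilde u \mapsto (G,H,K)$ analytic from $C^\halpha(\UMT)$ into itself on a neighbourhood of $0$.

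Applying Theorem~\ref{linearth} to the uniformly parabolic linear operator on the left yields a bounded solution operator $\Phi$ sending sources and initial data to the unique $C^\halpha$ solution. Define the self-map $\mathcal M(\tilde u) := \Phi(0;\mathcal S, G(\cdot,\tilde u), H(\cdot,\tilde u), K(\cdot,\tilde u))$ on a small ball in $C^\halpha(\UMT)$. Two independent sources of smallness turn $\mathcal M$ into a contraction once $T$ is small: the explicit $T$-factors in front of the lower order terms in \eqref{HE-est1}, and the vanishing initial trace $\tilde u|_{t=0}=0$ which gives $\|\tilde u\|_{L^\infty(\UMT)}\le T^{\halpha/2}\|\tilde u\|_{C^\halpha(\UMT)}$, whence Lemma~\ref{calculus}~\eqref{c3} delivers quadratic smallness of the nonlinear sources in $C^\halpha$. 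Banach's fixed-point theorem then yields a unique local solution, and the same $L^\infty$ smallness guarantees $u(\UMT)\subset W$ provided $T$ is chosen with $\|\tilde u\|_{L^\infty} < \operatorname{dist}(V,\partial W)$.

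For analyticity in $u_0$, consider
$$\Psi: B_R(u_0^*)\times B_r(0) \longrightarrow C^\halpha(\UMT), \qquad (u_0,\tilde u)\longmapsto \tilde u - \mathcal M_{u_0}(\tilde u),$$
viewed as a function of the parameter $u_0 \in C^\halpha(\UM)$. The map $\mathcal M_{u_0}$ depends analytically on $u_0$ because (i) $u_0 \mapsto (a^{ij},\beta^i,\gamma,\mathcal S)$ is analytic by the composition estimate Lemma~\ref{calculus}~\eqref{c1} together with the uniform analyticity of $f^{ij},b^i,c$; (ii) the linear solution operator $\Phi$ depends analytically on its $C^\halpha$ coefficients, via a Neumann expansion of $(\partial_t - \mathcal L)^{-1}$ about a reference $(\partial_t - \mathcal L_0)^{-1}$ that converges in operator norm on $C^\halpha(\UMT)$ when the coefficient perturbation is $C^\halpha$-small; and (iii) $\tilde u \mapsto (G,H,K)$ is analytic by Lemma~\ref{calculus}~\eqref{c4}. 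At the fixed point $D_{\tilde u}\Psi = I - D\mathcal M_{u_0}$ is invertible because $\mathcal M_{u_0}$ is a contraction, so the analytic implicit function theorem produces an analytic map $u_0 \mapsto \tilde u$, whence $u_0 \mapsto u = u_0 + \tilde u$ is analytic as claimed.

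The main obstacle is step (ii), analytic (not merely Lipschitz, as in \eqref{Lip-dependence}) dependence of the linear solution operator on its coefficients. The resolution is that the perturbation $(\mathcal L - \mathcal L_0)$ acts on $u \in C^\halpha(\UMT)$ by producing sources of the form $\partial_{ij}^2((a^{ij} - a^{ij}_0) u)$ and similar lower-order terms; the product estimate Lemma~\ref{calculus}~\eqref{c2} combined with the uniform bound \eqref{ball-estimate} makes $\Phi_0 \circ (\mathcal L - \mathcal L_0)$ a bounded operator on $C^\halpha(\UMT)$ whose norm is controlled by the $C^\halpha$-size of the coefficient perturbation, so the Neumann series converges in a $C^\halpha$-neighbourhood of the reference coefficients; if the bare norm is not yet $<1$, one decomposes $[0,T]$ into sufficiently many sub-intervals and propagates analyticity step by step. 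A secondary point is verifying that the iterated contraction maps preserve not only the ball in $C^\halpha$ but also the range condition into $W$; this is built in by choosing $T$ small enough to control the $L^\infty$-norm via the $C^\halpha$-norm.
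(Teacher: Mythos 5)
Your approach is a genuinely different organization of essentially the same ingredients, and it is sound in outline.  Where the paper keeps a \emph{constant}-coefficient operator $\partial_t-\Laplace$ on the left (after localizing) and throws the entire quasilinear expression, including the close-to-$\delta^{ij}$ coefficient variation, into the source terms that Lemma~\ref{constant} acts on --- extracting the contraction from the uniform oscillation bound $\gamma$ on $\partial_u f^{ij}$ plus $T$-smallness --- you instead freeze the full \emph{variable}-coefficient linear operator at $u_0$, Taylor-expand to isolate a genuinely quadratic remainder, and let the contraction come solely from the quadratic smallness afforded by $\|\tilde u\|_{L^\infty}\le T^{\halpha/2}\|\tilde u\|_{C^\halpha}$ together with Lemma~\ref{calculus}\eqref{c3}.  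Conceptually this is a cleaner modularization.  Your explicit Neumann-series argument for analytic dependence of the linear resolvent on the $C^\halpha$-coefficients is also a useful elaboration of the single-sentence implicit-function-theorem remark that closes the paper's proof, and it is consistent with the fact that \eqref{Lip-dependence} only gives Lipschitz dependence.

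There is, however, one concrete gap: the solution operator $\Phi$ you appeal to is not actually furnished by Theorem~\ref{linearth} as stated.  That theorem admits only sources of the form $f+\partial_i g^i$ (with $f,g\in L^\infty$), whereas your reformulation produces two genuine double-divergence sources, $\partial_{ij}^2 G^{ij}(x,\tilde u)$ and the $u_0$-term $\partial_{ij}^2 f^{ij}(x,u_0)$ inside $\mathcal S$.  You therefore cannot simply cite Theorem~\ref{linearth}; you must first extend it --- by rerunning its localization argument, now using the $f^{ij}$-part of Lemma~\ref{constant} and the identity $\eta_l\,\partial_{ij}^2 F^{ij}=\partial_{ij}^2(\eta_l F^{ij})-\partial_j((\partial_i\eta_l)F^{ij})-\partial_i((\partial_j\eta_l)F^{ij})+(\partial_{ij}^2\eta_l)F^{ij}$ --- to obtain an a priori bound with an additional $\|F^{ij}\|_{C^\halpha}$ term on the right.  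Crucially this extra term carries \emph{no} factor of a positive power of $T$, unlike the $b$- and $c$-contributions in \eqref{HE-est1}; so, as you must keep in mind, the contraction for the principal part has to be extracted entirely from the quadratic structure and the vanishing initial trace, and cannot be helped by shrinking $T$ in the way that works for the lower-order sources.  Once that extension is stated and used consistently (including for $\mathcal S$, which also feeds $\partial_{ij}^2$-data into $\Phi$, and for the coefficient perturbation $\partial_{ij}^2((a^{ij}-a_0^{ij})u)$ in your Neumann expansion), the argument goes through.
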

As stated, as of the present theorem, the existence time might depend on the
H\"older norm of the initial data. However, we will see soon that this is
not the case and that the existence time is influenced by $u_0$ only
through its supremum norm.

\begin{proof}
The first step for the nonlinear problem, namely Equation~\eqref{adjoint},
$$
  u_t = \d_{ij}^2 ( f^{ij} (x,u)) +  \d_i (b^i(x,u))
  + c(x,u)
$$
requires few changes.  We rewrite
the equation again as fixed point problem, this time in an $L^\infty$ ball
about $u_0$, within $C^\halpha$. We choose $\eps_0$ such that a $2\eps_0$
neighbourhood of $V$ still lies in~$W$.
We search a small solution in the form
$u=u_0 + \tilde u$, where $\tilde u$ satisfies an equation of the same type;
$\|\tilde{u}\|_{L^\infty}$ will be assumed to be $\le 2\eps_0$.

Just as in the linear case, the equation for $\tilde u$ can be localized
using the partition of unity  with
$$\Tst
v_l = (\vareta_l  \tilde u)\circ\chi_l^{-1}\;,
\qquad
\tilde u = \sum_l \eta_l (v_l\circ \chi_l)\;.
$$
Again $\tilde u_l:= u\circ\chi_l^{-1}$ and
$\eta_l:=\vareta_l\circ\chi_l^{-1}$.
One easily computes in local coordinates
\begin{equation} \label{local}
   \d_t v_{l} - \Laplace v_l =
   \d_{ij}^2 \tilde f^{ij}_l + \d_i \tilde b^i_l + \tilde c_l
\end{equation}
(with initial data 0) where, by a slight abuse of notation,
$$
\begin{array}{r@{\mbox{}=\mbox{}}l}
\tilde f^{ij}_l  &
    \eta_l \,\bigl(
           f^{ij}(x,u_0 + \tilde u) -
           \delta^{ij} \tilde{u} \bigr)
\,,
\\[1ex]
\tilde b^i_l   &
    \eta_l b^i(x,u_0+\tilde u)
    - 2 (\d_j\eta_l) \, f^{ij}(x,u_0 + \tilde u)
\,,
\\[1ex]
\tilde c_l   &
    \eta_l c(x,u_0+\tilde u)  - (\d_i \eta_l)\, b^i(x,u_0+\tilde u )
    + (\d_{ij}^2 \eta_l)\,f^{ij}(x,u_0+\tilde u)
\;.
\end{array}
$$

Using the considerations for the heat equation \eqref{heateq},
we again aim to construct the solution up to short time~$T$,
by means of the Banach fixed point
theorem applied in a closed subset
$S:=\{\tilde u\mid \|\tilde u\|_{L^\infty}\le2\eps_0\,,\;\|\tilde
u\|_{C^\halpha}\le M\}$
of~$C^\halpha(\UMT)$. Here $M$ is a possibly large constant that will be chosen
shortly and will depend on $\|u_0\|_{C^\halpha(\UM)}$.
The mapping is again
$\mapT: \tilde u \mapsto \sum_l(\eta_lv_l)\circ\chi_l^{-1}$. We first set a
target contraction constant $\vartheta\in`]\frac12,1`[$. We also commit 
a~priori to an upper bound $T_0$ for $T$ and calculate
$\|\mapT 0\|_{C^\halpha(\UMT)}\le
\|\mapT 0\|_{C^\halpha(\UMT{}_{{}_0})}=:A$. Then the choice
$M:= A/(1-\vartheta)$ will turn out to be expedient.

The estimates for the heat equation from Lemma~\ref{constant} imply:
\begin{equation}\label{heatest-a}
\|v_l\|_{C^\halpha(\RnT)} \le C \left(
    \|\tilde{f}_l\|_{C^\halpha}
   + T^{1/2} \|\tilde{b}_l\|_{C^\halpha}
   +T^{1-\halpha/2} \|\tilde{c}_l\|_{C^\halpha}
\right)
\end{equation}
and for differences of solutions
\begin{equation}\label{heatest-b}
\|v_l-\bar v_l\|_{C^\halpha} \le C \left(
   \|\tilde{f}_l-\bar{\tilde{f}}_l\|_{C^\halpha}
   + T^{1/2} \|\tilde{b}_l-\bar{\tilde{b}}_l\|_{C^\halpha}
   +T^{1-\halpha/2} \|\tilde{c}_l-\bar{\tilde{c}}_l\|_{C^\halpha}
\right)
\end{equation}
with
\begin{equation}\label{heatest-c}
\tilde{f}_l-\bar{\tilde{f}}_l
=
\eta_l \int_0^1 \left[
  \d_u f^{ij}(x,u_0+\bar{\tilde u} + s(\tilde u - \bar{\tilde u}))
  -\delta^{ij} \right]\,ds \, (\tilde u - \bar{\tilde u})
\;.
\end{equation}
The constants $C$ in \eqref{heatest-a}, \eqref{heatest-b} are uniform as long
as $u_0$, $u_0+\tilde u$, $u_0+\bar{\tilde u}$ have range within~$W$. All norms
in \eqref{heatest-a}, \eqref{heatest-b} are $C^\halpha(\RnT)$ norms.

Smallness of the oscillation of $\d_uf^{ij}$ over all pertinent arguments
$(x,u)\in\UM\times W$ is crucial for the contraction estimate. Given $\gamma>0$, there exist
$\delta,\eps_1$ such that
$$
|\d_u f^{ij} (x,u) - \d_u f^{ij} (y,\bar u)| \le \gamma
$$
if $d(x,y) < \delta$ and $|u-\bar u|\le\eps_1$.
By an affine change of local coordinates in a ball, we can actually assume
$\delta^{ij}$ to be the value of $\d_u f^{ij}$ at some $(x,u)$. This will make
our constants dependent on the ellipticity parameters $\lambda,\Lambda$, but
does otherwise not affect the estimates. We choose
$\gamma\le\frac{1}{2CN}$ with~$N$
bounding the number of $\delta$-balls that can be packed in a $3\delta$-ball,
a quantity independent of~$\delta$ by Lemma~\ref{Lem-ballpacking}.
Then, \eqref{heatest-c}~implies, with $K$ a bound for
$\|\d_uf^{ij}\|_{L^\infty(\UM\times W)}$ and
$\|\d_u^2f^{ij}\|_{L^\infty(\UM\times W)}$, that
\begin{equation} \label{fdifference}
\begin{array}{l}
\|\tilde{f}_l-\bar{\tilde{f}}_l\|_{C^\halpha(\RnT)}
\le
\gamma\|\eta_l(\tilde u_l-\bar{\tilde u}_l)\|_{C^\halpha(\RnT)} +
\\[1ex]\kern1.5em +
\max_{s\in[0,1]}
\|\d_uf^{ij}(x,u_0+s\tilde u+(1-s)\bar{\tilde u})-\delta^{ij}\|_{C^\halpha}
\|\eta_l(\tilde u_l-\bar{\tilde u}_l)\|_{L^\infty}
\\[1ex]
\kern1em
\le \Bigl(\gamma + K
 \|u_0+s\tilde u+(1-s)\bar{\tilde u}\|_{C^\halpha}+1)
C T^{\halpha/2}\Bigr) \|\tilde u -\bar{\tilde u}\|_{C^\halpha(\UM)}
\,,
\end{array}
\end{equation}
provided the partition of unity is made with balls of diameter%
$\mbox{}<\delta$. With $\delta$ thus fixed, we have bounds for the $C^\halpha$
norms of $\eta_l$ and its derivatives and can use the smallness of~$T$,
dependent also on $K(\|u_0\|_{C^\halpha(\UM)}+M)$, to
ensure from \eqref{heatest-b} and~\eqref{fdifference} that
$\mapT: \tilde{u}\mapsto \sum(\eta_l v_l)\circ\chi_l^{-1}$ is a contraction
from  $S\cap C^\halpha(\UMT)$ to $C^\halpha(\UMT)$ with contraction
constant~$\vartheta$.

From~\eqref{heatest-a}, we get a fixed (possibly large) bound
for~$\mapT \tilde u:= \|\sum(\eta_l v_l)\circ\chi_l^{-1}\|_{C^\halpha}$, and
again the smallness of~$T$ ensures
that $\|\mapT \tilde u\|_{L^\infty}\le 2\eps_0$.
We also estimate
$$
\|\mapT\tilde u\|_{C^\halpha(\UMT)} \le
\|\mapT\tilde u-\mapT 0\|_{C^\halpha(\UMT)} +
\|\mapT 0\|_{C^\halpha(\UMT)}
\le \vartheta M + A = M
\;.
$$
This guarantees the applicability of
Banach's fixed point theorem, and hence warrants local existence.

The same solution can also be obtained by means of the implicit function
theorem, because Thm.~\ref{linearth} guarantees a bounded inverse for the
linearization operator. The analytic dependence of the equation on the data
then leads to the analytic dependence of the solution on the initial data and parameters.
\end{proof}

We will want to control the deviation of the nonlinear flow from the linear
flow, at least for short times. Contingent upon second order  differentiability
of the flow in the appropriate function space~$C^\halpha$, this deviation
should be of quadratic order in the norm of the space.
But we will need an estimate that involves the weaker
$L^\infty$-norm.  So we claim:
\begin{lemma}[Linear approximation of nonlinear
    semiflow]\label{quadratic-error}
\hskip 0pt plus 8pt
Let $\bar w$ solve the homogeneous linear equation~\eqref{linear}, namely
$(\d_t-\Lop)w=0$ for initial data $w_0$, where in local coordinates, $\Lop w=
\d_{ij}^2 (a^{ij} w) + \d_i (b^i w) + c w$. Let $w$ solve the quasilinear
equation $(\d_t-\Lop)w=\Lopmod(f(w)w)$ for the same initial data $w_0$,
where in local coordinates $\Lopmod=\Lop + \d_i\circ \tilde b^i + \tilde c$
and $f$ is a smooth function from an interval about 0 into $\R$ satisfying
$f(0)=0$. Assume the coefficients are smooth.

Then, for sufficiently short time~$T$ and sufficiently small
$\|w_0\|_{L^\infty}$ 
(dependent on the same quantities as in Thm.~\ref{existence}),
there exists a constant~$K$ (uniform as
$T\to0$) 
such that we have the estimate
\begin{equation}\label{quadratic-est1}
\|w-\bar w\|_{C^\halpha(\UMT)} \le K \|w\|_{C^\halpha(\UMT)}
\|w\|_{L^\infty(\UMT)}
\end{equation}
and from it the time-step estimate (with a different $K$):
\begin{equation}\label{quadratic-est2}
\|\bar w(T)-w(T)\|_{C^\halpha(\UM)}\le
K \|w_0\|_{C^\halpha(\UM)}\, \|w_0\|_{L^\infty(\UM)}
\;.
\end{equation}
\end{lemma}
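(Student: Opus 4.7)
The plan is to set $d := w - \bar w$. By construction $d$ has vanishing initial data and satisfies
\[
(\d_t - \Lop) d = \Lopmod\bigl(f(w)\,w\bigr).
\]
Unfolding $\Lopmod = \Lop + \d_i \circ \tilde b^i + \tilde c$ in local coordinates, the right-hand side is exactly a divergence-form source of the kind treated in \eqref{linear}, namely
\[
\d_{ij}^2\!\bigl(a^{ij} f(w)w\bigr) + \d_i\bigl((b^i+\tilde b^i)\,f(w)w\bigr) + (c+\tilde c)\,f(w)w.
\]
I would therefore apply the inhomogeneous $C^\halpha$-Schauder bound \eqref{ball-estimate2} of Theorem~\ref{linearth} to~$d$ (with $u_0 = 0$), reducing matters to estimating $\|f(w)w\|_{C^\halpha(\UMT)}$.

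The crucial quadratic gain comes from $f(0)=0$: since $f$ is smooth, $f(w)=w\,h(w)$ for some smooth~$h$, so $f(w)w = h(w)\,w^2$. The algebra estimate \eqref{c3} of Lemma~\ref{calculus} (with the roles there $g = w$, and \eqref{c2} used once to absorb the smooth coefficient factors $a^{ij}, b^i + \tilde b^i, c + \tilde c$) then gives
\[
\|f(w)w\|_{C^\halpha(\UMT)} \le C\bigl(\|w\|_{C^\halpha(\UMT)}\bigr)\,\|w\|_{L^\infty(\UMT)}\,\|w\|_{C^\halpha(\UMT)}.
\]
Under the hypothesis that $\|w\|_{L^\infty}$ and $T$ are small, the prefactor $C(\|w\|_{C^\halpha})$ is controlled by a fixed constant via the short-time existence ball from Theorem~\ref{existence}. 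Combining this with the Schauder estimate for $d$ yields \eqref{quadratic-est1} with a constant $K$ uniform as $T \to 0$, because the $T$-powers in \eqref{ball-estimate2} only help.

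For the time-step estimate \eqref{quadratic-est2}, I would specialize \eqref{quadratic-est1} to the slice $t=T$ and then replace the two norms of $w$ on the right-hand side by norms of $w_0$. The bound $\|w\|_{C^\halpha(\UMT)} \le K'\,\|w_0\|_{C^\halpha(\UM)}$ is immediate from the fixed-point construction in Theorem~\ref{existence}, which produces $w$ inside a ball of radius $M = A/(1-\vartheta)$ proportional to $\|w_0\|_{C^\halpha}$ once $T$ is small. The main obstacle is upgrading this to the linear-in-$\|w_0\|_{L^\infty}$ bound $\|w\|_{L^\infty(\UMT)} \le K'\,\|w_0\|_{L^\infty(\UM)}$, since Theorem~\ref{existence} only pins $\|w\|_{L^\infty}$ inside a neighbourhood of size~$2\eps_0$. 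I would close this via a small-data bootstrap: the linear semigroup $\Sop$ associated with $\Lop$ obeys $\|\Sop(t) w_0\|_{L^\infty} \le e^{Ct}\|w_0\|_{L^\infty}$ by the comparison statement of Theorem~\ref{linearth} (applied after the usual shift removing $c$'s sign), while the nonlinear Duhamel correction $w - \Sop(\cdot)w_0$ is controlled by \eqref{quadratic-est1} (with $\bar w = \Sop(\cdot)w_0$), which is quadratic in $w$ and thus absorbable on the left for $T$ and $\|w_0\|_{L^\infty}$ small enough. This gives $\|w\|_{L^\infty(\UMT)} \le 2\|w_0\|_{L^\infty(\UM)}$, and substitution into \eqref{quadratic-est1} at $t=T$ yields \eqref{quadratic-est2}.
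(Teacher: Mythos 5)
Your argument for~\eqref{quadratic-est1} is essentially the paper's: pass to $d=w-\bar w$, which has zero initial data, apply the inhomogeneous Schauder theory, and extract the quadratic gain from $f(0)=0$ via Lemma~\ref{calculus}\,\eqref{c3}. One imprecision: the Schauder bound \eqref{ball-estimate2} of Theorem~\ref{linearth} as stated applies to sources of the form $f + \d_i g^i$, whereas your source contains the second-order piece $\d_{ij}^2\bigl(a^{ij} f(w)w\bigr)$. Rewriting that piece as $\d_i g^i$ would cost a derivative of $f(w)w$ and destroy the quadratic structure. The cure is to localize and use Lemma~\ref{constant} directly, whose estimate does cover $\d_{ij}^2 f^{ij}$-type sources through the $\|f\|_{C^\halpha}$ term with no $T$-power; that is exactly what the paper's proof of Lemma~\ref{quadratic-error} does, so after this small repair the first half coincides with the paper.

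The time-step estimate~\eqref{quadratic-est2} is where you genuinely diverge. The paper obtains $\|w\|_{L^\infty(\UMT)}\le C\|w_0\|_{L^\infty(\UM)}$ directly by a comparison-principle argument applied to the quasilinear PDE itself (truncating $f$, conjugating away the lower-order terms, and comparing with barriers of the form $e^{\lambda t}(\pm\|w_0\|_{L^\infty}\pm\eps(\mu t+\rho))$). You instead bootstrap from the \emph{linear} comparison principle and the already-proved~\eqref{quadratic-est1}. That is an attractive route, but as written the absorption does not close: you need to subsume the term $K\|w\|_{C^\halpha(\UMT)}\|w\|_{L^\infty(\UMT)}$ into $\|w\|_{L^\infty(\UMT)}$, which requires $K\|w\|_{C^\halpha(\UMT)}<\tfrac12$; but $\|w\|_{C^\halpha}$ is only controlled by $\|w_0\|_{C^\halpha}$, which is not assumed small, so smallness of $T$ and of $\|w_0\|_{L^\infty}$ alone does not give absorption. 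The repair is to use that $(w-\bar w)\vert_{t=0}=0$, hence $\|w-\bar w\|_{L^\infty(\UMT)}\le T^{\halpha/2}\|w-\bar w\|_{C^\halpha(\UMT)}$, which degenerates as $T\to 0$ and yields the required smallness after shrinking $T$ depending on $\|w_0\|_{C^\halpha}$ (a dependence the lemma permits, since it inherits it from Theorem~\ref{existence}). With that insertion your bootstrap is valid and gives an alternative to the paper's quasilinear comparison argument, avoiding the somewhat delicate truncation of the nonlinearity, though the $T$-dependence it produces is then tied to $\|w_0\|_{C^\halpha}$ and not just $\|w_0\|_{L^\infty}$.
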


\begin{proof}
The proof follows the Banach fixed point argument used in proving
Theorems~\ref{linearth} and~\ref{existence}.
We decompose, as before,  the solution $\bar w$
as $\bar w=\sum \eta_l\bar w_l$ with $\bar w_l=\eta_l \bar w$ and
$\{\eta_l^2\}$ a partition of unity.
Similarly, we decompose $w=\sum \eta_l w_l$ in the same way as
$\bar w$, and refer to the proof of Thm.~\ref{linearth}, specifically
Eqn.~\eqref{pre-contraction}.  

Now from Equation~\eqref{BFP-localization}, we can copy
\begin{equation}\label{HE-w-lin}
\begin{array}{r@{}l}  \Dst
\d_t \bar w_l - \Laplace \bar w_l
&\Dst\mbox{}  =
\d_{ij}^2 \bigl[ (a^{ij}- \delta^{ij})  \eta_l  \bar w_l   \bigr] +
\d_i\bigl[ b^i\eta_l \bar w_l - 2 a^{ij}(\d_j\eta_l) \bar w_l \bigr]
\\[1ex]&\Dst\kern1.2em\mbox{}
+ c\eta_l \bar w_l
+ \bigl[a^{ij} (\d_{ij}^2 \eta_l) - b^i (\d_i\eta_l)\bigr] \bar w_l
\end{array}
\end{equation}
and likewise from~\eqref{local}
\begin{equation}\label{HE-w-NL}
\kern-2em 
\begin{array}{r@{}l}  \Dst
\d_t  w_l - \Laplace w_l
&\Dst\mbox{}  =
\d_{ij}^2 \bigl[ (a^{ij}- \delta^{ij})  \eta_l  w_l   \bigr] +
\d_i\bigl[ b^i\eta_l w_l - 2 a^{ij}(\d_j\eta_l) w_l \bigr]
\\[1ex]&\Dst\kern1.2em\mbox{}
+ c\eta_l w_l
+ \bigl[a^{ij} (\d_{ij}^2 \eta_l) - b^i (\d_i\eta_l)\bigr] w_l
\\[1ex]&\Dst \kern1.2em \mbox{}
+ \d_{ij}^2 \bigl[ a^{ij}  \eta_l  f(w_l)w_l   \bigr] +
\d_i\bigl[ b^i\eta_l f(w_l)w_l - 2 a^{ij}(\d_j\eta_l) f(w_l)w_l \bigr]
\\[1ex]&\Dst\kern1.2em\mbox{}
+ c\eta_l f(w_l)w_l
+ \bigl[a^{ij} (\d_{ij}^2 \eta_l) - b^i (\d_i\eta_l)\bigr] f(w_l)w_l
\\[1ex]&\Dst \kern1.2em \mbox{}
+\d_i[\tilde b^i \eta_l f(w_l)w_l] 
+ [\tilde c\eta_l - \tilde b^i(\d_i\eta_l)] f(w_l)w_l
\;.
\end{array}\kern-2em 
\end{equation}
Applying the heat equation estimate to the difference (which has initial
data~0), we conclude that
$$
\begin{array}{l}
\|w_l-\bar w_l\|_{C^\halpha(\RnT)}
\le C \Bigl(
\gamma\|w-\bar w\|_{C^\halpha(\UMT)}
+ \gamma \|\eta_l\|_{C^\halpha}\|w-\bar w\|_{L^\infty}
\\[1.5ex] \kern3.1cm \mbox{}
+\|a^{ij}-\delta^{ij}\|_{C^\halpha}\|w-\bar w\|_{L^\infty}
+T^{1/2} \|\eta_l\|_{C^{2,\halpha}} \|w-\bar w\|_{C^\halpha}
\Bigr)
\\[1.5ex] \kern2.4cm \mbox{}
+ C
\|(a^{ij},b^i,c,\tilde b^i,\tilde c)\|_{C^\halpha}
\|\eta_l\|_{C^{2,\halpha}} \|f(w)w\|_{C^\halpha}
\;.
\end{array}
$$
(All norms of $w$ on the right hand side are space-time norms, even if
notationally suppressed for conciseness.)
Combining contributions from all coordinate patches, and using the
contraction estimate for small times, we conclude that
$$
\|w-\bar w\|_{C^\halpha(\UMT)} \le K \|f(w)w\|_{C^\halpha(\UMT)} \le
K\|w\|_{C^\halpha(\UMT)}\,\|w\|_{L^\infty(\UMT)}
\,,
$$
i.e.~\eqref{quadratic-est1}.

From the maximum principle,
we can estimate, similarly as in the proof of Thm.~\ref{linearth}, briefly
  postponing  details, that
 $\|w\|_{L^\infty(\UMT)} \le
C\|w_0\|_{L^\infty(\UM)}$. Now we assume $\|w_0\|_{L^\infty}<1/(2KC)$, and we
obtain, still in the space-time norms, $\|w\|\le\|\bar w\|+\|w-\bar w\|\le
\|\bar w\| + \frac12\|w\|$,
hence $\|w\|_{C^\halpha(\UMT)}\le 2\|\bar w\|_{C^\halpha(\UMT)}$.
From Theorem~\ref{linearth}, we know that $\|\bar w\|_{C^\halpha(\UMT)}\le
C\|w_0\|_{C^\halpha(\UM)}$. Combining these estimates, we conclude, with a new
constant $K'=2KC$,
$$
\|w-\bar w\|_{C^\halpha(\UMT)} \le
K'\|w_0\|_{C^\halpha(\UM)}\,\|w_0\|_{L^\infty(\UM)}
\,,
$$
and this implies \eqref{quadratic-est2} immediately.

Let us now clarify the details of the comparison principle argument:
After commuting the smooth coefficients in front of the derivatives,
$w$ satisfies (with appropriate new $b^i,c,\tilde b^i, \tilde c$)
\begin{equation} \label{w-eqn}
w_t - a^{ij}\d_{ij}^2 (w+f(w)w) -b^i\d_i w - \tilde
b_i \d_i(f(w)w) - (c+\tilde c f(w)) w=0
\;.
\end{equation}
We assume
$w\mapsto w+f(w)w=:g(w)$ to be monotonic on an interval $|w|\le2\delta_1$
and choose $\delta_0<\delta_1$, assuming $\|w_0\|_{L^\infty}\le\delta_0$.
Without loss of generality we truncate $f$
smoothly to a constant outside the interval $[-2\delta_1,2\delta_1]$, leaving
$f$ unchanged on $[-\delta_1,\delta_1]$, so that the
nonlinearity is globally defined and the equation with the modified nonlinearity
is still uniformly parabolic. The modification of the nonlinearity will not
affect solutions whose range stays in $[-\delta_1,\delta_1]$.
A smallness
assumption on~$T$ dependent on $\delta_1/\delta_0$ will be imposed. Namely,
with $\max_{[-2\delta_1,2\delta_1]}|f(w)|=:M$, we want  $\lambda\ge0$ to be an
upper bound for $|c|+M|\tilde c|$.
For consistency, we require $\lambda T\le\ln(\delta_1/\delta_0)$.

We now compare $w$
with $\uw:= e^{\lambda t}(-\|w_0\|_{L^\infty} - \eps(\mu t + \rho(x))$,
where $\rho$ is the smooth radial function from
Lemma~\ref{growth} and $\lambda\ge0$ as given above, and $\mu,\eps$ are
positive constants.  Given $\eps>0$, we are assured that
$\uw\le w$ outside a compact set, and also for $t=0$.
We need to show that the same operator as in \eqref{w-eqn} applied to
$\uw$ is $\le 0$.
Then Thm.~12 in Sec 3.7 of Protter-Weinberger \cite{MR0219861} guarantees
$w\ge\uw$.  Indeed,
$$
\begin{array}{l}
\uw_t - a^{ij}\d_{ij}^2 g(\uw)
-b^i\d_i \uw - \tilde b^i \d_i g(\uw) - (c+\tilde c f(\uw)) \uw
=\mbox{}
\\[1ex]\kern2em =
\uw_t -a^{ij}g'(\uw)\d_{ij}^2\uw - g''(\uw)a^{ij}(\d_i\uw)(\d_j\uw)
-(b^i+\tilde b^ig'(\uw))\d_i \uw - (c+\tilde c f(\uw)) \uw
= \mbox{}
\\[1ex]\kern2em =
e^{\lambda t}\Bigl( [\lambda-(c+\tilde c f(\uw))](-\|w_0\|-\eps(\mu t+\rho(x)))
-\eps \mu \pm\eps O(1) \Bigr)
\;.
\end{array}
$$
By choosing $\mu$ larger than the constant in $O(1)$, independent of $\eps$,
this quantity is indeed $\le0$. Now we can let $\eps\to0$ and conclude $w\ge
-e^{\lambda t}\|w_0\|_{L^\infty}$.
A~similar argument can be made with $\bar w:= e^{\lambda t}(\|w_0\|_{L^\infty}
+ \eps(\mu t + \rho(x))$, with the inequalities reversed and an upper bound
proved.
\end{proof}

We now return to the fast diffusion equation formulated in the relative
$L^\infty$ norm on the cigar manifold $\M$, which is the motivating example for the
equations studied so far in this chapter.

In order to get a~priori control over the behavior of the dynamics for long
time in the nonlinear case, we
use a comparison principle with Barenblatt
solutions to gain such control in the $L^\infty$ norm, and a parabolic
Nash-Moser-DeGiorgi result to upgrade this control to a H\"older norm. The
a~priori control gained from these arguments will allow to iterate the
short-term existence from Thm.~\ref{existence} and gain global well-posedness.

It is of course well known that the initial value problem for the fast diffusion
equation has unique solutions, and hence the initial value problem for
$v$ is well-posed in suitable function spaces. We need however a more
precise result that establishes, in particular,
differentiable dependence of the semiflow on its initial data.
We now prove this in the H\"older spaces $C^\halpha$,  remarking afterwards
that the same proof extends to smoother spaces $C^{k,\halpha}$ and to
spaces $C^\halpha_\eta$ with more restrictive weights $\eta<0$
(but not with the more permissive weights $\eta>0$, which will be
discussed in Chapter~\ref{SecWeights:p>2}).

\begin{lemma}[Relative $L^\infty$ bounds; cf~\cite{MR1977429}]
\label{comp}
Suppose $\frac{n-2}n = m_0 < m \in `]0,1`[$ and the initial data  $u_0\in
C(\Rn)$ of a solution~$u$ to~\eqref{transformed} satisfies
$c^{-1}\uB(\x) \le u_0(\x)\le c \uB(\x)$ for all $\x \in \Rn$ and
some constant~$c\ge 1$.  Then there exists $C= C(n,m,c,B) \in [1,\infty`[$ such that
$(t,\x) \in [0,\infty`[ \times \Rn$ implies
\begin{equation} \label{relative}
   C^{-1} \uB(\x) \le u(t,\x) \le  C \uB(\x).
\end{equation}
\end{lemma}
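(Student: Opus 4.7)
The natural strategy is to sandwich $u$ between explicit super- and sub-solutions constructed from the Barenblatt family itself, and then invoke a comparison principle. It is easier to work in the unscaled variables of \eqref{pm}, where the claim, translated via \eqref{back-trf}, becomes: if $c^{-1}\rhoB(0,\y) \le \rho(0,\y) \le c\,\rhoB(0,\y)$ for all $\y \in \Rn$, then $C^{-1}\rhoB(\tau,\y) \le \rho(\tau,\y) \le C\,\rhoB(\tau,\y)$ for all $(\tau,\y) \in [0,\infty`[ \times \Rn$. The point is that \eqref{pm} is invariant under both the one-parameter family of time translations $\tau \mapsto \tau + \tau_0$ and under the reparametrization $B \mapsto B'$ of the Barenblatt profile, and these can be combined to produce explicit exact solutions sitting initially at $c^{\pm 1}\rhoB(0,\cdot)$.

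Concretely, for $c \ge 1$ I set $\alpha_\pm := c^{\mp 1/p}$ and choose the shifts $\tau_\pm$ and masses $B_\pm$ by
$$
1 + 2p\tau_\pm = \alpha_\pm^{-1/\spread} = c^{\pm(1-m)}, \qquad B_\pm = \alpha_\pm^2 B = c^{\mp 2/p} B,
$$
and I define the barriers as time-shifted Barenblatts with altered mass parameter,
$$
R_\pm(\tau,\y) := (1+2p(\tau+\tau_\pm))^{-n\spread}\Bigl(B_\pm + (1+2p(\tau+\tau_\pm))^{-2\spread}|\y|^2\Bigr)^{-1/(1-m)}.
$$
A direct computation, exploiting $(n+p)(1-m) = 2$ and hence $\spread(1-m) = 1/p$, shows $R_\pm(0,\y) = c^{\pm 1}\,\uB(\y) = c^{\pm 1}\,\rhoB(0,\y)$, so that $R_-(0,\cdot) \le \rho(0,\cdot) \le R_+(0,\cdot)$. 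Since $\tau_+ \ge 0$ and $1 + 2p\tau_- = c^{-(1-m)} > 0$, both $R_\pm$ are classical positive solutions of \eqref{pm} on $[0,\infty`[ \times \Rn$.

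\textbf{Comparison and ratio estimate.} Applying the classical parabolic comparison principle for the fast diffusion equation \eqref{pm} --- valid in the mass-preserving range $m > m_0$ either via the local arguments of Herrero--Pierre \cite{MR797051} or by approximation on balls $B_R$ with barriers as growth control at infinity, using strict positivity of all three functions to render the equation locally uniformly parabolic --- yields $R_-(\tau,\y) \le \rho(\tau,\y) \le R_+(\tau,\y)$ for all $\tau \ge 0$. Setting $T := 1 + 2p\tau \ge 1$ and $T_\pm := T + (c^{\pm(1-m)} - 1)$, the explicit ratio
$$
\frac{R_\pm(\tau,\y)}{\rhoB(\tau,\y)} = \left(\frac{T_\pm}{T}\right)^{-n\spread} \left(\frac{B + T^{-2\spread}|\y|^2}{\alpha_\pm^2 B + T_\pm^{-2\spread}|\y|^2}\right)^{1/(1-m)}
$$
is a monotone Möbius-type function of $|\y|^2$, hence attains its extrema at $|\y| = 0$ and at $|\y| \to \infty$. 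At these endpoints the ratio simplifies to $(T_\pm/T)^{-n\spread}\alpha_\pm^{-(n+p)}$ and $(T_\pm/T)^{\spread p}$ respectively; since $T \ge 1$ and $T_\pm/T \in [1,c^{1-m}]$ (resp.\ $[c^{-(1-m)},1]$) for $\tau \ge 0$, both quantities are bounded by a constant $C = C(n,m,c,B)$. Translating back through \eqref{back-trf} --- under which $\rho(\tau,\y)/\rhoB(\tau,\y) = u(t,\x)/\uB(\x)$ at the corresponding points --- establishes \eqref{relative}.

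\textbf{Expected obstacle.} The principal subtlety is the comparison principle on the non-compact domain $\Rn$: although the fast-diffusion equation is only degenerate at $u=0$, our barriers already ensure strict positivity of $\rho$ throughout $[0,\infty`[\times\Rn$, which is enough to localize the argument to a uniformly parabolic setting and thereby use the barriers themselves to control Dirichlet data on the boundary of an exhaustion by balls.
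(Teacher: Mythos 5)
Your proof is correct and follows essentially the same strategy as the paper: sandwiching $\rho$ between time-shifted Barenblatt solutions with rescaled mass parameter and invoking the comparison principle, then bounding the ratio of the barrier to $\rhoB$. The only difference is cosmetic---the paper selects $\tau_\pm$ and $B_\pm$ by a two-step ``first $\tau$ large, then $B_+$ small'' existence argument and splits the ratio bound into a time-shift factor and a mass-shift factor, whereas you pin down the unique parameters making the barriers equal to $c^{\pm 1}\uB$ at $\tau=0$ and compute the ratio in one pass via its M\"obius structure in $|\y|^2$.
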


\begin{proof}
This follows from the same comparison argument used in V\'azquez
\cite[Thm.~21.1]{MR1977429}.
Consider a quotient of Barenblatt solutions:
\begin{equation}\label{quotient}
  \left(\frac{\rhoBplus(\tau,\y)}{\uB(\y)} \right)^{-(1-m)}
  =
  \left(1+2p\tau \right) ^{-1} 
  \frac{B_+ \left(1+ 2p\tau\right)^{2\spread} + |\y|^2 }%
       {B + |\y|^2}
\;.
\end{equation}
Given any number $c \ge 1$ we can make the quotient
$\rhoBplus(\tau,\y)/\uB(\y)$ larger than $c$ (equivalently, the right
hand side of \eqref{quotient} sufficiently small) by first
choosing $\tau$ large and next $B_+>0$ small. This yields the
inequality
$$
\rhoBplus(\tau_+, \y) \ge c \uB(\y) \ge u_0(\y)
$$
for some $B_+>0$ small enough and $\tau_+$ large enough.
By the comparison principle, the solution 
of \eqref{pm} with initial condition $\rho_0:=u_0$ satisfies
$
\rho(\tau, \y) \le \rhoBplus(\tau+ \tau_+ , \y) \le C_+ \rhoBplus(\tau,\y)
$
for all $(\tau,\y) \in [0,\infty`[ \times \Rn$ and some explicitly computable
$C_+=C_+(n,m,\tau_+)<\infty$.  The corresponding transformed solutions to equation
\eqref{transformed} satisfy $u(t,\x) \le C_+ \uBplus(\x)$.
Since $1 \le \uBplus/\uB \le(B_+/B)^{-1/(1-m)}$ this completes
the proof of the second assertion in \eqref{relative}.

The same estimate with $B_-$ large and $\tau_- > -1/2p$ sufficiently
negative  can be used to get
$\rho(\tau,\y) \ge \rhoBminus(\tau+\tau_-,\y) \ge C_- \rhoBminus(\tau,\y)$
and complete the proof that $u(t,\x) \ge \uB(\x)/C$ for some $C \in [1,\infty`[$
and all $(t,\x) \in [0,\infty`[\times \Rn$.
\end{proof}

In order to use the DeGiorgi, Nash and Moser result, we note that
in local coordinates we can write  equation \eqref{adjoint}
in divergence form as
$$
u_t - \d_i \bigl((\d_u f^{ij})(x,u) \d_j u\bigr)
- \bigl((\d_u b^j)(x,u)+  (\d_i\d_uf^{ij})(x,u)\bigr) \d_j
u  = \tilde c(x,u)
$$
where
$$
\tilde c(x,u) = c(x,u) + (\d_i b^i)(x,u) +  (\d_{ij}^2f^{ij})(x,u)
$$
is bounded if $u$~is, i.e., if $u(\UMT)\subset W$.
Let us clarify the notation here: expressions of the
form $\d_i(g(x,u))$ refer to a partial with respect to $x_i$ in all locations
including implicit in~$u$, whereas $(\d_ig)(x,u)$ would refer only to the
explicit occurrence of $x_i$ in the first argument of~$g$.

We consider this equation as a linear equation of the type
\begin{equation}  \label{linearpar}
u_t - \d_i (\tilde a^{ij} \d_j u) - \tilde b^i \d_i u = \tilde c
\end{equation}
where $\tilde a^{ij}$, $\tilde b^i$ and $\tilde c$ are bounded measurable
functions and there exist
$\lambda,\Lambda >0 $ with
$$
\lambda \delta^{ij} \le \tilde a^{ij} \le \Lambda \delta^{ij}
$$
in the sense of quadratic forms.
Now we can apply

\begin{theorem}[DeGiorgi-Nash-Moser] \label{DNM}
There exists $0<\halpha_0<1$ depending only on $\lambda$ and $\Lambda$
so that  any bounded weak solution $u$ to~\eqref{linearpar}
in the cylinder~$[0,2R^2] \times B_{2R}$ lies in
$C^{\halpha_0}([R^2,2R^2]\times B_R)$ and
$$
\| u \|_{C^{\halpha_0}([R^2,2R^2]\times B_R)}
\le
c(\lambda, \Lambda, R, \| \tilde b \|_{L^\infty})
\,
(\|u\|_{L^\infty([0,2R^2]\times B_{2R})} 
+ \|\tilde c\|_{L^\infty([0,2R^2]\times B_{2R})} )
\,.
$$
\end{theorem}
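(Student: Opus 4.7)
I plan to follow the classical De~Giorgi-Nash-Moser scheme, adapted to the parabolic setting. After a parabolic rescaling normalizing $R=1$, and absorbing the zeroth-order contribution through the substitution $u\mapsto e^{-\mu t}u$ if convenient, it suffices to establish an interior H\"older estimate for bounded weak solutions of \eqref{linearpar} on the unit space-time cylinder. The technical engine is a Caccioppoli-type energy inequality: testing \eqref{linearpar} against $\varphi^2(u-k)_{\pm}$ with a smooth space-time cutoff $\varphi$ vanishing on the parabolic boundary, and exploiting the uniform ellipticity $\lambda I\le \tilde a^{ij}\le\Lambda I$, yields on $\{u>k\}$ a bound of the form
\begin{equation*}
\sup_t\!\int\!\varphi^2(u-k)_+^2\,dx + \lambda\!\int\!\!\!\int\!\varphi^2|\nabla(u-k)_+|^2\,dx\,dt \le C\!\int\!\!\!\int\!\bigl(|\nabla\varphi|^2+\varphi|\partial_t\varphi|\bigr)(u-k)_+^2\,dx\,dt + \text{l.o.t.}
\end{equation*}
with the lower-order remainder controlled by $\|\tilde b\|_{L^\infty}$ and $\|\tilde c\|_{L^\infty}$.

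From this Caccioppoli estimate, Moser iteration combined with the parabolic Sobolev embedding $L^\infty_t L^2_x\cap L^2_t H^1_x\hookrightarrow L^{2(n+2)/n}$ yields the local $L^\infty$ bound $\|u\|_{L^\infty([R^2,2R^2]\times B_R)}\le C\bigl(\|u\|_{L^2([0,2R^2]\times B_{2R})}+\|\tilde c\|_{L^\infty}\bigr)$, which is the routine half of the statement. The substantive step is oscillation decay. I would extract from the same Caccioppoli inequality De~Giorgi's \emph{measure-decrease lemma}: if $u\le M$ on the larger cylinder and the set $\{u\ge M-\sigma\}$ occupies a sufficiently small fraction of it, then $u\le M-\sigma/2$ on the smaller concentric cylinder, and symmetrically for supersolutions. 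A measure-theoretic dichotomy applied to the subsolution $u-\tfrac12(\sup+\inf)$ then yields, for every bounded weak solution, an inequality $\osc_{[R^2,2R^2]\times B_R}u\le (1-\theta)\osc_{[0,2R^2]\times B_{2R}}u + CR$ with $\theta\in(0,1)$ depending only on $\lambda$, $\Lambda$, $n$.

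The main obstacle is the measure-decrease lemma itself in the parabolic setting: a subsolution may be near its supremum at some time and then drop quickly, so passing from ``$\{u>M-\sigma\}$ has small space-time measure'' to a pointwise upper bound requires propagating information forward in time. This is classically bridged by Moser's logarithmic estimate, testing the equation against $\varphi^2/(M-u+\delta)$ to control $\log\bigl((M-u+\delta)^{-1}\bigr)$ in a parabolic BMO sense, which produces the quantitative link between the integral smallness of the coincidence set and a pointwise dominance at later times. Once the dichotomy is in hand, iterating oscillation decay on the geometric sequence of cylinders of radii $R\cdot 2^{-k}$ yields $\osc u \le C\rho^k$ for some $\rho\in(0,1)$, hence $C^{\halpha_0}$ regularity with $\halpha_0=-\log_2\rho$ depending only on $\lambda,\Lambda$. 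The dependence of the prefactor $c$ on $R$, $\|\tilde b\|_{L^\infty}$ and $\|\tilde c\|_{L^\infty}$ enters through the perturbative lower-order contributions accumulated at each scale of the iteration.
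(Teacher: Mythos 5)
Your outline is a correct sketch of the classical parabolic De~Giorgi--Nash--Moser argument: Caccioppoli estimates on truncations, Moser iteration with the parabolic Sobolev embedding for local boundedness, the measure-decrease lemma plus Moser's logarithmic estimate for oscillation decay, and geometric iteration to conclude $C^{\halpha_0}$ regularity. The paper itself does not prove Theorem~\ref{DNM} at all --- it simply cites Lady\v{z}enskaja--Solonnikov--Ural'ceva \cite{MR0241822}, Chapter III, \S10, whose proof proceeds along essentially the same lines you describe.
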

Proof and statement can be found in Lady\v{z}enskaja, Solonnikov and
Ural'ceva \cite{MR0241822}, III\S10.

We can now state:

\begin{theorem}[Long-time smooth dependence in $C^\halpha$ of the FDE
    on data]   \label{t-3}
  \mbox{}\hskip0pt plus2em \penalty0
  Let $\halpha \in `]0,1`[$,  $m\in`]0,1`[$ and $m> m_0=\frac{n-2}{n}$.
  Then there exists~$R$ such that for $v_0\in C^\halpha$ with
  $\|v_0-1\|_{L^\infty(\Rn)}<R$ there exists a unique solution $v\in
  C^\halpha(\MT)$ to \eqref{cyl} for each $T$.
  The map
  $$
   C^\halpha(\M)  \ni v_0 \longmapsto  v \in C^\halpha (\MT)
  $$
  has continuous derivatives of all orders.
  It is bounded in the sense that $C^{-1} \le v(t,\x)<C$ for all $(t,\x)$.
\end{theorem}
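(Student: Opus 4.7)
The strategy is to combine the short-time analytic flow of Theorem~\ref{existence} with \emph{uniform-in-time} $L^\infty$ and H\"older a~priori bounds, and then to iterate. First I would choose $R$ small enough that $\|v_0-1\|_{L^\infty}<R$ forces $v_0$ to take values in a compact subinterval $V=[c^{-1},c] \subset\subset (0,\infty)$. Re-expressing \eqref{cyl} on the cigar manifold $\M$ in the divergence form of \eqref{adjoint}, we see that the coefficient $(1/m) v^{m-1}$ of the principal Laplace-Beltrami part is bounded above and below whenever $v\in V$, so the equation is uniformly parabolic and uniformly analytic in the sense of Section~\ref{SecGlobWP}. Translating the relative variable back via $u=v\uB$, Lemma~\ref{comp} yields a global constant $C=C(n,m,c,B)$ with $C^{-1}\le v(t,\x)\le C$ for all $t\ge 0$. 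This pins the solution to a compact interval $W\subset\subset(0,\infty)$ for all time and is the crucial a~priori input.

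Next I need uniform-in-time H\"older control so that Theorem~\ref{existence} can be re-applied repeatedly with a common time step. For this I would rewrite the equation for $v$ in local charts on $\M$ as the divergence-form linear equation \eqref{linearpar} with coefficients $\tilde a^{ij}=\d_u f^{ij}(x,v)$, $\tilde b^i$, $\tilde c$ that are bounded measurable once $v$ is known to be bounded into $W$. The coefficient $\tilde a^{ij}$ is uniformly elliptic with ellipticity constants $\lambda,\Lambda$ depending only on $W$, so Theorem~\ref{DNM} (DeGiorgi--Nash--Moser) produces a universal $\halpha_0\in(0,1)$ and a constant such that
\begin{equation*}
\|v\|_{C^{\halpha_0}([t+1,t+2]\times B_R(x))} \le C\bigl(\|v\|_{L^\infty}+\|\tilde c\|_{L^\infty}\bigr)
\end{equation*}
uniformly in $t\ge 0$ and $x\in\M$. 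Covering $\M$ by charts of uniform radius and invoking the uniform structure of Section~\ref{SecUMHoeld} converts this into a uniform $C^{\halpha_0}(\M)$ bound for $v(t)$ on any interval $t\ge 1$. Now I view the full quasilinear equation as a \emph{linear} equation for $v$ with coefficients $a^{ij}(x,v),b^i(x,v),c(x,v)$ of known $C^{\halpha_0}$ regularity, and apply the linear Schauder result \eqref{ball-estimate} of Theorem~\ref{linearth} to bootstrap $v$ from $C^{\halpha_0}$ to $C^{\halpha}$ for the target exponent, with constants depending only on the a~priori bounds, not on the time interval.

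With these uniform $C^\halpha(\M)$ bounds in hand, Theorem~\ref{existence} can be applied at every starting time with a fixed time step $T_0$ depending only on $W$, on the a~priori $C^\halpha$ bound, and on the nonlinearities. The short-time solutions glue by uniqueness into a global solution $v\in C^\halpha(\MT)$ for any $T$. For smooth (in fact analytic) dependence on $v_0$: Theorem~\ref{existence} asserts that on each step $v_0\mapsto v(T_0,\cdot)$ is an analytic map between H\"older balls; the composition of finitely many analytic maps between Banach spaces is analytic, and the extension to arbitrary $T$ is then a finite iteration. In particular, the flow map $v_0\mapsto v\in C^\halpha(\MT)$ has continuous derivatives of all orders.

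The main obstacle is the uniform H\"older estimate: without it, one could only iterate on shrinking time intervals and might fail to reach arbitrary $T$. The a~priori $L^\infty$ bound from Lemma~\ref{comp} is what unlocks DeGiorgi--Nash--Moser with time-independent constants, and the uniform manifold structure of $\M$ is what lets the local H\"older estimate pass to a global one; these two inputs together are the essential content of the proof, with the remainder being bookkeeping for iterating analytic maps.
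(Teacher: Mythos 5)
Your proof follows essentially the same route as the paper's: short-time existence and analyticity from Theorem~\ref{existence}, a~priori pointwise bounds from Lemma~\ref{comp}, De~Giorgi--Nash--Moser for a uniform $C^{\halpha_0}$ bound, a Schauder bootstrap, and iteration of the analytic time-step map. One small imprecision: \eqref{ball-estimate} assumes $C^\halpha$ coefficients, so when $\halpha>\halpha_0$ the step from $C^{\halpha_0}$ to $C^\halpha$ actually goes through the gain $C^{\halpha_0}\text{ coefficients}\Rightarrow C^{2,\halpha_0}\text{ solutions}\hookrightarrow C^\halpha$ rather than a direct application of that estimate, but this is the same tool and the paper's own write-up is equally terse on this point.
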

\begin{proof}
The local existence part  is an immediate consequence of Theorem \ref{existence}.

We are now in a position to see that the local existence time asserted in
Thm.~\ref{existence} is controlled only by the $L^\infty$ norm of the initial
data, and not by an otherwise conceivable deterioration of the H\"older norms.

Comparison of $\tilde u$ with $K\pm(m t+\eps_0)$ (with $m$ large enough to
control $c$, $\d b$ and $\d^2f$) shows
that there exists $T_0$ independent of
the H\"older norm of the initial data so that $u$ cannot leave $W$
before that time. We again turn to the local uniformly parabolic equation
which we write as
$$
\tilde u_t = \sum_{i} \d_i (\tilde a^{ij}(x, \tilde u) \d_i \tilde u) +
 \tilde b^i(x,\tilde u )\d_i \tilde u  + \tilde c(x,\tilde u).
$$
By Theorem \ref{DNM} and uniformity of the manifold there exists $\halpha_0 >0$
and $C$ so that
$\| u \|_{C^{\halpha_0}([\tau,\min \{T, T_0\}] \times \UM ) } \le C$
with $C$ depending only on $W$ (via the bounds of derivative of the
nonlinearities, and the ellipticity of the operator)  for all $\tau>0$.

Localizing again on sufficiently small spatial and temporal scale we obtain
again local equations
$$
\tilde u_t - \Laplace \tilde u
= \d_{ij}^2 \Bigl(f^{ij}(x,u_0+\tilde u) - \delta^{ij}(u_0+\tilde u)\Bigr)
+ \d_i b^i(x,u_0+ \tilde u) + c(x,u_0 + \tilde u).
$$
Now
$$
\d_{ij}^2 (f^{ij}(x,u_0+\tilde u)-\delta^{ij} u )
= \d_{ij}^2 (f^{ij}(x,u_0+\tilde u) - f^{ij}(x,u_0)- \delta^{ij} u)
+ \d_{ij}^2 f^{ij}(x,u_0).
$$
Multiplying by a cutoff function we obtain
$$
\| u \|_{C^\halpha( [t_0+ R^2,t_0+2R^2]\times B_R(x_0)) }
\le c \, \| u \|_{L^\infty([t_0, t_0+2R^2] \times B_{2R}(x_0))}
$$
which gives the bound of Thm.~\ref{existence} as long as $u$ assumes values
in~$W$.

Therefore  the only way that a solution may cease to exist is by $v$ tending to
zero or infinity. This is impossible by  the pointwise bounds of Theorem
\ref{comp}.
This proves the global semiflow property.
\end{proof}

\begin{remark}[Smoother spaces]
\label{highreg}
We can carry out the same proof in $C^{k,\halpha}$
spaces, using the analog of \eqref{HE-est1} for these spaces, or by
looking at systems of equations for the derivatives.
\end{remark}

We observe that without any change in the argument we obtain a similar global
result in the function space $\Ceta^\halpha$ for $\eta\le0$. This space is
defined by the norm
\begin{equation}\label{Cetadef0}
\| f \|_{\Ceta^\halpha} := \| (\cosh s)^{-\eta} f \|_{C^\halpha} \;.
\end{equation}
Let us be more precise. We define $\check v = (\cosh s)^{-\eta} v$,
and then $\check v$ satisfies an equation similar to~\eqref{cyl}, for which the
same proof carries over. In this process, $\eta\le 0$ is needed to guarantee that
$\Ceta^\halpha$ still embeds into $L^\infty$.
Otherwise quadratic and higher order terms accumulate positive
powers of $\cosh s$ and cannot be controlled anymore.

We will also obtain certain analogs of Thm.~\ref{existence} and
Lemma~\ref{quadratic-error} in the case $\eta>0$ in
Chapter~\ref{SecWeights:p>2}. They
are of a mixed nature inasmuch as the hypotheses and statements
still need to enforce the boundedness of $w$ that is not enforced by the
$\Ceta^\halpha$ norm itself.

\section{The spectrum of the linearized equation}
\label{SecSpecLinEqn}

Since the spectral calculations of~\cite{MR1982656} and~\cite{MR2126633} have
been done in a different framework, some explanations are at hand to obtain a
valid comparison.
Here we have adopted the normalization conventions of the announcement
  \cite{MR1982656}, which differ from those of \cite{MR2126633}.

Our fast diffusion equation \eqref{pm}, \eqref{transformed} differs from the
one in~\cite{MR2126633} by a factor $1/m$ in front of the
Laplacian and a factor $\frac2{1-m}$ in front of the rescaling term.
This difference is explained by a correspondence
$\x_{\mbox{\tiny\cite{MR2126633}}}=
\x\sqrt{\frac{2m}{1-m}}$, 
$t_{\mbox{\tiny\cite{MR2126633}}} = \frac{2}{1-m}\,t$, 
$C_{\mbox{\tiny\cite{MR2126633}}}=\frac{2m}{1-m}B$.
$\tau$ coincides
between~\cite{MR2126633} and here,
but $\alpha_{\mbox{\tiny\cite{MR2126633}}}=1/\spread$.   The linearization
operator in~\cite{MR2126633} was
$$
\Hop_{\mbox{\tiny\cite{MR2126633}}} \Psi
=
-m \uB^{m-1} \Laplace\Psi + \x \cdot\nabla\Psi
$$
(of which we should drop the $m$ in front of the Laplacian
and add a factor of $\frac{2}{1-m}$ in front of the rescaling term
to account for our
space variable, and which was defined there as the {\em negative\/}
linearization operator). Let us import the spectral results
from~\cite{MR2126633}, but 
{\em adapted to the notation conventions of the present
paper, which coincide with those of \cite{MR1982656}:}
\begin{theorem}[Spectral theory of the linearized
    operator 
    \cite{MR2126633}]\label{spectrum-from-ARMA}
The operator
\begin{equation}\label{hop}
\Hop : \Psi\mapsto \uB^{m-2}\Div[\uB\nabla\Psi] =
\uB^{m-1}\Laplace\Psi - \frac{2}{1-m} \x\cdot\nabla\Psi
\end{equation}
is essentially self-adjoint on the
Hilbert space defined by the norm $(\int \uB |\nabla\Psi|^2 \,d\x)^{1/2}$ (with
constant functions $\Psi$ modded out). The restriction of $\Hop$ to the
eigenspaces of the
spherical Laplacian~$\Laplace_{\Sph}$ with eigenvalue~$-\ell(\ell+n-2)$
(where $\ell=0,1,2,\ldots$) has continuous spectrum for
$$
\lambda\le \lambda_\ell^{\rm cont}
:= -\left[
   \frac{1}{(1-m)^2} +  \Bigl(\frac{n}{2}+\ell-1\Bigr)^2
    - \frac{n-2}{1-m}
\right]
=
   - \ell(\ell+n-2) - \biggl(\frac{p}{2}+1\biggr)^2
$$
and finitely many eigenvalues
$$\textstyle
\lambda_{\ell k} =
   -\frac{2}{1-m}(\ell+2k) + 4k(k+\ell+ \frac{n}{2}-1) 
= -(\ell + 2k) p - n\ell - 4k(1-\ell-k)
$$
for integers $k$ satisfying
$$\textstyle
0\le k < \frac12 \left[\frac{1}{1-m} - \frac{n}2 + 1 -\ell\right] =
\frac12\left[\frac{p}2+1-\ell\right]  ,
$$
with corresponding eigenfunctions $\psi_{\ell k}(r) Y_{\ell \mu}(\x/|\x|)$
where the $Y_{\ell\mu}$ are spherical harmonics and
$$
\psi_{\ell k}(r) = r^\ell {}_2F_1\left(
\begin{array}{r}  k+\ell-1 -p/2,\;  -k\\
                               \ell+n/2\end{array};
-\frac{r^2}{B}
\right),
$$
which are polynomials of degree $\ell+2k$ in~$r$.
\end{theorem}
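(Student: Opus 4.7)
The plan is to exploit the rotational symmetry to reduce to one-dimensional spectral problems indexed by the angular momentum quantum number~$\ell$, then identify all eigenvalues via a hypergeometric substitution and read off the essential spectrum from the asymptotics at infinity. A useful initial observation is that $\frac{m-1}{1-m}=-1$, so $\uB^{m-1}=B+|\x|^2$, whence
$$
\Hop\Psi = (B+|\x|^2)\Laplace\Psi - \frac{2}{1-m}\x\cdot\nabla\Psi
= \uB^{m-2}\Div(\uB\nabla\Psi).
$$
The second form shows $\Hop$ is symmetric with respect to the measure $\uB^{2-m}d\x$, and integration by parts gives the Dirichlet-form identity $\int\Psi\,(-\Hop\Phi)\,\uB^{2-m}d\x=\int\uB\nabla\Psi\cdot\nabla\Phi\,d\x$. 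Thus the form is non-negative and closable, and Friedrichs extension produces the self-adjoint realization corresponding exactly to the Hilbert space defined by $(\int\uB|\nabla\Psi|^2d\x)^{1/2}$, modulo constants. Essential self-adjointness on a natural core (polynomials, or smooth functions of compact support augmented by constants) then follows from the limit-point character of the one-dimensional reductions at $r=\infty$, which I establish below.

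Next, the spherical-harmonic decomposition $\Psi=\sum\psi_\ell(r)Y_{\ell\mu}(\x/|\x|)$ reduces $\Hop$ to radial operators
$$
\Hop_\ell\psi=(B+r^2)\Bigl(\psi''+\frac{n-1}{r}\psi'-\frac{\ell(\ell+n-2)}{r^2}\psi\Bigr)-\frac{2}{1-m}r\psi'.
$$
To locate the essential spectrum, I change to the cigar coordinate $\sinh s=r/\sqrt{B}$ and conjugate $\Hop_\ell$ by a suitable power of $\cosh s$ to eliminate the first-derivative term, producing a Schr\"odinger operator on the half-line with a potential that approaches the constant $-\lambda_\ell^{\rm cont}=\ell(\ell+n-2)+(p/2+1)^2$ as $s\to\infty$ (since $\tanh s\to1$) and whose potential decay at infinity is like $(\cosh s)^{-2}$. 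Standard Weyl-sequence / relatively-compact-perturbation arguments then yield $\sigma_{\rm ess}(\Hop_\ell)=(-\infty,\lambda_\ell^{\rm cont}]$.

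To extract the discrete spectrum, substitute $\psi(r)=r^\ell\phi(z)$ with $z=-r^2/B$. A direct computation converts $\Hop_\ell\psi=\lambda\psi$ into a Gauss hypergeometric equation
$$
z(1-z)\phi''+[c-(a+b+1)z]\phi'-ab\,\phi=0
$$
with $c=\ell+\tfrac{n}{2}$ and $a+b$, $ab$ determined by $\ell$, $n$, $m$ and~$\lambda$. Solving $-a$ or $-b=k\in\{0,1,2,\ldots\}$ (the polynomial-termination condition for ${}_2F_1$) yields precisely $\lambda=\lambda_{\ell k}=-(\ell+2k)p-n\ell-4k(1-\ell-k)$ and the stated polynomial eigenfunctions $\psi_{\ell k}$. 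The other solution near $z=0$ behaves like $r^{-(\ell+n-2)}$ and must be rejected for regularity at the origin.

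The main obstacle, and the step that determines the admissible range $0\le k<\tfrac12(\tfrac{p}{2}+1-\ell)$, is showing that these polynomial candidates actually lie in the Hilbert space. Since $\deg\psi_{\ell k}=\ell+2k$, its gradient grows like $r^{\ell+2k-1}$, while $\uB\sim r^{-2/(1-m)}=r^{-(n+p)}$ at infinity. Integrability of $\uB|\nabla\psi_{\ell k}|^2$ in $d\x=r^{n-1}dr\,d\omega$ therefore demands
$$
-\,(n+p)+2(\ell+2k-1)+(n-1)<-1,\qquad\text{i.e.}\qquad \ell+2k<\tfrac{p}{2}+1,
$$
which matches both the eigenvalue-below-continuum condition $\lambda_{\ell k}>\lambda_\ell^{\rm cont}$ and the claimed upper bound on~$k$. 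Completeness of the $\psi_{\ell k}Y_{\ell\mu}$ together with the continuous spectrum on each angular sector then follows from the self-adjoint spectral theorem applied to the Schr\"odinger reduction, whose bound states are exactly the polynomial eigenfunctions identified.
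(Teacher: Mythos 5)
This theorem is imported from \cite{MR2126633} by citation and notation translation; the present paper gives no proof of it, so there is no ``paper's own proof'' to compare against. Your sketch nonetheless tracks the argument that the cited reference uses and that the surrounding text of this paper presupposes: the Dirichlet-form realization and Friedrichs extension giving self-adjointness on the weighted Sobolev space, spherical-harmonic reduction, essential spectrum read off from the $s\to\infty$ asymptotics of the conjugated Schr\"odinger operator on the cigar (the $(\cosh s)^{-2}$ Bargmann-type potential), and the eigenvalues obtained by the hypergeometric substitution $\psi=r^\ell\phi(-r^2/B)$ with polynomial termination, followed by an $L^2$-membership check. That outline is correct, and the integrability computation (growth of $\uB|\nabla\psi_{\ell k}|^2 r^{n-1}$ near infinity) does give the claimed range $\ell+2k<\tfrac{p}{2}+1$.

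One parenthetical is inaccurate and worth correcting: you assert that the integrability condition ``matches the eigenvalue-below-continuum condition $\lambda_{\ell k}>\lambda_\ell^{\rm cont}$.'' In fact a short computation from the stated formulas gives the identity
$$
\lambda_{\ell k}-\lambda_\ell^{\rm cont}=\Bigl(\frac{p}{2}+1-\ell-2k\Bigr)^2\ge 0,
$$
so \emph{every} nominal $\lambda_{\ell k}$ lies at or above the continuum threshold regardless of $k$, with equality only on the borderline $\ell+2k=\tfrac{p}{2}+1$. Spectral ordering therefore cannot be what selects the admissible $k$; it is the $L^2$-condition alone that eliminates the polynomials with $\ell+2k>\tfrac{p}{2}+1$. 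Your derivation of that condition is right, so the conclusion stands, but the side remark as phrased is false. A further small elision: ``completeness from the self-adjoint spectral theorem'' does not by itself rule out singular continuous spectrum for the one-dimensional Schr\"odinger reductions; one needs the standard scattering/trace-class-perturbation theory for short-range (here exponentially decaying) potentials on the half-line. For a sketch of a cited result both points are minor, but the first one should not be stated as an equivalence.
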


Moreover, the operator~$\Lop$ studied here is still not exactly the same as the
operator $\Hop$ just imported from~\cite{MR1982656}, \cite{MR2126633}.
This is due to a different
linearization. In~\cite{MR1982656}, \cite{MR2126633}, the linearization was defined in terms of
the pushforward of a measure, $(I+\eps\nabla\Psi)_{\#}\uB$, whereas here we
have the linearization  $\uB(1+\eps \vb)$. At least on a formal level,
$$
\begin{array}{l}
\Dst
\bigl( (I+\eps\nabla\Psi)_{\#}\uB \bigr)(\x) =
\uB\left( (I+\eps\nabla\Psi)^{-1} \x\right) \big/ \det (I+\eps\nabla\Psi)
=\mbox{}
\\[2ex]\Dst\kern4em=
\bigl(\uB(\x) - \eps \nabla \uB(\x) \cdot \nabla\Psi(\x)
\bigr) \bigl(1-\eps\tr D(\nabla\Psi)\bigr)
+O(\eps^2)
\\[2ex]\Dst\kern4em=
\uB(\x) -\eps \bigl(
\nabla \uB(\x) \cdot \nabla\Psi(\x)
+\uB(\x)\Laplace\Psi(\x) \bigr)
+O(\eps^2)
\\[2ex]\Dst\kern4em=
\bigl( \uB -\eps \Div[\uB\nabla\Psi] + O(\eps^2) \bigr) (\x)
\,,
\end{array}
$$
so $-\vb=\uB^{-1}\Div(\uB\nabla\Psi)=:\Lambda\Psi$.  So we should have
$\Lop\circ\Lambda=\Lambda\circ\Hop$. This can be confirmed by a look at the
`factorized' forms in \eqref{hop} and \eqref{linlop-cartes}.
Since $\Lambda=\uB^{1-m}\circ\Hop$,
there is also an easier conjugacy:
$\Lop\circ \uB^{1-m}  = \uB^{1-m} \circ\Hop$. (In the notation
\eqref{conjugated}, this means $\Hop=\Lop_{\eta=-2}$.)

In Theorem~\ref{spectrum-from-ARMA}, an eigenvalue $\lambda_{00}$ with
eigenfunction a
constant, does, strictly speaking, not exist, because constants were modded out
in the Hilbert space $W^{1,2}_{\uB}$ defined by the norm
$(\int\uB|\nabla\Psi|^2d\x)^{1/2}$.
In our context however, the constant function $\psi_{00}$ (which becomes
proportional to $\uB^{1-m}$ under conjugacy) will correspond to the
derivative of the Barenblatt with respect to mass: indeed,
from~\eqref{barenblatt},  $(\d_B\uB)/\uB= \frac{-1}{1-m}\uB^{1-m}$. The mass
was of course fixed a~priori in the (mass-transport based)
linearization formalism of~\cite{MR1982656}\cite{MR2126633}.

We can also introduce the Hilbert space $L^2_{\uB^{2-m}}$, which is defined by
the norm
$(\int\uB^{2-m}|\Psi|^2d\x)^{1/2}$. Now first by abstract functional analysis,
then by explicit integration by parts, we have
$$
\langle \Hop^{1/2} \Psi_1; \Hop^{1/2}\Psi_2 \rangle_{W^{1,2}_{\uB}}  =
\langle \Psi_1;\Hop \Psi_2 \rangle_{W^{1,2}_{\uB}} =
\langle \Hop \Psi_1; \Hop \Psi_2 \rangle_{L^2_{\uB^{2-m}}}  
\,.
$$
Thus, as the quartet \cite{BDGV10} also found, 
we get a Hilbert space isomorphism $\Hop^{1/2}: W^{1,2}_{\uB} \to
L^2_{\uB^{2-m}}$ and $\Hop$ is self-adjoint with the same spectrum and
eigenfunctions in $L^2_{\uB^{2-m}}$ as well, except that $\lambda_{00}=0$ with
the constant eigenfunction is genuine on $L^2_{\uB^{2-m}}$.

For~$\Lop$, which is self-adjoint in the Hilbert space $L^2_{\uB^m}$,
this means (at a formal level, since we want to consider~$\Lop$
in a H\"older space setting) that the eigenfunctions for $\lambda_{\ell k}$ are
now
$\uB^{1-m}\psi_{\ell k}(r) Y_{\ell\mu}(\x/|\x|)$. They will lie in the H\"older
space $C^\halpha$ only if  they are
bounded. Different weights in the H\"older space can however be used to
incorporate these formal eigenfunctions into the space, or, equivalently,
we can conjugate the operators by an appropriate weight and analyze the
conjugated operators in the unweighted space
$C^\halpha$. In any case, the critical growth threshold for a function to be in
$L^2_{\uB^m}$ differs from the growth threshold for a function to be in
$C^\halpha$. This affects the spectral theory.
We define
\begin{equation}\label{Cetadef}
\begin{array}{l}
\Ceta^{k,\halpha}(\M):= \{ g:= (\cosh s)^\eta f \mid f\in C^{k,\halpha}(\M) \}\\
\|g\|_{\Ceta^{k,\halpha}}(\M) := \|(\cosh s)^{-\eta} g\|_{C^{k,\halpha}(\M)}
\,.
\end{array}
\end{equation}
Also note that $\eta_{cr}=\frac{p}{2}-1$, combined with
\eqref{cigarmetric} and \eqref{cigarvol}, yields
the isometry
$$
 \|f\|_{L^2_{\uB^m}(\Rn)}
      = \| (\cosh s)^{-\eta_{cr}} f \|_{L^2(\M)}
     =: \| f \|_{L^2_{\eta_{cr}}(\M)}
$$
and thus displays that $\eta_{cr}$ gives the critical growth for
selfadjointness on~$\M$.

Given an unbounded operator $\Lop:dom \Lop \longrightarrow C^\alpha$
defined on a linear subspace $dom \Lop \subset C^\alpha$, recall that
$\lambda \in \C$ is said to be in the spectrum of $\Lop$ if
$(\Lop -\lambda I)$ does not have a bounded inverse on $C^\alpha$.
There are two subclassifications of spectra that we also refer to:
into {\em point, continuous}, and {\em residual} on the one hand,  and into
{\em essential} and {\em inessential} on the other.
More specifically,  $\lambda$ is said to be {\em point} spectrum
if $\Lop-\lambda I$ fails to be injective,  {\em continuous} spectrum if
$\Lop - \lambda I$ is injective and its range
is dense but not closed (in which case
$(\Lop - \lambda I)^{-1}$ fails to be bounded),  and {\em residual} spectrum
if $\Lop - \lambda I$ is injective but its range fails to be dense.
We say $\Lop - \lambda I$ is {\em essential} spectrum if range$(\Lop -\lambda I)$
fails to be closed (finiteness of its codimension, when closed,
and of the dimension of its kernel, always being satisfied
in the examples below; c.f.~\cite[IV \S 5.6]{Kato76}).

The calculations from~\cite{MR2126633} can be reused, to give the following
result:
\begin{theorem}[Spectrum of~$\Lop$ in H\"older spaces]
\label{HolderSpectrum}
\hskip 0pt plus 2em 
Given $\eta \in \R$,
the formula $\Lop v:= \uB^{-1}\Div[\uB\nabla (\uB^{m-1}v)]$ defines 
an operator in
the space $\Ceta^\halpha(\M) $ with domain $\Ceta^{2,\halpha}(\M)$.
Let $\Lop_\ell$ be
its restriction to the eigenspace of $\Laplace_{\Sph}$ for eigenvalue
$-\ell(\ell+n-2)$ (where $\ell=0,1,2,\ldots$), i.e.,
$\Lop(f(r)Y_{\ell\mu}(\omega)) = (\Lop_\ell f(r)) Y_{\ell\mu}(\omega)$.

The domain of $\Lop_\ell$ is
$$
\Cetaell^{2,\halpha} := \biggl\{ f\in C^{2,\halpha}[0,\infty`[ \biggm|
\left[ \scriptsize \begin{array}{ll} 
f(0)=f'(0)=f''(0)=0 & \mbox{ if } \ell\ge3\\
f(0)=f'(0)=0 & \mbox{ if } \ell=2\\
f(0)=f''(0)=0 & \mbox{ if } \ell=1\\
f'(0)=0 & \mbox{ if } \ell=0
\end{array} \right]
\biggr\}
\,,
$$
considered as an operator in 
$$
\Cetaell^{\halpha} := \{ f\in C^{\halpha}[0,\infty`[ \mid 
f(0)=0 \mbox{ if } \ell\neq0 \}
\;.
$$

Let $\eta_{cr}=\frac{p}{2}-1$, the critical value for~$\eta$ introduced
after~\eqref{cinfty}.  The spectrum of $\Lop_{\ell}$ consists of finitely
many eigenvalues,  plus an unbounded connected component
consisting of a filled-in parabola
(which degenerates to a ray if $\eta = \eta_{cr}$).
The boundary of this parabola forms the essential spectrum.
More precisely:

(1) If $\eta=\eta_{cr}$, then $\Lop_{\ell}$ has only point spectrum. This
consists
of essential spectrum  for $\lambda\le \lambda_{\ell}^{\rm cont}$ (as in
Thm.~\ref{spectrum-from-ARMA}) in addition to discrete
eigenvalues $\lambda_{\ell k}$ of finite multiplicity for integers $0\le k<
\frac12[\frac{p}2+1-\ell]$.

(2) If $\eta>\eta_{cr}$, the spectrum is still only point, with the discrete
eigenvalues exactly the $\lambda_{\ell k}$ for $0\le k <
\frac12[\frac{p}2+1-\ell - |\eta-\eta_{cr}|]$ and
 the parabolic region
\begin{equation}\label{parabolic region}
\re \lambda 
\le 
- (\frac{p}{2}+1)^2 - \ell(\ell+n-2) + (\eta-\eta_{cr})^2 
- \Big(\frac{\im \lambda/2}{\eta - \eta_{cr}}\Big)^2
\,,
\end{equation}
i.e., $\textstyle
\re\sqrt{\T}\le \eta+1-\frac{p}2= |\eta-\eta_{cr}| \mbox{, where }
{\T}= \ell(\ell+n-2) + (\frac{p}2+1)^2+\lambda\,.
$
The largest $\re\lambda$ occurring in the  essential spectrum is precisely
at $\lambda_{\ell,\eta}^{\rm cont}:=
(\eta-\eta_{cr})^2-(\frac{p}2+1)^2-\ell(\ell+n-2)$. 
(This parabolic region
includes now those eigenvalues $\lambda_{\ell k}$ that, in comparison
with the case $\eta=\eta_{cr}$, have been `lost' from the original list of
eigenvalues.) 
For $\ell=0$ and $\eta=0$ (unweighted $C^\halpha$), this threshold
$\lambda_{0,0}^{\rm cont}$ coincides with $\lambda_{01}$.

(3) If $\eta<\eta_{cr}$, the same formula $\re\sqrt{\T}\le |\eta-\eta_{cr}|$
characterizes now the residual spectrum ($\Lop-\lambda$ has 
closed range with codimension 1), 
and the same formulas as in (2) apply
for the eigenvalues $\lambda_{\ell k}$, which now make up the only point
spectrum.

In cases (2) and (3), the boundary of the parabolic region, namely
$\re\sqrt{\T} =  \eta+1-\frac{p}2= |\eta-\eta_{cr}|$, is essential spectrum
(range $\Lop-\lambda$ not closed). The remaining spectrum is non-essential:
the kernel of\/ $\Lop -\lambda$ is finite dimensional, and its range
is closed with finite codimension.

The eigenfunction for eigenvalue $\lambda_{\ell k}$ is
 $v_{\ell k}(|\x|) Y_{\ell\mu}(\x/|\x|)$ where
$$
\begin{array}{r@{}l}
\Dst
v_{\ell k}(r)=
\uB^{1-m}\psi_{\ell k}(r)
& \Dst\mbox{}
=
(\cosh s)^{-2}\psi_{\ell k}(\sinh s)
\\[2ex]
&\Dst\mbox{}
=
\left(\frac{\sinh^\ell s}{\cosh^2 s}\right)\times 
{}_2F_1\left(
\begin{array}{r}  k+\ell-1 -p/2,\;  -k\\
                               \ell+n/2\end{array};
-\sinh^2 s
\right)
\,.
\end{array}
$$
\end{theorem}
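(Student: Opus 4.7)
The plan is to reduce the analysis to constant-coefficient operators on the half-line via two standard moves. Multiplication by $(\cosh s)^{-\eta}$ is an isometry from $\Ceta^\halpha(\M)$ onto $C^\halpha(\M)$ that intertwines $\Lop$ with the operator $\Lop_\eta$ of \eqref{conjugated}, so it suffices to work with $\Lop_\eta$ on the unweighted space. Separation of variables via spherical harmonics then produces a one-parameter family of radial ODE operators $\Lop_{\eta,\ell}$ on $s\in[0,\infty[$. The boundary conditions at $s=0$ defining $\Cetaell^{2,\halpha}$ are forced by the requirement that $f(|\x|)Y_{\ell\mu}(\x/|\x|)$ extend $C^{2,\halpha}$-smoothly across the origin in Cartesian coordinates, which means the lowest nonvanishing Taylor coefficient of $f$ must be $r^\ell$.

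The essential spectrum is determined at the cylindrical end. As $s\to\infty$, formula \eqref{conjugated} shows that $\Lop_{\eta,\ell}$ converges, with a relatively compact $(\cosh s)^{-2}$ remainder, to the constant-coefficient model $\Lop_{\eta,\ell}^\infty:=\d_s^2+b^\infty(\eta)\d_s+c^\infty(\eta)-\ell(\ell+n-2)$, whose coefficients are read off from \eqref{binfty} and \eqref{cinfty}. Weyl's theorem matches $\sigma_{\rm ess}(\Lop_{\eta,\ell})$ to that of the model, and plane-wave testing $e^{i\xi s}$ traces out the curve $\lambda(\xi)=-\xi^2+i\xi b^\infty(\eta)+c^\infty(\eta)-\ell(\ell+n-2)$, which coincides with the boundary of the parabolic region \eqref{parabolic region} after the algebraic identity $(b^\infty(\eta)/2)^2-c^\infty(\eta)=(p/2+1)^2$. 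Singular Weyl sequences built from cut-off plane-wave solutions translated deep into the cigar complete the identification. When $\eta=\eta_{cr}$ the drift $b^\infty(\eta_{cr})=0$ forces the parabola to degenerate to the ray $\lambda\le\lambda_\ell^{\rm cont}$.

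The discrete eigenvalues and eigenfunctions are imported from Theorem \ref{spectrum-from-ARMA} through the intertwining $\Lop\circ\uB^{1-m}=\uB^{1-m}\circ\Hop$ noted just before the theorem statement, together with the identity $\uB^{1-m}=(\cosh s)^{-2}$ (normalizing $B=1$). This gives $v_{\ell k}(r)=(\cosh s)^{-2}\psi_{\ell k}(\sinh s)$ as the eigenfunction for $\lambda_{\ell k}$, with asymptotic growth $v_{\ell k}\sim e^{(\ell+2k-2)s}$ at infinity since $\psi_{\ell k}$ is a polynomial of degree $\ell+2k$ in $r=\sinh s$. Membership in $\Ceta^\halpha(\M)$ thus requires $\ell+2k\le\eta+2$, and a direct computation yielding $\T(\lambda_{\ell k})=(\ell+2k-(p/2+1))^2$ shows that $\lambda_{\ell k}$ lies strictly outside the parabolic region exactly when $\ell+2k<\frac{p}{2}+1-|\eta-\eta_{cr}|$, which matches the advertised range $0\le k<\frac12(\frac{p}{2}+1-\ell-|\eta-\eta_{cr}|)$. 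Completeness of the list of discrete eigenvalues follows from the observation that, away from the essential spectrum, the radial ODE admits exactly one solution (up to scalar) with the prescribed regularity at $s=0$ and exactly one with appropriate decay at $s=\infty$; a Wronskian computation parallel to that in \cite{MR2126633} shows these coincide only at the $\lambda_{\ell k}$.

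The subtlest step, which I expect to be the main obstacle, is the precise spectral classification inside the parabolic region. A Fredholm analysis at infinity using the two exponential modes $e^{\mu_\pm s}$ of $\Lop_{\eta,\ell}^\infty$, where $\mu_\pm=-b^\infty(\eta)/2\pm\sqrt{\T}$, shows that the sign of $\eta-\eta_{cr}$ governs how $\re\mu_\pm$ compares to zero. For $\eta>\eta_{cr}$, interior $\lambda$ makes both modes decay, so an entire one-parameter family of solutions lies in $\Ceta^\halpha$; generically this produces a nontrivial kernel and hence point spectrum, explaining how the eigenvalues ``lost'' from the list in Theorem \ref{spectrum-from-ARMA} get absorbed into the continuum of case (2). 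For $\eta<\eta_{cr}$, neither mode decays at interior $\lambda$, so the kernel is trivial; a density/codimension argument pairing against the adjoint problem then shows the range has a one-dimensional cokernel, yielding the residual spectrum of case (3). The boundary case $\ell=0$, $\eta=0$, where $\lambda_{01}=-2p$ sits exactly on the parabola boundary, is verified directly using \eqref{essential spectral gap}.
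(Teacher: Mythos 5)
Your route diverges from the paper's in the handling of the essential spectrum and the finer spectral classification. The paper works entirely from the explicit Green's function representation (the resolvent formula (4.36) imported from \cite{MR2126633}), written out in terms of the two ODE solutions $v_1,v_2$ with known growth rates at $0$ and $\infty$; this single representation simultaneously produces (a) the bounded inverse outside the parabola, (b) the point spectrum with onto-ness for $\eta>\eta_{cr}$ in its interior, (c) the codimension-one closed range for $\eta<\eta_{cr}$ via the explicit solvability constraint $\int_0^\infty w\,v_1\,\uB^m r^{n-1}\,dr=0$, and (d) the failure of closedness on the parabola boundary via concrete families $w_N\to w_*$. You instead appeal to Weyl's theorem with a relatively compact $(\cosh s)^{-2}$ perturbation of the constant-coefficient model plus a mode-matching Fredholm argument. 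This is in principle a legitimate alternative, but it conceals two points that the paper's computation handles for free. First, the relative compactness of $(\cosh s)^{-2}$ from $C^{2,\halpha}[0,\infty[$ to $C^\halpha[0,\infty[$ and the stability of the semi-Fredholm domain under such perturbations in a non-reflexive Banach space require proof, not citation; the paper's definition of essential spectrum ("range not closed, kernel and cokernel finite") is exactly tailored so that the explicit resolvent calculation settles it without invoking any abstract perturbation theorem. Second, your mode-matching analysis is too vague where the theorem is most delicate: for $\eta>\eta_{cr}$ in the parabola interior the kernel is always one-dimensional (not "generically"), since the regular solution $v_1$ grows like $r^{\eta_{cr}+\sqrt{\T}}$ which is $O(r^\eta)$ precisely because $\re\sqrt{\T}<\eta-\eta_{cr}$; and for $\eta<\eta_{cr}$ the cokernel claim needs the concrete continuous linear functional $w\mapsto \int_0^\infty w\,v_1\,\uB^m r^{n-1}dr$ rather than a "density/codimension argument pairing against the adjoint," since the dual of a H\"older space is not another function space you can easily work in. The eigenvalue/eigenfunction import via the conjugacy $\Lop\circ\uB^{1-m}=\uB^{1-m}\circ\Hop$, the computation $\T(\lambda_{\ell k})=(\ell+2k-\frac p2-1)^2$, and the $\mu_\pm$ mode analysis are all correct and align with the paper's. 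One small slip: to verify $\lambda_{0,0}^{\rm cont}=\lambda_{01}$ you should use the formula $(\eta-\eta_{cr})^2-(\frac p2+1)^2$ at $\eta=0$, giving $-2p$, rather than equation \eqref{essential spectral gap}, which is the $\eta=\eta_{cr}$ case.
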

\begin{remark}
The domain of\/ $\Lop$ is dense in the little, but not the large,
H\"older spaces~$C^\halpha$.
The latter fact seems a bit annoying from the point of view of abstract operator
theory, but will not cause us any trouble.
\end{remark}

\begin{figure}\unitlength1bp
\centerline{%
\begin{picture}(380,220)
\put(0,0){\includegraphics{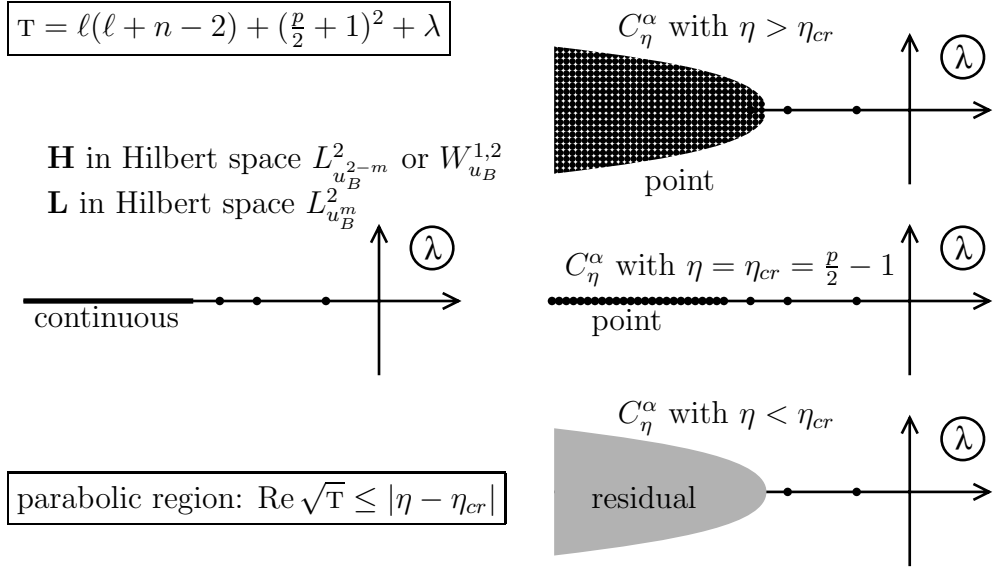}}
\put( 15,158){$\Hop$ in Hilbert space $L^2_{\uB^{2-m}}$ or $W^{1,2}_{\uB}$}
\put( 15,141){$\Lop$ in Hilbert space $L^2_{\uB^{m}}$}
\put(230,208){$\Ceta^\halpha$ with $\eta>\eta_{cr}$}
\put(210,118){$\Ceta^\halpha$ with $\eta=\eta_{cr}=\frac{p}{2}-1$}
\put(230, 62){$\Ceta^\halpha$ with $\eta<\eta_{cr}$}
\put(10,98){continuous}
\put(220,97){point}
\put(240,150){point}
\put(220,30){residual}
\put(0,208){\fbox{$\T=\ell(\ell+n-2) + (\frac p2+1)^2 + \lambda$}}
\put(0,30){\fbox{parabolic region: $\re\sqrt{\T} \le |\eta-\eta_{cr}|$}}
\end{picture}
}
\caption{\sl The spectrum of $\Lop_\ell$, in various spaces, schematically.}
\label{space+spectrum}
\end{figure}

\begin{remark}[Relevant parts of spectrum]
We will rely only on the value of the essential spectral radius of
the semigroup generated by $\Lop_\eta$ (essential spectral abscissa 
of~$\Lop_\eta$), and the eigenvalues outside the essential spectral radius, 
but not on any other aspects of the spectrum.

After decomposition into spherical harmonics, the value of the essential 
spectral radius 
could be obtained qualitatively by asymptotic methods (see, eg., Ch.~2 of
Fedoryuk \cite{MR1295032}) without reference to explicit solutions in terms of
special functions. The precise nature of the spectrum is instructive to know,
but not relevant for our arguments. Note that in contrast to the
weighted space used in \cite{MR2126633}, in the present, unweighted space, the
onset of the essential spectrum of $\Lop_0$ is precisely at $\lambda_{01}$.
\end{remark}

\begin{proof}[Proof of Thm.~\ref{HolderSpectrum}]
Lest there be any doubt, we point out beforehand that the sign of $\eta$,
$\eta_{cr}$ will not be relevant for the proof of this theorem (but will
become relevant later).
The claim that $\Lop$ defines an operator in $\Ceta^\halpha$ with the
domain $\Ceta^{2,\halpha}$ is equivalent to the claim that the conjugated
operator $\Lop_\eta= (\cosh s)^{-\eta} \circ \Lop \circ (\cosh s)^\eta$ is a
mapping $C^{2,\halpha}\to C^\halpha$, and this is clear from the
definition, see~\eqref{conjugated}; or the equation in cartesian 
coordinates to see that there are actually no singularities at the origin. 
By slight abuse of notation, we let
$\Lop_\ell$ operate on functions $f(s)$ rather than
$f(s)Y_{\ell\mu}(\x/|\x|)$. Recalling~\eqref{conjugated}, we have
\begin{equation}
\label{conjugated-ell}
\begin{split}
  \Lop_{\ell,\eta}: = &
      (\cosh s)^{-\eta} \circ \Lop_{\ell} \circ (\cosh s)^\eta
\\
  = & \,  \Bigl( \d_s^2 + \frac{2(n-1)}{\sinh2s}\d_s -
  (\tanh s)^{-2} \, \ell(\ell+n-2) \Bigr)
\\
 & + \Big( n - \frac{2m}{1-m} + 2  \eta \Big) \tanh s \, \d_s
\\
 & + n(\eta+2)
  + \Bigl(  \eta^2 - \frac{2m}{1-m}\eta - \frac{4}{1-m} \Bigr)
    \tanh^2 s
\;.
\end{split}
\end{equation}
For the claimed domains $\Cetaell^{2,\halpha}$, 
notice that if $f(r)Y_{\ell\mu}(\x/|\x|)$ is in~$C^{2,\halpha}(\M)$, 
then $f$~is $C^{2,\halpha}$ by restriction to rays. Moreover, any
$C^{2,\halpha}(\M)$ function can be written, near the origin, in the
form $c_0+\cb_1\cdot\x+c_2r^2 + h_2(\x) + o(r^2)$, with $h_2$ a
harmonic polynomial of homogeneous degree~2. (Readers looking for
details on harmonic polynomials can find them on pp.~159-163
of~\cite{BGM}). If this function is also of the form $f(r)
Y_{\ell\mu}(\x/|\x|)$, then we can multiply with various
$Y_{\ell'\mu'}$ and integrate over the unit sphere. 
Unless $\ell=0$, choosing $\ell'=0$ gives $0=c_0+c_2r^2+o(r^2)$, hence
$c_0=c_2=0$. Unless $\ell=1$, choosing $\ell'=1$ gives
$0=\cb_1$. Unless $\ell=2$, testing with $\ell'=2$ gives $h_2=0$. The
claims about $f$ and its derivatives at~0 follow. 

Now we need to see, conversely, that when $f$ is in the claimed
domains, then $fY_{\ell\mu}$ is in $C^{2,\halpha}(\M)$. It is only in
a neighbourhood of the origin that this is nontrivial. Using Schauder
theory, it suffices to show that $\Laplace(f(r)Y_{\ell\mu}(\omega)= 
(\frac{d^2}{dr^2} + \frac{n-1}{r}\,\frac{d}{dr} -
\frac{\ell(\ell+n-2)}{r^2}) f(r) Y_{\ell\mu}(\omega)$ is (uniformly)
$C^\halpha$ in a punctured neighborhood of the origin. Use of the
initial conditions at~0 in each case establishes this in a
straightforward manner.

The analysis of solutions to $(\Lop_\ell-\lambda)\psi=0$ in terms of
hypergeometric functions carried out in Sec.~4.2 of \cite{MR2126633} carries
over with only trivial modifications, and the discussion whether such solutions
lie in $C^\halpha(\M)$ reduces to growth at infinity (the smoothness being
trivial).
This establishes the precise point spectrum claimed for all cases,
and the eigenfunctions $v_{\ell k}$.

We now assume we are not in the eigenvalue case.
Translating the resolvent formula (4.36) from~\cite{MR2126633}, we can write
any solution to $(\Lop_\ell-\lambda)u=w$ as
\begin{equation}\label{resolventf}
\begin{array}{l}
u(r)= v_2(r) \left(A_0+\int_0^r w v_1 \uB^m r^{n-1} dr \right)
    + v_1(r) \left(A_1+\int_r w v_2 \uB^m r^{n-1} dr \right)
\\[1ex]
u'(r)= v_2'(r) \left(A_0+\int_0^r w v_1 \uB^m r^{n-1} dr \right)
    + v_1'(r) \left(A_1+\int_r w v_2 \uB^m r^{n-1} dr \right)
\;.
\end{array}
\end{equation}
As $r\to\infty$, we have $\uB^mr^{n-1} \sim c r^{-p/2-\eta_{cr}}$ and
$$
v_2(r)\sim c r^{\eta_{cr}-\sqrt{\T}}\;,\qquad
v_1(r)\sim c r^{\eta_{cr}+\sqrt{\T}}
\;.
$$
As $r\to0$, we have  $\uB^mr^{n-1}\sim cr^{n-1}$ and
$$
v_2(r)\sim c r^{2-n-\ell}\;,\qquad
v_1(r)\sim c r^\ell
\;.
$$

First assume $\re\sqrt{\T}>|\eta-\eta_{cr}|$ and let $w=O(r^\eta)$ as
$r\to\infty$. Then the second integrand converges near $\infty$ and we may
specify its upper limit as $\infty$. A growth estimate for~$u$ requires now
$A_1=0$, because else the dominant term $A_1r^{\eta_{cr}+\sqrt{\T}}$
is not in the space. The dominant term as $r\to0$ is
$A_0r^{2-n-\ell}$, so we need $A_0=0$ to get
$u(0)=0$ in the case $\ell\ge1$. In the case $\ell=0$, we still need $A_0=0$ for
$u'(0)$ to vanish. With these choices we do indeed get a solution~$u$ that
satisfies the bounds near 0 and infinity required for it to be in the domain of
$\Lop_\ell$ ($u''(0)=0$ in the cases $\ell\ge3$ uses $w(0)=0$); 
the smoothness is trivial.  So these cases are not in the spectrum.

Next we assume $\re\sqrt{\T}<|\eta-\eta_{cr}|$. We are then in the case
$\eta<\eta_{cr}$ because the other cases have been dealt with as point
spectrum. If $w\in {\rm range}(\Lop_\ell-\lambda)$, then
$\int_0^\infty w v_1 \uB^m r^{n-1}\,dr=0$ by symmetry of $\Lop_\ell$
with respect to the inner product in $L^2(\R^+,\uB^mr^{n-1}dr)$, which
{\em is\/} defined in this particular case because the integrand is
bounded by $O(r^{\eta+\re\sqrt{\T}-\eta_{cr}-1})$ for large~$r$.
Let us assume $w$ satisfies this orthogonality condition, which restricts $w$
to a closed subspace of codimension 1.
We also
have the convergence that allows us to have the upper limit $\infty$
in the second integral of the resolvent formula~\eqref{resolventf}.
Moreover, if $w=(\Lop_\ell-\lambda)u$ we need $A_1=0$ in the resolvent formula
for the same growth reasons.
We can then write
$$
u(r)= A_0v_2(r) - v_2(r)\int_r^\infty w v_1 \uB^m r^{n-1} dr
    + v_1(r) \int_r^\infty w v_2 \uB^m r^{n-1} dr
\;.
$$
We now need $A_0=0$, because the first term is the only one that would grow
faster than $O(r^\eta)$. As before, we now confirm that $u$ does have the
required behavior as $r\to0$ to be in the domain of $\Lop_\ell-\lambda$.
We have (non-essential) residual spectrum in this case (injective, range closed
and with codimension~1).

Let us now return to the point spectrum in $\re\sqrt{\T}<|\eta-\eta_{cr}|$ in
the case $\eta>\eta_{cr}$ to see that this spectrum is non-essential. Indeed,
we want to show that $\Lop_\ell-\lambda$ is onto in this case. We cannot choose
$\infty$ as upper limit of integration in~\eqref{resolventf} and choose~1
instead. $A_1$ is arbitrary anyways, since $v_1$ is a bona-fide  eigenfunction.
The choice $A_0=0$ is required for the same reasons as before, and with this
choice we do get a solution $u$ in the domain for every choice of~$w$ in the
space $\Cetaell^\halpha$.

Finally, we consider the cases on the boundary of the parabolic region. Let
first  $\re\sqrt{\T}=\eta-\eta_{cr}\ge0$. We know we
have point spectrum, and we want to show that the range is {\em not\/} closed.
Not being assured of convergence at $\infty$, we write
$$
u(r)= v_2(r) \left(A_0+\int_0^r w v_1 \uB^m r^{n-1} dr \right)
    + v_1(r) \left(A_1-\int_1^r w v_2 \uB^m r^{n-1} dr \right)
$$
For $w$ to be in the range, it is necessary that the function
$r\mapsto \int_1^r w v_2 \uB^m r^{n-1} dr$ remains bounded as $r\to\infty$,
which is not automatic: $w\in \Ceta^\halpha$ would allow for logarithmic
divergence. Assume this hypothesis is verified. Then the choice
$A_0=0$ does give a solution that satisfies the right bounds (and
automatically the needed smoothness, too). With this characterization
of the range, it can be seen that
$w_*=r^{2(\eta-\eta_{cr})} \bar{v}_2 \chi(r) /\ln r$ (where $\chi$ is a
truncation at~$0$, constant~1 for $r\ge1$) is not in the range, but
$w_N=r^{2(\eta-\eta_{cr})} \bar{v}_2 \chi(r) /(\ln r+\frac rN)$ is in the range,
for every~$N$, and $w_N\to w_*$ uniformly, and also in the H\"older norm.

Now, for $\eta<\eta_{cr}$ and $\re\sqrt{\T}=\eta_{cr}-\eta$, we can write the
second integral as $\int_r^\infty$ again, and need $A_1=0$, $A_0=0$. We get a
similar conclusion, namely that $w$ is in the range if and only if
$r\mapsto \int_0^r w v_1 \uB^m r^{n-1} dr$ is bounded as $r\to\infty$. With the
same argument as before, the range is not closed. Is the range dense or not?
We claim it is not dense, thus identifying the spectrum as residual rather than
continuous.
Indeed, if $\|g\|_{\Ceta^\halpha}<\eps$, then
$w(r):= r^{2(\eta-\eta_{cr})}\bar{v}_1(r) \chi(r) + g(r)$ parametrizes an open
set of functions that is not in the range, where the implied smallness of the
weighted $L^\infty$ norm of $g$ prevents it from compensating for the
logarithmic divergence of the integral caused by the first term.
\end{proof}

The spectrum of $\Lop$ in $\Ceta^\halpha$ is the same as the spectrum of
$\Lop_\eta$ (defined in \eqref{conjugated}) in~$C^\halpha$. So we can now
study the differential equation
$v_t - \Lop_\eta v = 0$.
It follows from Rmk.~\ref{highreg} that the linear semigroup
$\Sop_\eta(t)$ generated is analytic on $C^\halpha(\M)$.
We want to obtain the usual semigroup estimates
on the complement of the space spanned by the eigenfunctions $v_{\ell k}$.
Major work goes into the boundedness result: $\|\exp[t\Lop_\eta]\|\le C
\exp[c_\infty(\eta) t]$ {\em without\/} an extra $\eps$. The loss of
self-adjointness makes this result non-trivial, in particular since we have
established non-zero Fredholm index for the onset of the essential spectrum
of~$\Lop_\eta$. We claim
\begin{theorem}[Semigroup Estimates] \label{semigroupestimates}
   Equation \eqref{conjugated} defines an
   analytic semigroup $\Sop_\eta(t) = \exp t \Lop_\eta$ on $C^{\halpha}(\M)$ or
   on $L^\infty(\UM)$, in the sense of Remark~\ref{analytic-semigroup}.

   The essential spectrum of $\Sop_\eta(t)$ is contained
   in $\overline{B(0,e^{c^\infty(\eta)  t})}$ and in no smaller ball, where
   $c^\infty(\eta)$ is given by \eqref{cinfty} and equals the
   $\lambda_{0,\eta}^{\rm cont}$ from Thm.~\ref{HolderSpectrum}.
   Outside this ball there are only finitely many eigenvalues $e^{\lambda_j t}$
   with $c_\infty(\eta) < \lambda_1 \le \lambda_2\ldots$ where the $\lambda_j$
   are the $\lambda_{\ell k}$ from Thm.~\ref{HolderSpectrum}

   The spectral projections onto the eigenspace corresponding to a set of
   eigenvalues $\lambda_{\ell k}$ of the operator $\Lop$
   in~$L^2_{\uB^m}(\Rn)=L^2_{\eta_{cr}}(\M)$ are, by the same formula,
   also well-defined mappings in~$\Ceta^\halpha(\M)$, or correspondingly
   weighted
   $L_{\eta}^\infty(\M)$, provided the $L^2_{\eta_{cr}}(\M)$-eigenfunctions
   for~$\lambda_{\ell k}$  are still in the space~$\Ceta^\halpha(\M)$.

   Let $v_j$ be the eigenfunctions corresponding to the $\lambda_j$ in
   $L^2(\M)$.
   Then, for $\eta\neq\eta_{cr}$, each solution $\vb=\Sop_\eta(t)\vb_0$
   with initial data $\vb_0\in L^\infty$ can be  written in the  form
   \begin{equation}  \label{slowmfd-decomp}
        \vb(t,\x) = \sum c_j e^{\lambda_j t} v_j(\x) + \vb_{res}(t,\x)
   \end{equation}
   with
   \begin{equation}   \label{slowmfd-est}
       \sup_{t \ge  1  } e^{-c^\infty(\eta) t} \| \vb_{res} \|_{L^\infty(\M)}
       < \infty.
   \end{equation}
  For $\eta=\eta_{cr}$, we at least get the same conclusion with
   $c^\infty(\eta)+\eps$ instead of $c^\infty(\eta)$, for arbitrary
  $\eps>0$.
  For $\eta=\eta_{cr}$, we also get the analog of \eqref{slowmfd-est}
  with the $L^2(\M)$ norm (and no~$\eps$).
\end{theorem}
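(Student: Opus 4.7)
The plan is to deduce analyticity and the spectral structure as direct consequences of earlier results, and then to attack the sharp semigroup bound, which is the technical heart of the statement.

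First I would invoke Theorem~\ref{linearth} (with Remark~\ref{highreg} for higher regularity) applied to the uniformly parabolic equation $v_t = \Lop_\eta v$ on the uniform cigar manifold $\M$: since \eqref{conjugated} shares its principal part with the Laplace--Beltrami operator \eqref{Laplace-Beltrami} and has bounded first and zeroth order coefficients, Theorem~\ref{linearth} delivers an analytic semigroup $\Sop_\eta(t)$ on $C^\halpha(\M)$ and on $\BC(\M)$ in the sense of Remark~\ref{analytic-semigroup}. The spectrum of $\Lop_\eta$ on $C^\halpha$ was already computed in Theorem~\ref{HolderSpectrum}: finitely many isolated eigenvalues $\lambda_{\ell k}$ of finite multiplicity plus a parabolic essential region whose rightmost point is $c^\infty(\eta)$. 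The spectral mapping theorem for analytic semigroups (valid outside the essential spectrum) then places the essential spectrum of $\Sop_\eta(t)$ inside the closed disk of radius $e^{c^\infty(\eta)t}$, and leaves only finitely many isolated eigenvalues $e^{\lambda_j t}$ outside it. Finite-rank spectral projections $\Qop_j$ are defined by the Dunford--Riesz contour integrals of the resolvent around each $\lambda_j$; the same formula produces bounded projections in $L^2_{\uB^m}$, in $\Ceta^\halpha$, and in the $L^\infty_\eta$ variant, precisely when the $L^2$-eigenfunctions $v_{\ell k}$ from Theorem~\ref{HolderSpectrum} lie in the space (a direct growth check using the explicit hypergeometric formula).

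Setting $\Qop = \sum_j \Qop_j$, $\Pop = I - \Qop$, the decomposition \eqref{slowmfd-decomp} follows at once with $\vb_{res} = \Sop_\eta(t)\Pop\vb_0$. A Hille--Yosida / Gearhart--Pr\"uss argument along a contour following the right boundary of the parabolic region, combined with resolvent bounds coming from the parabolic Schauder estimate \eqref{HE-est1}, yields the $\eps$-weakened bound
\[
\|\Sop_\eta(t)\vb_{res}\|_{L^\infty(\M)} \le C_\eps\, e^{(c^\infty(\eta)+\eps)t}\,\|\vb_{res}(0)\|_{L^\infty(\M)}
\]
for every $\eps>0$. This already delivers the $\eta=\eta_{cr}$ conclusion in $L^\infty$. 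Still at $\eta=\eta_{cr}$, the $L^2$ conclusion without $\eps$ is immediate from the self-adjointness of $\Lop_{\eta_{cr}}$ on $L^2(\M)=L^2_{\uB^m}(\Rn)$: the spectral theorem applied on $\mathrm{range}(\Pop)$ gives $\|\Sop_{\eta_{cr}}(t)\vb_{res}\|_{L^2} \le e^{c^\infty(\eta_{cr})t}\|\vb_{res}(0)\|_{L^2}$ directly.

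The main obstacle is removing the $\eps$ when $\eta\neq\eta_{cr}$ in $L^\infty$. The strategy exploits the strict monotone dependence $-c^\infty(\eta)=(p/2+1)^2-(\eta-\eta_{cr})^2$: for a slightly smaller $|\eta'-\eta_{cr}|<|\eta-\eta_{cr}|$, one has strictly $c^\infty(\eta')<c^\infty(\eta)$, and any eigenvalues $\lambda_{\ell k}$ lying in the gap $[c^\infty(\eta'),c^\infty(\eta)]$ --- which are swallowed by the essential spectrum in the $\eta'$-space but isolated in the $\eta$-space --- can be added to an enlarged projection $\Pop'$. The $\eps$-loss estimate in the $\eta'$-weighted space, applied to the $\Pop'$-projected data, gives decay at rate strictly less than $e^{c^\infty(\eta)t}$. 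A maximum-principle comparison on the original equation $u_t=\Lop u$ then transfers this into the $\eta$-weighted $L^\infty$ norm with the sharp exponent: one constructs an explicit super-solution $e^{c^\infty(\eta)t}\phi(s)$ with $\phi(s)\sim(\cosh s)^\eta$, using the ansatz $\phi=(\cosh s)^\eta+C(\cosh s)^{\eta'}$; the sign of $(\Lop-c^\infty(\eta))\phi$ is controlled at infinity by the strict inequality $c^\infty(\eta')<c^\infty(\eta)$ together with the negative contribution of the potential well $d(\eta)/\cosh^2 s$ computed after \eqref{cinfty}. Comparison of $\vb_{res}$ against positive and negative multiples of this super-solution (splitting $\vb_{res}$ into positive and negative parts, since $\Pop$ is not positivity-preserving) absorbs any eigenmode leakage into the finite-rank sum already displayed in \eqref{slowmfd-decomp}.

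The hardest step will be this final barrier construction and verifying its compatibility with $\Pop$: the super-solution must carry precisely the correct $(\cosh s)^\eta$ tail to match the norm, yet dominate $\vb_{res}$ uniformly once the finite-dimensional modes are removed, without any extra polynomial-in-$t$ factor --- this is delicate because the essential spectrum boundary is genuinely touched at $c^\infty(\eta)$ and the quadratic tangency of the parabolic region to the vertical line $\re\lambda=c^\infty(\eta)$ means a crude resolvent estimate would only give $O(t^{-1/2})$ gain, insufficient by itself.
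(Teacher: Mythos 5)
The first three quarters of your proposal track the paper closely and are sound: Theorem~\ref{linearth} gives the analytic semigroup, Theorem~\ref{HolderSpectrum} plus the spectral mapping theorem (Engel--Nagel Cor.~IV.3.12) gives the essential spectral radius, the Dunford--Riesz projections coincide with the paper's explicit $L^2_{\uB^m}$ projections, and at $\eta=\eta_{cr}$ the self-adjointness gives the sharp $L^2$ result while resolvent bounds give the $\eps$-weakened $L^\infty$ one.

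The gap is in your barrier argument for the sharp bound when $\eta\neq\eta_{cr}$, and it is fatal to the ansatz as stated. You propose $\phi=(\cosh s)^\eta+C(\cosh s)^{\eta'}$ and claim $(\Lop-c^\infty(\eta))\phi\le 0$ is ``controlled at infinity by $c^\infty(\eta')<c^\infty(\eta)$ together with the \emph{negative} contribution of the potential well.'' But $d(\eta)>0$ is the \emph{depth} of the well for $-\Lop_\eta$, so the zeroth-order term of $\Lop_\eta$ is $c^\infty(\eta)+d(\eta)\cosh^{-2}s$: the well contributes \emph{positively}, which is precisely why the paper says the maximum principle for $\Lop_\eta-c^\infty(\eta)$ ``barely fails.'' Computing directly,
\[
(\Lop-c^\infty(\eta))\phi
= d(\eta)(\cosh s)^{\eta-2}
+ C(\cosh s)^{\eta'}\bigl[c^\infty(\eta')-c^\infty(\eta)\bigr]
+ Cd(\eta')(\cosh s)^{\eta'-2}.
\]
At $s\to\infty$ the middle term can indeed dominate (negatively) provided $\eta'>\eta-2$. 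But at $s=0$, using $c^\infty(\eta)+d(\eta)=n(2+\eta)$, the bracket satisfies $c^\infty(\eta')-c^\infty(\eta)+d(\eta')=n(2+\eta')+(p-\eta)(2+\eta)>0$ for the relevant parameters, so increasing $C$ only \emph{increases} $(\Lop-c^\infty(\eta))\phi$ near the origin. No choice of $C>0$ and $\eta'$ with $c^\infty(\eta')<c^\infty(\eta)$ makes $\phi$ a global supersolution; the well near $s=0$ cannot be beaten by the tail correction.

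This is exactly the obstruction the paper's proof is built to circumvent: the maximum-principle/barrier reasoning is used \emph{only} in the exterior region $s\ge\delta t$, where $d(\eta)\cosh^{-2}s\le c e^{-2\delta t}$ is integrable in time and absorbed into a bounded correction $a(t)=c^\infty(\eta)t+\int_0^t ce^{-2\delta\tau}d\tau$, so that $e^{-a(t)}\vb$ genuinely obeys a maximum principle there. In the complementary cone $s\le\delta t$ one must use a non-barrier argument: conjugation to $\eta_{cr}$, the $L^2(\M)$ spectral gap of the self-adjoint $\Lop_{\eta_{cr}}$ (which gives decay strictly faster than $c^\infty(\eta)$ by some $\eps>0$), and the observation that inside the cone the price $(\cosh s)^{|\eta-\eta_{cr}|}\le e^{\delta|\eta-\eta_{cr}|t}$ is exactly compensated by choosing $\delta=\eps/|\eta-\eta_{cr}|$. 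For $\eta>\eta_{cr}$ the conjugated data are no longer bounded and one additionally needs the exterior-Dirichlet solution, a cutoff, and a Duhamel/averaging argument to make the interior piece compactly supported. You flag the last step as ``the hardest'' and anticipate a $t^{-1/2}$ loss from the quadratic tangency, but the actual obstruction is structural (no global supersolution exists in your form); the repair is the space-time cone decomposition, not a refinement of the barrier.
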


\begin{proof}
It has been proven in Theorem~\ref{linearth} that the semigroup is analytic. 
The spectral statements  follow from
Thm.~\ref{HolderSpectrum} by the functional calculus for analytic
semigroups (e.g., Cor.~IV, 3.12 in Engel, Nagel \cite{MR1721989}). In using
this reference, note that strong continuity in~0 is assumed there; so we get
the estimate of the spectral radius for the little H\"older
spaces~$\oC^\halpha$
directly, in view of our Remark~\ref{analytic-semigroup}(c). However, the
result also carries over to the big H\"older spaces $C^\halpha$, because the
semigroup maps $C^\halpha$ initial data continuously into $\C^{\halpha'}\subset
\oC^\halpha$ (for $\halpha'>\halpha$) in any
short time $t_0$, as a consequence of the regularization estimate in
Thm.~\ref{linearth}.

Clearly the projections $C^\halpha(\M) \ni u(s,\omega)  \longmapsto
u_{\ell\mu}(s)=\int u(s,\omega)
Y_{\ell\mu}(\omega) \,d\omega \in C^\halpha[0,\infty`[$
are continuous. If $\ell>0$, the functions $u_{\ell\mu}$ satisfy
$u_{\ell\mu}(0)=0$, and we denote the space of those
$C^\halpha[0,\infty`[$-functions that vanish
at 0 by $C^\halpha_0[0,\infty`[$.

Conversely, the imbeddings $C^\halpha_0[0,\infty`[\ni u(s)
\longmapsto u(s)Y_{\ell\mu}(\x/|\x|) \in C^\halpha(\M)$
are continuous for $\ell>0$, and
likewise the analogous imbedding $C^\halpha[0,\infty`[\to C^\halpha(\M)$ in the
case $\ell=0$.
The $L^\infty$ estimate being trivial, we deal with the H\"older quotient:
$$
\frac{|f(r_1)Y(\omega_1)-f(r_2)Y(\omega_2)|}%
     {d((r_1,\omega_1) , (r_2,\omega_2))^\halpha}
\le
\frac{|f(r_1)-f(r_2)|\,|Y(\omega_1)|}%
     {d(r_1 , r_2)^\halpha}
+
\frac{|f(r_2)| \, |Y(\omega_1)-Y(\omega_2)|}%
     {d((r_1,\omega_1) , (r_2,\omega_2))^\halpha}
$$
The first term is bounded in terms of $[f]_\halpha$.
The second term vanishes if $\ell=0$. If however $\ell\ge1$, we use that
$f(0)=0$ and therefore $|f(r)|\le O(d(0,r)^\halpha)=O(s^\halpha)$.
Then the second term is bounded by
$$
c \frac{s_2^\halpha d(\omega_1,\omega_2)}%
{\max\{|s_2-s_1|^\halpha, (\tanh s_2)^\halpha\, d(\omega_1,\omega_2)^\halpha\}}
$$
where $s_1\ge s_2$ without loss of generality. The estimate follows immediately.

Finally,  the inner product
$\langle v,v_{\ell k}\rangle_{L^2_{\uB^m}}$
is well-defined for $v\in C^\halpha[0,\infty`[$ provided
$\eta + \ell + 2k < p$. By the constraint on~$k$, we have indeed
$\eta+\ell + 2k < \eta + \frac{p}2+1-|\eta-\eta_{cr}|=
p+\eta-\eta_{cr}-|\eta-\eta_{cr}| \le p$. So again these projections are
continuous in each $\Ceta^\halpha$, and so are trivially the corresponding
imbeddings.

So, since for given~$m<1$, there are only finitely many eigenvalues
$\lambda_{\ell k}$, we conclude that the spectral projection $\Qop$
onto the span of their eigenspaces that was naturally constructed within the
$L^2_{\uB^m}$ Hilbert space framework, namely
$$
\Qop v:= \sum_{\ell,\mu} \sum_{k\le \bar{k}(\ell)}
\frac{\langle v ; v_{\ell k}Y_{\ell\mu}\rangle_{L^2_{\uB^m}}}%
     {\langle v_{\ell k}Y_{\ell\mu} ; v_{\ell
     k}Y_{\ell\mu}\rangle_{L^2_{\uB^m}}}
v_{\ell k} Y_{\ell\mu}
\;,
$$
defines, by the same formula, a spectral projection in the $\Ceta^\halpha(\M)$
framework, too. Here,
$\bar{k}(\ell)= \frac12[\frac{p}2+1-\ell - |\eta-\eta_{cr}|]$ is from
Thm.~\ref{HolderSpectrum}.
Let $\Pop:=1-\Qop$ be the complementary projection in $\Ceta^\halpha$.

We turn to  the semigroup estimate and consider a solution $\vb$ with bounded
initial data. The key point is that we get estimate \eqref{slowmfd-est} with
precisely $c^\infty(\eta)$ in the exponent, rather than $c^\infty(\eta)+\eps$,
an issue that has become nontrivial because we lost the notion of
self-adjointness when abandoning the Hilbert space setting.
There is nothing to do in the case $\eta=\eta_{cr}$, where we did not claim
such an improvement. Also, the $L^2(\M)$ estimate for $\eta=\eta_{cr}$
is clear because the operator is self-adjoint (with the Riemannian
volume on the cigar as measure) in this case.

We will distinguish the cases $\eta<\eta_{cr}$ and  $\eta>\eta_{cr}$.
We rely on the fact that
$c^\infty(\eta)>c^\infty(\eta_{cr})$ and a trade-off between time decay and
spatial growth that will be exhibited during the proof details.
The key idea is that for $s\to\infty$, the maximum principle `almost' gives
the correct growth rate $c^\infty(\eta)$ in the $L^\infty$ norm. There is a
remnant of the $0^{\rm th}$ order term that decays like $O(1/\cosh^2 s)$ and
that could give rise to secular terms; controlling this effect gives rise to
the said trade-off between spatial growth and time decay. To control the
estimates for small $s$, we use conjugacy and estimates with a better decay
constant than $c^{\infty}(\eta)$ for data in the correct spectral space.

The $\Lop_\eta$ invariant projection~$\Pop_\eta$ corresponding to those
eigenvalues that are strictly
larger than $c^\infty(\eta)$ immediately
gives rise to the decomposition \eqref{slowmfd-decomp}.
In case $c^\infty(\eta)$ is itself among the $L^2_{\uB^m}$ eigenvalues
$\lambda_{\ell k}$, components in its eigenspace immediately satisfy
\eqref{slowmfd-est}. So we can project this eigenvalue away, too, and need to
prove \eqref{slowmfd-est} only for $\vb:= \vb_{res}$ in the complementing space.
Moreover, for the price of a small time shift (regularization estimate in
Thm.~\ref{linearth}), we may assume $\vb_0\in C^\halpha$.

\noindent{\bf Case 1: $\eta < \eta_{cr}$.}
As remarked after \eqref{Cetadef}, multiplication by $(\cosh s)^{\eta_{cr}}$
gives an isometry between $L^2(\M)$ and $L^2_{\uB^m}(\Rn)$.
Let us first conjugate the problem with
$(\cosh s)^{\eta_{cr}-\eta}$ to obtain the self-adjoint operator
$\Lop_{\eta_{cr}}$
with $L^2(\M)$ initial data $\vc_0 := (\cosh s)^{\eta-\eta_{cr}} \vb_0$. We
find some $c^\infty(\eta)-\eps$ strictly above the next smaller of
the $L^2(\M)$ eigenvalues (or, if there are no such eigenvalues, strictly above
$c^\infty(\eta_{cr})$).

\begin{figure}
\fbox{
\begin{minipage}{0.95\textwidth}
\begin{center}
\begin{picture}(345,62)
\put(0,50){Flow of}    \put(185,50){Flow of}
\put(0,36){$\Lop_\eta \mbox{ on } \vb\in C^\halpha\,$,\kern2em or}
\put(0,22){$\Lop \mbox{ on } (\cosh s)^\eta \vb \in \Ceta^\halpha$}
\put(0,6){$\phantom{\Lop \mbox{ on } (\cosh s)^\eta \vb} \subset L^2(\M)$}
\put(185,36){$\Lop_{\eta_{cr}} \mbox{ on } \vc= \mbox{}$}
\put(185,22){\kern1em$\mbox{}= (\cosh s)^{\eta-\eta_{cr}}\vb\in
                                     C^\halpha_{\eta-\eta_{cr}}$}
\put(185,6){\kern1em$\phantom{\mbox{}= (\cosh s)^{\eta-\eta_{cr}}\vb}
                                    \subset C^\halpha(\M)$}
\put(160,6){\footnotesize\it \ldots since $\eta<\eta_{cr}$ \ldots}
\end{picture}\\
We get a spectral gap from the $L^2$ theory, \\
but the estimates do not enforce sufficient decay at $\infty$.

\begin{picture}(285,83)
  \put( 0,  0){\includegraphics{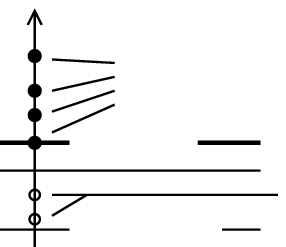}}
  \put(35, 51){\small estimate applies to complement of eigenspace}
  \put(35, 40){\small for these eigenvalues}
  \put(25, 28){\small $c^\infty(\eta)$}
  \put(78, 24.5){\scriptsize$\}$\normalsize$\,\eps$}
  \put(83, 13){\small possible further $L^2$ eigenvalues}
  \put(99,  2){\small (in residual spectrum for
                $\Ceta^\halpha$)}
  \put(25, 3){\small $c^\infty(\eta_{cr})$}
  \put(14,64){\small$\lambda$}
\end{picture}%
\kern-25bp
\fbox{%
\begin{picture}(85,70)
  \put(0,0){\includegraphics{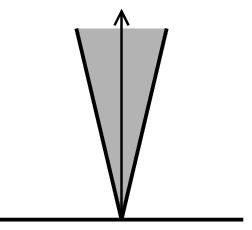}}
  \put(73,3){$\M$}
  \put(39,62){$t$}
\end{picture}
}

\medskip

In the gray wedge the spectral gap can compensate for the weight factor that is
too weak. Outside, the maximum principle has to give growth control.
\end{center}
\end{minipage}
}
\caption{\sl Schematic outline of proof idea for $\eta<\eta_{cr}$: In the shaded
  cone, the spectral gap $\eps$ compensates for the `too slow' decay. In the
  exterior region, the estimate carries over by the maximum principle.}
\label{fastdecay-idea-figure}
\end{figure}

By the spectral properties of $\Lop_{\eta_{cr}}$ in~$C^\halpha(\M)$,
we conclude, uniformly in $t$,
$$
\| \vc(t) \|_{C^\halpha(\M)} \le
c e^{(c^\infty(\eta)-\eps) t}\|vc_0\|_{C^\halpha(\M)} \;;
$$
using $\|vc_0\|_{C^\halpha}\le c\|\vb_0\|_{C^\halpha}$, this gives in
particular the weighted $L^\infty$ estimate
$$
|\vb(t,\x)| \le c (\cosh s)^{\eta_{cr}-\eta}\,
e^{(c^\infty(\eta)-\eps) t}\|\vb_0\|_{C^\halpha} \;.
$$
So we choose  $\delta=\eps/(\eta_{cr}-\eta) >0$ such that
\begin{equation}
\sup_{s\le \delta t} |\vb(t,\x)|
\le c e^{c^\infty(\eta)t} \|\vb_0\|_{C^\halpha}
\;.
\end{equation}
We want to apply the maximum principle to obtain the same kind of estimate
in the set $s \ge \delta t$ as well.
To this end, $\vbb := e^{-c^\infty(\eta)t} \vb$
satisfies $\bigl(\d_t-(\Lop_\eta-c^\infty(\eta))\bigr)\vbb=0$.
While $\Lop_{\eta}-c^\infty(\eta)$ barely fails to satisfy a straightforward
maximum principle --~its $0^{\rm th}$ order term is positive according to
\eqref{conjugated}~--, this term is $\le c/\cosh^2s\le ce^{-2\delta t}$.
So we let $a(t):= c^\infty(\eta)t + \int_0^t c e^{-2\delta t}\,dt$ and
define instead the new $\vbb:= \exp[-a(t)] \vb$, which solves
$\bigl(\d_t-(\Lop_\eta-a(t))\bigr)\vbb=0$ and satisfies a maximum principle.
We get therefore, in the domain $s\ge\delta t$:
$$
|\vb(t,\x)| =  e^{a(t)}|\vbb(t,\x)| \le e^{a(t)} c\|\vb_0\|_{L^\infty}
\le C e^{c^\infty(\eta)t} \|\vb_0\|_{L^\infty} \;.
$$
Together we have shown
$\|\vb(t)\|_{L^\infty} \le C e^{c^\infty(\eta)t} \|\vb_0\|_{C^\halpha}$.
With the regularization estimate
$\|\vb(t+1)\|_{C^\halpha}\le C \|\vb(t)\|_{L^\infty}$ from Thm.~\ref{linearth},
the desired conclusion follows.

\bigskip

\noindent{\bf Case 2:  $\eta > \eta_{cr}$.}
Here we let $\vb^{ext}$ be the solution of an exterior initial boundary value
problem that can be estimated with the maximum principle;  then the
remainder $\vb-\vb^{ext}$ has compactly supported initial data, which allow for
using the conjugacy again. This decomposition idea requires the use of a cutoff
function~$\chi$, which in turn entails writing some explicit projections to get back
into the spectral spaces destroyed by the cutoff. Moreover, the remainder
$\vb-\chi\vb^{ext}$ satisfies the differential equation with an inhomogeneity
created by the cutoff; however the effect of the inhomogeneity can be
controlled, see~\eqref{rhscalc}--\eqref{wint-decompose} below.

In detail, we first choose $\eps$ such that $c^\infty(\eta)-2\eps$  is still
larger than $c^\infty(\eta_{cr})$, and any larger pertinent $L^2$
eigenvalues buried in the essential spectrum for $\Ceta^\halpha$ as well.
and we choose $\delta$ such that $\delta(\eta-\eta_{cr})=\eps$.
We then we choose $(t_1,\x_1)$ (with the geodesic radial
coordinate $s_1$) and
let $\vb^{ext}$ solve $(\d_t-\Lop_\eta)\vb^{ext}=0$
in $s>s_1+\delta(t_1-t)$,
$0\le t\le t_1$, with boundary data $\vb^{ext}=0$ if $s=s_1+\delta(t_1-t)$, and
initial data $\vb^{ext}_0=\vb_0$ for $s>s_1+\delta t_1 + 1$, and smoothly cut
off to~0 for $s\in[s_1+\delta t_1, s_1+\delta t_1+1]$.
$$
\makebox{%
\begin{picture}(195,107)
\put(0,0){\includegraphics{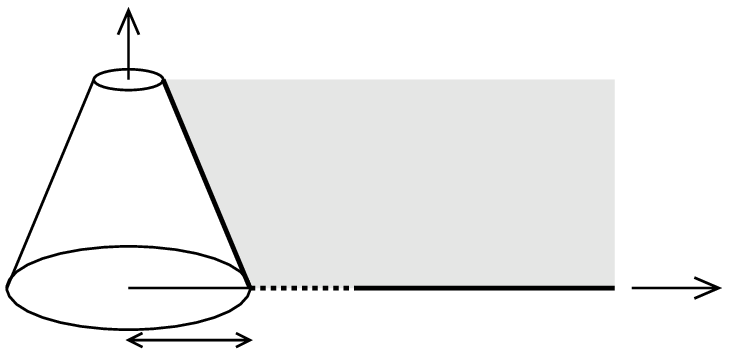}}
\put(48,78){$(t_1,s_1)$}
\put(35,-6){$s_1+\delta t_1$}
\put(73,10){cutoff}
\put(120,10){$\vb_0$}
\put(60,50){0}
\put(95,65){$\vb^{ext}$}
\put(80,50){$(\d_t-\Lop_\eta)\vb^{ext}=0$}
\put(200,8){$s$}
\put(39,98){$t$}
\end{picture}\rule[-6pt]{0pt}{0pt}
}
$$
To prove the existence and uniqueness of the solution to this exterior boundary
value problem, we first map the exterior of the cone to the exterior of a
cylinder by means of the map $s+\delta t -(s_1+\delta t_1)= \sigma$, with
$\sigma$ the new radial
coordinate, and then refer to Cor.~\ref{DBCconstant}. The estimates obtained
there are uniform in $(t_1,s_1)$ as long as $s_1$ is bounded away from
0. Indeed, with $\vb(s,\omega,t)=V(\sigma,\omega,t)$, the equation
$(\d_t-\Lop_\eta)\vb=0$ transforms into
$(\d_t-\delta\d_\sigma-\tilde\Lop_\eta)V=0$, and $\tilde\Lop_\eta$ is the same
as $\Lop_\eta$ in~\eqref{conjugated}, except with $\d_\sigma$ replacing~$\d_s$.

With $a(t):= c^\infty(\eta)t + c\int_0^t e^{-2(s_1+\delta(t_1-t))}dt$,
the function $\vbb^{ext}:= e^{-a(t)}\vb^{ext}$
satisfies a maximum principle again, and we get
\begin{equation}\label{MaxPrincEst}
|\vb^{ext}(t,\x)| = e^{a(t)} |\vbb^{ext}(t,\x)|
\le e^{a(t)} \|\vb_0\|_{L^\infty} \le C
e^{c^\infty(\eta)t}\|\vb_0\|_{L^\infty}
\end{equation}
with a constant $C=C(\delta)$ that is easily calculated to be uniform in
$(t_1,s_1)$ as long as $s_1$ is bounded away from 0.
In particular, $\|\vb^{ext}(t-1)\|_{L^\infty}\le C
e^{c^\infty(\eta)(t-1)} \|\vb_0\|_{C^\halpha}$.
Having been constructed by means of a cutoff function, $\vb^{ext}$
may not be in the appropriate spectral space. But with a regularization
estimate and successive projection, we can conclude that
\begin{equation}\label{vext-est}
  \|\Pop_\eta\vb^{ext}(t_1)\|_{C^\halpha} 
  \le  
  C \|\vb^{ext}(t_1)\|_{C^\halpha} 
  \le
  C e^{c^\infty(\eta)t_1}\|\vb_0\|_{C^\halpha}
  \;.
\end{equation}
Now take a cutoff function~$\chi\in C^\infty(\R)$ for which $\chi(\sigma)=0$ if
$\sigma\le1$ and $\chi(\sigma)=1$ if $\sigma\ge2$. Let
$$
\vb^{int}:= \vb - \Pop_\eta \bigl(\vb^{ext} \chi(s-s_1-\delta(t_1-t)) \bigr)
= \Pop_\eta (\vb-\chi\vb^{ext})
\,,
$$
where we have used in the last step that
$\vb$ is assumed to be in the range of $\Pop_\eta$.
The initial data of $\vcheck^{int} := \vb-\chi \vb^{ext}$ are compactly
supported.
So we can conjugate, letting
\begin{equation} \label{wintdef}
\vcheckc^{int}:= (\cosh s)^{\eta-\eta_{cr}} \vcheck^{int}
\quad\mbox{ and }\quad
w^{int}=\Pop_{\eta_{cr}}\check w^{int} = (\cosh s)^{\eta-\eta_{cr}} \vb^{int}
\;.
\end{equation}
We have
\def\rhs{h}
\begin{equation}\label{rhscalc}
\kern-1em\begin{array}{r@{}l}
(\d_t-\Lop_{\eta_{cr}}) \vcheckc^{int} &\mbox{}
= (\cosh s)^{\eta-\eta_{cr}} (\d_t-\Lop_{\eta}) (\vb-\chi \vb^{ext})
\\ &\mbox{}
= -(\cosh s)^{\eta-\eta_{cr}} (\d_t-\Lop_{\eta}) (\chi\vb^{ext})
\\ &\mbox{}
= -(\cosh s)^{\eta-\eta_{cr}}  [\d_t-\Lop_{\eta}, \chi] \vb^{ext}
\\ &\mbox{}
= (\cosh s)^{\eta-\eta_{cr}}
  \bigl( \hat{\chi}(\sigma) \vb^{ext}
        +2\chi'(\sigma) \d_s\vb^{ext}\bigr) =: f+\d_s g =:\rhs
\end{array}\kern-1em
\end{equation}
where
$$
\begin{array}{l}\Dst
g:= 2(\cosh s)^{\eta-\eta_{cr}}\chi'(\sigma)\vb^{ext}
\quad\mbox{ and }\quad
\\ \Dst
f:= (\cosh s)^{\eta-\eta_{cr}}\hat{\chi}(\sigma)\vb^{ext}
- 2\d_s \bigl( \chi'(\sigma)\,(\cosh s)^{\eta-\eta_{cr}}\bigr)\,
\vb^{ext}
\end{array}
$$
and, using \eqref{conjugated} and \eqref{binfty},
$\hat{\chi}=(\frac{2(n-1)}{\sinh 2s} - b^\infty(\eta)\tanh s)\chi'+\chi'' \in
C_0^\infty`]1,2`[$, and $\sigma:=s-s_1-\delta(t_1-t)$.

We'd like to argue that
$$
\begin{array}{l}
\vcheckc^{int}(t)
=
\Sop_{\eta_{cr}}(t)\vcheckc^{int}(0)
+ \int_0^t \Sop_{\eta_{cr}}(t-\tau) \rhs(\tau)\, d\tau
\,,
\\
\vc^{int}(t)
=
\Sop_{\eta_{cr}}(t) \vc^{int}(0)
+ \int_0^t \Sop_{\eta_{cr}}(t-\tau) \Pop_{\eta_{cr}}\rhs(\tau)\, d\tau
\,.
\end{array}
$$
However, we have proved insufficient regularity for $\rhs(\tau)$ to be an
initial value for the semigroup estimate. While this could be fixed by stronger
regularity estimates, we can more easily average over estimates for
initial times
$t_0\in[0,1]$ and use Thm~\ref{linearth} directly. At a formal level the
argument is written as follows: for $t\ge1$ and all $t_0\in[0,1]$, we have
\begin{equation}\label{vdecomp}
\vcheckc^{int}(t) = \Sop_{\eta_{cr}}(t-t_0)\vcheckc^{int}(t_0) +
\int_0^{t-t_0} \Sop_{\eta_{cr}}(t-t_0-\tau) \rhs(t_0+\tau)\,d\tau
\;;
\end{equation}
so by averaging over $t_0$, we have
\begin{equation}\label{wint-decompose}
\begin{array}{r@{}l}
\vcheckc^{int}(t)
= \mbox{} &
\int_0^1 \Sop_{\eta_{cr}}(t-t_0)\vcheckc^{int}(t_0) \, dt_0
\\[1ex] & \mbox{}
+ \int_0^1 \int_0^{t-1}
      \Sop_{\eta_{cr}}(t-t_0-\tau) \rhs(t_0+\tau)\,d\tau\, dt_0
\\[1ex] & \mbox{}
+ \int_0^1 \int_{t-1}^{t-t_0}
      \Sop_{\eta_{cr}}(t-t_0-\tau) \rhs(t_0+\tau)\,d\tau\, dt_0
\\[1ex]
\mbox{} = \mbox{} &
\Sop_{\eta_{cr}}(t-1) \int_0^1 \Sop_{\eta_{cr}}(1-t_0)\vcheckc^{int}(t_0) \, dt_0
\\[1ex] & \mbox{}
+ \int_0^{t-1} \Sop_{\eta_{cr}}(t-1-\tau) \Bigl(
       \int_0^1 \Sop_{\eta_{cr}}(1-t_0) \rhs(\tau+t_0)\,dt_0 \Bigr) \,d\tau
\\[1ex] & \mbox{}
+ \int_0^1 \Bigl(\int_{0}^{1-t_0} \Sop_{\eta_{cr}}(1-t_0-\tau')
        \rhs(t-(1-t_0)+\tau')\,d\tau' \Bigr)\, dt_0
\\[1ex]
\mbox{} =: \mbox{} &
\Sop_{\eta_{cr}}(t-1) \Vc_0
\\[1ex] & \mbox{}
+ \int_0^{t-1} \Sop_{\eta_{cr}}(t-1-\tau) \Vc_1(\tau) \,d\tau
\\[1ex] & \mbox{}
+ \int_0^1 \Vc_2(1-t_0) \, dt_0
\;.
\end{array}
\end{equation}
The fact that the integrals in~\eqref{wint-decompose} still manifestly apply
the semigroup to a potentially singular distribution~$\rhs$ is of no concern
for the argument. The $\Vc_{0,1,2}$ terms are simply solutions of the
inhomogeneous equation with homogeneous initial data, and they can be
estimated by  Theorem~\ref{linearth}.

$\Vc_0$ is the solution at time~1
of $(\d_t-\Lop_{\eta_{cr}})\Vc= \vcheckc^{int}$
with initial data~0, and so we  have, using \eqref{rhscalc},
\eqref{MaxPrincEst} along with Thm.~\ref{linearth},
$$
\begin{array}{l}
\|\Vc_0\|_{C^\halpha(\M)} \le C \|\vcheckc^{int}\|_{L^\infty([0,1]\times\M)}
\;,
\\[1ex]
\|\vcheckc^{int}\|_{C^\halpha([0,1]\times\M)} \le
C (\|\vcheckc^{int}(0)\|_{C^\halpha(\M)} + \|f,g\|_{L^\infty([0,1]\times\M)})
\\
\phantom{\|\check w\|_{C^\halpha([0,1]\times\M)}}
\le C e^{(s_1+\delta t_1) (\eta-\eta_{cr})} (\|\vb_0\|_{C^\halpha(\M)} +
\|\vb^{ext}\|_{L^\infty([0,1]\times\M)})
\\
\phantom{\|\check w\|_{C^\halpha([0,1]\times\M)}}
\le C e^{(s_1+\delta t_1) (\eta-\eta_{cr})} \|\vb_0\|_{C^\halpha(\M)}
\;.
\end{array}
$$

$\Vc_1(\tau)$ is the solution at time 1 of $(\d_t-\Lop_{\eta_{cr}})\Vc=
\rhs(\tau+\cdot)$ with initial data~0. We have
$$
\begin{array}{l}
\|\Vc_1(\tau)\|_{C^\halpha(\M)} \le C \|f,g\|_{L^\infty([\tau,\tau+1]\times\M)}
\\
\phantom{\|W_1(\tau)\|_{C^\halpha(\M)}}
\le C e^{(s_1+\delta(t_1-\tau))(\eta-\eta_{cr})} \,
\|\vb^{ext}\|_{L^\infty([\tau,\tau+1]\times\M)}
\;.
\end{array}
$$

$\Vc_2(1-t_0)$ is the solution at time $1-t_0\le1$
of $(\d_t-\Lop_{\eta_{cr}})\Vc = \rhs(t-(1-t_0)+\cdot)$ with initial
data~0. We have
$$
\begin{array}{l}
\|\Vc_2(1-t_0)\|_{C^\halpha(\M)}
\le C \|f,g\|_{L^\infty([t-(1-t_0),t]\times\M)}
\\
\phantom{\|W_2(1-t_0)\|_{C^\halpha(\M)}}
\le C e^{(s_1+\delta(t_1-t))(\eta-\eta_{cr})} \,
\|\vb^{ext}\|_{L^\infty([t-1,t]\times\M)}
\;.
\end{array}
$$
With these estimates established, we can now apply the spectral projection and
conclude
$$
\vc^{int}(t) =\Sop_{\eta_{cr}}(t-1)\Pop_{\eta_{cr}} \Vc_0 + \int_0^{t-1}
\Sop_{\eta_{cr}}(t-1-\tau)\Pop_{\eta_{cr}} \Vc_1(\tau)\,d\tau +
\int_0^1 \Pop_{\eta_{cr}} \Vc_2(1-t_0)\,dt_0
\,,
$$
hence
$$
\begin{array}{r@{}l}
\|\vc^{int}(t)\|_{C^\halpha(\M)} \le \mbox{} &
C e^{t(c^\infty(\eta)-2\eps)}\| \Vc_0 \|_{C^\halpha(\M)}
\\[1ex]& \mbox{}
+ C \int_0^{t-1} e^{(t-1-\tau)(c^\infty(\eta)-2\eps)}
  \|\Vc_1(\tau)\|_{C^\halpha(\M)}
\\[1ex]& \mbox{}
+ C \sup_{t_0\in[0,1]} \| \Vc_2(1-t_0) \|_{C^\halpha(\M)}
\;.
\end{array}
$$
The first and third term can immediately be estimated as
$C\exp[t(c^\infty(\eta)-2\eps)+(s_1+\delta t_1)(\eta-\eta_{cr})]
\|\vb_0\|_{C^\halpha(\M)}$  and
$C\exp[(s_1+\delta(t_1-t))(\eta-\eta_{cr})+c^\infty(\eta) t]
\|\vb\|_{L^\infty(\M)}$ respectively, and for $t=t_1$, they are controlled
(with one $\eps$ to spare in the first term)
by $C\exp[t_1 c^\infty(\eta) + s_1(\eta-\eta_{cr}) ]
\|\vb_0\|_{C^\halpha(\M)}$.
The second term is estimated similarly to be
$$
\le C \int_0^{t-1} \exp[(c^\infty(\eta)-2\eps)(t-1-\tau) +
  (s_1 +\delta(t_1-\tau)) (\eta-\eta_{cr}) + c^\infty(\eta)\tau]\, d\tau
\|\vb_0\|_{C^\halpha(\M)}
\,,
$$
which, for $t=t_1$,  is $\le C\exp[t_1 c^\infty(\eta) + s_1(\eta-\eta_{cr})]
\int_0^{t_1-1} e^{-\eps(t_1-1-\tau)}\,d\tau\, \|\vb_0\|_{C^\halpha(\M)}$, and
the integral of order $1/\eps$ can be absorbed in the constant.

So we have proved
$$
\|\vc^{int}(t_1)\|_{C^\halpha(\M)}
\le
C e^{t_1 c^\infty(\eta) + s_1(\eta-\eta_{cr})}
\|\vb_0\|_{C^\halpha(\M)}
\;;
$$
and in particular
$|\vc^{int}(t_1,\x_1)| \le C e^{t_1 c^\infty(\eta) + s_1(\eta-\eta_{cr})}$.
With \eqref{wintdef}, this implies
$|\vb^{int}(t_1,\x_1)| \le C e^{t_1 c^\infty(\eta)}$. The constant is uniform
in $(s_1,t_1)$, as long as $s_1$ is bounded away from 0, say $s_1\ge1$. But for
$s<1$, the estimate with $s_1=1$ still gives $|\vb^{int}(t_1,\x)| \le C e^{t_1
  c^\infty(\eta)}$.
We review \eqref{MaxPrincEst}, which guaranteed
$\|\vb^{ext}(t_1)\|_{L^\infty}\le C e^{c^\infty(\eta)t_1}\|\vb_0\|_{L^\infty}$,
and since the projection operator is bounded from $L^\infty$ into itself, we
also have
$\|(\Pop_{\eta}(\chi\,\vb^{ext}))(t_1)\|_{L^\infty}\le
C e^{c^\infty(\eta)t_1}\|\vb_0\|_{L^\infty}$. In view of
$\vb(t_1,\x_1)= \vb^{int}(t_1,\x_1) +
(\Pop_{\eta}(\chi \vb^{ext}))(t_1,\x_1)$, which we have from \eqref{vdecomp}, we
conclude
$$
|\vb(t_1,\x_1)| \le C e^{c^\infty(\eta)t_1}\|\vb_0\|_{C^\halpha}
$$
uniformly in $(\x_1,t_1)$.  This establishes
$\|\vb(t)\|_{L^\infty}\le C e^{c^\infty(\eta)t}\|\vb_0\|_{C^\halpha}$; and
with another invocation of regularization, we can get the same estimate for the
$C^\halpha$ norm on the left hand side.
\end{proof}

\section{Proof of Theorem \ref{t-1}}
\label{pthm1}

Let $\rho_0$ be an initial density which satisfies the assumptions of Theorem
\ref{t-1}.
After scaling we may assume that its mass agrees with that of $\uB(\x)$
for $B=1$.  We shall actually prove the asserted decay \eqref{te-2} in a norm
$\|\,\cdot\,\|_{C^\halpha(\M)}$ stronger than $\|\,\cdot\,\|_{L^\infty(\Rn)}$.
As long as $m>m_1=\frac{n-1}{n+1}$, the center of mass of $\rho_0$ is defined.
We may shift $\rho_0$, such as to bring its center of mass to 0.
We then claim that the statement of the theorem holds in particular
by choosing $\z=0$.
The hypothesis $m>m_1$ is automatically satisfied for $n=1$, and also whenever
$m>m_n$. In the other cases, we either cannot define a center of mass or else
will not bother to shift it, and we claim that in those cases, the theorem holds
with any~$\z$.
For quick reference, here are the relevant eigenvalues of~$\Lop$ in~$C^\halpha$
(unweighted) in the case $n\ge2$:
$$
\mbox{%
  \begin{picture}(120,35)
     \put(0,-17){\includegraphics{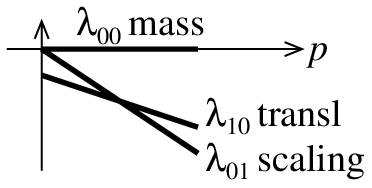}}
  \end{picture}%
}
\qquad
\mbox{eigenvalues: }
\begin{array}{l}
\lambda_{00}= 0\\
\lambda_{10}= -(p+n)\\
\lambda_{01}= -2p
\end{array}
$$
In the case $n=1$ (where $m_n=0$), the picture looks like this:
$$
\mbox{%
  \begin{picture}(120,35)
    \put(0,-17){\includegraphics{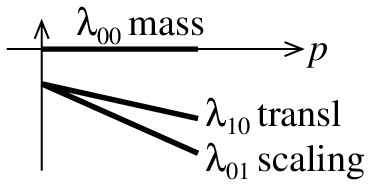}}
  \end{picture}%
}
\qquad
\parbox{7em}{eigenvalues\\when $n=1$: }
\begin{array}{l}
\lambda_{00}= 0\\
\lambda_{10}= -(p+1)\\
\lambda_{01}= -2p
\end{array}
$$
In either case, the onset of the ($\ell=0$ layer of the) essential spectrum
is right at~$\lambda_{01}$.

The evolution
conserves mass and center of mass. In the variables of eqn.~\eqref{cyl}, these
conservation laws amount to $\int(v-1)\uB\,d\x=0$ and $\int(v-1)\uB\,\x\,d\x=0$.
In other words,  using the inner product of
$L^2_{\uB^m}$ makes $v-1$ orthogonal to the eigenfunction $v_{00}=\uB^{1-m}$
and, if $m>m_1$, to $v_{10}=r\uB^{1-m}Y_{1\mu}(\omega)$ for $\mu =1,\ldots,n$.

By \eqref{scalingtrf}, the statement of the theorem, with $\z=0$, is equivalent
to
$$
\sup e^{-\lambda_{01} t} |v(\x,t)-1| < \infty
\;,
$$
recalling $\lambda_{01} = -2p$.
Let us rewrite \eqref{cyl}--\eqref{linlop} as
\begin{equation}\label{cyl-L-NL}
\begin{split}
(\d_t - \Lop) (v-1) = &\
\left(   \frac{\d^2}{\d s^2}  + \frac{2(n-1)}{\sinh 2s} \frac{\d}{\d s}
 +  (\tanh s)^{-2} \Laplace_{\Sph}  \right)  \Bigl(\frac{v^m}{m} - v\Bigr)
\\ & +
    \biggl( n - \frac{2(m+1)}{1-m}\biggr) \tanh s  \frac{\d}{\d s}
              \Bigl(\frac{v^m}{m} - v\Bigr)
\\  &
+ \frac{4}{(1-m)^2}
   \Bigl[  n\frac{1-m}{2}- 1 + (\cosh s)^{-2}\Bigr]
(mv-v^m+1-m)
\;.
\end{split}
\end{equation}

With the trivial substitution $w=v-1$,
its structure becomes clearer in the form
\begin{equation}\label{lineq-w}
(\d_t - \Lop)w = \tilde \Lop ( f(w) w )
\end{equation}
where $f(w)=((1+w)^m-1-mw)/mw$ is analytic and vanishes at  the origin,
and where $\Lopmod=\Lop - \frac{2}{1-m} (\tanh s\,\d_s + n -
\frac{2}{1-m}\tanh^2s)$~can be
read off the right side of~\eqref{cyl-L-NL}. We can apply Thm.~\ref{t-3}.
Let us write $w_j:= w(jT)\in C^\halpha(\M)$, for a  constant~$T$ chosen
sufficiently small for the contraction estimates of Chapter~\ref{SecGlobWP},
specifically from Lemma~\ref{quadratic-error}, to apply.
Also let
$\Sop(t):= \exp[t\Lop]$ be the semigroup generated by the linear
operator~$\Lop$.  We write
\begin{equation}\label{recurs}
w_{j+1} = \Sop(T) w_j + g(w_j) 
\;,
\end{equation}
where
$$
C^{\halpha}(\M) \ni w \longmapsto g(w) \in C^{\halpha}(\M)
$$
is smooth and
vanishes at the origin together with its derivative.
More specifically, since the map $w_j\mapsto w_{j+1}=:F(w_j)$ is smooth
according to Thm.~\ref{t-3}, and $F(0)=0$, $DF(0)=\Sop(T)$, we can write
$w_{j+1}= \int_0^1\frac{d}{d\sigma} F(\sigma w_j)\,d\sigma = \Sop(T)w_j +
[ \int_0^1 (DF(\sigma w_j)-\Sop(T))\,d\sigma ] w_j =: \Sop(T) w_j + G(w_j)
w_j$.
Here $G$ is  smooth with values $G(w)$ being bounded linear maps
$C^\halpha\to C^\halpha$, and $G(0)=0$. This gives the claim.

In the case $m>m_n$, let $\Pop$ be the $\Sop$-invariant projection
to the $L^2_{\uB^m}(\Rn)$ complement of the eigenspaces
for $\lambda_{00}=0$ and $\lambda_{10}= -(p+n)$. In the case $m\le m_n$,
let $\Pop$ be the $\Sop$-invariant projection in $C^\halpha(\M)$ to the
complement of the eigenspace for $\lambda_{00}=0$. It is well-defined according
to Theorem~\ref{semigroupestimates}.
We have seen that the $w_j$ are in
the range of $\Pop$, and therefore
$\|\Sop(t)w_j\|\le C e^{\lambda_{01}t}\|w_j\|$ with a constant $C$ independent
of~$t$, where $\|\cdot\|$ refers to $\|\cdot\|_{C^\halpha(\M)}$.

We introduce a small quantity~$r<1$ and assume $\|w_0\|$ is less than $r$
divided by a constant supplied from Lemma~\ref{comp}. This lemma
then ensures that $\|w_j\|_{L^\infty}<r$ for all~$r$.
Lemma~\ref{quadratic-error} applies to our operators $\Lop$, $\Lopmod$ and
nonlinearity $f(w)w$, and it controls
$\|g(w)\|_{C^\halpha} \le K\|w\|_{C^\halpha} \,\|w\|_{L^\infty}$ for
$\|w\|_{L^\infty}<r$. Now $K$ stays
fixed, while we still may impose further smallness requirements on~$r$.
We denote by $\lambda<0$ whatever spectral gap applies to the parameter~$m$
under consideration.
Iterating \eqref{recurs} gives
\begin{equation}\label{recurs-iter}
\begin{array}{l}
w_k = \Sop(T)^k w_0 + \sum_{j=1}^k \Sop(T)^{k-j} g(w_{j-1})
\\
\|w_k\|_{C^\halpha}
\le Ce^{\lambda kT}\|w_0\|_{C^\halpha} + \sum_{j=1}^k
CKe^{\lambda(k-j)T}\|w_{j-1}\|_{C^\halpha}\,\|w_{j-1}\|_{L^\infty}
\;.
\end{array}
\end{equation}
Here, $C$ is the constant obtained from Thm.~\ref{semigroupestimates}. We know
that
$$
\|w_{j-1}\|_{L^\infty} \le \min\{ r, \|w_{j-1}\|_{C^\halpha}\}
\le r^{1/2} \|w_{j-1}\|_{C^\halpha}^{1/2}
$$
and will prove inductively that
$\|w_k\|_{C^\halpha}\le 2Ce^{\lambda kT}\|w_0\|_{C^\halpha}$. The start being
trivial, the induction step in~\eqref{recurs-iter} follows if we can ascertain
that
$$
1+\sum_{j=1}^k K e^{-\lambda jT} r^{1/2} (2C)^{3/2} e^{3\lambda(j-1)T/2}
\le 2 \;.
$$
By convergence of the geometric series $\sum^\infty e^{\lambda jT/2}$, this
requirement can indeed be met by choosing $r$ sufficiently small.

Apart from the adjustment of $\Pop$, the proof works the same in the
full range of $m$ (with $\lambda+\eps$ in case $m=m_2$, per
Thm.~\ref{semigroupestimates}).

Finally, let's deal with the case $m=m_2$ under the moment
hypothesis~\eqref{te-1}, which, in view of \eqref{cigarmetric} and
\eqref{cigarvol}, amounts to $\int_\M w_0(y)^2\,d\mu < \infty$. 
(Recall $d\mu$ denotes the Riemannian volume element on~$\M$.)
We will argue shortly that 
\begin{equation}\label{wL2decay}
\|w(t)\|_{L^2(\M)} \le C e^{\lambda t} \|w_0\|_{L^2(\M)}
\;.
\end{equation}
We also note that $\|\Sop(T)w\|_{C^\halpha(\M)} \le C \|w\|_{L^2(\M)}$
by regularization: indeed, this is a regularization estimate for a
uniformly parabolic linear PDE in each coordinate chart~$U_l$
separately, with the $\|w\|_{L^2(U_l)}$ all being trivially estimated
by the global $\|w\|_{L^2(\M)}$; recall that the 
distortion between the metric on the $U_l$ and the euclidean metric
is uniformly bounded. 

From these two ingredients we conclude, in view of the close-to-sharp 
$O(e^{(\lambda+\eps)t})$ decay already established, that
$$
\|w(t+T)\|_{C^\halpha} 
\le 
\|\Sop(T) w(t)\|_{C^\halpha} + \|g(w(t))\|_{C^\halpha} 
\le
C e^{\lambda t} \|w_0\|_{L^2(\M)} + C e^{2(\lambda+\eps)t}
\,,
$$
and the claim follows since $2(\lambda+\eps)<\lambda$.
So all we have to establish is \eqref{wL2decay}, and we do it by direct
calculation from the PDE. 

Letting $h(w) = \frac{(1+w)^m-1}{m}=w+O(w^2)$ and
$H(w)=\frac12w^2+O(w^3)$ its antiderivative, we use  
$E(t):= \int_{\M} H(w(t))\,d\mu$ as a proxy for the $L^2$-norm. 
For the technical reason of properly caring about a flux term at infinity,
we will also need $E(r,t):= \int_{B(r)}
H(w(t))\,d\mu$, with integration over the euclidean $r$-ball around
the origin (equivalently the geodesic $\arsinh r$-ball). 
Formally, we calculate
\begin{equation}\label{formalEderiv}
\kern-2em 
\begin{array}{l}\Dst
\d_t E(t) = \int_\M h(w) w_t\,d\mu = \mbox{}
\\[2ex] \Dst \kern3em = 
\int_\M h(w)\Lop h(w)\, d\mu + 
 \frac{2}{1-m} \int_\M h(w) (\x\cdot\nabla)(w-h(w))\,d\mu + \mbox{}
\\[1.5ex] \Dst\kern4em 
+ \frac{2}{1-m} \int_\M 
        \Bigr( n - \frac{2\uB^{1-m}}{1-m}|\x|^2
          \Bigl) (w-h(w))\,h(w) \,d\mu 
\;.
\end{array}
\kern-3em
\end{equation}
Returning to the euclidean volume element 
$d^n\x = (1+|\x|^2)^{n/2}d\mu$, we want to integrate the second term
of this sum by parts to avoid derivatives of $w$. 
With $h(w)\nabla(w-h(w)) = \nabla (H(w)-\frac12h(w)^2)$, we obtain for
$n\neq2$ (with routine modifications in case $n=2$)
$$
\begin{array}{l}\Dst
\d_t E(r,t) = \int_{B(r)} h(w) w_t\,d\mu 
= \int_{B(r)} h(w)\Lop h(w)\, d\mu +  \mbox{}
\\[2ex] \Dst \kern3em +
 \frac{2}{(1-m)(2-n)} \int_{B(r)} \Div \Bigl(
 (H(w)-{\Tst\frac12}h(w)^2) \,
   \nabla(1+|\x|^2)^{-n/2+1} \Bigr) \,d^n\x 
\\[2ex] \Dst \kern3em 
-  \frac{2}{(1-m)(2-n)}
     \int_{B(r)}  (H(w)-{\Tst\frac12}h(w)^2) \,
        \Laplace (1+|\x|^2)^{-n/2+1}\,d^n\x 
\\[2ex] \Dst \kern3em 
+ \frac{2}{1-m} \int_{B(r)} 
       \Bigr( n - \frac{2\uB^{1-m}}{1-m}|\x|^2
          \Bigl) (w-h(w))\,h(w) \,d\mu 
\;.
\end{array}
$$
The last integrand is estimated as $O(w^3)$, or $O(wH(w))=
e^{(\lambda+\eps)t} O(H(w))$. The second last term is integrable even
over $\M$, or can be estimated in the same manner as the last term. 
We obtain (with $dS$ the {\em Riemannian\/} surface element)
$$
\d_t E(r,t) \le \int_{B(r)} h\Lop h\, d\mu + C_1 e^{(\lambda+\eps)t}
\int_{\boundary B(r)}|w(t)|^2\, dS + C_2 e^{(\lambda+\eps)t} E(r,t)
\;.
$$
Letting $\gamma(t):= - \int_0^t C_2 e^{(\lambda+\eps)t}\,dt$, we get 
\begin{equation}\label{energygrowth}
\begin{array}{l}\Dst
\d_t (e^{\gamma(t)}E(r,t)) \le 
e^{\gamma(t)} \int_{B(r)} h(w(t))\Lop h(w(t))\,d\mu + 
\mbox{} 
\\[1.5ex]\Dst\kern7.5em +
C_1 e^{\gamma(t)+(\lambda+\eps)t}\int_{\boundary B(r)}|w(t)|^2\, dS
\end{array}
\end{equation}
and therefore
$$
e^{\gamma(t)} E(r,t) - E(r,0) \le \int_0^t \int_{B(r)} h(w(t))\Lop
h(w(t))\,d\mu\, dt + C\|w(t)\|_{L^\infty}^2
\;.
$$
Now from $\|w_0\|_{L^2(\M)}^2<\infty$, we get $E(r,0)\to
E(0)<\infty$  as $r\to\infty$. If $w(t)\in L^2$, we conclude 
$\int_{B(r)} h(w(t))\Lop h(w(t))\,d\mu \to \int_\M h(w(t))\Lop
h(w(t))\,d\mu \le \lambda h(w(t))^2\,d\mu \le0$. Without {\em assuming\/} 
$w(t)\in L^2$, we still maintain the inequality 
$\limsup_{r\to\infty} \int_{B(r)} h(w(t))\Lop h(w(t))\,d\mu \le0$ by
approximation, and this guarantees the finiteness of
$\lim_{r\to\infty} e^{\gamma(t)}E(r,t)$, and therefore $w(t)\in
L^2(\M)$ for every~$t$. 

Knowing this, we can actually justify the formal calculation
\eqref{formalEderiv} and proceed with the same estimates and
integration by parts done for $E(r,t)$ to obtain 
$$
\d_t E(t) \le \lambda \int_\M h(w(t))\Lop h(w(t))\, d\mu
+ C e^{(\lambda+\eps)t} E(t)
\le 
(2\lambda + C e^{(\lambda+\eps)t})E(t)
\;.
$$
The estimate 
$E(t)\le C e^{2\lambda t} E(0)$ follows from this; and then
immediately the same estimate for $\|w\|_{L^2(\M)}^2$, hence
\eqref{wL2decay}.

\section{Asymptotic estimates in weighted spaces: The case $m< \frac{n}{n+2}$}
\label{SecWeights:p<2}

In the previous section, we have seen in particular that for
$m<m_2=\frac{n}{n+2}$, we get a convergence rate of $O(1/\tau)$, and no better,
to the appropriately shifted Barenblatt with respect to the relative $L^\infty$
norm. This is the same rate as was obtained by Kim and McCann 
\cite{MR2246356} via Newton potentials, 
and later extended to a wider range of~$m$ by  the quintet \cite{BBDGV09}
at the cost of an extra $\eps>0$ in the rate exponent. 
The slightly larger spectral gap in the weighted Hilbert
space of~\cite{MR2126633} (where $\lambda_{01}$ was not an eigenvalue any more
for $m<m_2$ and the continuous spectrum started further below) was not actually
expected to be dynamically effective from that setting, because the
linearization formalism relied on the existence of second moments, in contrast
to the reality in the case $m<m_2$.

In the present setting, the absence of a weight in $C^\halpha$ has changed the
spectrum in comparison to~\cite{MR2126633}. To recover that spectrum, we need
to study $\Lop$ in $\Cetacr^\halpha$, or equivalently, the operator
$\Lop_{\eta_{cr}}$ in~$C^\halpha$. Since $\eta_{cr}=\frac{p}{2}-1<0$
for $m<m_2$, this means we need to study the case where initial data
deviate from Barenblatt less than what would automatically be achieved
by V\'azquez' result \eqref{reluni}. 
For initial data with this special tail behavior, we
can indeed get improved convergence rates from a study of the conjugated
operator. These better convergence rates will apply to {\em appropriately
  weighted\/} norms.

We are careful to point out that the {\em nonlinear\/} evolution was
constructed for unweighted norms only: this is in accordance with the fact
that, in the relative $L^\infty$ setting, unweighted norms keep a uniform
distance from the singularity $u=0$ of the Fast Diffusion Equation~\eqref{pm}
and allow, on a technical level, uniformly parabolic estimates on the whole
space. (However, in the present case $m<m_2$, Thm.~\ref{t-3} does carry over,
as noted near the end of Section~\ref{SecGlobWP}.)
In contrast, the linearized flow can be studied with the same ease in
weighted and unweighted spaces.

Let's have a quick look at the spectrum of $\Lop_{\eta_{cr}}$ in this range of
parameters, where the only eigenvalues are $\lambda_{00}=0$ and
$\lambda_{10}= -(p+n)$, and the onset of the essential spectrum
is~$\lambda_0^{\rm cont} = -(\frac{p}{2}+1)^2$:

In the case $n=1$, one has $m_1=0$, so Barenblatt always has 1st moments and
we can shift the
center of mass. We also need to do this because $\lambda_{10}$ is above the
onset of the essential spectrum.
$$
\mbox{%
  \begin{picture}(120,35)
      \put(0,-17){\includegraphics{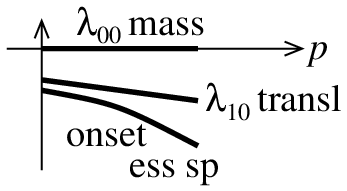}}
  \end{picture}}
\quad
\parbox{10em}{eigenvalues of\\ $\Lop_{\eta_{cr}}$ when $n=1$: }
\begin{array}{l}
\lambda_{00}=0\\
\lambda_{10}=-(p+1)\\
\lambda_{0}^{\rm cont}=-(\frac{p}{2}+1)^2
\end{array}
$$

In the case $n\ge2$ the center of mass may or may not be defined (depending on
whether $m>m_1$ or $m\le m_1$), but there is no need to shift the center of
mass, because the
intersection of $-(\frac{p}{2}+1)^2$ with $\lambda_{10}=-(p+n)$
occurs at $p=2\sqrt{n-1}\ge2$, hence
$\lambda_{10}$ is below the onset of the essential spectrum (ranging from
$-1$ to $-4$ monotonically)  in the whole
parameter range $m_0<m<m_2$.

As we have resolved to work in the space $\Cetacr^\halpha$, we let
$\tilde{w}=(\cosh s)^{-\eta_{cr}}w$, and now equation~\eqref{lineq-w} becomes
\begin{equation}\label{lineq-w-conj<}
(\d_t - \Lop_{\eta_{cr}})\tilde{w} =
\tilde \Lop_{\eta_{cr}}
     \bigl( f(1+(\cosh s)^{\eta_{cr}}\tilde{w}) \tilde{w} \bigr)
\end{equation}
Since $\eta_{cr}<0$, we can estimate
$\|(\cosh s)^{\eta_{cr}}\tilde{w}\|_{C^\halpha(\M)}\le c \|\tilde{w}\|_{C^\halpha(\M)}$,
and this is why Theorem~\ref{t-3} carries over
to this case along with its proof, as remarked at \eqref{Cetadef0}.
We now can repeat the proof from
Section~\ref{pthm1} almost verbatim.
We conclude: If the initial data $w_0$ satisfy
\begin{equation}\label{fine-hypothesis}
(\cosh s)^{-\eta_{cr}} w_0 \in L^\infty(\M)   \;,
\end{equation}
then
\begin{equation}\label{fine-conclusion}
\sup_{t>1} e^{(-\lambda_0^{\rm cont}-\eps)t}
\|(\cosh s)^{-\eta_{cr}}w(t)\|_{L^\infty} <\infty
\end{equation}
for every $\eps>0$. (The same statement works also for the $C^\halpha$-norm.)
Here
$-\lambda_0^{\rm cont} = (\frac{p}{2}+1)^2= 
       (\frac{1}{1-m} + 1 - \frac{n}{2})^2
$.
If we put $L^\infty\cap L^2$ in the hypothesis
\eqref{fine-hypothesis}, we get conclusion \eqref{fine-conclusion}
without the~$\eps$.

We therefore get the following:
\begin{theorem}[Fine Asymptotics for $m<m_2$, with restricted initial data]
\label{fine:p<2} \  \\
Assume $n\ge2$ and $m \in `]m_0,m_2`[=`]\frac{n-2}{n},\frac{n}{n+2}`[$, or else
$n=1$ and $0<m<m_2=\frac13$. Further assume that the mass of $\rho_0$ is
one, and if $n=1$ also that the center of mass of $\rho_0$ is 0.
Let $\delta:= \frac{p/2-1}{p+n} = \frac{m}{2}-\frac{n(1-m)}{4}$ (note
$\delta<0$).
If $\rhoB(0,\cdot)^\delta|1-\rho_0/\rhoB(0,\cdot)|\in L^\infty$,
then, for every $\eps>0$
\begin{equation} \label{bestdecay}
\limsup_{\tau\to\infty} \sup_{\y} \tau^{\gamma-\eps}
\left(\frac{\rhoB(\tau,\y)}{\rhoB(\tau,0)}\right)^{\delta}
\left| \frac{\rho(\tau,\y)}{\rhoB(\tau,\y)} -1 \right|
< \infty
\end{equation}
where
$\gamma =  \frac{1}{2p}(\frac{p}{2}+1)^2$.
Likewise, we get the same conclusion without the $\eps$, provided we
add the hypothesis that 
$\rhoB(0,\cdot)^\delta|1-\rho_0/\rhoB(0,\cdot)|\in L^\infty$ is also
square integrable with respect to the measure $(1+|\x|^2)^{-n/2}\,d^n\x$.
\end{theorem}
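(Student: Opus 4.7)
The plan is to reduce Theorem~\ref{fine:p<2} to the semigroup estimates of Theorem~\ref{semigroupestimates} applied with the critical weight $\eta=\eta_{cr}=\frac{p}{2}-1$, which for $m<m_2$ is \emph{negative}, following exactly the scheme used to prove Theorem~\ref{t-1}. First I would rewrite the statement in cigar/conjugated coordinates. Normalizing $B=1$, so that $\uB=(\cosh s)^{-2/(1-m)}$ and $\cosh^2 s=1+|\x|^2$, a brief calculation using $1-m=2/(p+n)$ identifies $\delta=(p/2-1)/(p+n)$ with $\eta_{cr}(1-m)/2$, so that $\rhoB(0,\cdot)^\delta=(\cosh s)^{-\eta_{cr}}$, and similarly $(\rhoB(\tau,\y)/\rhoB(\tau,0))^\delta=(\cosh s)^{-\eta_{cr}}$ in the scaled radius. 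Since $\gamma=-\lambda_0^{\rm cont}/(2p)$ and $2p\tau=e^{2pt}-1$, the assertion \eqref{bestdecay} becomes, with $w=u/\uB-1$ and $\tilde w=(\cosh s)^{-\eta_{cr}}w$, equivalent to
\begin{equation*}
\sup_{t\ge 1}\,e^{-(\lambda_0^{\rm cont}+\eps)t}\,\|\tilde w(t)\|_{L^\infty(\M)}<\infty
\end{equation*}
(for every $\eps>0$, with $\eps=0$ under the extra $L^2(\M)$ hypothesis), while $\rhoB(0,\cdot)^\delta|1-\rho_0/\rhoB(0,\cdot)|\in L^\infty$ translates to $\tilde w_0\in L^\infty(\M)$.

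The $\eps$-version then follows from the observation, made just after Theorem~\ref{t-3}, that the global smooth semiflow conclusion persists in $\Cetacr^\halpha(\M)$ since $\eta_{cr}\le 0$; hence $\tilde w$ depends smoothly on $\tilde w_0$ in this space, and satisfies the conjugated equation \eqref{lineq-w-conj<}. Lemma~\ref{comp}, together with the boundedness of $(\cosh s)^{-\eta_{cr}}$, provides a uniform $L^\infty(\M)$ bound on $\tilde w$. The mass normalization makes $\tilde w_0$ orthogonal in $L^2_{\uB^m}$ to the $\lambda_{00}=0$ eigenfunction, and for $n=1$ the centering additionally kills the $\lambda_{10}=-(p+1)$ mode; for $n\ge 2$, $\lambda_{10}=-(p+n)$ is already buried below $\lambda_0^{\rm cont}=-(p/2+1)^2$ throughout $p<2$, so no further projection is needed. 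Consequently $\tilde w_0$ lies in the range of $\Pop_{\eta_{cr}}$, and Theorem~\ref{semigroupestimates} gives $\|\Sop_{\eta_{cr}}(t)\Pop_{\eta_{cr}}\|_{C^\halpha\to C^\halpha}\le C e^{(\lambda_0^{\rm cont}+\eps)t}$. A time-$T$ iteration identical to \eqref{recurs}--\eqref{recurs-iter}, with the quadratic error $\|g(\tilde w)\|_{C^\halpha}\le K\|\tilde w\|_{C^\halpha}\,\|\tilde w\|_{L^\infty}$ supplied by the conjugated analogue of Lemma~\ref{quadratic-error} (applicable because $(\cosh s)^{\eta_{cr}}$ is bounded and the weighted product estimate of Lemma~\ref{weighted-algebra} controls the resulting nonlinearity), then yields $\|\tilde w(t)\|_{C^\halpha(\M)}\le C e^{(\lambda_0^{\rm cont}+\eps)t}$ once $\|\tilde w_0\|_{L^\infty}$ is sufficiently small. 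Reversing the conjugation and the scaling delivers \eqref{bestdecay} with the $\eps$.

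The main obstacle is the sharp rate without $\eps$ under the extra $L^2(d\mu)$ assumption. Theorem~\ref{semigroupestimates} intrinsically loses an $\eps$ in the case $\eta=\eta_{cr}$, but $\Lop_{\eta_{cr}}$ is self-adjoint on $L^2(\M)$ and hence $\|\Sop_{\eta_{cr}}(t)\Pop_{\eta_{cr}}\|_{L^2\to L^2}\le C e^{\lambda_0^{\rm cont}t}$ without~$\eps$. I would then mimic the sharp $m=m_2$ argument at the end of Section~\ref{pthm1}: an energy estimate of the form \eqref{formalEderiv}--\eqref{energygrowth} for $E(t)=\int_\M H(\tilde w(t))\,d\mu$, where $H$ is a primitive of the linearization's quadratic form, combined with the already-established $e^{(\lambda_0^{\rm cont}+\eps)t}$ pointwise decay to dominate the cubic and higher remainder terms, yields the sharp bound $\|\tilde w(t)\|_{L^2(\M)}\le C e^{\lambda_0^{\rm cont}t}\|\tilde w_0\|_{L^2(\M)}$. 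A unit-time Schauder regularization $\|\Sop_{\eta_{cr}}(T)\tilde w\|_{C^\halpha(\M)}\le C\|\tilde w\|_{L^2(\M)}$ (local parabolic smoothing uniformly in each coordinate chart of the uniform manifold), together with the quadratic error now of size $O(e^{2(\lambda_0^{\rm cont}+\eps)t})$ which beats $e^{\lambda_0^{\rm cont}t}$ for small $\eps$, upgrades the sharp $L^2$ decay to sharp $L^\infty$ (and $C^\halpha$) decay, completing the proof.
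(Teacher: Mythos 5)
Your proposal reproduces the paper's own argument in Section~\ref{SecWeights:p<2} essentially verbatim: conjugate by $(\cosh s)^{-\eta_{cr}}$ (bounded since $\eta_{cr}\le0$, so Theorem~\ref{t-3} and Lemma~\ref{quadratic-error} carry over), iterate the time-$T$ map using the $\Lop_{\eta_{cr}}$ semigroup estimate of Theorem~\ref{semigroupestimates}, and obtain the $\eps$-free rate from self-adjointness on $L^2(\M)$ plus regularization, just as in the $m=m_2$ case at the end of Section~\ref{pthm1}. The only small quibble is that Lemma~\ref{weighted-algebra} is not actually needed here: because $(\cosh s)^{\eta_{cr}}$ is bounded for $\eta_{cr}\le0$, the ordinary algebra estimate \eqref{c2} already controls the nonlinearity, and that lemma is reserved for the $\eta>0$ regime of Section~\ref{SecWeights:p>2}.
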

A graph for $\gamma,\delta$ in dependence on $m$ can be found in
Figure~\ref{graph:t-2}.
The tremendous improvement over the
$O(1/\tau)$ convergence rate established by Kim and McCann~\cite{MR2246356} and in our
present Thm.~\ref{t-1} depends on the strong decay assumptions for the
relative deviation of the initial data.

Also note that our Theorem \ref{fine:p<2} is a generalization of the comparison
estimates by V\'azquez (of which we use a modified form as Lemma~\ref{comp}).
Namely this comparison asserted that if a solution is close to Barenblatt in
relative $L^\infty$ norm, then it stays close to Barenblatt in this norm.
Here we have shown the same in a stronger norm, showing that proximity to
Barenblatt in a norm that enforces an even closer fit with Barenblatt in the
tails, is also preserved under the dynamics.

\section{Higher asymptotics in weighted spaces: The case $m>
  \frac{n}{n+2}$. Proof of Theorem~\ref{t-subquadratic} and its corollaries.}
\label{SecWeights:p>2}

This section is devoted to proving Theorems \ref{t-subquadratic} and
\ref{t-superquadratic}.
When applicable, the latter 
extends the conclusions of the former to decay rates
$\Lambda$ outside the range $`] 2\lambda_{01},\lambda_{01}]$.  For an
example of its applicability, see Remark \ref{even higher asymptotics}.
The restriction $\Lambda > \lambda + \lambda_{01}$ appearing in this
refinement arises from a limitation of our method, which relies on a
weighted control of the quadratic term in the nonlinearity by a
product of a weighted norm \eqref{amplification-hypothesis}
decaying at rate $\lambda$ and an unweighted norm decaying at the rate
$\lambda_{01}$ (given by Theorem \ref{t-1}).

It seems convenient to repeat Theorem~\ref{t-subquadratic} here:
%
\newcount\restoreeqn \restoreeqn=\csname c@equation\endcsname          
\newcount\restoresec \restoresec=\csname c@section\endcsname           
\newcount\restorethm \restorethm=\csname c@remark\endcsname            
\csname c@equation\endcsname=\recalleqn
\csname c@section\endcsname =\recallsec
\csname c@remark\endcsname  =\recallthm
\begin{quote} \footnotesize
\def\label#1{\relax}
\rememberthistheorem 
\end{quote}
\csname c@section\endcsname =\restoresec
\csname c@remark\endcsname  =\restorethm
\csname c@equation\endcsname=\restoreeqn

\begin{theorem}[Higher-order asymptotics in weighted H\"older spaces]
\label{t-superquadratic} \hfill
Fix $p= 2(1-m)^{-1} -n>2$ and
$\Lambda\in[\lambda_0^{\rm cont},\lambda_{01}]$, and choose
$\lambda\le\lambda_{01}$ such that $\lambda+\lambda_{01}<\Lambda$.
If $u(t,\x)$ satisfies the hypotheses of Theorem \ref{t-subquadratic},
and either $\lambda=\lambda_{01}$ or else we have the stronger
hypothesis
\begin{equation}\label{amplification-hypothesis}
\limsup_{t\to\infty} e^{-\lambda t}\left\|
\frac{u(t,\x)/\uB(\x) - 1}%
     {(B+|\x|^2)^{\frac{p}{2} -1 - \sqrt{(\frac{p}{2}+1)^2 +\Lambda} }}
\right\|_{C^\halpha(\M)}
<\infty
\,,
\end{equation}
then the conclusion \eqref{subquadratic-asymptotics}
of Theorem \ref{t-subquadratic} remains true and
$u_{\ell k}=0$ for $\lambda_{\ell k} >\lambda$.
\end{theorem}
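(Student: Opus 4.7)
The idea is to extend the spectral decomposition and exponential dichotomy argument behind Theorem~\ref{t-subquadratic} into the range $\Lambda<2\lambda_{01}$ by exploiting the amplification hypothesis~\eqref{amplification-hypothesis} to tame the quadratic nonlinearity, which on its own (by the crude product $\|w\|_{L^\infty}\cdot\|w\|_{L^\infty}$ combined with Theorem~\ref{t-1}) decays only at the borderline rate $e^{2\lambda_{01}t}$. First I would work in the weighted H\"older space whose weight, after the substitution $(B+|\x|^2)^{1/2}\sim\cosh s$, coincides with the denominator in~\eqref{subquadratic-asymptotics}; writing it as $\Ceta^\halpha(\M)$ with $\eta-\eta_{cr}=\pm\sqrt{(p/2+1)^2+\Lambda}$, Theorem~\ref{HolderSpectrum}(2) ensures that the continuous spectrum of $\Lop$ retreats precisely to $\re\lambda=\Lambda$, leaving only the finitely many eigenvalues $\lambda_{\ell k}\in(\Lambda,0)$ as isolated point spectrum. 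Let $\Qop$ denote the spectral projection onto the span of the corresponding eigenspaces and $\Pop=I-\Qop$; Theorem~\ref{semigroupestimates} then supplies the sharp bound $\|\Sop_\eta(t)\Pop\|\le Ce^{\Lambda t}$ without any $\eps$-loss.

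Next I would rewrite \eqref{eqv-principal} as $(\d_t-\Lop)w=\Lopmod(f(w)w)$ for $w=u/\uB-1$, where $f(w)w=O(w^2)$ is analytic vanishing to second order, and apply Duhamel. The decisive nonlinearity bound comes from Corollary~\ref{weighted-algebra-cigar}, with $h$ chosen to match the weight in~\eqref{amplification-hypothesis}:
\begin{equation*}
\|f(w)w\|_{\text{(amp)}}\le C\|w\|_{L^\infty(\M)}\|w\|_{\text{(amp)}},
\end{equation*}
where $\|\cdot\|_{\text{(amp)}}$ denotes the norm appearing on the left of~\eqref{amplification-hypothesis}; the same device handles the subordinate transport and zeroth-order remnants of $\Lopmod$, whose cigar coefficients are uniformly bounded with bounded logarithmic gradient. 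By Theorem~\ref{t-1} (applicable since the center of mass vanishes), the first factor is $O(e^{\lambda_{01}t})$, while hypothesis~\eqref{amplification-hypothesis} bounds the second by $O(e^{\lambda t})$, so the nonlinear source is $O(e^{(\lambda+\lambda_{01})t})$. Since $\lambda+\lambda_{01}<\Lambda$, Duhamel combined with the sharp semigroup bound on $\Pop$ yields
\begin{equation*}
\|\Pop w(t)\|_{\Ceta^\halpha(\M)}\le Ce^{\Lambda t}\|\Pop w(0)\|_{\Ceta^\halpha(\M)}+C\int_0^t e^{\Lambda(t-s)}e^{(\lambda+\lambda_{01})s}\,ds=O(e^{\Lambda t}),
\end{equation*}
while on the finite-dimensional range of $\Qop$, mode-by-mode integration against $e^{\lambda_{\ell k}(t-s)}$ produces convergent limit coefficients $c_{\ell k}=\lim_{t\to\infty}e^{-\lambda_{\ell k}t}\Qop_{\ell k}w(t)$; these assemble into the polynomial profiles $u_{\ell k}(\x)=c_{\ell k}(B+|\x|^2)v_{\ell k}(\x)$ of~\eqref{subquadratic-asymptotics} through the eigenfunctions of Theorem~\ref{HolderSpectrum}.

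To conclude $u_{\ell k}=0$ whenever $\lambda_{\ell k}>\lambda$, I would argue by contradiction: any surviving term $c_{\ell k}e^{\lambda_{\ell k}t}v_{\ell k}$ would contribute to $w(t)$ a component whose $\|\cdot\|_{\text{(amp)}}$-norm grows like $e^{\lambda_{\ell k}t}$ (the corresponding $v_{\ell k}$ lies in the amplification space for the eigenvalues in range, by Theorem~\ref{HolderSpectrum}), violating the hypothesis $\|w(t)\|_{\text{(amp)}}=O(e^{\lambda t})$. The alternative case $\lambda=\lambda_{01}$ (for which no amplification hypothesis is required) collapses to Theorem~\ref{t-subquadratic} directly, since then $\lambda+\lambda_{01}=2\lambda_{01}<\Lambda$ and Theorem~\ref{t-1} already provides the unweighted $L^\infty$ decay. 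The main technical obstacle is the sharpness of Theorem~\ref{semigroupestimates}: without the no-$\eps$ bound at the edge of the essential spectrum, the Duhamel iteration would leak an $\eps$ at every step and fail to reach precisely $e^{\Lambda t}$; this sharpness is secured by the delicate spatial-growth-versus-time-decay trade-off carried out in Case~2 of its proof, and is exactly what allows the bootstrap to close at the target rate $\Lambda$.
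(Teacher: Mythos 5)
Your proposal mirrors the paper's own proof, which treats Theorem~\ref{t-superquadratic} as a minor modification of the proof of Theorem~\ref{t-subquadratic}: conjugate by the $\eta$ of \eqref{choose-eta}, substitute the $e^{\lambda t}$ decay of the conjugated relative density asserted by \eqref{amplification-hypothesis} for the $e^{\lambda_{01}t}$ decay supplied by Theorem~\ref{t-1}, and close the iteration under $\lambda+\lambda_{01}<\Lambda$; you phrase this with the continuous Duhamel integral and Corollary~\ref{weighted-algebra-cigar} where the paper iterates the time-$T$ map and invokes Lemma~\ref{quadratic-error-weighted}, but these routes are interchangeable. Two small cautions. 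First, since \eqref{choose-eta} gives $\eta\le\eta_{cr}$, the relevant clause of Theorem~\ref{HolderSpectrum} is (3), not (2): the parabolic region is residual rather than point spectrum. Only the essential spectral radius enters and that is given by the same formula, so this does not affect the argument. Second --- a caveat you inherit verbatim from the paper's proof sketch --- the denominator in \eqref{amplification-hypothesis} is $(B+|\x|^2)^{\eta}\sim(\cosh s)^{2\eta}$, whereas the quantity the proof actually feeds into the semigroup estimate, $\|\tilde w\|_{\Ceta^\halpha}$ with $\tilde w=(\cosh s)^{-\eta}w$, carries the strictly stronger weight $(\cosh s)^{\eta}$ when $\eta>0$. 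As stated, the amplification bound therefore does not directly yield $\|\tilde w\|_{\Ceta^\halpha}\le C e^{\lambda t}$; interpolating against the unweighted $L^\infty$ bound of Theorem~\ref{t-1} gives only $\|\tilde w\|_{\Ceta^\halpha}=O(e^{(\lambda+\lambda_{01})t/2})$, which closes Duhamel at rate $\Lambda$ only under the tighter threshold $\tfrac32\lambda_{01}+\tfrac12\lambda<\Lambda$. Either the stated exponent in \eqref{amplification-hypothesis} is meant to be halved (in which case the argument closes exactly as you and the paper write it) or the threshold on $\Lambda$ must be sharpened; this is worth flagging, but it is an issue with the theorem's statement rather than a defect of your proof relative to the paper's.
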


\begin{proof}[Proof of Thm.~\ref{t-subquadratic}]
In case $\Lambda=\lambda_{01}=-2p$,
the weights cancel in \eqref{subquadratic-asymptotics},
and the result was established  in Section \ref{pthm1}
during the proof of Theorem \ref{t-1}.
The improvement to $\Lambda<\lambda_{01}$ is based on the same
principle as Section \ref{SecWeights:p<2}, except this time
$\eta_{cr} > 0$, and the situation is dual to the previous one: we have a
chance to obtain faster decay by introducing a weight which
relaxes the strength of the norm as $s \to \infty$.
We need not conjugate all the way to $\eta_{cr}$,  but instead have the
flexibility to choose $\eta \in [0,\eta_{cr}]$
to balance the error term in Theorem \ref{t-subquadratic} against
the severity of this relaxation.
Indeed, the choice
\begin{equation}\label{choose-eta}
\eta = \eta_{cr} - \sqrt{\Lambda - \lambda_0^{\rm cont}}
\end{equation}
makes the radius $\lambda_{0,\eta}^{\rm cont}$ of the essential spectrum of
$\Lop_\eta$ given by Theorem \eqref{HolderSpectrum} coincide with
$\Lambda \in [\lambda_0^{\rm cont}, \lambda_{01}]$.
Choosing $\eta>\eta_{cr}$ would only prove the statement with a weaker norm,
without giving better rates.

Conjugating with $\eta>0$ however requires a reconsideration of the proof of
Theorem~\ref{t-3}, because a~priori, uniform parabolicity cannot be expected
for weights that allow growth relative to the Barenblatt.
The redeeming feature will first be that we do retain the {\em unweighted\/}
relative $L^\infty$ hypothesis on the initial data;
then since the nonlinearity enters through a term
$f(w)w$, after conjugacy $\tilde{w}=(\cosh s)^{-\eta} w$, this term is still
$f(w)\tilde{w}$, and the unweighted estimates continue to control the
nonlinearity, whereas linear theory applies to the weighted estimates.
We now carry this out in detail:

We again study equation \eqref{lineq-w} 
for initial data $w_0\in C^\halpha$ (in particular bounded), but using weighted
$\Ceta^\halpha$ norms with the more permissive weight $\eta>0$
from~\eqref{choose-eta}. Letting
$\tilde w:= (\cosh s)^{-\eta} w$,
Eq.~\eqref{lineq-w} becomes
\begin{equation}\label{lineq-w-conj}
(\d_t-\Lop_\eta)\tilde w =\Lopmod_\eta(f(w) \tilde w)
\end{equation}
with $\Lopmod_\eta =
\Lop_\eta - \frac{2}{1-m}
(\tanh s\; \d_s + n + (\eta - \frac{2}{1-m})\tanh^2s)$
and with the same $f(w)$ as before. We can use the fact, from the
unweighted norm, that a solution $w\in C^\halpha(\M_T)$
to~\eqref{lineq-w} exists; in
particular the conjugated~$\tilde{w}(\cdot,t)$ will also be in
$C^\halpha(\MT)$ still (even in $C_{-\eta}^\halpha$).

We obtain an analog to Lemma~\ref{quadratic-error}, 
but stated specifically for the FDE on the cigar manifold, because the
a~priori estimate on relative $L^\infty$ norms from Lemma~\ref{comp}
is used: namely, we claim
\begin{lemma}[Linear approximation of nonlinear
    semiflow; weighted norm]\label{quadratic-error-weighted}
\mbox{} \kern3em \hskip0pt minus3em
Let~$\bar {\tilde w}$ solve the homogeneous linear
equation~\eqref{linear}, namely
$(\d_t-\Lop_\eta)\tilde w=0$ for initial data $\tilde w_0$, where $\Lop_\eta$
  is given
by~\eqref{conjugated} and can be written, in local coordinates, as
$\Lop_\eta \tilde w=
  \d_{ij}^2 (a^{ij} \tilde w) + \d_i (b^i \tilde w) + c \tilde w$.
Let $\tilde w$ solve the quasilinear
equation
\eqref{lineq-w-conj},
for the same initial data $\tilde w_0$,  and $w=(\cosh s)^\eta \tilde w$.
In local coordinates, we can write  $\Lopmod=\Lop + \d_i\circ \tilde b^i +
\tilde c$,
and $f$ is a smooth function from an interval about 0 into $\R$ satisfying
$f(0)=0$. The coefficients are smooth.

Then, for sufficiently short time~$T$, there exists a constant~$K$ (uniform as
$T\to0$) such that we have the estimate
\begin{equation}\label{quadratic-est1-weighted}
\|\tilde w-\bar {\tilde w}\|_{C^\halpha(\MT)} \le
K \|w\|_{L^\infty(\MT)} \|\tilde w\|_{C^\halpha(\MT)}
\end{equation}
and from it the time-step estimate (with a different $K$):
\begin{equation}\label{quadratic-est2-weighted}
\|\bar {\tilde w}(T)-\tilde w(T)\|_{C^\halpha(\M)}\le
K \|\tilde w_0\|_{C^\halpha(\M)}\, \|w_0\|_{L^\infty(\M)}
\;.
\end{equation}
\end{lemma}

\begin{proof}[Proof of the Lemma]
The proof is modeled right after the proof of
Lemma~\ref{quadratic-error}. We can basically copy equations
\eqref{HE-w-lin} and \eqref{HE-w-NL} almost verbatim,
the only changes being  that all $w$'s have tildes now, except the
ones inside $f(\cdot)$, and the insignificant fact that coefficients
implicitly depend on the conjugacy parameter~$\eta$ (not to be
confused with the partition of unity $\eta_l$). The proof ensues as
before, with the estimate
$\|f(w)\tilde w\|_{C^\halpha(\MT)} \le K \|f(w)\|_{L^\infty(\MT)}\,
\|\tilde w\|_{C^\halpha(\MT)}$ provided by
Corollary~\ref{weighted-algebra-cigar}.
\end{proof}

{\it Proof of Thm.~\ref{t-subquadratic} continued: }
As before in Ch.~\ref{pthm1}, we write $\tilde{w}_j:= \tilde{w}(jT)\in
C^\halpha(\M)$, where $T$ is sufficiently small for
Lemma~\ref{quadratic-error-weighted} to apply; and we write
$\Sop_\eta(t):=\exp(t\Lop_\eta)$ for the semigroup.
We continue to use Thm.~\ref{t-3} for the unconjugated flow and copy
from \eqref{recurs} and the paragraph following it that
$$
w_{j+1}=  \Sop(T) w_j + G(w_j)w_j
\;.
$$
After conjugating, this becomes
\begin{equation}\label{conj-timemap}
\tilde w_{j+1}=  \Sop_\eta(T) \tilde w_j + G_\eta(w_j)\tilde w_j
\,,
\end{equation}
where $G_\eta$ is a smooth function on~$C^\halpha$ with values
$G_\eta(w) = (\cosh s)^{-\eta}\circ G(w) \circ (\cosh s)^\eta$ being
bounded linear maps on $C_{-\eta}^\halpha(\M)$. The
estimate~\eqref{quadratic-est2} guarantees that $G_\eta(w_j)$ is also
a bounded linear map on $C^\halpha(\M)$ (namely with norm $\le
K\|w_j\|_{L^\infty}$).

From Theorem \ref{HolderSpectrum},  the only spectrum of $\Lop_\eta$
closer to zero than $\lambda_{0,\eta}^{\rm cont}=\Lambda$ consist of
eigenvalues \eqref{eigenvalues} of finite multiplicity and indexed by
non-negative integers $\ell,k \in \N$ such that $\ell + 2k < p/2 +1 -
|\eta - \eta_{cr}|$.
Enumerate the eigenvalues $\lambda_{\ell k}$ which lie in the range
$`]\Lambda,0]$ by $\Lambda_l \le \Lambda_{l-1}\le \cdots \le\Lambda_1$,
counting them with multiplicity.
Set $\Lambda_{l+1} = \Lambda$ by convention,
whether or not this is an eigenvalue.

Before continuing the analysis of~\eqref{remainder}, a brief remark
about the heuristics of the proof strategy seems useful:
We are not  constructing
a `slow' or `pseudocenter' manifold corresponding to the eigenvalues
$\Lambda_i$ using invariant manifold methods with respect to which one
would naturally measure convergence rates of the transversal `fast'
dynamics. Rather we assess convergence rates in terms of distances to
the linear eigenspaces using spectral projections.
This is appropriate since the spectral ratio $\Lambda/\lambda_{01}$ is
less than 2, the order of the nonlinearity. For larger spectral
ratios, the deviation of the slow
manifold from its tangential space, rather than the eigenvalues alone,
would indeed dominate the asymptotic behavior of the `fast'
dynamics transversal to the slow manifold. This is also the reason why
products of eigenfunctions and linear combinations of eigenvalues do
not occur in the statement of the theorem. (Fig.~\ref{Fig-heuristics}
explains this heuristics in a simple 2D model.)

\begin{figure}[htb]
\fbox{%
\begin{picture}(340,108)
\put(0,0){\includegraphics{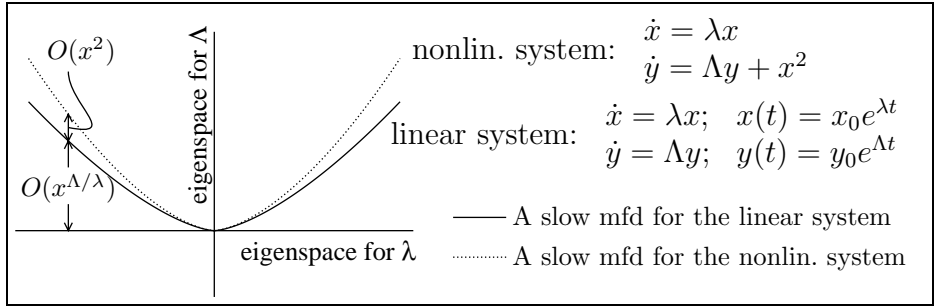}}
\put(142,58){linear system: 
     $\begin{array}{ll}\dot{x}=\lambda x; & x(t) = x_0 e^{\lambda t} \\
                       \dot{y}=\Lambda y; & y(t) = y_0 e^{\Lambda t}\\
      \end{array}$}
\put(150,90){nonlin.\ system: 
     $\begin{array}{l}\dot{x}=\lambda x\\
                      \dot{y}=\Lambda y+x^2\\
      \end{array}$}
\put(12,90){\footnotesize$O(x^2)$}
\put(2,38){\footnotesize$O(x^{\Lambda/\lambda})$}
\put(188,27){\footnotesize A slow mfd for the linear system}
\put(188,12){\footnotesize A slow mfd for the nonlin.\ system}
\end{picture}
}
\caption{\sl A simple 2D model illustrating the heuristics of working with
linear eigenspaces with $\Lambda<\lambda<0$  in Thm.~\ref{t-subquadratic},
rather than slow manifolds.  Linear asymptotics is
$x_0 e^{\lambda t} + O(e^{\Lambda t})$ (solid curve).  Nonlinear asymptotics
(dotted curve) should involve $e^{2\lambda t}$ etc., which gets absorbed into $O(e^{\Lambda t})$
if $\Lambda/\lambda < 2$.}
\label{Fig-heuristics}
\end{figure}

Returning to \eqref{conj-timemap},  let
$\Qop^{(i)}$ be the spectral projection onto
the one-dimensional eigenspace corresponding to the eigenvalue
$\Lambda_{i}$ of $\Lop$.
Take $\Qop = \sum_{i=1}^l \Qop^{(i)}$, and let $\Pop$ be the
complementary projection.

Setting
$\Qop^{(i)}_\eta =
(\cosh s)^{-\eta} \circ \Qop^{(i)} \circ (\cosh s)^\eta$ and
$\Pop_\eta = (\cosh s)^{-\eta} \circ \Pop \circ (\cosh s)^\eta$,
we obtain
$$
\Pop_\eta \tilde{w}_{j+1} = \Sop_\eta(T)(\Pop_\eta\tilde{w}_j)
              + (\Pop_\eta \circ G(w_j))  \tilde{w}_j
\;.
$$
By iteration, we obtain inductively
$$
\Pop_\eta\tilde{w}_k  = \Sop_\eta(T)^k (\Pop_\eta\tilde{w}_0) +
\sum_{j=0}^{k-1} \Sop_\eta(T)^{k-1-j} (\Pop_\eta\circ G(w_j)) \tilde{w}_j
\;.
$$
We set $\vartheta_{l+1}:=\exp[\Lambda_{l+1}T]$, so
$\|\Sop_\eta(T)^k\|\le C \vartheta_{l+1}^k$ on the range
of~$\Pop_\eta$. We also use what we have already established in
Thm.~\ref{t-1}, namely that $\|w_j\|_{C^\halpha}\le C\vartheta^j$
with $\vartheta:= \exp(\lambda_{01}T)$, with a similar estimate
following for $\tilde{w}_j$. Then we conclude
\begin{equation}\label{pre-remainder}
\|\Pop_\eta \tilde{w}_k\|_{C^\halpha(\M)}
\le
C \vartheta_{l+1}^k \Bigl(
   \|\Pop_\eta\tilde{w}_0\|_{C^\halpha(\M)}
   + \sum_{j=0}^{k-1} \vartheta_{l+1}^{-j-1}
          KC^2\vartheta^{2j} \Bigr)
\;.
\end{equation}
Since $\vartheta^2/\vartheta_{l+1}<1$ (here we use
$2\lambda_{01}<\Lambda$), the parenthesis is bounded independently
of~$k$. So we get
\begin{equation}\label{remainder}
\|\Pop_\eta \tilde{w}_k\|_{C^\halpha(\M)} \le c_2\vartheta_{l+1}^k
\;.
\end{equation}
A similar estimate can be made for the complementary projection
$$
 \Qop = \sum_{i=1}^l \Qop^{(i)}
\,.
$$
On the $\Lambda_i$ eigenspace,  $\Sop_\eta(T)$ operates as the scalar
$\vartheta_i = \exp[\Lambda_i T]$, hence
$$
\vartheta_i^{-k}\Qop^{(i)}_\eta \tilde w_k = \Qop^{(i)}_\eta \tilde w_0 + \sum_{j=0}^{k-1} \vartheta_i^{-j-1} \Qop^{(i)}_\eta G(w_j)\tilde w_j
$$
leads to a 2-sided Cauchy sequence estimate
$$
\left\|
   \frac{\Qop^{(i)}_\eta\tilde{w}_{k+q}}{\vartheta_i^{k+q}} -
   \frac{\Qop^{(i)}_\eta\tilde{w}_{k}}{\vartheta_i^{k}}
\right\|_{C^\halpha(\M)}
\le
\left({\vartheta^2}/{\vartheta_i}\right)^k
\sum_{j=0}^{q-1} c_3 \vartheta_i^{-1}
\left({\vartheta^2}/{\vartheta_i}\right)^j.
$$
We conclude $\tilde{v}_i := \lim_{k \to \infty }\Qop^{(i)}_\eta
\tilde{w}_k/\vartheta_i^k$
exists, and $\tilde{v}_i$ must be an eigenvector of $\Lop_\eta$ in the
$\Lambda_i$~eigenspace.  The convergence rate follows from this estimate:
\begin{equation}\label{eigenrate}
\left\|
   \Qop^{(i)}_\eta\tilde{w}_{k} - \tilde v_i \vartheta_i^{k}
\right\|_{C^\halpha(\M)}
\le
c_3 (\vartheta^2)^k
\;.
\end{equation}
Returning to the unconjugated functions, with $v_i=(\cosh s)^\eta
\tilde{v_i}$, we use from
Theorems \ref{spectrum-from-ARMA} and \ref{HolderSpectrum}, that
this eigenvector $v_i(\x) = u_i(\x)/(B+|\x|^2)$
is given by a degree $\ell + 2k'$
polynomial $u_i(\x)$,  and $\Lambda_i=\lambda_{\ell k'}$.
Since $\uB(\x)$ has the same mass and the same center of mass as the
initial data by hypothesis,
we have $u_{00}(\x) = 0 = u_{10}(\x)$ as remarked before
equation \eqref{cyl-L-NL},  hence $\ell + 2k >1$.

The last two inequalities combine with
the identity $1 = \Pop + \sum_{i=1}^l \Qop^{(i)}$ to yield
$$
\Big\| \frac{(B+|\x|^2)w_k - \sum_{i=1}^l \vartheta_i^k u_i}%
            {(B+|\x|^2)^{1+\eta/2}} \Big\|_{C^\halpha(\M)}
=
\Big\| \frac{w_k - \sum_{i=1}^l \vartheta_i^k v_i}%
            {(\cosh s)^\eta} \Big\|_{C^\halpha(\M)}
\le c_4 \vartheta_{l+1}^k
\;.
$$
Our choices
$\eta = \eta_{cr} - \sqrt{\Lambda - \lambda_0^{\rm cont}}
= \frac{p}{2} -1 - \sqrt{\Lambda + (\frac{p}{2}+1)^2 }$
and $\vartheta_i=\exp[\Lambda_i T]$ for $T$ sufficiently large
convert this into the desired estimate
\eqref{subquadratic-asymptotics}. This concludes the proof of
Thm.~\ref{t-subquadratic}.
\end{proof}

\begin{proof}[Proof of Thm.~\ref{t-superquadratic}]
Since the case $\lambda=\lambda_{01}$ is Thm.~\ref{t-subquadratic}, we
assume $\lambda<\lambda_{01}$ and
hypothesis~\eqref{amplification-hypothesis}.
The proof of Thm.~\ref{t-subquadratic} carries over with only minor
changes:

We keep conjugating with the $\eta$ from~\eqref{choose-eta} and
instead of using $\|\tilde w_j\|\le C\exp[\lambda_{01}Tj]=:
\vartheta^j$ obtained from Thm~\ref{t-1}, we use the
stronger~\eqref{amplification-hypothesis},
$\|\tilde w_j\|\le C\exp[\lambda Tj]=:\bar\vartheta^j\ll
\vartheta^j$.
As in the previous proof, we obtain \eqref{pre-remainder}, only with
$(\vartheta\bar\vartheta)^j$ instead of $\vartheta^{2j}$, and use
$(\vartheta\bar\vartheta)/\vartheta_{l+1}<1$ from
$\lambda+\lambda_{01}<\Lambda$. The same change applies to the
estimates of $\Qop$.

It is easy to see that the largest eigenvalue $\lambda_{\ell k}$ for
which the polynomial $u_{\ell k}(\x) \ne 0$ is non-vanishing
cannot exceed $\lambda$ without \eqref{subquadratic-asymptotics}
contradicting either \eqref{amplification-hypothesis} (in case
$\lambda < \lambda_{01}$) or Theorem \ref{t-1}
(in case $\lambda = \lambda_{01}$).
\end{proof}

\begin{cor}[Coefficient formulae]\label{c-coefficient formulae}
The coefficients appearing in the asymptotic
expansion~\eqref{subquadratic-asymptotics}
from Theorems \ref{t-subquadratic} and \ref{t-superquadratic} are given by
$u_{\ell k}(\x) = \psi_{\ell k}(|\x|
) \sum_{\mu} c_{\ell k \mu} Y_{\ell \mu}(\x/|\x|)$
where $\psi_{\ell k}$ and $Y_{\ell \mu}$ are the polynomial
eigenfunctions of $\Hop$ and spherical harmonics from Theorem
\ref{spectrum-from-ARMA} and
\begin{equation}\label{coefficient limits}
c_{\ell k \mu} =
\lim_{t \to \infty} e^{-\lambda_{\ell k}t}
\langle u(t,\cdot) - \uB, \psi_{\ell k} Y_{\ell \mu} \rangle_{L^2(\Rn)}
/ \| \psi_{\ell k} Y_{\ell \mu} \uB^{1-m/2}\|^{2}_{L^2(\Rn)}
\;.
\end{equation}
For $k=0 \ne \ell$, the $\psi_{\ell\, 0}Y_{\ell \mu}$ are the
homogeneous harmonic polynomials of degree $\ell$ and the expression
under the limit \eqref{coefficient limits} is independent of time,
hence
\begin{equation}\label{harmonic coefficients}
u_{\ell\, 0}(\x) = |\x|^\ell \sum_{\mu} Y_{\ell \mu}\Big(\frac{\x}{|\x|}\Big)
\frac{\langle u(0,\cdot), \psi_{\ell\, 0} Y_{\ell \mu} \rangle_{L^2(\Rn)}}
{\| \psi_{\ell\, 0} Y_{\ell \mu} \uB^{1-m/2}\|^2_{L^2(\Rn)}}
\;.
\end{equation}
\end{cor}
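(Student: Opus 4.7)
The plan is to unpack the construction of $u_{\ell k}$ from the proof of Theorem~\ref{t-subquadratic} into an explicit projection formula, using self-adjointness of $\Lop$ in $L^2_{\uB^m}(\Rn)$ to translate into an unweighted pairing, and then, for the case $k=0,\,\ell\ge1$, to derive the simpler formula~\eqref{harmonic coefficients} from a conservation law specific to harmonic polynomials.

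For the structural form of $u_{\ell k}$, by Theorems~\ref{spectrum-from-ARMA} and~\ref{HolderSpectrum} the $\lambda_{\ell k}$-eigenspace of $\Lop$ is spanned by $v_{\ell k}(|\x|)Y_{\ell\mu}(\x/|\x|)$ with $v_{\ell k}=\uB^{1-m}\psi_{\ell k}$, and the Barenblatt identity $(B+|\x|^2)\uB^{1-m}=1$ from~\eqref{barenblatt} gives $(B+|\x|^2)v_{\ell k}Y_{\ell\mu}=\psi_{\ell k}Y_{\ell\mu}$, so the identification of $u_{\ell k}/(B+|\x|^2)$ with an eigenvector in the proof of Theorem~\ref{t-subquadratic} already produces $u_{\ell k}=\psi_{\ell k}\sum_\mu c_{\ell k\mu}Y_{\ell\mu}$ for some scalars $c_{\ell k\mu}$. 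To identify these scalars, I would use that $\Lop$ is self-adjoint in $L^2_{\uB^m}(\Rn)$ (Sec.~\ref{SecSpecLinEqn}), so the one-dimensional spectral projection onto $v_{\ell k}Y_{\ell\mu}$ is $f\mapsto\langle f,v_{\ell k}Y_{\ell\mu}\rangle_{L^2_{\uB^m}}\,v_{\ell k}Y_{\ell\mu}/\|v_{\ell k}Y_{\ell\mu}\|^2_{L^2_{\uB^m}}$. Substituting $f=w(t)=u(t)/\uB-1$ and using $\uB^m\cdot\uB^{1-m}=\uB$ rewrites the numerator as $\langle u(t)-\uB,\psi_{\ell k}Y_{\ell\mu}\rangle_{L^2(\Rn)}$ and the denominator as $\|\psi_{\ell k}Y_{\ell\mu}\uB^{1-m/2}\|^2_{L^2(\Rn)}$; the admissibility $\ell+2k<p/2+1$ of Theorem~\ref{spectrum-from-ARMA} ensures finiteness of both. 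The limit $c_{\ell k\mu}=\lim_{t\to\infty}e^{-\lambda_{\ell k}t}(\cdots)$ then comes directly from the construction $\tilde v_i=\lim_k\Qop^{(i)}_\eta\tilde w_k/\vartheta_i^k$ established in the proof of Theorem~\ref{t-subquadratic}, transported back to unconjugated variables and promoted from the discrete time-$T$ sequence to continuous time using the regularization estimates of Theorem~\ref{semigroupestimates}.

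For the $k=0,\,\ell\ge1$ refinement, the hypergeometric identity ${}_2F_1(\cdot,0;\cdot;\cdot)=1$ reduces $\psi_{\ell 0}(r)$ to $r^\ell$, making $\phi_{\ell\mu}(\x):=\psi_{\ell 0}(|\x|)Y_{\ell\mu}(\x/|\x|)$ a homogeneous harmonic polynomial of degree~$\ell$. Subtracting the stationary form of~\eqref{transformed} satisfied by $\uB$ from the equation for $u$, testing against $\phi_{\ell\mu}$, and integrating by parts twice, harmonicity kills the term containing $\Laplace\phi_{\ell\mu}$ while Euler's relation $\x\cdot\nabla\phi_{\ell\mu}=\ell\phi_{\ell\mu}$ converts the transport term, leaving
\begin{equation*}
\frac{d}{dt}\int_{\Rn}(u-\uB)\phi_{\ell\mu}\,d\x
= -\frac{2\ell}{1-m}\int_{\Rn}(u-\uB)\phi_{\ell\mu}\,d\x
= \lambda_{\ell 0}\int_{\Rn}(u-\uB)\phi_{\ell\mu}\,d\x,
\end{equation*}
since $\lambda_{\ell 0}=-\ell(p+n)=-2\ell/(1-m)$ by~\eqref{eigenvalues}. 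Thus $e^{-\lambda_{\ell 0}t}\langle u(t)-\uB,\phi_{\ell\mu}\rangle_{L^2(\Rn)}$ is constant in time, the limit in~\eqref{coefficient limits} is already attained at $t=0$, and~\eqref{harmonic coefficients} follows.

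The main obstacle is justifying the vanishing of the boundary contributions at infinity in the integrations by parts. Lemma~\ref{comp} yields $|u-\uB|\le C\uB\sim|\x|^{-(n+p)}$, so $(u-\uB)\phi_{\ell\mu}$ decays like $|\x|^{\ell-n-p}$, integrable for $\ell<p$; the eigenvalue admissibility $\ell<p/2+1$ gives this with margin. The mean value theorem combined with Lemma~\ref{comp} gives $|u^m-\uB^m|\le C\uB^{m-1}|u-\uB|\le C(B+|\x|^2)|u-\uB|$, enough to control the nonlinear term encountered along the way, and a standard radial cutoff approximation confirms that the fluxes through $\boundary B_R$ vanish as $R\to\infty$. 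The same moment bounds legitimize the transition from the $L^2_{\uB^m}$ projection formula to the unweighted pairings of~\eqref{coefficient limits}.
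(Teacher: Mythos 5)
Your proposal is correct and rests on the same three pillars as the paper's argument: the self-adjoint projection formula for the coefficient, the identification of the limit from the asymptotic expansion, and conservation of harmonic-polynomial moments for the $k=0$ refinement. The details differ cosmetically in a few places worth noting. You project using $\Lop$ on $L^2_{\uB^m}$, while the paper works with the conjugate operator $\Hop$ on $L^2_{\uB^{2-m}}$; since multiplication by $\uB^{1-m}$ is a unitary isomorphism between these Hilbert spaces, the resulting pairings and normalizing constants coincide, as your computation confirms. For the limit, you propose to return to the discrete construction $\tilde v_i=\lim_k\Qop^{(i)}_\eta\tilde w_k/\vartheta_i^k$ from the proof of Theorem~\ref{t-subquadratic} and re-promote to continuous time; the paper's route is shorter, simply multiplying the already-stated expansion~\eqref{subquadratic-asymptotics} by $e^{-\lambda_{\ell k}t}\psi_{\ell k}Y_{\ell\mu}\uB^{2-m}$ and integrating, so that the $O(e^{\Lambda t})$ remainder dies under the limit without reopening the proof. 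For the $k=0\ne\ell$ case, you test the scaled equation~\eqref{transformed} against $\phi_{\ell\mu}$ and use Euler's relation; the paper transforms back to the unscaled density $\rho$ via~\eqref{back-trf}, where the conserved quantity is simply the $\phi_{\ell\mu}$-moment of $\rho$ (testing $\rho_\tau=\frac1m\Laplace\rho^m$ and integrating by parts twice) and the time-dependence $e^{\lambda_{\ell 0}t}$ emerges from the rescaling $\x=(1+2p\tau)^{-\spread}\y$. These are equivalent; your version saves a change of variables, the paper's makes the conservation law self-evident. One place where the paper is more precise than your sketch: to justify the double integration by parts for the $\Laplace$ term one needs not only the decay of $u-\uB$ (or $u^m-\uB^m$) but also gradient decay at infinity, which the paper secures by citing $|\nabla\rho(\tau,\y)|=O(|\y|^{-n-p-1})$ from~\cite[Corollary~9]{MR2246356}; your "standard radial cutoff" remark should invoke an estimate of this type explicitly.
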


\begin{proof}
Rewrite \eqref{subquadratic-asymptotics} in the form
$$
\frac{u(t,\cdot) - \uB}{\uB^{2-m}}=
\sum_{\Lambda<\lambda_{\ell k}<0} u_{\ell k} e^{\lambda_{\ell k}t}
+O(e^{\Lambda t})(B + |\x|^2)^{\left(p+ 2 - \sqrt{(p+2)^2+4\Lambda}\right)/4}
\,,
$$
where we have $L^\infty$ control on the factor $O(e^{\Lambda t})$,
and $u_{\ell k}$
lies in the $\lambda_{\ell k}$ eigenspace of the self-adjoint
operator $\Hop$ on $L^2_{\uB^{2-m}}$ spanned by the family of
degree $\ell + 2k < p/2+1$ polynomials $(\psi_{\ell k} Y_{\ell\mu})_\mu$.
Multiplying this expression by
$e^{-\lambda_{\ell k}t}\psi_{\ell k} Y_{\ell \mu}$ and by
$\uB^{2-m} = (B + |\x|^2)^{-(n+p+2)/2}$, 
integration yields
$$
e^{-\lambda_{\ell k}t}
\langle u(t,\cdot) - \uB, \psi_{\ell k} Y_{\ell \mu} \rangle_{L^2(\Rn)} =
\langle u_{\ell k}, \psi_{\ell k} Y_{\ell \mu} \rangle_{L^2_{\uB^{2-m}}}
+ O(e^{(\Lambda-\lambda_{\ell k}) t})
\;.
$$
The remainder term vanishes in the limit $t \to \infty$ to establish
\eqref{coefficient limits}.

In case $k=0\ne\ell$ we have $\psi_{\ell\, 0}(|\x|) = |\x|^\ell$,
hence $\{\psi_{\ell\,0} Y_{\ell \mu}\}_\mu$
are the homogeneous harmonic polynomials of degree $\ell$.
These integrate to zero against the radial distribution $\uB$.
It remains only to show time independence of the expression under the
limit \eqref{coefficient limits} to complete
the proof of \eqref{harmonic coefficients}.
Transforming back to the original variables \eqref{back-trf},  we see
that the integral of $\rho(\tau,\y)$ against any harmonic polynomial
of degree less than $p$ is independent of $\tau$
from the evolution equation \eqref{pm};
the spatial decay $|\nabla \rho(\tau,\y)| = O(1/|\y|^{n+p+1})$ in
e.g.~\cite[Corollary 9]{MR2246356}
justifies the integration by parts.  Thus
$$
\int  \rho(\tau,\y) |\y|^\ell Y_{\ell \mu}(\frac{\y}{|\y|}) d\y
=\int  u(\ln (1 + 2p\tau)^{1/2p},\x)
|(1+2p\tau)^\spread\x|^\ell Y_{\ell\mu}(\frac{\x}{|\x|}) d\x
$$
is independent of
$t = \frac{1}{2p} \ln (1+ 2p\tau)$.
Since $\lambda_{\ell\,0}=  -2p\spread \ell$
from \eqref{scalingtrf} and \eqref{eigenvalues},
this establishes \eqref{harmonic coefficients}.
\end{proof}

\begin{proof}[Proof of Corollary \ref{c-2}]
Let us quickly review the top of the spectrum from Theorem \ref{HolderSpectrum}
(omitting the eigenvalues rendered ineffective by the mass / center of mass adjustments):
$$
\quad
\parbox{10em}{effective eigenvalues\\ of $\Lop_{\eta}$ when $n\ge3$: }
\begin{array}{l}
\lambda_{01}= -2((1-m)^{-1} -n) = -2p\\
\lambda_{02}= -8((1-m)^{-1} - 1 - n/2) = -4p+8\\
\lambda_{20}= -2p-2n \\
\lambda_0^{\rm cont}= -(p+2)^2/4
\end{array}
$$
The same picture, with the interval $[m_6,m_{n+4}]$ shrinking to a point,
applies to $n=2$, whereas in the case $n=1$ we have the intersection of
$\lambda_0^{\rm cont}$ and $\lambda_{20}$ at $p=2+2\sqrt{2}$.

Apart from $\lambda_{01}={-2p}$,  which forms the top of
the spectrum of $\Lop_{\eta_{cr}}$
in the full range $p>2$, the spectral gap is given by
$\Lambda = \max\{\lambda_0^{\rm cont}, \lambda_{02}, \lambda_{20} \}
= - \min\{(\frac{p}{2}+1)^2,4(p-2),2(p+n)\}$.
Since $\lambda_0^{\rm cont}/\lambda_{01} <2$, $\lambda_{02}/\lambda_{01}<2$
and $\lambda_{20}/\lambda_{01} <2$ are easily checked in the relevant ranges
$p\in`]2,6]$, $p \in `]6,n+4]$ and $p>n+4$,  Corollary
\ref{c-subquadratic-rho-asymptotics}
yields an asymptotic expansion
weighted by the denominator
$\tau^{-\gamma} (\rho(\tau,\origin)/\rho(\tau,\y))^{\delta+{2/(n+p)}}$
where $\gamma = \Lambda / \lambda_{01}$ is given by \eqref{d-gamma}, and
$\delta = \frac{1}{p+n} (\frac{p}{2} - 1 - \sqrt{(\frac{p}{2}+1)^2 + \Lambda})$
agrees with \eqref{d-delta}.  The sum in this asymptotic expansion consists of
only one term,  and it corresponds to the eigenvalue $\lambda_{01}$.  Thus
$$
\frac{\rho(\tau,\y)}{\rhoB(\tau,\y)}=
1 + \frac{u_{11}((1+2p\tau)^{-\spread}\y)}%
         {B(1+2p\tau)} 
    \left(\frac{\rhoB(\tau,\y)}{\rhoB(\tau,0)}\right)^{2/(p+n)}  
+ O(1/\tau^\gamma)
\,,
$$
as $\tau \to \infty$, where
$u_{11}(\x)/(B+|\x|^2)$ belongs to the
one-dimensional eigenspace of $\lambda_{01}$,  and the error is
measured in an $L^\infty$ norm with the desired weight.  Similarly
Corollary \ref{c-subquadratic-rho-asymptotics} asserts 
$$
\frac{\rhoB(\tau-\tau_0,\y)}{\rhoB(\tau,\y)}
= 
1 +
c(\tau_0)\frac{u_{11}((1+2p\tau)^{-\spread}\y)}%
              {B(1+2p\tau)} 
         \left(\frac{\rhoB(\tau,\y)}{\rhoB(\tau,0)}\right)^{2/(p+n)}  
+ O(1/\tau^\gamma)
\,,
$$
in the same norm, since the eigenspace is one-dimensional.  If
$\tau_0$ can be chosen to make $c(\tau_0)=1$,  then subtracting these
identities and using the fact \eqref{reluni} that
$\|\rhoB(\tau-\tau_0,\y)/\rhoB(\tau,\y) - 1 \|_{L^\infty(\Rn)} \to 0$
will conclude the proof of Corollary~\ref{c-2}.

On the other hand, from definition \eqref{d-rhoB} we compute
$$
\frac{\rhoB(\tau-\tau_0,\origin)}%
     {\rhoB(\tau,\origin)}
= 
\left(\frac{1 + 2p(\tau-\tau_0)}%
           {1 + 2p\tau}\right)^{-n\spread}
= 1 + n\spread \tau_0/\tau + o(1/\tau)
\,,
$$
from which we deduce that $c(\tau_0)$ does not generally vanish but
depends linearly on $\tau_0$. 
Thus a suitable choice of $\tau_0$ yields $c(\tau_0)=1$ to complete the proof.
\end{proof}

\begin{proof}[Proof of Corollary \ref{c-10}]
In the range $m > m_{n+4}$ and $n \ge 2$,  the next largest spectral
value after $\lambda_{20}$ is $\lambda_{11}$ 
(and not $\lambda_0^{\rm cont}$).
Taking $\Lambda = \lambda_{11}$ and observing
that the spectral ratio $\lambda_{11}/\lambda_{01} = 2 - (p+4-n)/2p$
is strictly less than $2$ in the range $p > n + 4$ of interest,  as in
the preceding proof we can choose $\tau_0$ to obtain
$$
\frac{\rho(\tau-\tau_0,\y)}{\rhoB(\tau,\y)} 
= 
1+ 
\frac{u_{20}((1+2p\tau)^{-\spread} \y)}{(1+2p\tau)^{(3p + n -4)/2p}}
\left(\frac{\rhoB(\tau,\y)}{\rhoB(\tau,0)}\right)^{\frac{2}{p+n}}  + 
O\left(\tau^{-\gamma}\right),
$$
as $\tau \to \infty$, where
the error is measured in an $L^\infty$
norm with the desired weight according to Theorem \ref{t-subquadratic},
and $u_{20}(\y)$ is a
homogeneous harmonic polynomial of degree $2$ as in Corollary
\ref{c-coefficient formulae}. 

A number of authors starting with Titov and Ustinov \cite{MR0857985}
and Tartar (circa 1986, unpublished but cited in \cite{MR2286292}) and
including two of us \cite{DenzlerMcCann08}, have independently observed that
the family of functions $\uB(\Sigma^{-1/2}\y) \, \det \Sigma^{-1/2}$
parametrized  by positive definite symmetric matrices $\Sigma>0$ form
a invariant manifold of dimension $n(n+1)/2$
under the porous medium and fast diffusion dynamics \eqref{pm}.
Each solution
$ \tilde \rho(\tau,\y) =  
\uB(\Sigma^{-1/2}(\tau) \y) \, \det \Sigma^{-1/2}(\tau)$
satisfies
$$
\frac{\rho(\tau-\tilde \tau_0,\y)}{\rhoB(\tau,\y)} = 1 +
    \frac{\tilde u_{20}((1+2p\tau)^{-\spread} \y)}{(1+2p\tau)^{(3p + n -4)/2p}}
\left(\frac{\rhoB(\tau,\y)}{\rhoB(\tau,0)}\right)^{\frac{2}{p+n}}  + 
O\left(\tau^{-\gamma}\right)
$$
as above. Since $\Sigma(\tau)$ is proportional to the moment of inertia tensor
$\int_\Rn \y \otimes \y \rho(\tau,\y) d\y$,
Corollary \ref{c-coefficient formulae} shows
the traceless part $\Sigma_0$ of $\Sigma(\tau)$ is independent of $\tau$
and can be selected  to make $\tilde u_{20} = u_{20}$.
The evolution equation $(d\sigma/d\tau)^{p+n} = c_B \det \Sigma(\tau)$
for $\Sigma(\tau) = \Sigma_0 + \sigma(\tau) I > 0$
is from Corollary~4 of \cite{DenzlerMcCann08}.
Subtracting the two equations above and using the fact that
$\|\tilde \rho(\tau-\tilde \tau_0,\y)/\rhoB(\tau,\y) - 1\|_{L^\infty(\Rn)} \to 0$
as in \eqref{reluni} yields the desired limit \eqref{affine asymptotics},
after translating the solution $\tilde \rho$ in time by $\tau_0-\tilde \tau_0$.
\end{proof}

\section{Appendix: Pedestrian derivation of all Schauder Estimates}

Although the announced results have now been established, for the sake 
of convenience, we include this appendix giving a self-contained 
derivation of all Schauder estimates for linear equations that we 
have relied on.

\begin{proof}[Self-contained proof of Lemma~\ref{constant}]
By superposition, the estimates are assembled from four cases,
in each of which
exactly one of the quantities $f,b,c,v_0$ is non-zero.

We repeat and modify the arguments from~\cite{MR0241822}, IV,
appropriately to match our situation, denoting
by~$\Gamma$ the heat kernel:
$\Gamma(t,x):= (4\pi t)^{-n/2} \exp[-|x|^2/4t]$.

We estimate the \underline{spatial H\"older quotients for the contribution
  from~$f$:}

In this estimate, $B(2r,x)$ denotes the ball centered at~$x$ with
radius $2r=2|x-x'|$. We have
$$
\begin{array}{l}  \Dst
v(t,x)-v(t,x') =
\int_0^t\int_{\Rn} \d_{ij}^2\Gamma(t-\tau,x-y)
                  \bigl(f^{ij}(\tau,y)-f^{ij}(\tau,x)\bigr)\,dy\,d\tau
\\[2ex]\Dst\phantom{v(t,x)-v(t,x')=\mbox{}}
-
\int_0^t\int_{\Rn} \d_{ij}^2\Gamma(t-\tau,x'-y)
                  \bigl(f^{ij}(\tau,y)-f^{ij}(\tau,x')\bigr)\,dy\,d\tau
\\[2ex] \Dst \mbox{} =
\int_0^t\int_{B(2r,x)} \d_{ij}^2\Gamma(t-\tau,x-y)
                  \bigl(f^{ij}(\tau,y)-f^{ij}(\tau,x)\bigr)\,dy\,d\tau
\\[2ex]\Dst\kern2em \mbox{}
-
\int_0^t\int_{B(2r,x)} \d_{ij}^2\Gamma(t-\tau,x'-y)
                  \bigl(f^{ij}(\tau,y)-f^{ij}(\tau,x')\bigr)\,dy\,d\tau
\\[2ex]\Dst\kern2em \mbox{}
+
\int_0^t\int_{B(2r,x)^c}
       \bigl(\d_{ij}^2\Gamma(t-\tau,x-y) - \d_{ij}^2\Gamma(t-\tau,x'-y)\bigr)
       \bigl(f^{ij}(\tau,y)-f^{ij}(\tau,x)\bigr)\,dy\,d\tau
\\[2ex]\Dst\kern2em \mbox{}
-
\int_0^t \bigl(f^{ij}(\tau,x)-f^{ij}(\tau,x')\bigr)
       \int_{B(2r,x)^c} \d_{ij}^2\Gamma(t-\tau,x'-y) \,dy\,d\tau
\;.
\end{array}
$$
We note: If $|x-y|\le2r$, then $|x'-y|\le3r$. In the third term, if
$|x-y|\ge2r$, then $|\sigma x+(1-\sigma)x'-y|\ge\frac12|x-y|$ for any
$\sigma\in[0,1]$. Therefore
\begin{equation}\label{HQx:f}
\begin{array}{l}  \Dst
|v(t,x)-v(t,x')| \le
\int_0^t\int_{B(2r,x)} |\d_{ij}^2\Gamma(t-\tau,x-y)| \,
                  |x-y|^\halpha\, [f^{ij}(\tau)]_{x;\halpha}\,dy\,d\tau
\\[2ex]\Dst\kern2em\mbox{}
+
\int_0^t\int_{B(2r,x)} |\d_{ij}^2\Gamma(t-\tau,x'-y)| \,
                  |x'-y|^\halpha\,[f^{ij}(\tau)]_{x;\halpha}\,dy\,d\tau
\\[2ex]\Dst\kern2em \mbox{}
+
\int_0^t\int_{B(2r,x)^c}
       \bigl|\d_{ij}^2\Gamma(t-\tau,x-y) - \d_{ij}^2\Gamma(t-\tau,x'-y)\bigr|
       \,|x-y|^\halpha\, [f^{ij}(\tau)]_{x;\halpha}\,dy\,d\tau
\\[2ex]\Dst\kern2em \mbox{}
+
\int_0^t|x-x'|^\halpha \, [f^{ij}(\tau)]_{x;\halpha} \biggl| \int_{B(2r,x)^c}
       \d_{ij}^2\Gamma(t-\tau,x'-y) \,dy\biggr| \,d\tau
\\[2ex]\Dst\kern1em \mbox{} \le
C\int_0^t\int_{B(2r,x)}
            (t-\tau)^{-\frac n2-1}
            \exp\bigl[-C{\Tst\frac{(x-y)^2}{t-\tau}}\bigr] \,
            |x-y|^\halpha\, [f^{ij}(\tau)]_{x;\halpha}\,dy\,d\tau
\\[2ex]\Dst\kern2em\mbox{}
+
C\int_0^t\int_{B(3r,x')}
            (t-\tau)^{-\frac n2-1}
            \exp\bigl[-C{\Tst\frac{(x'-y)^2}{t-\tau}}\bigr] \,
            |x'-y|^\halpha\, [f^{ij}(\tau)]_{x;\halpha}\,dy\,d\tau
\\[2ex]\Dst\kern2em\mbox{}
+
C\int_0^t\int_{B(2r,x)^c}
      |x-x'|       (t-\tau)^{-\frac n2 - \frac32}
      \exp\bigl[-C{\Tst \frac{(x-y)^2}{t-\tau}}\bigr] \,
      |x-y|^\halpha\, [f^{ij}(\tau)]_{x;\halpha}\,dy\,d\tau
\\[2ex]\Dst\kern2em \mbox{}
+
C\int_0^t|x-x'|^\halpha \, [f^{ij}(\tau)]_{x;\halpha}
       \int_{\boundary B(2r,x)}
       |\nabla\Gamma(t-\tau,x'-y)|\,dS(y) \,d\tau
\;.
\end{array}
\end{equation}
Now if  we estimate $[f(\tau)]_{x;\halpha}\le
\|f\|_{C^\halpha(\RnT)}$ and in the first three integrals
evaluate the time integration first,
using that, for $k>1$,
\begin{equation}\label{aux-int}
\int_0^t (t-\tau)^{-k}\exp\bigl[{\Tst-\frac{A^2}{t-\tau}}\bigr]\,d\tau =
(A^2)^{1-k}\int_0^{t/A^2}  \!\!s^{-k}\exp\bigl[{\Tst-\frac1s}\bigr]\,ds
\le C (A^2)^{1-k}
\,,
\end{equation}
we conclude that each term comes to the same estimate, namely we get
$$
|v(t,x)-v(t,x')|
\le C \|f\|_{C^\halpha(\RnT)} \, |x-x'|^{\halpha}
\;.
$$
In comparison, if we estimate $[f(\tau)]_{x;\halpha}\le
\tau^{-\halpha/2} \|f\|_{C^\halpha(\RnT)}^*$, we obtain the following time
integral instead, and we split it in the middle, obtaining
\begin{equation}\label{aux-int*}
\begin{array}{l} \Dst
\biggl(\int_0^{t/2}+\int_{t/2}^t\biggr)
(t-\tau)^{-k}\exp\bigl[-\frac{A^2}{t-\tau}\bigr]\tau^{-\halpha/2}\,d\tau
\\[2ex]\kern2em\Dst \mbox{}
\le
\max_{\sigma\in[t/2,t]}\bigl(\sigma^{1-k}\exp[-A^2/\sigma]\bigr)
\int_0^{t/2}(t-\tau)^{-1}\tau^{-\halpha/2}\,d\tau
\\[2ex]\kern4em\Dst\mbox{}
+
(t/2)^{-\halpha/2}(A^2)^{1-k} \int_0^{t/2A^2}s^{-k}\exp[-1/s]\,ds
\\[2ex]\kern2em\Dst \mbox{}
\le
C t^{-\halpha/2} (A^2)^{1-k}
\,,
\end{array}
\end{equation}
with the same estimate for each summand. This results in
$$
t^{\halpha/2} \, |v(t,x)-v(t,x')|
\le C \|f\|_{C^\halpha(\RnT)}^* \, |x-x'|^{\halpha}
\;.
$$

We now estimate the
\underline{time H\"older quotients for the contribution from~$f$:}

We assume $t'>t$ and let $t'-t=:d$.
Then
$$
\begin{array}{l}\Dst
v(t',x)-v(t,x) =
\int_0^{t'}\int_{\Rn} \d_{ij}^2\Gamma(t'-\tau,x-y)
                  \bigl(f^{ij}(\tau,y)-f^{ij}(\tau,x)\bigr)\,dy\,d\tau
\\[2ex]\Dst\phantom{v(t',x)-v(t,x)=\mbox{}}
-
\int_0^t\int_{\Rn} \d_{ij}^2\Gamma(t-\tau,x-y)
                  \bigl(f^{ij}(\tau,y)-f^{ij}(\tau,x)\bigr)\,dy\,d\tau
\\[2ex] \Dst \mbox{} =
\int_{(t-d)_+}^{t'}\int_{\Rn} \d_{ij}^2\Gamma(t'-\tau,x-y)
                  \bigl(f^{ij}(\tau,y)-f^{ij}(\tau,x)\bigr)\,dy\,d\tau
\\[2ex]\Dst\kern2em\mbox{}
-
\int_{(t-d)_+}^t\int_{\Rn} \d_{ij}^2\Gamma(t-\tau,x-y)
                  \bigl(f^{ij}(\tau,y)-f^{ij}(\tau,x)\bigr)\,dy\,d\tau
\\[2ex]\Dst\kern2em\mbox{}
+
\int_0^{(t-d)_+}  \!\! \int_{\Rn} \bigl[
   \d_{ij}^2\Gamma(t'-\tau,x-y) - \d_{ij}^2\Gamma(t-\tau,x-y) \bigr]
                  \bigl(f^{ij}(\tau,y)-f^{ij}(\tau,x)\bigr)\,dy\,d\tau
\;.
\end{array}
$$
Hence
\begin{equation}\label{HQt:f}
\begin{array}{l}\Dst
|v(t',x)-v(t,x)|
\le
\\[1ex]\Dst\kern2em\mbox{}
C\int_{(t-d)_+}^{t'}\int_{\Rn}
    (t'-\tau)^{-\frac n2-1} \exp\bigl[{\Tst-C\frac{|x-y|^2}{t'-\tau}}\bigr]
        |x-y|^{\halpha} \,dy
                \,  [f(\tau)]_{x;\halpha}\,d\tau
\\[2ex]\Dst\kern3em\mbox{}
+
C\int_{(t-d)_+}^t\int_{\Rn}
    (t-\tau)^{-\frac n2-1} \exp\bigl[{\Tst-C\frac{|x-y|^2}{t-\tau}}\bigr]
        |x-y|^{\halpha} \,dy
                \,  [f(\tau)]_{x;\halpha}\,d\tau
\\[2ex]\Dst\kern3em\mbox{}
+
C\int_0^{(t-d)_+}  \!\! (t'-t)\int_{\Rn}
    (t^*-\tau)^{-\frac n2-2} \exp\bigl[{\Tst-C\frac{|x-y|^2}{t^*-\tau}}\bigr]
        |x-y|^{\halpha} \,dy
                \,  [f(\tau)]_{x;\halpha}\,d\tau
\,,
\end{array}
\end{equation}
where $t^*\in[t,t']$; and therefore, in the last domain of integration,
we have
$\frac12(t'-\tau)\le t_*-\tau\le t'-\tau$.
Evaluating the space integrals directly first,
we obtain
\begin{equation} \label{HQt:f2}
\begin{array}{l} \Dst
|v(t',x)-v(t,x)|
\le
\\[1ex]\Dst\kern2em\mbox{}
C\int_{(t-d)_+}^{t'}
    (t'-\tau)^{\frac \halpha2-1}
                \,  [f(\tau)]_{x;\halpha}\,d\tau
+ \text{same with $t$ instead of $t'$}
\\[2ex]\Dst\kern3em\mbox{}
+
C\int_0^{(t-d)_+}  \!\! (t'-t)
    (t'-\tau)^{\frac \halpha2-2}
                \,  [f(\tau)]_{x;\halpha}\,d\tau
\;.
\end{array}
\end{equation}
Each term is dominated by
$C(t'-t)^{\halpha/2}\sup_\tau\|f(\tau)\|_{C^\halpha(\Rn)}
\le C(t'-t)^{\halpha/2}\|f\|_{C^\halpha(\RnT)}$, as desired.

If we use the weighted norms in \eqref{HE-est3} instead,
we obtain
\begin{equation} \label{HQt:f2*}
\begin{array}{l}\Dst
|v(t',x)-v(t,x)|
\le
\\[1ex]\Dst\kern2em\mbox{}
C \int_{(t-d)_+}^{t'}
    (t'-\tau)^{\frac \halpha2-1}
    \tau^{-\halpha/2} \|f\|_{C^\halpha(\RnT)}^* \,d\tau
\\[2ex]\Dst\kern3em\mbox{}
+
C \int_{(t-d)_+}^t
    (t-\tau)^{\frac \halpha2-1}
    \tau^{-\halpha/2} \|f\|_{C^\halpha(\RnT)}^* \,d\tau
\\[2ex]\Dst\kern3em\mbox{}
+
C(t'-t)  \int_0^{(t-d)_+}
    (t-\tau)^{\frac \halpha2-2}
    \tau^{-\halpha/2} \|f\|_{C^\halpha(\RnT)}^* \,d\tau
\\[2ex]\Dst\kern1em\mbox{}
\le
C (t'-t)^{\halpha/2}  t^{-\halpha/2} \|f\|_{C^\halpha(\RnT)}^*
\;.
\end{array}
\end{equation}
To justify the last estimate for the first two integrals, we distinguish two
cases: If $d\ge\frac12t$, our estimate
$[\frac{t'-t}{t}]^{\halpha/2}=(d/t)^{\halpha/2}$ is weaker than a constant,
whereas the integrals can be extended to a lower limit $\tau=0$ and readily
estimated by a constant. If $d\le\frac12t$, the term
$\tau^{-\halpha/2}$ is $\le (\frac12t)^{-\halpha/2}$, and the other
factor can be integrated and found to be bounded by $O((t'-t)^{\halpha/2})$.

For the third integral, in the case that $d\ge\frac12t$,
we argue that $(t-\tau)^{\frac\halpha2-2} \le
d^{\frac\halpha2-2} = (t'-t)^{\frac\halpha2-2}$, and the coefficient of
$\|f\|^*$ from the third term is
dominated by $C(t'-t)^{\frac\halpha2-1} (t-d)_+^{1-\frac\halpha2}$.
This quantity is again bounded by a constant, whereas our claimed
estimate is weaker than a constant.
On the other hand, if $d<\frac12t$, we split the integral in the middle,
estimate the bounded factor under each integral by its maximum (at
$\tau=\frac12t$ for each integral) and integrate the remaining factor.
Now $d\int_0^{t/2}\ldots d\tau\le C d/t\le C(d/t)^{\halpha/2}$
and $d\int_{t/2}^{t-d}\ldots d\tau\le C (d/t)^\halpha/2$,
hence the desired estimate.

We now estimate the
\underline{supremum norm  for the contribution from~$f$:}

This is the easy estimate
\begin{equation}\label{sup:f}
\begin{array}{l} \Dst
|v(t,x)|\le \int_0^t
\int_{\Rn}|\d_{ij}^2\Gamma(t-\tau,x-y)|\,|f^{ij}(\tau,y)-f^{ij}(\tau,x)|
\,dy\,d\tau
\\[2ex]\Dst\kern3em\mbox{}\le
C\int_0^t
\int_{\Rn} (t-\tau)^{-\frac n2-1} \exp\bigl[{\Tst -C\frac{|x-y|^2}{t-\tau}}\bigr]
|x-y|^{\halpha} \,dy \, [f(\tau)]_{x;\halpha}\,d\tau
\\[2ex]\Dst\kern3em\mbox{}\le
C\int_0^t (t-\tau)^{\frac\halpha2-1} [f(\tau)]_{x;\halpha}\,d\tau
\le C t^{\halpha/2}\|f\|_{C^\halpha(\RnT)}
\;.
\end{array}
\end{equation}
Since $t$ is bounded, we have estimated $|v(t,x)|$ by the H\"older norm of $f$.
In the same way, we can also estimate
$t^{\halpha/2}|v(t,x)|\le Ct^{\halpha/2}\|f\|_{C^\halpha(\RnT)}^*
\le C\|f\|_{C^\halpha(\RnT)}^*$.

We now estimate \underline{the spatial H\"older quotient contributed
  from~$b$\vphantom{$f$}:}

They can be estimated as in \eqref{HQx:f}, but without
splitting the space integral, as
\begin{equation}\label{HQx:b}
\begin{array}{l}\Dst
\frac{|v(t,x)-v(t,x')|}{|x-x'|^\halpha} \le
\int_0^t\int_\Rn |\d_i\Gamma(t-\tau,y)| \,
                  [b^{i}(\tau)]_{x;\halpha}\,dy\,d\tau
\\[2ex]\Dst\kern4em\mbox{}
\le C  \int_0^t (t-\tau)^{-\frac12}\,
   [b(\tau)]_{x;\halpha}\,d\tau
\\[2ex]\Dst\kern4em\mbox{}
\le
\min\bigl\{ C t^{\frac12}\|b\|_{C^\halpha(\RnT)}
\,,\;
C t^{\frac12-\frac\halpha2}\|b\|_{C^\halpha(\RnT)}^*
\bigr\}
\;.
\end{array}
\end{equation}

We now estimate \underline{the time H\"older quotients contributed
  from~$b$\vphantom{$f$}:}

For the time H\"older quotients, we can argue as in~\eqref{HQt:f2}, only with
an extra power of $(t'-\tau)^{1/2}$  under the
integrals. We get
$$
\begin{array}{l} \Dst
|v(t',x)-v(t,x)| \le C \bigl( (t'-t)^{\frac\halpha2+\frac12} +
(t'-t)(t'-t)^{\frac\halpha2-\frac12} \bigr) \sup_\tau [b(\tau)]_{x;\halpha}
\\[1.5ex] \Dst \phantom{|v(t',x)-v(t,x)|}
\le C(t'-t)^{\frac\halpha2} T^{\frac12} \|b\|_{C^\halpha(\RnT)}
\;.
\end{array}
$$

If we use the weighted norms $\|\cdot\|^*$ instead, we get by modification
of~\eqref{HQt:f2*}:
\begin{equation}\label{HQt:b*}
\begin{array}{l}\Dst
|v(t',x)-v(t,x)|
\le
\\[0.5ex]\Dst\kern4em\mbox{}
C  \|b\|_{C^\halpha(\RnT)}^*
\int_{(t-d)_+}^{t'} (t'-\tau)^{\halpha/2-1/2} \tau^{-\halpha/2}\,d\tau
\\[2ex]\Dst\kern5em\mbox{}
+
C  \|b\|_{C^\halpha(\RnT)}^*
\int_{(t-d)_+}^{t} (t-\tau)^{\halpha/2-1/2} \tau^{-\halpha/2}\,d\tau
\\[2ex]\Dst\kern5em\mbox{}
+
C(t'-t) \|b\|_{C^\halpha(\RnT)}^*  \int_0^{(t-d)_+}
    (t-\tau)^{\frac \halpha2-\frac32}
    \tau^{-\halpha/2}  \,d\tau
\;.
\end{array}
\end{equation}
The same splitting argument as for \eqref{HQt:f2*} proves that each term is
dominated by $C(t'-t)^{\halpha/2+1/2}t^{-\halpha/2}\|b\|_{C^\halpha(\RnT)}^*
\le CT^{1/2}(t'-t)^{\halpha/2}t^{-\halpha/2}\|b\|_{C^\halpha(\RnT)}^*$.

We now estimate the \underline{supremum norm contributed
  from~$b$\vphantom{$f$}:}

As in \eqref{sup:f}, we conclude
$|v(t,x)|\le C t^{\frac\halpha2+\frac12}\|b\|_{C^\halpha(\RnT)}$
and
$|v(t,x)|\le C t^{\frac12}\|b\|_{C^\halpha(\RnT)}^*$.
An even simpler version of the same estimate yields
$|v(t,x)|\le C t^{\frac12}\|b\|_{L^\infty(\RnT)}$ for later use in
proving~\eqref{HE-est2}.

We now estimate the \underline{space and time H\"older quotients contributed
  from~$c$\vphantom{$f$}:}

In contrast to $b$ and $f$, we cannot follow the paradigms of \eqref{HQx:f} and
\eqref{HQt:f} here, because $\Gamma$ carries no space derivative here.
This impacts the results of $[v]_{t;\halpha/2}$ since we cannot benefit from
using spatial H\"older quotients of~$c$. We get 
$$
\frac{|v(t,x)-v(t,x')|}{|x-x'|^\halpha} \le
\int_0^t\int_\Rn \Gamma(t-\tau,y)\, [c(\tau)]_{x;\halpha} dy\,d\tau
\le t \|c\|_{C^\halpha(\RnT)}
\;.
$$
Similarly, $t^{\halpha/2}[v(t)]_{x;\halpha} \le t \|c\|_{C^\halpha(\RnT)}^*$.

For the time H\"older quotients, we have (with $t'>t$)
$$
\begin{array}{l} \Dst
\frac{|v(t',x)-v(t,x)|}{|t'-t|^{\halpha/2}}
\le
\frac{1}{|t'-t|^{\halpha/2}}
\int_{t}^{t'} \int_{\Rn} \Gamma(\tau,y) c(t'-\tau,x-y)\,dy\,d\tau
\\[2ex]\kern6em \Dst \mbox{}
+
\int_0^{t} \int_{\Rn} \Gamma(\tau,y)
\frac{|c(t'-\tau,x-y)-c(t-\tau,x-y)|}{|t'-t|^{\halpha/2}} \,dy\,d\tau
\\[2ex]\kern4em \Dst \mbox{}
\le (t'-t)^{1-\halpha/2}\|c\|_{L^\infty(\RnT)} + T [c]_{t;\halpha/2}
\le T^{1-\halpha/2} \|c\|_{C^\halpha(\RnT)}
\;.
\end{array}
$$
With $|v(t,x)|\le t^{\halpha/2}[v]_{t;\halpha/2}$,
the $c$-estimate in~\eqref{HE-est1} follows immediately. The example
$c\equiv1$, $v(t,x)=t$ shows that the estimate is optimal.

Estimating the time H\"older quotients in terms of the weighted norm,
we get analogously
$$
\begin{array}{l} \Dst
\frac{|v(t',x)-v(t,x)|}{|t'-t|^{\halpha/2}}
\le
\frac{1}{|t'-t|^{\halpha/2}}
\int_{t}^{t'}  \|c(t'-\tau)\|_{L^\infty(\Rn)} \,d\tau
\\[2ex]\kern6em \Dst \mbox{}
+
\int_0^{t} (t-\tau)^{-\halpha/2} \|c\|_{C^\halpha(\RnT)}^* \,d\tau
\\[2ex]\kern4em \Dst \mbox{}
\le  C (t'-t)^{1-\halpha}\|c\|_{C^\halpha(\RnT)}^*
+ C t^{1-\halpha/2}\|c\|_{C^\halpha(\RnT)}^*
\;.
\end{array}
$$
Hence $t^{\halpha/2}\frac{|v(t',x)-v(t,x)|}{|t'-t|^{\halpha/2}}
\le C T^{1-\halpha/2} \|c\|_{C^\halpha(\RnT)}^*$; again, as a corollary, we get
$|v(t',x)|\le CT  \|c\|_{C^\halpha(\RnT)}^*$.
This gives the $c$ estimate in~\eqref{HE-est3}.

The estimates for the \underline{contributions from $v_0$} have been
given in Sec.~\ref{SecHeatEq},
Equations \eqref{vmax},\eqref{vxalpha},\eqref{vtalpha}
already.

Likewise, the estimates for the \\
\underline{contributions of $\|b\|_{L^\infty}$ and $\|c\|_{L^\infty}$
to the alternate bound~\eqref{HE-est2}} have been given
in Sec.~\ref{SecHeatEq} already.

This concludes the proof of Lemma~\ref{constant}.
\end{proof}

\bibliographystyle{plain}
\bibliography{fast}

\end{document}